\documentclass{article}

\usepackage[utf8]{inputenc}
\usepackage{amssymb}
\usepackage{makeidx}
\usepackage[english]{babel}
\usepackage{graphicx}
\usepackage{amsfonts,amsmath,amssymb,amsthm}
\usepackage{oldgerm}
\usepackage{mathrsfs}
\usepackage[active]{srcltx}
\usepackage{verbatim}
\usepackage{enumitem} 
\usepackage[toc,page]{appendix}
\usepackage{aliascnt}
\usepackage{array}
\usepackage{hyperref}
\usepackage[textwidth=4cm,textsize=footnotesize]{todonotes}
\usepackage{xargs}
\usepackage{cellspace}
\usepackage[Symbolsmallscale]{upgreek}
\usepackage{geometry}
\usepackage{array}
\geometry{top=3.5cm, bottom=3.5cm, left=3.5cm , right=3.5cm}
\usepackage{fancyhdr}
\pagestyle{fancy}

\usepackage{accents}
\usepackage{dsfont}
\usepackage{aliascnt}
\usepackage{cleveref}
\makeatletter
\newtheorem{theorem}{Theorem}
\crefname{theorem}{theorem}{Theorems}
\Crefname{Theorem}{Theorem}{Theorems}

\newtheorem{lemma}{Lemma}
\crefname{lemma}{lemma}{lemmas}
\Crefname{Lemma}{Lemma}{Lemmas}

\crefname{corollary}{corollary}{corollaries}
\Crefname{Corollary}{Corollary}{Corollaries}

\newtheorem{proposition}{Proposition}
\crefname{proposition}{proposition}{propositions}
\Crefname{Proposition}{Proposition}{Propositions}

\crefname{definition}{definition}{definitions}
\Crefname{Definition}{Definition}{Definitions}

\crefname{remark}{remark}{remarks}
\Crefname{Remark}{Remark}{Remarks}

\newtheorem{example}{Example}
\crefname{example}{example}{examples}
\Crefname{Example}{Example}{Examples}

\crefname{figure}{figure}{figures}
\Crefname{Figure}{Figure}{Figures}

\newtheorem{assumptionH}{\textbf{H}\hspace{-3pt}}
\Crefname{assumptionH}{\textbf{H}\hspace{-3pt}}{\textbf{H}\hspace{-3pt}}
\crefname{assumptionH}{\textbf{H}}{\textbf{H}}

\newtheorem{assumptionG}{\textbf{G}\hspace{-3pt}}
\Crefname{assumptionG}{\textbf{G}\hspace{-3pt}}{\textbf{H}\hspace{-3pt}}
\crefname{assumptionG}{\textbf{G}}{\textbf{G}}

\Crefname{assumptionA}{\textbf{A}\hspace{-3pt}}{\textbf{A}\hspace{-3pt}}
\crefname{assumptionA}{\textbf{A}}{\textbf{A}}

\def\Rset{\mathbb{R}}
\def\Nset{\mathbb{N}}
\def\nset{\mathbb{N}}
\def\rset{\mathbb{R}}
\def\varb{b}

\newcommand{\1}{\mathds{1}}
\newcommand{\floor}[1]{\left\lfloor #1 \right\rfloor}
\newcommand{\ceil}[1]{\left\lceil #1 \right\rceil}

\def\ie{\text{i.e.}}
\newcommand{\coint}[1]{\left[#1\right)}
\newcommand{\ocint}[1]{\left(#1\right]}
\newcommand{\ooint}[1]{\left(#1\right)}
\newcommand{\ccint}[1]{\left[#1\right]}
\newcommand{\Wienerspace}{\mathbf{W}}

\def\PP{\mathbb{P}}
\def\PE{\mathbb{E}}
\newcommandx{\CPE}[3][1=]{{\mathbb E}_{#1}\left[\left. #2 \, \right| #3 \right]} \newcommandx{\PEt}[2][1=]{\PE_{#1}\left[#2\right]}
\def\rmd{\mathrm{d}}
\def\rme{\mathrm{e}}
\def\target{\pi}
\def\dim{d}

\def\Vdot{\dot{V}}
\def\I{\mathcal{I}}

\def\Iclosed{\overline{\mathcal{I}}}
\def\unitarget{\pi}
\def\rmL{\mathrm{L}}
\def\Lp{\rmL^p}

\def\G{\mathcal{G}}

\newcommand{\norm}[1]{\left\Vert #1 \right \Vert}
\newcommand{\plusinfty}{+ \infty}
\newcommand{\defEns}[1]{\left \{ #1 \right \}}
\newcommand{\abs}[1]{\left\vert #1 \right\vert}

\def\eqsp{\,}

\def\wrt{with respect to}
\newcommand{\parenthese}[1]{\left( #1 \right)}
\newcounter{hypH}

\newcommand{\eqdef}{\ensuremath{:=}}
\newcommand{\setAccept}{\mathcal{A}}
\def\iid{\operatorname{i.i.d.}}
\newcommand{\sachant}[1]{\left| #1 \right.}
\newcommand{\setDisconDotV}{\mathcal{D}_{\dot{V}}}

\newcommandx\sequence[3][2=,3=]
{\ifthenelse{\equal{#3}{}}{\ensuremath{\{ #1_{#2}\}}}{\ensuremath{( #1_{#2}, \eqsp #2 \in #3 )}}}
\newcommandx{\sequencen}[2][2=n\in\nset]{\ensuremath{( #1, \eqsp #2 )}}
\newcommandx\dsequence[4][3=k,4=\zset]{\ensuremath{( (#1_{#3}, #2_{#3}), \eqsp #3 \in #4 )}}
\newcommandx{\as}[1][1=\PP]{\ensuremath{#1\--\mathrm{a.s.}}}
\newcommand{\eqspp}{\ \ }

\def\canonicalFiltration{\mathscr{B}}

\def\barX{\bar{X}}
\def\barY{\bar{Y}}
\def\barS{\bar{S}}

\newcommand\proba[1]{\PP\left[ #1 \right]}
\newcommand\expe[1]{\PE\left[ #1 \right]}
\def\osc{\operatorname{osc}}

\def\mcf{\mathcal{F}}

\def\sign{\mathrm{sign}}

\def\pdf{\pi}
\def\sqpdf{\xi}

\newcommandx{\lp}[1][1=p]{\ensuremath{\mathrm{L}^{{#1}}}}

\def\coeffLasso{\lambda}
\def\smoothPotLasso{U}
\def\Erm{\mathrm{E}}
\def\Dcal{\mathcal{D}}
\def\Jrm{\mathrm{J}}
\def\Lrm{\mathrm{L}}
\def\Xsf{\mathsf{X}}
\def\Ysf{\mathsf{Y}}
\def\Usf{\mathsf{U}}
\def\constSet{\mathrm{r}}
\def\Vgamma{V_{\upgamma}}
\def\Vgammadot{\dot{V}_{\upgamma}}
\def\pigamma{\pi_{\upgamma}}
\def\Vbeta{V_{\upbeta}}
\def\Vbetadot{\dot{V}_{\upbeta}}
\def\pibeta{\pi_{\upbeta}}
\def\ESJD{\operatorname{ESJD}}
\def\rma{\mathrm{a}}
\def\generator{\Lrm}

\begin{document}

\author{Alain Durmus\footnotemark[1]\and Sylvain Le {C}orff\footnotemark[2]\and Eric Moulines\footnotemark[3]\and Gareth O. Roberts\footnotemark[4]}
 
\footnotetext[1]{LTCI, CNRS and T\'el\'ecom ParisTech.}
\footnotetext[2]{Laboratoire de Math\'ematiques d'Orsay, Univ. Paris-Sud, CNRS, Universit\'e Paris-Saclay.}
\footnotetext[3]{Centre de Math\'ematiques Appliqu\'ees, Ecole Polytechnique.}
\footnotetext[4]{University of Warwick, Department of Statistics.}

\title{Optimal scaling of the Random Walk Metropolis algorithm under $\Lp$  mean differentiability}
\date{}
\lhead{Durmus et al.}
\rhead{Optimal scaling under $\Lp$  mean differentiability}

\maketitle

\begin{abstract}
This paper considers the optimal scaling problem for high-dimensional random walk Metropolis algorithms for densities which are differentiable in  $\Lp$ mean but which may be irregular at some points (like the Laplace density for example) and / or are supported on an interval. Our main result is the weak convergence of the Markov chain (appropriately rescaled in time and space) to a Langevin diffusion process  as the dimension $\dim$ goes to infinity. Because the log-density might be non-differentiable, the limiting diffusion could be singular. The scaling limit is established under assumptions which are much weaker than the one used in the original derivation of \cite{roberts:gelman:gilks:1997}. This result has important practical implications for the use of random walk Metropolis algorithms in Bayesian frameworks based on sparsity inducing priors.
\end{abstract}

\section{Introduction}
\label{sec:intro}
A wealth of contributions have been devoted to the study of the behaviour  of high-dimensional Markov chains.
One of the most powerful approaches for that purpose is the scaling analysis, introduced by \cite{roberts:gelman:gilks:1997}.
Assume that the target distribution has a density \wrt~the $\dim$-dimensional
Lebesgue measure given by:
\begin{equation}
 \label{eq:targetiid}
\target^{\dim}(x^{\dim}) = \prod_{i=1}^{\dim} \unitarget(x_i^{\dim}) \eqsp.
\end{equation}
The Random Walk Metropolis-Hastings (RWM) updating scheme was first applied in \cite{metropolis:rosenbluth:teller:1953} and proceeds as follows.  Given the current state $X^{\dim}_k$, a new value  $Y_{k+1}^{\dim} = (Y_{k+1,i}^{\dim})_{i=1}^{\dim}$ is obtained by moving independently each coordinate, \ie\  $Y^\dim_{k+1,i}= X^\dim_{k,i} + \ell \dim^{-1/2} Z_{k+1}^\dim$   where $\ell > 0$ is a scaling factor and $\left(Z_{k}\right)_{k\ge 1}$ is a sequence of independent and identically distributed ($\iid$)  Gaussian random variables. Here $\ell$ governs the overall size of the proposed jump and plays a crucial role in determining the efficiency of the algorithm. The proposal is then accepted or rejected according to the acceptance probability $\alpha(X_k^\dim,Y_{k+1}^\dim)$ where $\alpha(x^\dim,y^{\dim})= 1 \wedge \target^{\dim}(y^\dim)/\target^\dim(x^\dim)$. If the proposed value is accepted it becomes the next current value, otherwise the current value is left unchanged:
\begin{align}
\label{eq:proposaliid}
X_{k+1}^{\dim} &= X_k ^{\dim} + \ell \dim^{-1/2} Z^{\dim}_{k+1} \1_{\setAccept^{\dim}_{k+1}} \eqsp, \\
\label{eq:setAccept}
\setAccept_{k+1}^{\dim} &= \left\lbrace U_{k+1} \leq \prod_{i=1}^{d} \unitarget(X_{k,i}^{\dim} +
\ell \dim^{-1/2} Z^{\dim}_{k+1,i}) / \unitarget( X_{k,i}^{\dim}) \right\rbrace \eqsp,
\end{align}
where  $\left(U_{k}\right)_{k\ge 1}$ of $\iid$ uniform
random variables on $\ccint{0,1}$ independent of $\left(Z_k\right)_{k \geq 1}$.

Under certain regularity assumptions on $\pi$, it has been proved in \cite{roberts:gelman:gilks:1997} that if the  $X_0^\dim$ is distributed under  the stationary distribution $\target^\dim$, then each component of $(X^{\dim}_k)_{k\ge 0}$ appropriately rescaled in time converges weakly to a Langevin diffusion process
with invariant distribution $\pi $ as $\dim \to +\infty$.

This result allows to compute the asymptotic mean acceptance rate and
to derive a practical rule to tune the factor $\ell$. It is shown in \cite{roberts:gelman:gilks:1997} that the speed of the limiting diffusion has a function of $\ell$ has a unique maximum. The corresponding mean acceptance rate in stationarity
is equal to $0.234$.

These results have been derived for target distributions of the form (\ref{eq:targetiid}) where $\unitarget(x) \propto \exp(-
V(x))$ where $V$ is three-times continuously differentiable. Therefore, they do not cover the cases where the
target density is continuous but not smooth, for example the Laplace distribution which plays a key role as a sparsity-inducing prior in high-dimensional Bayesian inference.

The aim of this paper is to extend the scaling results for the RWM algorithm introduced in the seminal paper \cite[\Cref{theo:diffusion_limit_RMW}]{roberts:gelman:gilks:1997} to
densities which are absolutely continuous densities differentiable in $\Lp$ mean
(DLM) for some $p \geq 2$ but can be either non-differentiable at some points or are supported on an interval. As shown
in \cite[Section~17.3]{lecam:1986}, differentiability of the square
root of the density in $\mathrm{L}^2$ norm implies a quadratic
approximation property for the log-likelihood known as local
asymptotic normality. As shown below, the DLM permits 
the quadratic expansion of the log-likelihood without
paying the twice-differentiability price usually demanded by such a
Taylor expansion (such expansion of the log-likelihood plays a key
role in \cite{roberts:gelman:gilks:1997}).

The paper is organised as follows. In \Cref{sec:R} the target density $\pi$ is assumed to be
positive on $\rset$. \Cref{theo:result_acceptance_rate_RWM} proves that under the
DLM assumption of this paper, the average acceptance rate and the expected square jump distance 
are the same as in \cite{roberts:gelman:gilks:1997}. \Cref{theo:diffusion_limit_RMW} shows that under
the same assumptions the rescaled in time Markov chain produced by the
RWM algorithm converges weakly to a Langevin diffusion. We show that these results may be applied to a density of the form $\pi(x) \propto \exp(-\lambda \abs{x} + \smoothPotLasso(x))$, where $\lambda\ge 0$ and $\smoothPotLasso $ is a smooth function. In \Cref{sec:I}, we focus on the case where $\pi$ is supported only on an open interval of $\rset$. Under appropriate assumptions, \Cref{theo:result_acceptance_rate_RWM:G} and \Cref{theo:diffusion_limit_RMW:G} show that the same asymptotic results (limiting average acceptance rate and limiting Langevin diffusion associated with $\pi$) hold.  We apply our results to Gamma and Beta distributions. The proofs are postponed to \Cref{sec:proofs} and \Cref{sec:proofs:G}.

%%% Local Variables:
%%% mode: latex
%%% TeX-master: "scaling_HM_L1"
%%% End:

\section{Positive Target density on $\Rset$}
\label{sec:R}
\label{subsec:result:R}
The key of the proof of our main result will be to show  that  the acceptance ratio and the expected square jump distance
converge to a finite and non trivial limit. In the original proof of \cite{roberts:gelman:gilks:1997}, the density of the product form
(\ref{eq:targetiid}) with
\begin{equation}
\label{eq:definition-unitarget}
\unitarget(x) \propto \exp(-V(x))
\end{equation}
is three-times continuously differentiable and  the acceptance ratio is expanded using the usual pointwise Taylor formula. More precisely,  the log-ratio of the density evaluated at the proposed value and at the current state is given by $\sum_{i=1}^{\dim} \Delta V^{\dim}_{i}$ where
\begin{equation}
\label{eq:Delta_V}
\Delta V^{\dim}_{i} = V\left(X_{i}^{\dim}\right) - V\left(X_{i}^{\dim} + \ell\dim^{-1/2} Z^{\dim}_{i}\right)\eqsp,
\end{equation}
and where $X^{\dim}$ is  distributed according to $\target^{\dim}$ and $Z^{d}$ is a $d$-dimensional standard Gaussian random variable independent of $X$.
Heuristically, the two leading terms are $\ell \dim^{-1/2} \sum_{i=1}^{\dim}
\dot{V}(X^{\dim}_i) Z^{\dim}_i$ and $\ell^2 d^{-1} \sum_{i=1}^{\dim} \ddot{V}(X^{\dim}_i)
(Z_i^{\dim})^2/2$, where $\dot{V}$ and $\ddot{V}$ are the first and second derivatives of $V$, respectively.  By the
central limit theorem, this expression converges in distribution to a zero-mean Gaussian random variable with variance $\ell^2I$ where
\begin{equation}
\label{eq:translation-score}
I = \int_{\rset}  \Vdot^2(x)\pi(x) \rmd x \eqsp.
\end{equation}
Note that  $I$ is the Fisher information associated with the translation model
$\theta \mapsto \pdf(x+\theta)$ evaluated at $\theta=0$. Under appropriate technical conditions, the second term converges almost surely to $-\ell^2I/2$. Assuming that these limits exist, the acceptance ratio in the RWM algorithm  converges to $\PE[ 1 \wedge \exp(\mathsf{Z})]$ where $\mathsf{Z}$ is a Gaussian random variable with mean $-\ell^2 I/2$ and variance $\ell^2 I$; elementary computations show that $\PE[ 1 \wedge \exp(\mathsf{Z})]= 2\Phi(-\ell/2 \, \sqrt{I})$,
where $\Phi$ stands for the cumulative distribution function of a standard normal distribution.

For $t \geq 0$, denote by $Y_{t}^{\dim}$ the linear interpolation of the Markov chain  $(X_k^{\dim})_{k\ge 0}$ after time rescaling:
\begin{align}
\label{eq:defY}
Y_{t}^{\dim} &= \left(\lceil \dim \, t\rceil - \dim \, t\right)X_{\floor{\dim \, t}}^{\dim} + \left(\dim \, t - \lfloor\dim \, t\rfloor\right)X_{\ceil{\dim \, t}}^{\dim} \\
&= X_{\floor{\dim \, t}}^{\dim} + \left(\dim \, t - \lfloor\dim \, t\rfloor\right) \ell
\dim^{-1/2} Z^{\dim}_{\lceil \dim \, t\rceil} \1_{\setAccept^{\dim}_{\lceil \dim \, t\rceil}} \eqsp,
\end{align}
where $\lfloor \cdot \rfloor$ and $\lceil \cdot \rceil$ denote the lower and the upper integer part functions.
Note that for all $k\ge 0$, $Y_{k/\dim}^{\dim} = X_{k}^{\dim}$. Denote by $(B_t, t \geq 0)$ the standard Brownian motion.
\begin{theorem}[\cite{roberts:gelman:gilks:1997}]
\label{theo:roberts:gelman:gilks}
Suppose that the target $\pi^\dim$ and the proposal distribution are given by \eqref{eq:targetiid}-\eqref{eq:definition-unitarget} and \eqref{eq:proposaliid} respectively. Assume that
\begin{enumerate}[label=(\roman*)]
\item $V$ is twice continuously differentiable and $\dot{V}$ is Lipshitz continuous ;
\item $\PE[(\dot{V}(X))^8] < \infty$ and $\PE[(\ddot{V}(X))^4] < \infty$ where $X$ is
  distributed according to $\pi$.
\end{enumerate}
Then
$(Y_{t,1}^\dim, t \geq 0)$, where $Y^{\dim}_{t,1}$ is the first component of the vector $Y^{\dim}_t$ defined in \eqref{eq:defY}, converges weakly in the Wiener space (equipped with the uniform topology) to the Langevin diffusion
\begin{equation}
\label{eq:langevin_limit}
\rmd Y_t  = \sqrt{h(\ell)} \rmd B_t - \frac{1}{2} h(\ell) \Vdot(Y_t) \rmd t \eqsp,
\end{equation}
where $Y_0$ is distributed according to $\pdf$, $h(\ell)$ is given by
\begin{equation}
\label{eq:defhK}
h (\ell) = 2 \ell^2 \Phi\left(-\frac{\ell}{2} \, \sqrt{I} \right)  \eqsp,
\end{equation}
and $I$ is defined in \eqref{eq:translation-score}.
\end{theorem}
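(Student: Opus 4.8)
The approach will be the generator / martingale-problem method. Run the chain accelerated by the factor $\dim$, so that, up to an interpolation term of uniform size $\ell\dim^{-1/2}\to 0$ which may be discarded, $(Y^{\dim}_{t,1},t\ge 0)$ from \eqref{eq:defY} is the linear interpolation of the skeleton $(X^{\dim}_{k,1})_{k\ge 0}$ run at rate $\dim$. Let
\begin{equation*}
\generator f:=\tfrac12 h(\ell)\,f''-\tfrac12 h(\ell)\,\Vdot\,f'\eqsp,\qquad f\in C^{\infty}_c(\rset)\eqsp,
\end{equation*}
be the generator of the candidate limit \eqref{eq:langevin_limit}, with $h(\ell)$ as in \eqref{eq:defhK}: since $\Vdot$ is Lipschitz by hypothesis~(i), the SDE \eqref{eq:langevin_limit} has a unique strong solution, so the martingale problem for $\generator$ is well posed, and $\unitarget\propto\rme^{-V}$ is invariant for $\generator$. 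Because the chain is started in stationarity, $X^{\dim}_0\sim\target^{\dim}$ has i.i.d.\ coordinates. Set $G_{\dim}f(X^{\dim}_0):=\dim\,\PE\!\left[f(X^{\dim}_{1,1})-f(X^{\dim}_{0,1})\mid X^{\dim}_0\right]$. The plan is: \textbf{(a)} to prove that $G_{\dim}f(X^{\dim}_0)\to\generator f(X^{\dim}_{0,1})$ in $\rmL^1(\target^{\dim})$ for every $f\in C^{\infty}_c(\rset)$; \textbf{(b)} to establish tightness of $(Y^{\dim}_{\cdot,1})_{\dim}$ in $C([0,\infty),\rset)$; \textbf{(c)} to combine (a) and (b) with stationarity and well-posedness of the martingale problem and identify every weak limit with \eqref{eq:langevin_limit}.

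\noindent\textbf{The generator computation (step (a)).} Fix $f\in C^{\infty}_c(\rset)$. The first coordinate moves by $\ell\dim^{-1/2}Z_1\1_{\setAccept}$, with $Z_1:=Z^{\dim}_{1,1}$ and $\setAccept$ the acceptance event, so a Taylor expansion of $f$ to second order with remainder gives
\begin{equation*}
G_{\dim}f(X^{\dim}_0)=\ell\dim^{1/2}f'(X^{\dim}_{0,1})\,\PE\!\left[Z_1\1_{\setAccept}\mid X^{\dim}_0\right]+\tfrac12\ell^2 f''(X^{\dim}_{0,1})\,\PE\!\left[Z_1^2\1_{\setAccept}\mid X^{\dim}_0\right]+O\!\left(\dim^{-1/2}\norm{f'''}_{\infty}\right)\eqsp.
\end{equation*}
Here $\1_{\setAccept}$ has conditional mean $g(\Delta V^{\dim}_1+R_{\dim})$ given $\left(X^{\dim}_0,(Z^{\dim}_{1,i})_{i\ge 1}\right)$, where $g(r):=1\wedge\rme^{r}$, $\Delta V^{\dim}_1$ is the first summand in \eqref{eq:Delta_V}, and $R_{\dim}:=\sum_{i=2}^{\dim}\Delta V^{\dim}_i$ depends only on the i.i.d.\ pairs $(X^{\dim}_{0,i},Z^{\dim}_{1,i})_{i\ge 2}$, hence is independent of $(X^{\dim}_{0,1},Z_1)$. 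Replacing each $\Delta V^{\dim}_i$ by its quadratic part $-\ell\dim^{-1/2}\Vdot(X^{\dim}_{0,i})Z^{\dim}_{1,i}-\tfrac12\ell^2\dim^{-1}\ddot{V}(X^{\dim}_{0,i})(Z^{\dim}_{1,i})^2$ (the error being negligible because $V\in C^2$, $\Vdot$ is Lipschitz, and the moments in (ii) are finite), a triangular-array central limit theorem together with the strong law of large numbers shows that, for $\target^{\dim}$-almost every realisation of $X^{\dim}_0$, $R_{\dim}\Rightarrow\mathsf{Z}\sim\mathcal{N}(-\ell^2 I/2,\ell^2 I)$. Using $\Delta V^{\dim}_1=-\ell\dim^{-1/2}Z_1\Vdot(X^{\dim}_{0,1})+O(\dim^{-1})$, the expansion $g(\Delta V^{\dim}_1+R_{\dim})=g(R_{\dim})+g'(R_{\dim})\Delta V^{\dim}_1+O(\dim^{-1})$ (with $g$ Lipschitz and piecewise $C^2$), the independence $Z_1\perp R_{\dim}$, and $\PE[Z_1]=0$, $\PE[Z_1^2]=1$, one gets (a.s.\ and with $\rmL^1$ domination)
\begin{align*}
\PE\!\left[Z_1^2\1_{\setAccept}\mid X^{\dim}_0\right]&\longrightarrow\PE[g(\mathsf{Z})]=2\Phi\!\left(-\tfrac{\ell}{2}\sqrt{I}\right)=h(\ell)/\ell^2\eqsp,\\
\dim^{1/2}\,\PE\!\left[Z_1\1_{\setAccept}\mid X^{\dim}_0\right]&\longrightarrow-\ell\Vdot(X^{\dim}_{0,1})\,\PE[g'(\mathsf{Z})]=-\ell\Vdot(X^{\dim}_{0,1})\,\Phi\!\left(-\tfrac{\ell}{2}\sqrt{I}\right)\eqsp,
\end{align*}
where $\PE[g'(\mathsf{Z})]=\PE[\rme^{\mathsf{Z}}\1_{\{\mathsf{Z}<0\}}]=\PP(\mathsf{Z}<-\ell^2 I)=\Phi(-\tfrac{\ell}{2}\sqrt{I})$ by the Gaussian change of measure $\PE[\rme^{\mathsf{Z}}\psi(\mathsf{Z})]=\PE[\psi(\mathsf{Z}+\ell^2 I)]$. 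Plugging back, $G_{\dim}f(X^{\dim}_0)\to\tfrac12 h(\ell)f''(X^{\dim}_{0,1})-\tfrac12 h(\ell)\Vdot(X^{\dim}_{0,1})f'(X^{\dim}_{0,1})=\generator f(X^{\dim}_{0,1})$, and the moment bounds in (ii) supply the uniform integrability promoting this to convergence in $\rmL^1(\target^{\dim})$.

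\noindent\textbf{Tightness and identification (steps (b) and (c)).} Tightness of $(Y^{\dim}_{\cdot,1})_{\dim}$ in $C([0,\infty),\rset)$ will follow from standard moment bounds on the increments of the interpolated chain (using $\PE[\abs{Z_1}^4]<\infty$, the Markov property and stationarity), e.g.\ via Aldous' criterion. For (c), fix $f\in C^{\infty}_c(\rset)$; up to the interpolation correction, $M^{\dim}_t:=f(Y^{\dim}_{t,1})-f(Y^{\dim}_{0,1})-\int_0^t G_{\dim}f(X^{\dim}_{\floor{\dim s}})\,\rmd s$ is a martingale, while step (a) and stationarity (together with the uniform smallness of the interpolation increments) give $\PE\!\left[\int_0^t\abs{G_{\dim}f(X^{\dim}_{\floor{\dim s}})-\generator f(Y^{\dim}_{s,1})}\,\rmd s\right]\to 0$. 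Hence, along any weakly convergent subsequence with limit $Y$ — with $Y_0\sim\unitarget$, since $\unitarget$ is preserved — $f(Y_t)-f(Y_0)-\int_0^t\generator f(Y_s)\,\rmd s$ is a martingale for every $f\in C^{\infty}_c(\rset)$, so $Y$ solves the martingale problem for $\generator$ and therefore equals the diffusion \eqref{eq:langevin_limit}; together with tightness, this yields full weak convergence.

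\noindent\textbf{Main obstacle.} The hard part is step (a), and within it two points. First, the drift contribution is magnified by $\dim^{1/2}$, so the triangular-array central limit theorem for $R_{\dim}$ has to be combined with the first-order expansion of $g$ while keeping every remainder genuinely $o(\dim^{-1})$ and controlled uniformly over $\target^{\dim}$-typical realisations of $X^{\dim}_0$; this is precisely where the strong moment hypotheses $\PE[(\Vdot(X))^8]<\infty$ and $\PE[(\ddot{V}(X))^4]<\infty$ of (ii) are used, since one must simultaneously control $\sum_{i\ge 2}(\Delta V^{\dim}_i)^2$, the Lyapunov/CLT remainder, the quadratic-part substitution, and the contribution of the kink of $g$ at the origin (the last via a local-limit bound on the density of $R_{\dim}$ near $0$). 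Second, the convergence $G_{\dim}f\to\generator f$ holds only in the averaged sense $\rmL^1(\target^{\dim})$, not uniformly in the state — the acceptance event is a global functional of all $\dim$ coordinates — so the passage from generator convergence to convergence of the processes must be organised around the stationarity of the chain and well-posedness of the martingale problem, rather than around a uniform-convergence-of-generators theorem.
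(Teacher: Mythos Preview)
The paper does not prove this statement separately --- it is quoted from \cite{roberts:gelman:gilks:1997} as background --- but \Cref{theo:diffusion_limit_RMW} is its generalisation under the weaker hypothesis~\Cref{assum:diff:quadratic}, and the proof there is the relevant comparison. Your overall architecture (tightness via moment bounds on the increments, $\rmL^1$-convergence of the discrete generator to $\generator$, identification of every limit point through the martingale problem and well-posedness of \eqref{eq:langevin_limit}) is exactly the paper's: compare your steps (b), (a), (c) with \Cref{prop:tight}, \Cref{propo:decomposition_martingale} and \Cref{propo:reduction_martingale_problem} respectively.

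The substantive difference is in how the conditional moments $\PE[Z_1\1_{\setAccept}\mid\cdot\,]$ and $\PE[Z_1^2\1_{\setAccept}\mid\cdot\,]$ are extracted. You invoke a CLT for $R_{\dim}=\sum_{i\ge 2}\Delta V^{\dim}_i$ conditionally on $X^{\dim}_0$, and then Taylor-expand $g(r)=1\wedge\rme^r$ around $R_{\dim}$ in the small increment $\Delta V^{\dim}_1$; this forces you to control the contribution of the kink of $g$ at the origin via a local-limit bound, which you correctly flag as the ``main obstacle''. The paper takes a different order of operations: it conditions further on $(Z^{\dim}_{1,i})_{i\ge 2}$ and, after replacing $\Delta V^{\dim}_1$ by its linear part $-\ell\dim^{-1/2}Z_1\Vdot(X^{\dim}_{0,1})$, integrates the remaining Gaussian variable $Z_1$ out \emph{exactly} via the closed-form identities for $\PE[Z_1(1\wedge\rme^{aZ_1+b})]$ and $\PE[Z_1^2(1\wedge\rme^{aZ_1+b})]$, expressed through the functions $\mathcal{G}$ and $\Gamma$ of \eqref{eq:defG}--\eqref{eq:defGamma} (borrowed from \cite[Lemma~6]{jourdain:lelievre:miasojedow:2015}). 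The limit then follows from the law of large numbers applied to the \emph{arguments} of $\mathcal{G},\Gamma$, with a Berry--Esseen estimate (\Cref{lem:fun_nearly_lip}--\Cref{lem:approx_first_term_decomposition_martingale}) handling the residual non-Lipschitz behaviour of $\mathcal{G}$ as its first argument $\ell^2\Vdot(X^{\dim}_{0,1})^2/\dim$ tends to $0$. Your route is viable under the strong regularity of (i)--(ii), but the exact-Gaussian-integration device packages the same analytic difficulty more cleanly and --- crucially for this paper --- is what makes the extension to non-$C^2$ potentials in \Cref{theo:diffusion_limit_RMW} possible.
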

Whereas $V$ is assumed to be twice continuously differentiable, the dual representation of the Fisher information
$-\PE[\ddot{V}(X)]= \PE[ (\dot{V}(X))^2]=I$ allows us to remove in the statement of the
theorem all mention to the second derivative of $V$, which hints that two derivatives might not
really be required. For all $\theta,x  \in \rset$, define
\begin{equation}
  \label{eq:def_xitheta}
\sqpdf_\theta(x) =\sqrt{\pdf(x+\theta)} \eqsp,
\end{equation}
For $p \geq 1$, denote $\norm{f}_{\pdf,p}^p= \int |f(x)|^p \pdf(x) \rmd x$.
Consider the following assumptions:
\begin{assumptionH}
\label{assum:diff:quadratic}
There exists a measurable function $\Vdot:\Rset\to\Rset$ such that:
\begin{enumerate}[label=(\roman*)]
\item
\label{hyp:mean_square_deriv2}
There exist $p >4$, $C>0$ and $\beta>1$ such that for all $\theta\in\rset$,
\[
\norm{V(\cdot+\theta)-V(\cdot)-\theta\Vdot(\cdot)}_{\pdf,p}  \le C|\theta|^{\beta}\eqsp.
\]
\item
\label{assum:X6}
The function $\Vdot$ satisfies $\norm{\Vdot}_{\pdf,6}<+\infty$.
\end{enumerate}
\end{assumptionH}

\begin{lemma}
\label{lem:DQM}
Assume \Cref{assum:diff:quadratic}. Then, the family of densities $\theta \to \pi(\cdot +\theta)$ is Differentiable in Quadratic Mean (DQM) at $\theta=0$ with derivative $\Vdot$, \ie~there exists $C>0$ such that for all $\theta\in\rset$,
\[
\parenthese{\int_{\Rset} \left(\sqpdf_\theta(x)-\sqpdf_0(x) + \theta \Vdot(x)\sqpdf_0(x)/2\right)^2 \rmd x}^{1/2} \le C|\theta|^{\beta}\eqsp,
\]
where $\xi_\theta$ is given by \eqref{eq:def_xitheta}.
\end{lemma}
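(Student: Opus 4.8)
The goal is to upgrade an $\rmL^p$-type control on the remainder of the first-order expansion of $V$ into the quadratic-mean differentiability of $\sqpdf_\theta=\sqrt{\pi(\cdot+\theta)}$. The natural device is to write $\sqpdf_\theta(x)=\exp\!\big(-\tfrac12 V(x+\theta)\big)$ (up to the normalising constant, which cancels) and use the pointwise identity
\[
\sqpdf_\theta(x)-\sqpdf_0(x)+\frac{\theta}{2}\Vdot(x)\sqpdf_0(x)
= \sqpdf_0(x)\left[\rme^{-\frac12 R_\theta(x)} - 1 + \frac12 R_\theta(x)\right]
+ \frac12\big(R_\theta(x)-\theta\Vdot(x)\big)\sqpdf_0(x),
\]
where $R_\theta(x)\eqdef V(x+\theta)-V(x)$. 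Actually it is cleaner to set $r_\theta(x)\eqdef V(x+\theta)-V(x)-\theta\Vdot(x)$, the quantity that \Cref{assum:diff:quadratic}\ref{hyp:mean_square_deriv2} bounds in $\norm{\cdot}_{\pi,p}$, and decompose
\[
\sqpdf_\theta-\sqpdf_0+\frac\theta2\Vdot\,\sqpdf_0
=\underbrace{\sqpdf_0\Big(\rme^{-\frac12(\theta\Vdot+r_\theta)}-1+\tfrac12(\theta\Vdot+r_\theta)\Big)}_{\mathrm{(I)}}
-\underbrace{\tfrac12 r_\theta\,\sqpdf_0}_{\mathrm{(II)}}.
\]
Note $\sqpdf_0^2=\pi$, so $\rmL^2(\rmd x)$ norms of things of the form $g\,\sqpdf_0$ are exactly $\norm{g}_{\pi,2}$.

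For term (II) we have $\norm{\mathrm{(II)}}_{\rmL^2(\rmd x)}=\tfrac12\norm{r_\theta}_{\pi,2}\le\tfrac12\norm{r_\theta}_{\pi,p}\le \tfrac{C}{2}|\theta|^\beta$ by Jensen (since $p>2$) and the hypothesis; this is already of the desired order. Term (I) is the main work. I would use the elementary bound $|\rme^{-u/2}-1+u/2|\le \tfrac18 u^2 \rme^{|u|/2}\le \tfrac18 u^2(\rme^{u/2}+\rme^{-u/2})$, valid for all real $u$, with $u=\theta\Vdot(x)+r_\theta(x)$. Then $\norm{\mathrm{(I)}}_{\rmL^2(\rmd x)}\lesssim \big\|u^2(\rme^{u/2}+\rme^{-u/2})\sqpdf_0\big\|_{\rmL^2}$. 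Splitting $u^2\le 2\theta^2\Vdot^2+2r_\theta^2$ and $\rme^{\pm u/2}\sqpdf_0 = \rme^{\pm u/2}\rme^{-V/2}\!\cdot\!(\text{const})$, the exponential factor $\rme^{u/2}\rme^{-V/2}=\rme^{\mp\frac12(V(\cdot+\theta)-V(\cdot)) }\sqpdf_{\mp\theta}\cdot$ — more transparently, $\rme^{-\frac12 V(x+\theta)}=\sqpdf_\theta(x)$ and $\rme^{+\frac12 V(x)}=\sqpdf_0(x)^{-1}$ — so $\rme^{u/2}\sqpdf_0 = \sqpdf_0 \cdot \sqpdf_\theta/\sqpdf_0 \cdot \rme^{r_\theta/2}= \sqpdf_\theta\,\rme^{r_\theta/2}$ (and similarly the other sign gives $\sqpdf_{-\theta}$-type factors after shifting). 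Squaring and integrating, and using Hölder with exponents tuned to $p>4$ (so that $p/2>2$), one controls $\int \theta^4\Vdot^4 \sqpdf_\theta^2 \rme^{r_\theta}\rmd x$ and $\int r_\theta^4 \sqpdf_\theta^2\rme^{r_\theta}\rmd x$ by products of $\norm{\Vdot}_{\pi',q}$-type quantities and $\norm{r_\theta}_{\pi,p}$-type quantities. The exponent $\beta>1$ then makes $|\theta|^{2\beta}$, and the surviving $\theta^4$ from the $\Vdot^4$ term gives $|\theta|^4$, both dominated by $|\theta|^{2\beta}$ for $|\theta|$ bounded; for large $|\theta|$ one closes the estimate crudely since all the integrals in sight are finite and the claimed inequality only needs to hold with a single constant $C$.

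The hard part is the bookkeeping of the exponential weights: one must make sure that whenever a factor $\rme^{\pm\frac12(V(x+\theta)-V(x))}$ appears multiplying $\pi(x)\rmd x$, it recombines into an integral against a genuine probability density ($\pi(x+\theta)\rmd x$ or $\pi(x)\rmd x$), leaving only the $\rme^{\pm r_\theta/2}$ "error" exponentials to be absorbed. Those residual exponentials are handled by a further Hölder split, using that $r_\theta$ is small in $\rmL^p(\pi)$ for $p>4$ (hence $\rme^{\pm r_\theta/2}$ is bounded in every $\rmL^s(\pi)$, $s<\infty$, uniformly for $|\theta|\le 1$, by a standard argument bounding $\expe{\rme^{s|r_\theta|}}$ via $|r_\theta|\le \varepsilon + C_\varepsilon|r_\theta|^p$ and dominated convergence, or simply by noting $\norm{r_\theta}_{\pi,p}\to 0$). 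Assembling the two terms and taking $C$ large enough to also cover $|\theta|\ge 1$ yields the stated bound with the same exponent $\beta$, proving DQM at $\theta=0$ with derivative $\Vdot$.
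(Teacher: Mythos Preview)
Your decomposition into (I) and (II) is exactly the paper's split into $A_\theta$ and $B_\theta$, and your treatment of (II) is correct and matches the paper. The gap is in term (I).

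The bound $|e^{-u/2}-1+u/2|\le \tfrac18 u^2(e^{u/2}+e^{-u/2})$ is symmetric in the sign of $u$; after squaring and integrating against $\pi$ it produces, among others, a term $\int u^4 e^{u}\,\pi(x)\,\rmd x$. Since $u=V(x+\theta)-V(x)$, one has $e^{u}\pi(x)\propto \pi(x)^2/\pi(x+\theta)$, which is \emph{not} a probability density and cannot be controlled under \Cref{assum:diff:quadratic}. Your algebraic claim $e^{u/2}\sqpdf_0=\sqpdf_\theta\,e^{r_\theta/2}$ is also wrong: since $\sqpdf_\theta/\sqpdf_0=e^{-u/2}$, in fact $\sqpdf_\theta\,e^{r_\theta/2}=\sqpdf_0\,e^{(-u+r_\theta)/2}=\sqpdf_0\,e^{-\theta\Vdot/2}$, not $e^{u/2}\sqpdf_0$. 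And the fallback you sketch---bounding $\PE_\pi[e^{s|r_\theta|}]$ from $\|r_\theta\|_{\pi,p}\to 0$---is unjustified: $\rmL^p$ smallness never yields exponential moments.

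The repair is simple and is precisely the paper's device: use an \emph{asymmetric} second-order bound. From $e^x-1-x=\tfrac{x^2}{2}e^{\xi}$ with $\xi$ between $0$ and $x$ one gets $(e^x-1-x)^2\le C x^4(e^{2x}+1)$. With $x=\Delta_\theta V/2=-u/2$ this gives
\[
\int_\rset A_\theta\,\pi\,\rmd x \;\le\; C\int_\rset (\Delta_\theta V)^4\bigl(1+e^{\Delta_\theta V}\bigr)\pi\,\rmd x
\;=\; C\int_\rset (\Delta_\theta V)^4\pi(x)\,\rmd x + C\int_\rset (\Delta_{-\theta}V)^4\pi(x)\,\rmd x,
\]
because $e^{\Delta_\theta V}\pi(x)=\pi(x+\theta)$ and a change of variable turns the second integral back into one against $\pi$. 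Only the ``good'' exponential $e^{-u}$ ever appears. Now $(\Delta_{\pm\theta}V)^4\le C\bigl(r_{\pm\theta}^4+\theta^4\Vdot^4\bigr)$ is controlled by $\|r_{\pm\theta}\|_{\pi,p}^4\le C|\theta|^{4\beta}$ (using $p>4$) and $\theta^4\|\Vdot\|_{\pi,4}^4$ (using $\|\Vdot\|_{\pi,6}<\infty$); no exponential integrability is required.
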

\begin{proof}
The proof is postponed to \Cref{sec:proof-DQM}.
\end{proof}
The first step in the proof is to show that the acceptance ratio $\PP \left( \setAccept_1^d \right) = \PE(  1 \wedge \exp\{\sum_{i=1}^{\dim} \Delta V^{\dim}_{i}\} )$,
and the expected square jump distance
$\PE[(Z_{1}^d)^2\{  1 \wedge \exp(\sum_{i=1}^{\dim} \Delta V^{\dim}_{i}) \}]$
both converge to a finite  value.  To that purpose, we consider
\begin{equation*}
\Erm^{\dim}(q) = \PEt[]{\left(Z_{1}^d\right)^q\left|  1 \wedge \exp\left(\sum_{i=1}^{\dim} \Delta V^{\dim}_{i}\right) - 1 \wedge \exp(\upsilon^\dim) \right|} \eqsp,
\end{equation*}
where $\Delta V^{\dim}_{i}$ is given by \eqref{eq:Delta_V},
\begin{align}
\label{eq:definition-W-d}
\upsilon^\dim &=  -\ell \dim^{-1/2}Z_{1}^{\dim}\Vdot(X_{1}^{\dim}) +\sum_{i=2}^{\dim} b^{\dim}(X_i^{\dim},Z_i^{\dim}) \\
\label{eq:bdi}
b^{\dim}(x,z) &= -\frac{\ell
  z}{\sqrt{\dim}}\Vdot(x)+\PE\left[2\zeta^{\dim}(X_1^{\dim},Z_1^{\dim})\right]
-  \frac{\ell^2 }{4\dim}\Vdot^2(x) \eqsp, \\
\label{eq:def:zeta}
\zeta^{\dim}(x,z)&= \exp \left\{\left(V \left(x \right)-V\left(x+\ell \dim^{-1/2} z
    \right)\right)/2\right\}-1  \eqsp.
\end{align}

\begin{proposition}
\label{lem:approx_ratio}
Assume \Cref{assum:diff:quadratic} holds. Let $X^{\dim}$ be a random variable distributed
according to $\target^{\dim}$ and $Z^{\dim}$ be a zero-mean standard Gaussian random variable, independent of $X^{\dim}$. Then, for any $q \geq 0$,
 $\lim_{\dim\to\plusinfty} \Erm^{\dim}(q) = 0$.
\end{proposition}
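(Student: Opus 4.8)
The plan is to bound $\Erm^{\dim}(q)$ using only that $x\mapsto 1\wedge\rme^{x}$ is $1$-Lipschitz and valued in $(0,1]$, so that $\abs{1\wedge\rme^{a}-1\wedge\rme^{b}}\le 1\wedge\abs{a-b}$. Writing $W^{\dim}=\sum_{i=1}^{\dim}\Delta V^{\dim}_{i}$ and isolating the first coordinate, $W^{\dim}-\upsilon^{\dim}=\rho^{\dim}_{1}+R^{\dim}$, where $\rho^{\dim}_{1}=\Delta V^{\dim}_{1}+\ell\dim^{-1/2}Z^{\dim}_{1}\Vdot(X^{\dim}_{1})$ depends only on $(X^{\dim}_{1},Z^{\dim}_{1})$ and $R^{\dim}=\sum_{i=2}^{\dim}\parenthese{\Delta V^{\dim}_{i}-b^{\dim}(X^{\dim}_{i},Z^{\dim}_{i})}$ is a sum of i.i.d.\ terms independent of $Z^{\dim}_{1}$. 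Using $1\wedge\abs{u+v}\le(1\wedge\abs{u})+(1\wedge\abs{v})$ and this independence, $\Erm^{\dim}(q)\le \PE[\abs{Z^{\dim}_{1}}^{q}\abs{\rho^{\dim}_{1}}]+\PE[\abs{Z^{\dim}_{1}}^{q}]\,\PE[1\wedge\abs{R^{\dim}}]$. Conditioning on $Z^{\dim}_{1}$ and using \Cref{assum:diff:quadratic} gives $\PE[\abs{\rho^{\dim}_{1}}\mid Z^{\dim}_{1}]\le C(\ell\dim^{-1/2}\abs{Z^{\dim}_{1}})^{\beta}$, so, since $Z^{\dim}_{1}$ has moments of all orders, the first term is $O(\dim^{-\beta/2})$. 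It then remains to prove $R^{\dim}\to 0$ in probability, since by bounded convergence this gives $\PE[1\wedge\abs{R^{\dim}}]\to 0$.

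I would establish $R^{\dim}\to0$ in $\rmL^{2}$ by computing its mean and variance. Since the summands of $R^{\dim}$ are i.i.d.\ and $2\PE[\zeta^{\dim}(X^{\dim}_{1},Z^{\dim}_{1})]$ is a constant, $\mathrm{Var}(R^{\dim})=(\dim-1)\,\mathrm{Var}\parenthese{\rho^{\dim}_{1}+\tfrac{\ell^{2}}{4\dim}\Vdot^{2}(X^{\dim}_{1})}$, which by \Cref{assum:diff:quadratic} (giving $\PE[(\rho^{\dim}_{1})^{2}]=O(\dim^{-\beta})$) and $\PE[\Vdot^{4}(X^{\dim}_{1})]<\infty$ is $(\dim-1)(O(\dim^{-\beta})+O(\dim^{-2}))\to0$. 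For the mean, $\PE[R^{\dim}]=(\dim-1)\parenthese{\PE[\Delta V^{\dim}_{1}]-2\PE[\zeta^{\dim}(X^{\dim}_{1},Z^{\dim}_{1})]+\tfrac{\ell^{2}I}{4\dim}}$, so the whole proof reduces to showing that this bracket is $o(\dim^{-1})$.

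This is the crux, and is where $\rmL^{p}$ mean differentiability must replace twice differentiability. The substitutes are the two exact identities $\PE[\rme^{\Delta V^{\dim}_{1}}]=\PE_{Z^{\dim}_{1}}[\int_{\rset}\pdf(x+\ell\dim^{-1/2}Z^{\dim}_{1})\,\rmd x]=1$ and $\PE[\rme^{\Delta V^{\dim}_{1}/2}]=1+\PE[\zeta^{\dim}(X^{\dim}_{1},Z^{\dim}_{1})]$, the first playing the role of the Fisher-information identity $-\PE[\ddot V(X)]=I$. Expanding $\rme^{t}=1+t+\tfrac{t^{2}}{2}+r(t)$ and using $\PE[\rme^{\Delta V^{\dim}_{1}}]=1$ gives $\PE[\Delta V^{\dim}_{1}]=-\tfrac12\PE[(\Delta V^{\dim}_{1})^{2}]-\PE[r(\Delta V^{\dim}_{1})]$. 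On $\{\Delta V^{\dim}_{1}\le0\}$ one has $\abs{r(\Delta V^{\dim}_{1})}\le\tfrac16\abs{\Delta V^{\dim}_{1}}^{3}$, and from $\Delta V^{\dim}_{1}=-\ell\dim^{-1/2}Z^{\dim}_{1}\Vdot(X^{\dim}_{1})+\rho^{\dim}_{1}$ with \Cref{assum:diff:quadratic} this contributes $O(\dim^{-3/2})$; on $\{\Delta V^{\dim}_{1}>0\}$ one uses $\abs{r(\Delta V^{\dim}_{1})}\le\tfrac16\abs{\Delta V^{\dim}_{1}}^{3}\rme^{\Delta V^{\dim}_{1}}$ together with the fact that $\rme^{\Delta V^{\dim}_{1}}$ is exactly the ratio $\pdf(x+\ell\dim^{-1/2}z)/\pdf(x)$, so the change of variable $x\mapsto x+\ell\dim^{-1/2}z$ turns it into $\tfrac16\PE[\abs{V(X^{\dim}_{1}-\ell\dim^{-1/2}Z^{\dim}_{1})-V(X^{\dim}_{1})}^{3}]=O(\dim^{-3/2})$, again by \Cref{assum:diff:quadratic}. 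Hence $\PE[r(\Delta V^{\dim}_{1})]=o(\dim^{-1})$, and since $\PE[(\Delta V^{\dim}_{1})^{2}]=\ell^{2}I/\dim+o(\dim^{-1})$ (using $\PE[(\rho^{\dim}_{1})^{2}]=O(\dim^{-\beta})$), we get $\PE[\Delta V^{\dim}_{1}]=-\ell^{2}I/(2\dim)+o(\dim^{-1})$. For the other term, $\PE[\zeta^{\dim}(X^{\dim}_{1},Z^{\dim}_{1})]=\PE_{Z^{\dim}_{1}}[\int_{\rset}\sqrt{\pdf(x+\ell\dim^{-1/2}Z^{\dim}_{1})\pdf(x)}\,\rmd x]-1=-\tfrac12\PE_{Z^{\dim}_{1}}[\int_{\rset}(\sqpdf_{\ell\dim^{-1/2}Z^{\dim}_{1}}(x)-\sqpdf_{0}(x))^{2}\rmd x]$, and \Cref{lem:DQM} yields $\int_{\rset}(\sqpdf_{\theta}(x)-\sqpdf_{0}(x))^{2}\rmd x=\tfrac{\theta^{2}}{4}I+O(\abs{\theta}^{1+\beta})$, whence $\PE[\zeta^{\dim}(X^{\dim}_{1},Z^{\dim}_{1})]=-\ell^{2}I/(8\dim)+O(\dim^{-(1+\beta)/2})$. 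Substituting, the bracket equals $-\ell^{2}I/(2\dim)+\ell^{2}I/(4\dim)+\ell^{2}I/(4\dim)+o(\dim^{-1})=o(\dim^{-1})$, where $\beta>1$ is used so that $\dim^{-(1+\beta)/2}=o(\dim^{-1})$; thus $\PE[R^{\dim}]\to0$, and combined with $\mathrm{Var}(R^{\dim})\to0$ this gives $R^{\dim}\to0$ in probability, completing the argument.

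The hard part is this last computation: obtaining the $o(\dim^{-1})$ expansions of $\PE[\Delta V^{\dim}_{1}]$ and $\PE[\zeta^{\dim}(X^{\dim}_{1},Z^{\dim}_{1})]$ without a second derivative of $V$. The two exact identities above and the DQM bound of \Cref{lem:DQM} are what make this possible; the single genuinely delicate estimate is the exponential remainder on $\{\Delta V^{\dim}_{1}>0\}$, where one must first change measure to $\pdf(\cdot+\ell\dim^{-1/2}z)$ before invoking \Cref{assum:diff:quadratic}, since exponential moments of $\Vdot$ (or of $V$) are not available. Everything else — the Lipschitz reduction, the first-coordinate estimate, and the variance bound for $R^{\dim}$ — is routine given \Cref{assum:diff:quadratic} and the Gaussian moments of $Z^{\dim}$.
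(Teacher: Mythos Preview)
Your argument is correct, but it takes a genuinely different route from the paper's. The paper never computes $\PE[\Delta V_1^{\dim}]$ or $\PE[\zeta^{\dim}]$ to order $o(\dim^{-1})$. Instead, it uses the identity $\Delta V_i^{\dim}=2\log(1+\zeta^{\dim}(X_i^{\dim},Z_i^{\dim}))$ and the Taylor expansion $\log(1+x)=x-x^2/2+R(x)$ to rewrite $R^{\dim}=\sum_{i=2}^{\dim}(\Delta V_i^{\dim}-b^{\dim})$ as a sum of three pieces: a linear-in-$\zeta^{\dim}$ part, a quadratic-in-$\zeta^{\dim}$ part, and a log-remainder part. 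The point is that the constant $2\PE[\zeta^{\dim}]$ in $b^{\dim}$ exactly centers the linear part, so that piece is handled purely by an $\rmL^2$ variance bound via \Cref{lem:integrated-DQM}\ref{lem:integrated-DQM-L2}; the quadratic part is matched against $\tfrac{\ell^2}{4\dim}\Vdot^2(X_i^{\dim})$ using the same lemma; and the remainder is controlled by \Cref{lem:integrated-DQM}\ref{lem:integrated-DQM-remainder}. No explicit mean computation is needed.

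Your approach instead proves $R^{\dim}\to 0$ in $\rmL^2$ by handling mean and variance separately; the variance is easy, but the mean forces you into the delicate $o(\dim^{-1})$ expansions of $\PE[\Delta V_1^{\dim}]$ and $\PE[\zeta^{\dim}]$ and the explicit cancellation $-\ell^2 I/2+\ell^2 I/4+\ell^2 I/4=0$. The change-of-measure trick you use for the exponential remainder on $\{\Delta V_1^{\dim}>0\}$ is exactly the manoeuvre the paper uses in the proof of \Cref{lem:integrated-DQM}\ref{lem:integrated-DQM-remainder}, so the analytic content is the same. What you gain is a self-contained argument that makes the cancellation visible; what the paper's log-expansion buys is that the $2\PE[\zeta^{\dim}]$ term is seen immediately as a centering constant rather than as a quantity whose leading order must be computed and then cancelled by hand.
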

\begin{proof}
The proof is postponed to \Cref{proof:lem:approx_ratio}.
\end{proof}
\Cref{lem:approx_ratio} shows that it is enough to consider $\upsilon^d$ to
analyse the asymptotic behaviour of the acceptance ratio and the
expected square jump distance as $d \to \plusinfty$. By the central
limit theorem, the term $ -\ell \sum_{i=2}^d
(Z_{i}^{\dim}/\sqrt{\dim})\Vdot(X_{i}^{\dim})$ in \eqref{eq:definition-W-d} converges in
distribution to a zero-mean Gaussian random variable with variance
$\ell^2 I$, where $I$ is defined in \eqref{eq:translation-score}. By
\Cref{lem:mean:zeta} (\Cref{proof:theo:result_acceptance_rate_RWM}),
the second term, which is
$\dim\,\PE\left[2\zeta^{\dim}(X_{1}^{\dim},Z_{1}^{\dim})\right] =
-\dim\,\PE\left[(\zeta^{\dim}(X^{\dim}_{1},Z_{1}^{\dim}))^2\right] $
converges to $-\ell^2I/4$. The last term converges in probability to
$-\ell^2I/4$. Therefore, the two last terms plays a similar role in
the expansion of the acceptance ratio as the second derivative of $V$
in the regular case.
\begin{theorem}
\label{theo:result_acceptance_rate_RWM}
Assume \Cref{assum:diff:quadratic} holds. Then,
$\lim_{\dim \to \plusinfty } \PP\left[\setAccept_1^\dim \right] = a(\ell)$, where $a(\ell) = 2 \Phi(-\sqrt{I} \ell /2)$.
\end{theorem}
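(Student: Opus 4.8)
The plan is to reduce the acceptance probability first to $\PE[1\wedge\exp(\upsilon^\dim)]$ and then to a one-dimensional Gaussian integral, via the weak limit of $\upsilon^\dim$. Since the chain starts in stationarity, conditioning on $(X_0^\dim,Z_1^\dim)$ and integrating out the uniform variable in \eqref{eq:setAccept} gives $\PP[\setAccept_1^\dim]=\PE[1\wedge\exp(\sum_{i=1}^{\dim}\Delta V_i^\dim)]$ with $\Delta V_i^\dim$ as in \eqref{eq:Delta_V}. Taking $q=0$ in \Cref{lem:approx_ratio} yields $\abs{\PP[\setAccept_1^\dim]-\PE[1\wedge\exp(\upsilon^\dim)]}\le\Erm^\dim(0)\to 0$ as $\dim\to\plusinfty$, so it suffices to prove $\PE[1\wedge\exp(\upsilon^\dim)]\to a(\ell)$. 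As $x\mapsto 1\wedge\rme^{x}$ is bounded and continuous, this in turn reduces to establishing the weak convergence $\upsilon^\dim\Rightarrow\mathsf Z$, where $\mathsf Z$ is Gaussian with mean $-\ell^{2}I/2$ and variance $\ell^{2}I$, together with the elementary identity $\PE[1\wedge\rme^{\mathsf Z}]=2\Phi(-\ell\sqrt I/2)$. The latter follows by splitting $\PE[1\wedge\rme^{\mathsf Z}]=\PP[\mathsf Z\ge 0]+\PE[\rme^{\mathsf Z}\1_{\{\mathsf Z<0\}}]$ and completing the square in the Gaussian density, using the mean/variance relation of $\mathsf Z$ — exactly the computation recalled in \Cref{sec:R}.

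For the weak limit of $\upsilon^\dim$, combine \eqref{eq:definition-W-d}–\eqref{eq:bdi} into
\[
\upsilon^\dim=-\frac{\ell}{\sqrt{\dim}}\sum_{i=1}^{\dim}Z_i^\dim\Vdot(X_i^\dim)+(\dim-1)\,\PE[2\zeta^\dim(X_1^\dim,Z_1^\dim)]-\frac{\ell^{2}}{4\dim}\sum_{i=2}^{\dim}\Vdot^{2}(X_i^\dim)\eqsp.
\]
Under $\target^\dim$ the pairs $(X_i^\dim,Z_i^\dim)$ are i.i.d.\ with $X_i^\dim\sim\pi$ and $Z_i^\dim$ a standard Gaussian independent of $X_i^\dim$; by part (ii) of \Cref{assum:diff:quadratic}, $I=\int_{\rset}\Vdot^{2}(x)\pi(x)\,\rmd x<\infty$, so $Z_i^\dim\Vdot(X_i^\dim)$ has mean $0$ and variance $I$. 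The central limit theorem then gives $-\ell\dim^{-1/2}\sum_{i=1}^{\dim}Z_i^\dim\Vdot(X_i^\dim)\Rightarrow\mathcal N(0,\ell^{2}I)$, and the weak law of large numbers gives $-\tfrac{\ell^{2}}{4\dim}\sum_{i=2}^{\dim}\Vdot^{2}(X_i^\dim)\to-\ell^{2}I/4$ in probability. Provided the deterministic middle term satisfies $(\dim-1)\PE[2\zeta^\dim(X_1^\dim,Z_1^\dim)]\to-\ell^{2}I/4$, Slutsky's lemma yields $\upsilon^\dim\Rightarrow\mathsf Z\sim\mathcal N(-\ell^{2}I/2,\ell^{2}I)$, completing the reduction.

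The remaining and main point is $\dim\,\PE[2\zeta^\dim(X_1^\dim,Z_1^\dim)]\to-\ell^{2}I/4$; this is the only place where $\Lp$ mean differentiability is genuinely used. The key is the exact identity $(1+\zeta^\dim(x,z))^{2}=\pi(x+\ell\dim^{-1/2}z)/\pi(x)$, which on integrating in $x$ against $\pi$ and using translation invariance of Lebesgue measure gives $\PE_X[(1+\zeta^\dim(X,z))^{2}]=1$ for every $z$, hence $\PE[2\zeta^\dim(X,z)]=-\PE[(\zeta^\dim(X,z))^{2}]$ and $\dim\,\PE[2\zeta^\dim]=-\dim\,\PE[(\zeta^\dim)^{2}]$. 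Moreover $\zeta^\dim(x,z)\sqpdf_0(x)=\sqpdf_{\ell\dim^{-1/2}z}(x)-\sqpdf_0(x)$ with $\sqpdf_\theta$ as in \eqref{eq:def_xitheta}, so $\PE[(\zeta^\dim)^{2}]=\int\phi(z)\int(\sqpdf_{\ell\dim^{-1/2}z}(x)-\sqpdf_0(x))^{2}\rmd x\,\rmd z$, with $\phi$ the standard Gaussian density. Applying \Cref{lem:DQM} with $\theta=\ell\dim^{-1/2}z$, expanding the square, and bounding the cross term by Cauchy–Schwarz (using $\int(\Vdot\sqpdf_0)^{2}=I<\infty$) gives $\int(\sqpdf_\theta-\sqpdf_0)^{2}\rmd x=\tfrac14\theta^{2}I+O(\abs\theta^{1+\beta})+O(\abs\theta^{2\beta})$ uniformly in $\theta$. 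Integrating against the law of $z$ and multiplying by $\dim$, the leading term contributes $\tfrac14\ell^{2}I\,\PE[(Z_1^\dim)^{2}]=\tfrac14\ell^{2}I$, while the remainders are $O(\dim^{(1-\beta)/2})+O(\dim^{1-\beta})\to 0$ because $\beta>1$ and all Gaussian moments are finite; hence $\dim\,\PE[(\zeta^\dim)^{2}]\to\ell^{2}I/4$ and so $(\dim-1)\PE[2\zeta^\dim]\to-\ell^{2}I/4$. I expect this last step to be the main obstacle: one must convert the integrated-in-$x$, $\Lp$-type control of \Cref{assum:diff:quadratic} into the sharp constant $\ell^{2}I/4$ with enough uniformity in $z$ to survive multiplication by $\dim$. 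With this in hand the theorem follows by assembling the pieces above.
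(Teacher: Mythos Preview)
Your proposal is correct and the overall architecture---reduce to $\PE[1\wedge\exp(\upsilon^\dim)]$ via \Cref{lem:approx_ratio} with $q=0$, then identify the limit as $2\Phi(-\ell\sqrt I/2)$---matches the paper. Your treatment of the deterministic term $(\dim-1)\PE[2\zeta^\dim]$ is essentially an inlined proof of the paper's \Cref{lem:mean:zeta}, using the same Hellinger identity $\PE_X[2\zeta^\dim]=-\PE_X[(\zeta^\dim)^2]$ and the DQM bound from \Cref{lem:DQM}.

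The one genuine methodological difference is how you pass from $\PE[1\wedge\exp(\upsilon^\dim)]$ to the limit. You argue unconditionally: CLT on $-\ell\dim^{-1/2}\sum Z_i^\dim\Vdot(X_i^\dim)$, weak LLN on the $\Vdot^2$ sum, Slutsky to combine, and then $x\mapsto 1\wedge\rme^x$ bounded continuous. The paper instead conditions on $X_0^\dim$, observes that $\Theta^\dim=\upsilon^\dim$ is then exactly Gaussian with explicit mean $\mu_\dim$ and variance $\sigma_\dim^2$, writes $\PE[1\wedge\exp(\Theta^\dim)]=\PE[\Gamma(\sigma_\dim^2,-2\mu_\dim)]$ via the closed form for $\PE[1\wedge\rme^G]$, and concludes by a.s.\ convergence of $(\mu_\dim,\sigma_\dim^2)$ plus dominated convergence (since $\Gamma$ is bounded). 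Your route is arguably more elementary and avoids introducing the auxiliary function $\Gamma$; the paper's route has the advantage that the same $\Gamma$ (and its companion $\G$) are reused later in the diffusion-limit proof, so the machinery is amortised. Both are valid and yield the same constant.
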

\begin{proof}
The proof is postponed to \Cref{proof:theo:result_acceptance_rate_RWM}.
\end{proof}
The second result of this paper is that the sequence $\{(Y_{t,1}^{\dim})_{t\ge 0}, \dim\in \Nset^{\star}\}$  defined by \eqref{eq:defY} converges weakly to a Langevin diffusion. Let $(\mu_d)_{d\ge 1}$ be the sequence of distributions of $\{(Y_{t,1}^{\dim})_{t\ge 0}, \dim\in \Nset^{\star}\}$.
 \begin{proposition}
\label{prop:tight}
Assume \Cref{assum:diff:quadratic} holds. Then, the sequence $\left(\mu_d\right)_{d\ge 1}$ is tight in $\Wienerspace$.
\end{proposition}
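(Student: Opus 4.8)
The plan is to deduce tightness from a uniform‑in‑$\dim$ Kolmogorov moment bound on the increments of $Y^{\dim}_{\cdot,1}$, together with the tightness of the time‑$0$ marginals. Since the chain is started in stationarity, $Y^{\dim}_{0,1}=X^{\dim}_{0,1}\sim\unitarget$ for every $\dim$, so the laws of $Y^{\dim}_{0,1}$ form a constant, hence tight, family. By the standard tightness criterion for piecewise‑linear interpolations of Markov chains in $C([0,T],\rset)$, it then suffices to show that for every $T>0$ there is $C_T$, not depending on $\dim$, with
\begin{equation}
\label{eq:tight:moment4}
\PE\bigl[\,\abs{Y^{\dim}_{t,1}-Y^{\dim}_{s,1}}^{4}\,\bigr]\le C_T\,\abs{t-s}^{2},\qquad 0\le s\le t\le T,\ \dim\ge 1\eqsp,
\end{equation}
and then to let $T\to\plusinfty$ to obtain tightness in $\Wienerspace$.

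To prove \eqref{eq:tight:moment4} I would first treat the regime $\abs{t-s}<1/\dim$: there $Y^{\dim}_{\cdot,1}$ is affine on $[s,t]$ except across at most one grid point, so $\abs{Y^{\dim}_{t,1}-Y^{\dim}_{s,1}}\le\ell\sqrt{\dim}\,\abs{t-s}\bigl(\abs{Z^{\dim}_{\floor{\dim s}+1,1}}+\abs{Z^{\dim}_{\ceil{\dim t},1}}\bigr)$ and $\dim\abs{t-s}\le 1$ give \eqref{eq:tight:moment4} at once. For $\abs{t-s}\ge 1/\dim$, write $n_1=\floor{\dim s}$, $n_2=\floor{\dim t}$, so $n_2-n_1\le\dim\abs{t-s}+1\le 2\dim\abs{t-s}$; the two fractional interpolation terms in $Y^{\dim}_{t,1}-Y^{\dim}_{s,1}$ each contribute at most $C\dim^{-2}\le C\abs{t-s}^{2}$ to the fourth moment, so it remains to bound $\PE[\abs{X^{\dim}_{n_2,1}-X^{\dim}_{n_1,1}}^{4}]$. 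Telescoping, $X^{\dim}_{n_2,1}-X^{\dim}_{n_1,1}=\sum_{k=n_1+1}^{n_2}\Delta_k$ with $\Delta_k=\ell\dim^{-1/2}Z^{\dim}_{k,1}\1_{\setAccept^{\dim}_k}$, and I would split $\Delta_k=M_k+D_k$ where $D_k=\PE[\Delta_k\mid\mcf_{k-1}]$, $\mcf_k=\sigma(X^{\dim}_0,\dots,X^{\dim}_k)$, and $(M_k)$ is a martingale‑difference sequence. Since $\PE[M_k^{4}]\le C\dim^{-2}$ (using $\PE[\abs{\Delta_k}^{4}]\le\ell^{4}\dim^{-2}\PE[\abs{Z}^{4}]$ and the drift bound below), Burkholder--Davis--Gundy together with Minkowski's inequality in $\mathrm{L}^{2}$ yields
\begin{equation*}
\PE\Bigl[\,\Bigl|\sum_{k=n_1+1}^{n_2}M_k\Bigr|^{4}\,\Bigr]\le C\,\PE\Bigl[\,\Bigl(\sum_{k=n_1+1}^{n_2}M_k^{2}\Bigr)^{2}\,\Bigr]\le C\Bigl(\sum_{k=n_1+1}^{n_2}\PE[M_k^{4}]^{1/2}\Bigr)^{2}\le C(n_2-n_1)^{2}\dim^{-2}\le C\abs{t-s}^{2}\eqsp.
\end{equation*}

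For the drift term the key point is that $D_k$ is of order $\dim^{-1}$, not $\dim^{-1/2}$. Conditionally on $X^{\dim}_{k-1}=x$ one has $D_k=\ell\dim^{-1/2}\,\PE_{Z}\bigl[Z_1\bigl(1\wedge\prod_{i=1}^{\dim}\unitarget(x_i+\ell\dim^{-1/2}Z_i)/\unitarget(x_i)\bigr)\bigr]$; because $\PE_{Z_1}[Z_1]=0$, one may subtract from the bracket its value at $Z_1=0$, which does not depend on $Z_1$, and the resulting difference is controlled using the first‑order expansion of $V$ in the first coordinate from part~\ref{hyp:mean_square_deriv2} of \Cref{assum:diff:quadratic}, the remaining product $\prod_{i=2}^{\dim}\unitarget(X_i+\ell\dim^{-1/2}Z_i)/\unitarget(X_i)$ being handled by the uniform integrability estimates already established in the proof of \Cref{lem:approx_ratio}. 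This should give $\PE[\abs{D_k}^{4}]\le C\dim^{-4}$, with $C$ depending on $\norm{\Vdot}_{\pdf,6}$ through part~\ref{assum:X6} of \Cref{assum:diff:quadratic}, whence, once more by Minkowski,
\begin{equation*}
\PE\Bigl[\,\Bigl|\sum_{k=n_1+1}^{n_2}D_k\Bigr|^{4}\,\Bigr]\le(n_2-n_1)^{4}\max_{k}\PE[\abs{D_k}^{4}]\le C(n_2-n_1)^{4}\dim^{-4}\le C\abs{t-s}^{4}\le C_T\abs{t-s}^{2}\eqsp.
\end{equation*}
Combining the two contributions with the endpoint terms gives \eqref{eq:tight:moment4}, and hence the claim.

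The hard part is the drift estimate $\PE[\abs{D_k}^{4}]=O(\dim^{-4})$: a crude bound on $\PE_{Z}[Z_1(1\wedge\prod_i\unitarget(x_i+\ell\dim^{-1/2}Z_i)/\unitarget(x_i))]$ only gives $O(1)$, which would force $D_k=O(\dim^{-1/2})$ and make \eqref{eq:tight:moment4} fail --- a plain random walk with $\asymp\dim\abs{t-s}$ steps of size $\dim^{-1/2}$ has fourth moment of order $\dim^{2}\abs{t-s}^{4}$, not $\abs{t-s}^{2}$. Extracting the extra factor $\dim^{-1/2}$ is precisely where the first‑order $\mathrm{L}^{p}$‑mean differentiability of $V$ enters, through the cancellation around $\PE_{Z_1}[Z_1]=0$; this is the same mechanism that drives \Cref{lem:approx_ratio}, so I expect the needed moment bounds can be recycled from its proof, after which the remaining steps --- the grid reduction, Burkholder--Davis--Gundy, Minkowski's inequality, and the passage from $C([0,T],\rset)$ to $\Wienerspace$ --- are routine.
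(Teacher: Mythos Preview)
Your approach is correct and shares the overall strategy with the paper --- Kolmogorov's fourth-moment criterion, the reduction to bounding $\PE[(X^{\dim}_{k_2,1}-X^{\dim}_{k_1,1})^4]$, and the key observation that the conditional mean increment is $O(\dim^{-1})$ rather than $O(\dim^{-1/2})$ --- but the route to the core estimate is genuinely different. The paper does \emph{not} use a Doob decomposition: it writes $Z_{k,1}^{\dim}\1_{\setAccept_k^{\dim}}=Z_{k,1}^{\dim}-Z_{k,1}^{\dim}\1_{(\setAccept_k^{\dim})^c}$, handles the pure Gaussian sum directly, and then expands $(\sum_k Z_{k,1}^{\dim}\1_{(\setAccept_k^{\dim})^c})^4$ combinatorially, bounding the cross terms $\PE[\prod_{j=1}^4 Z_{m_j,1}^{\dim}\1_{(\setAccept_{m_j}^{\dim})^c}]$ according to the cardinality of $\{m_1,\dots,m_4\}$. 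To get conditional independence it introduces an auxiliary chain $(\tilde X_k^{\dim})$ that skips the moves at $m_1,\dots,m_4$ (so that on $\bigcap_j(\setAccept_{m_j}^{\dim})^c$ it coincides with $(X_k^{\dim})$), and then extracts one factor $\dim^{-1/2}$ per distinct index via \cite[Lemma~6]{jourdain:lelievre:miasojedow:2015} (the explicit formula $\PE[Z(1-\rme^{aZ+b})_+]=a\,\G(a^2,-2b)$) together with \Cref{assum:diff:quadratic}\ref{hyp:mean_square_deriv2}. Your Doob decomposition plus Burkholder--Davis--Gundy is more streamlined and avoids the modified-chain device and the four-case combinatorics; the paper's argument is more elementary in that it bypasses BDG entirely. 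In either approach the essential analytic input is the same: the exact Gaussian integral from \cite[Lemma~6]{jourdain:lelievre:miasojedow:2015} combined with \Cref{assum:diff:quadratic} turns $\PE[Z_1(1\wedge\rme^{\cdots})\mid X^{\dim}_{k-1}]$ into $-(\ell/\sqrt{\dim})\Vdot(X^{\dim}_{k-1,1})\G(\cdot,\cdot)$ plus an $O(\dim^{-\beta/2})$ remainder, which is exactly your ``cancellation around $\PE[Z_1]=0$''. Your sketch of the drift bound can therefore be made precise using that lemma rather than appealing vaguely to ``uniform integrability estimates from \Cref{lem:approx_ratio}''.
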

\begin{proof}
The proof is adapted from \cite{jourdain:lelievre:miasojedow:2015}; it is postponed to \Cref{sec:proof:tightness}.
\end{proof}
By the Prohorov theorem, the tightness of  $\left(\mu_{\dim}\right)_{\dim\ge 1}$ implies that this sequence has a  weak limit point. We now prove that any limit point is the law of a solution to \eqref{eq:langevin_limit}. For that purpose, we use the equivalence between the weak formulation of stochastic differential equations and martingale problems. The generator $\Lrm$ of the Langevin diffusion  \eqref{eq:langevin_limit} is given, for all $\phi \in C^2_c(\rset,\rset)$, by
\begin{equation}
\label{eq:def-generator}
\Lrm \phi(x) = \frac{h(\ell)}{2}\left(-\Vdot(x)\dot{\phi}(x) + \ddot{\phi}(x)\right) \eqsp,
\end{equation}
where for $k \in \nset$ and $I$ an open subset of $\rset$, $C^k_c(I, \rset)$ is the space of $k$-times differentiable functions with compact support, endowed with the topology of uniform convergence of all derivatives up to order $k$. We set $C^{\infty}_c(I, \rset)= \bigcap_{k=0}^\infty C^k_c(I, \rset)$ and $\Wienerspace= C(\rset_+,\rset)$. The canonical process is denoted by $(W_t)_{t\geq 0}$ and $(\canonicalFiltration_t)_{t \geq 0}$ is the associated filtration. For any probability measure $\mu$ on $\Wienerspace$, the expectation with respect to $\mu$ is denoted by $\PE^{\mu}$. A probability measure
$\mu$ on $\Wienerspace$ is said to solve the martingale problem associated with \eqref{eq:langevin_limit} if
the pushforward of $\mu$ by $W_0$ is  $\pdf$ and if for all $\phi \in C^{\infty}_c(\rset,\rset)$,
the process
$$\left(\phi(W_t) - \phi(W_{0}) - \int_{0}^t \Lrm \phi(W_u) \rmd u\right)_{ t \geq 0} $$
 is a martingale with respect to $\mu$ and the filtration $(\canonicalFiltration_t)_{t
   \geq 0}$, \ie~if for all $s,t \in \rset_+, s \leq t$, \eqspp \as[\mu]
$$\PE^{\mu}\left[\phi(W_t) -\phi(W_0)- \int_{0}^t \Lrm \phi(W_u) \rmd u \middle| \canonicalFiltration_s\right] = \phi(W_s)-\phi(W_0) - \int_{0}^s \Lrm \phi(W_u) \rmd s \eqsp.
$$
\begin{assumptionH}
\label{assum:Vdot}
The function $\Vdot$ is continuous on $\rset$ except on a Lebesgue-negligible set $\setDisconDotV$ and is bounded on all compact sets of $\rset$.
\end{assumptionH}
If $\Vdot$ satisfies \Cref{assum:Vdot}, \cite[Lemma 1.9, Theorem 20.1 Chapter 5]{rogers:williams:2000-2} show that  any solution to  the martingale problem associated with \eqref{eq:langevin_limit} coincides with the law of a solution to the SDE \eqref{eq:langevin_limit}, and conversely. Therefore, uniqueness in law of weak solutions to \eqref{eq:langevin_limit} implies uniqueness of the solution of the martingale problem.
\begin{proposition}
\label{propo:reduction_martingale_problem}
Assume \Cref{assum:Vdot} holds. Assume also that for all $\phi \in C_c^{\infty}(\rset,\rset)$, $m \in \nset^*$, $g : \rset^m \to \rset$ bounded and continuous, and $0 \leq t_1 \leq \dots \leq t_m \leq s \leq t$:
 \begin{equation}
 \label{eq:reduction_martingale_problem}
 \lim_{d \to \plusinfty} \PE^{\mu_d}\left[ \left(\phi\left(W_t\right) - \phi\left( W_s \right) - \int_s^t \Lrm \phi \left(W_u \right) \rmd u  \right) g\left(W_{t_1},\dots,W_{t_m} \right) \right] =0 \eqsp.
\end{equation}
Then, every limit point of the sequence of probability measures $(\mu_{\dim})_{\dim\ge 1}$ on $\Wienerspace$ is a solution to the martingale problem associated with \eqref{eq:langevin_limit}.
\end{proposition}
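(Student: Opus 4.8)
The plan is to combine the tightness from \Cref{prop:tight} with the hypothesis \eqref{eq:reduction_martingale_problem} and a continuity argument on $\Wienerspace$. Fix a limit point $\mu$ of $(\mu_{\dim})_{\dim\ge 1}$ (it exists by tightness and Prokhorov's theorem) and a subsequence along which $\mu_{\dim_k}\Rightarrow\mu$. The easy half is the initial law: since $X_0^\dim$ is distributed according to $\target^\dim$, one has $Y_{0,1}^\dim=X_{0,1}^\dim\sim\pdf$ for every $\dim$, so the pushforward of $\mu$ by $W_0$ is $\pdf$. It remains to prove that for every $\phi\in C_c^{\infty}(\rset,\rset)$ the process $M^\phi_t:=\phi(W_t)-\phi(W_0)-\int_0^t\Lrm\phi(W_u)\,\rmd u$ is a martingale under $\mu$ for $(\canonicalFiltration_t)_{t\ge 0}$. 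Because $\phi$ has compact support $K$ and $\Vdot$ is bounded on $K$ by \Cref{assum:Vdot}, $\Lrm\phi$ is bounded, so $M^\phi$ is bounded (hence in $\lone(\mu)$) and adapted. By a standard monotone-class and approximation argument it then suffices to show, for every $m\in\nset^*$, every bounded continuous $g:\rset^m\to\rset$ and every $0\le t_1\le\dots\le t_m\le s\le t$,
\[
\PE^{\mu}\left[\left(\phi(W_t)-\phi(W_s)-\int_s^t\Lrm\phi(W_u)\,\rmd u\right)g(W_{t_1},\dots,W_{t_m})\right]=0\eqsp,
\]
which is exactly the limit of the quantity whose convergence to $0$ is assumed in \eqref{eq:reduction_martingale_problem}. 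So everything reduces to passing \eqref{eq:reduction_martingale_problem} to the limit along $(\mu_{\dim_k})$.

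To do this, introduce $\Psi:\Wienerspace\to\rset$ defined by $\Psi(w)=\left(\phi(w_t)-\phi(w_s)-\int_s^t\Lrm\phi(w_u)\,\rmd u\right)g(w_{t_1},\dots,w_{t_m})$. This functional is bounded (by $(2\norm{\phi}_\infty+(t-s)\norm{\Lrm\phi}_\infty)\norm{g}_\infty$) and Borel measurable, and I claim it is continuous at every $w$ with $\leb\{u\in[s,t]:w_u\in\setDisconDotV\}=0$. Indeed, evaluations $w\mapsto w_r$ at fixed times are continuous for the topology of uniform convergence on compacts, so $\phi(w_t)$, $\phi(w_s)$ and $g(w_{t_1},\dots,w_{t_m})$ are continuous in $w$; and if $w^{(n)}\to w$ uniformly on $[s,t]$, then for every $u$ with $w_u\notin\setDisconDotV$ one has $\Lrm\phi(w^{(n)}_u)\to\Lrm\phi(w_u)$, since $\Lrm\phi=\tfrac{h(\ell)}{2}(-\Vdot\dot\phi+\ddot\phi)$ is continuous off $\setDisconDotV$ by \Cref{assum:Vdot}. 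Dominated convergence (with the bound on $\Lrm\phi$) then gives $\int_s^t\Lrm\phi(w^{(n)}_u)\,\rmd u\to\int_s^t\Lrm\phi(w_u)\,\rmd u$, hence $\Psi(w^{(n)})\to\Psi(w)$. Thus the discontinuity set of $\Psi$ is contained in $\{w\in\Wienerspace:\leb\{u\in[s,t]:w_u\in\setDisconDotV\}>0\}$.

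The crucial step is to show this set is $\mu$-negligible, and this is where the one-dimensional time marginals of $\mu$ come in. For fixed $u\ge 0$, \eqref{eq:defY} gives $Y_{u,1}^\dim=X_{\floor{\dim u},1}^\dim+(\dim u-\floor{\dim u})\,\ell\dim^{-1/2}Z^\dim_{\ceil{\dim u},1}\1_{\setAccept^\dim_{\ceil{\dim u}}}$, where $X_{\floor{\dim u},1}^\dim$ has law $\pdf$ by stationarity and the remaining term tends to $0$ in probability (its second moment is $O(\dim^{-1})$); hence the law of $Y_{u,1}^\dim$ converges weakly to $\pdf$. Since $w\mapsto w_u$ is continuous, the law of $W_u$ under $\mu$ is the weak limit of the laws of $Y^{\dim_k}_{u,1}$, so it equals $\pdf$. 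As $\pdf$ is a density with respect to Lebesgue measure and $\leb(\setDisconDotV)=0$, we obtain $\PP^{\mu}(W_u\in\setDisconDotV)=0$ for every $u\ge 0$, and Tonelli's theorem yields $\PE^{\mu}\!\left[\leb\{u\in[s,t]:W_u\in\setDisconDotV\}\right]=\int_s^t\PP^{\mu}(W_u\in\setDisconDotV)\,\rmd u=0$. Therefore $\Psi$ is $\mu$-almost surely continuous and bounded, so $\mu_{\dim_k}\Rightarrow\mu$ together with the mapping theorem gives $\PE^{\mu_{\dim_k}}[\Psi]\to\PE^{\mu}[\Psi]$; by \eqref{eq:reduction_martingale_problem} the left-hand side tends to $0$, whence $\PE^{\mu}[\Psi]=0$. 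This is the identity displayed above, so $M^\phi$ is a $\mu$-martingale for every $\phi\in C_c^{\infty}(\rset,\rset)$, and combined with the initial law being $\pdf$ this shows $\mu$ solves the martingale problem associated with \eqref{eq:langevin_limit}.

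I expect the main obstacle to be precisely the lack of continuity of $\Lrm\phi$ caused by the possible discontinuities of $\Vdot$: without identifying the time marginals of $\mu$ as $\pdf$ one cannot control the occupation time of the limiting process on $\setDisconDotV$, and it is there that the negligibility of $\setDisconDotV$ in \Cref{assum:Vdot} and the absolute continuity of $\pdf$ are genuinely used. The remaining ingredients — the monotone-class reduction to cylinder functionals, the boundedness and adaptedness of $M^\phi$, and the Slutsky-type argument for the marginals — are routine.
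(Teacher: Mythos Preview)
Your proof is correct and follows essentially the same route as the paper: identify the discontinuity set of the functional $\Psi$, show it is $\mu$-null by establishing that the one-dimensional time marginals of $\mu$ are $\pdf$ and applying Fubini/Tonelli, and then pass to the limit using that $\Psi$ is bounded and $\mu$-a.s.\ continuous. The paper isolates the identification of the time marginals as a separate preliminary lemma (\Cref{lem:law_point_limit}), whereas you reproduce that argument inline via the Slutsky-type reasoning; your write-up is also more explicit than the paper about invoking the Portmanteau/mapping theorem for bounded $\mu$-a.s.\ continuous functionals, which the paper leaves implicit.
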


\begin{proof}
The proof is postponed to \Cref{sec:reduction_martingale_problem}.
\end{proof}

\begin{theorem}
\label{theo:diffusion_limit_RMW}
Assume \Cref{assum:diff:quadratic} and \Cref{assum:Vdot} hold. Assume also that \eqref{eq:langevin_limit} has a unique weak solution. Then, $\defEns{(Y_{t,1}^{\dim})_{t\geq 0}, \dim \in \Nset^*}$ converges weakly to the solution $\parenthese{Y_t}_{
t \geq 0}$ of the Langevin equation defined
by \eqref{eq:langevin_limit}. Furthermore, $h(\ell)$ is maximized at the unique value of $\ell$ for which $a
(\ell) = 0.234$, where $a$ is defined in \Cref{theo:result_acceptance_rate_RWM}.
\end{theorem}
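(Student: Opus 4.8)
The plan is to assemble the theorem from the three ingredients already in place: \Cref{prop:tight} (tightness of $(\mu_\dim)$ in $\Wienerspace$), \Cref{propo:reduction_martingale_problem} (which reduces identification of any limit point to verifying the single limit \eqref{eq:reduction_martingale_problem}), and \Cref{theo:result_acceptance_rate_RWM} together with \Cref{lem:approx_ratio} (which control the acceptance ratio and the expected square jump distance). First I would fix $\phi \in C_c^\infty(\rset,\rset)$, $m$, $g$, and times $0 \le t_1 \le \dots \le t_m \le s \le t$ as in \Cref{propo:reduction_martingale_problem}, and show that the expectation in \eqref{eq:reduction_martingale_problem} tends to $0$. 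The standard route (following \cite{roberts:gelman:gilks:1997} and \cite{jourdain:lelievre:miasojedow:2015}) is to write the increment $\phi(W_t)-\phi(W_s)$ along the discrete chain as a telescoping sum over the steps $k$ with $\floor{\dim s} < k \le \floor{\dim t}$, apply a one-step Taylor expansion of $\phi$ at $X_k^\dim$ with the increment $\ell \dim^{-1/2} Z_{k+1}^\dim \1_{\setAccept_{k+1}^\dim}$, and take conditional expectations given $X_k^\dim$. The $O(\dim^{-1/2})$ drift term produces $\PE[Z\1_\setAccept \mid X_k^\dim]\,\dot\phi(X_k^\dim)$, the $O(\dim^{-1})$ term produces $\tfrac12\PE[Z^2\1_\setAccept\mid X_k^\dim]\,\ddot\phi(X_k^\dim)$, and the remainder is $o(\dim^{-1})$ uniformly because $\phi$ has bounded derivatives of all orders and $Z$ has all moments.

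Next I would identify the limiting coefficients. By \Cref{lem:approx_ratio} (used with $q=0$ and $q=1,2$) one may replace $1\wedge\exp(\sum_i \Delta V_i^\dim)$ by $1\wedge\exp(\upsilon^\dim)$ in all the relevant expectations up to a vanishing error; combined with the CLT analysis described after \Cref{lem:approx_ratio} and in \Cref{theo:result_acceptance_rate_RWM}, the one-step quantities satisfy $\dim^{1/2}\PE[Z\1_\setAccept\mid X_k^\dim]\to -\tfrac12 h(\ell)\Vdot(X_k^\dim)$ and $\PE[Z^2\1_\setAccept\mid X_k^\dim]\to h(\ell)$ in the appropriate averaged sense, where $h(\ell)=2\ell^2\Phi(-\ell\sqrt I/2)$ as in \eqref{eq:defhK}. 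Summing over the roughly $\dim(t-s)$ steps and using that $k \mapsto X_k^\dim$ spends time $\approx \dim^{-1}$ at each state, the telescoped sum converges to $\PE^{\mu_d}[\int_s^t (\tfrac{h(\ell)}{2}(-\Vdot(W_u)\dot\phi(W_u)+\ddot\phi(W_u)))\rmd u \cdot g(\dots)] = \PE^{\mu_d}[\int_s^t \Lrm\phi(W_u)\rmd u\cdot g(\dots)]$ with $\Lrm$ as in \eqref{eq:def-generator}; here one uses stationarity of $X_0^\dim\sim\pi^\dim$ so each $W_u$ is marginally $\pi$-distributed, and \Cref{assum:Vdot} (continuity of $\Vdot$ off a negligible set, local boundedness) to pass the a.e.-continuous integrand through the weak limit via a continuous-mapping/Skorokhod argument. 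This verifies \eqref{eq:reduction_martingale_problem}, so by \Cref{propo:reduction_martingale_problem} every limit point of $(\mu_\dim)$ solves the martingale problem for \eqref{eq:langevin_limit}; by \Cref{assum:Vdot} and \cite[Ch.~5]{rogers:williams:2000-2} this martingale problem is equivalent to the SDE, and the assumed uniqueness in law of the weak solution forces all limit points to coincide. Tightness (\Cref{prop:tight}) plus a unique limit point gives weak convergence of the whole sequence, establishing the first assertion.

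For the final sentence, I would argue exactly as in \cite{roberts:gelman:gilks:1997}: since $h(\ell) = 2\ell^2\Phi(-\ell\sqrt I/2)$ with $I>0$ (note $I<\infty$ by \Cref{lem:DQM}/\Cref{assum:diff:quadratic}\ref{assum:X6}, and $I>0$ since $\Vdot$ is the nontrivial DQM derivative), a change of variable $u = \ell\sqrt I$ writes $h(\ell) = (2/I)\,u^2\Phi(-u/2)$; differentiating, $u^2\Phi(-u/2)$ has derivative $2u\Phi(-u/2) - (u^2/2)\varphi(u/2)$, which has a unique positive zero (the function $g(u)=4\Phi(-u/2) - u\varphi(u/2)$ is strictly decreasing on $(0,\infty)$ with $g(0^+)>0$ and $g(\infty)<0$), so $h$ has a unique maximizer $\ell^\star$. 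At that point the limiting mean acceptance rate is $a(\ell^\star) = 2\Phi(-\ell^\star\sqrt I/2) = 2\Phi(-u^\star/2)$ where $u^\star$ solves $4\Phi(-u^\star/2) = u^\star\varphi(u^\star/2)$; numerically $u^\star \approx 2.38$ and $2\Phi(-u^\star/2)\approx 0.234$, independently of $I$.

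\textbf{Main obstacle.} The delicate step is the second paragraph: justifying that the $o(\dim^{-1})$ Taylor remainders sum to something negligible over $\Theta(\dim)$ steps \emph{and} that the averaged one-step drift/variance estimates hold with errors summable after multiplication by $\dim$ — this is where \Cref{lem:approx_ratio} with its moment control ($p>4$, $\norm{\Vdot}_{\pi,6}<\infty$) does the real work, and where one must be careful that the non-smoothness of $\Vdot$ (e.g.\ the Laplace case) does not spoil the expansion, since $\Vdot$ enters only through $\rmL^p$-type quantities, never pointwise second derivatives. Passing the integrand $\Lrm\phi(W_u)$ through the weak limit also requires care because $\Vdot$ is only a.e.-continuous, but \Cref{assum:Vdot} and the fact that the occupation measure of the limit diffusion is absolutely continuous handle this.
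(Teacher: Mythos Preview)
Your overall architecture is exactly the paper's: combine \Cref{prop:tight} with \Cref{propo:reduction_martingale_problem}, verify \eqref{eq:reduction_martingale_problem} by a one-step Taylor expansion of $\phi$ along the chain, identify the drift and diffusion coefficients via the asymptotics of $\sqrt{d}\,\PE[Z\1_{\setAccept}\mid \mcf^d_k]$ and $\PE[Z^2\1_{\setAccept}\mid \mcf^d_k]$, and finally argue uniqueness. The paper packages the Taylor-expansion step and the coefficient identification into a separate result, \Cref{propo:decomposition_martingale}, which introduces an explicit $(\mcf^d_k)$-martingale $M^d_n(\phi)$ (see \eqref{eq:def:martingale}) and proves
\[
\PE\Bigl[\Bigl|\phi(Y^{d}_{t,1})-\phi(Y^{d}_{s,1})-\int_s^t \Lrm\phi(Y^{d}_{r,1})\rmd r-\bigl(M^d_{\ceil{dt}}(\phi)-M^d_{\ceil{ds}}(\phi)\bigr)\Bigr|\Bigr]\to 0\eqsp,
\]
so that \eqref{eq:reduction_martingale_problem} reduces to the trivial statement $\PE[(M^d_{\ceil{dt}}(\phi)-M^d_{\ceil{ds}}(\phi))g(\dots)]=0$ by successive conditioning. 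Your outline is an informal description of this same content, and your final paragraph on the $0.234$ claim is fine.

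Where you are underspecified is precisely the point you flag as the main obstacle, and here the paper uses tools you do not mention. The convergence of the conditional drift $\sqrt{d}\,\PE[Z\1_\setAccept\mid \mcf^d_k]$ is \emph{not} obtained directly from \Cref{lem:approx_ratio} or the CLT heuristic: the paper first integrates out $Z^d_{k+1,1}$ using \cite[Lemma~6]{jourdain:lelievre:miasojedow:2015} to rewrite these conditional expectations in closed form via the functions $\mathcal{G}$ and $\Gamma$ of \eqref{eq:defG}--\eqref{eq:defGamma}. One then needs to pass from $\mathcal{G}(a_d,2\sum_{i\ge 2}\Delta V^d_i)$ to $\mathcal{G}(a_d,2\sum_{i\ge 2} b^d_i)$, but $b\mapsto\mathcal{G}(a,b)$ is \emph{not} globally Lipschitz (its $b$-derivative blows up like $a^{-1/2}$ near $b=0$), so the $L^1$-control from \Cref{lem:interm_prop_approx_ratio} alone is insufficient. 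The paper handles this via \Cref{lem:fun_nearly_lip} and \Cref{lem:approx_first_term_decomposition_martingale}: a Berry--Esseen bound on $\sum_{i\ge 2} b^d_i$ (this is where $\norm{\Vdot}_{\pi,6}<\infty$ is used) controls the probability of landing in the bad strip $|b|\le a^{1/4}$, and outside that strip $\mathcal{G}$ is uniformly Lipschitz. Once the $\mathcal{G}$-arguments are replaced, the law of large numbers plus \Cref{lem:mean:zeta} give the limits $h(\ell)/2$ and $h(\ell)$ pointwise, and boundedness of $\mathcal{G},\Gamma$ allows dominated convergence. Your proposal would become a complete proof once you insert this $\mathcal{G}$-representation and the Berry--Esseen/nearly-Lipschitz step.
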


\begin{proof}
The proof is postponed to \Cref{sec:proof:weaklimit}.
\end{proof}

\begin{example}[Bayesian Lasso]
We apply the results obtained above to a target density $\pi$ on $\rset$ given by $x \mapsto \rme^{-V(x)}/ \int_{\rset} \rme^{-V(y)} \rmd y$ where $V$ is given by
$$
V: x \mapsto \smoothPotLasso(x) + \coeffLasso \abs{x} \eqsp,
$$
where $\coeffLasso \geq 0$ and $\smoothPotLasso$ is twice continuously differentiable with bounded second derivative. Furthermore, $\int_{\rset}\abs{x}^6 \rme^{-V(x)}\rmd x < \plusinfty$. Define  $\dot{V}: x \mapsto \smoothPotLasso'(x) + \coeffLasso\,  \sign(x)$, with $\sign(x) = -1$ if $x \leq 0$ and $\sign(x) = 1$ otherwise. We first check that \Cref{assum:diff:quadratic}\ref{hyp:mean_square_deriv2} holds.
Note that for all $x,y \in \rset$,
\begin{equation}
\label{eq:dev_valeur_absolue}
\abs{\abs{x+y} -\abs{x} - \sign(x)y}
\leq 2|y| \1_{\rset_+}(\abs{y} -\abs{x} ) \eqsp,
\end{equation}
which implies that, for any $p \geq 1$, there exists $C_p$ such that
\begin{align*}
\norm{V(\cdot+\theta)-V(\cdot) - \theta\Vdot(\cdot)}_{\pdf,p} &\leq \norm{\smoothPotLasso(\cdot+\theta)-\smoothPotLasso(\cdot) -\theta \smoothPotLasso'(\cdot)}_{\pdf,p}+ \coeffLasso \norm{\abs{\cdot+\theta}-\abs{\cdot} - \theta\,\sign(\cdot) }_{\pdf,p} \\
& \leq \norm{U''}_{\infty}\,\theta^2 + 2\,\left|\theta\right| \coeffLasso
\{ \pdf(\ccint{-\theta,\theta}) \}^{1/p}  \leq
C \abs{\theta}^{p+1/p} \vee \abs{\theta}^{2}\eqsp.
\end{align*}
Assumptions \Cref{assum:diff:quadratic}\ref{assum:X6} and \Cref{assum:Vdot} are easy to check.
The uniqueness in law of  \eqref{eq:langevin_limit} is established in \cite[Theorem 4.5 (i)]{cherny:engelbert:2005}.
Therefore, \Cref{theo:diffusion_limit_RMW} can be applied.
\end{example}

%%% Local Variables:
%%% mode: latex
%%% TeX-master: "scaling_HM_L1"
%%% End: 

\section{Target density supported on an interval of $\rset$}
\label{sec:I}
We now extend  our results  to densities supported by a open interval $\I  \subset \rset$ :
\[
\unitarget(x)\propto \exp(-V(x))\1_{\I}(x)\eqsp,
\]
where $V: \I \to \rset$ is a measurable function. Note that by convention $V(x) = -\infty$ for all $x\notin \I$. Denote by $\Iclosed$ the closure of $\I$ in $\rset$. The results of \Cref{sec:R} cannot be directly used in such a case, as $\unitarget$ is no longer positive on $\Rset$. Consider the following assumption.
\begin{assumptionG}
\label{assum:diff:quadratic:G}
There exists a measurable function $\Vdot:\I\to\Rset$ and $\constSet >1$ such that:
\begin{enumerate}[label=(\roman*)]
\item
\label{hyp:mean_square_deriv2:G}
There exist $p >4$, $C>0$ and $\beta>1$ such that for all $\theta\in\rset$,
\[
\norm{\{V(\cdot+\theta)-V(\cdot)\}\1_{\I}(\cdot + \constSet \theta)\1_{\I}(\cdot+(1-\constSet) \theta)-\theta\Vdot(\cdot)}_{\pdf,p} \le C|\theta|^{\beta}\eqsp,
\]
with the convention $0 \times \infty = 0$.
\item
\label{assum:X6:G}
The function $\Vdot$ satisfies $\Vert \Vdot \Vert_{\pi,6}<+\infty$.
\item
\label{assum:proba:G}
There exist $\gamma\ge6$ and $C>0$ such that, for all $\theta\in\Rset$,
$$
\int_{\rset} \1_{\I^c}(x+\theta)\pi(x)\rmd x\le C |\theta|^{\gamma} \eqsp.
$$
\end{enumerate}
\end{assumptionG}

As an important consequence of
\Cref{assum:diff:quadratic:G}\ref{assum:proba:G}, if $X$ is
distributed according to $\pi$ and is independent of the standard
random variable $Z$, there exists a constant $C$ such that
\begin{equation}
\label{eq:application-hyp}
\PP \left( X + \ell \dim^{-1/2} Z \in \I^c\right) \le C d^{-\gamma/2}\eqsp.
\end{equation}
\begin{theorem}
\label{theo:result_acceptance_rate_RWM:G}
Assume \Cref{assum:diff:quadratic:G} holds. Then,
$\lim_{\dim \to \plusinfty } \PP\left[\setAccept_1^\dim \right] = a(\ell)$, where $a(\ell) = 2 \Phi(-\sqrt{I} \ell /2)$.
\end{theorem}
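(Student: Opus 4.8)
The strategy is to mirror the proof of \Cref{theo:result_acceptance_rate_RWM} while controlling the extra events on which the proposal $X_i^\dim + \ell\dim^{-1/2}Z_i^\dim$ leaves the interval $\I$. Recall that $\PP[\setAccept_1^\dim] = \PE[1 \wedge \exp(\sum_{i=1}^\dim \Delta V_i^\dim)]$, where now $\Delta V_i^\dim = V(X_i^\dim) - V(X_i^\dim + \ell\dim^{-1/2}Z_i^\dim)$ is understood with the convention $V(x) = -\infty$ off $\I$, so that $\Delta V_i^\dim = -\infty$ (and the corresponding Metropolis ratio contribution is $0$) as soon as one coordinate exits $\I$. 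The first step is to show that the event $E^\dim = \{X_i^\dim + \ell\dim^{-1/2}Z_i^\dim \in \I \text{ for all } i = 1,\dots,\dim\}$ has probability tending to $1$: by a union bound and \eqref{eq:application-hyp}, $\PP[(E^\dim)^c] \le \dim \cdot C\dim^{-\gamma/2} = C\dim^{1-\gamma/2} \to 0$ since $\gamma \ge 6$. Hence $\PP[\setAccept_1^\dim] = \PE[\1_{E^\dim}(1 \wedge \exp(\sum_i \Delta V_i^\dim))] + o(1)$, and on $E^\dim$ all the $\Delta V_i^\dim$ are finite.

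The second step is to reduce to the analogue of $\upsilon^\dim$. One wants a version of \Cref{lem:approx_ratio} valid here: define $\Delta V_i^\dim$ on the event that the $i$-th coordinate stays in $\I$, and more precisely replace $V(\cdot+\theta)-V(\cdot)$ by the truncated increment $\{V(\cdot+\theta)-V(\cdot)\}\1_{\I}(\cdot+\constSet\theta)\1_{\I}(\cdot+(1-\constSet)\theta)$ that appears in \Cref{assum:diff:quadratic:G}\ref{hyp:mean_square_deriv2:G}, with $\theta = \ell\dim^{-1/2}Z_i^\dim$; the point of the two indicator factors is that when $X_i^\dim$ and $X_i^\dim+\theta$ are both well inside $\I$ (away from the boundary by more than $(\constSet-1)|\theta|$ on each side) the truncation is inactive, and the discrepancy between the truncated and untruncated sums is again controlled, coordinate by coordinate, by events of the form $\{X_i^\dim + c\theta_i \in \I^c\}$ whose total probability is $O(\dim^{1-\gamma/2})$ by \eqref{eq:application-hyp}. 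With this in hand, the same DQM expansion used in \Cref{sec:R} — driven now by \Cref{lem:DQM}, which should hold verbatim from \Cref{assum:diff:quadratic:G}\ref{hyp:mean_square_deriv2:G}-\ref{assum:X6:G} since $\1_\I(\cdot+\constSet\theta)\1_\I(\cdot+(1-\constSet)\theta)\to 1$ in the relevant sense as $\theta\to 0$ — gives that $\sum_i \Delta V_i^\dim$ on $E^\dim$ is well-approximated in the sense of $\Erm^\dim(0)\to 0$ by $\upsilon^\dim$ built exactly as in \eqref{eq:definition-W-d}-\eqref{eq:def:zeta} but with the truncated increments.

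The third step is the CLT/law-of-large-numbers analysis of $\upsilon^\dim$, which is identical to the positive-density case: the martingale term $-\ell\dim^{-1/2}\sum_{i\ge 2}\Vdot(X_i^\dim)Z_i^\dim$ converges in distribution to $\mathcal N(0,\ell^2 I)$ with $I = \int_\I \Vdot^2\pi$ by the Lindeberg CLT using $\norm{\Vdot}_{\pi,6}<\infty$; the term $\dim\,\PE[2\zeta^\dim(X_1^\dim,Z_1^\dim)] = -\dim\,\PE[(\zeta^\dim)^2]$ converges to $-\ell^2 I/4$ by the analogue of \Cref{lem:mean:zeta} (whose proof rests on \Cref{lem:DQM}); and the last term $-\ell^2\dim^{-1}\sum\Vdot^2(X_i^\dim)/4 \to -\ell^2 I/4$ in probability by the weak law of large numbers. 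Therefore $\upsilon^\dim \Rightarrow \mathsf Z \sim \mathcal N(-\ell^2 I/2, \ell^2 I)$, and by the standard identity $\PE[1\wedge\exp\mathsf Z] = 2\Phi(-\ell\sqrt I/2)$ together with uniform integrability (the variables $1\wedge\exp(\cdot)$ are bounded), one concludes $\PP[\setAccept_1^\dim]\to a(\ell) = 2\Phi(-\sqrt I\,\ell/2)$.

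The main obstacle is the second step: setting up the truncated increments so that (a) the difference between truncated and untruncated Metropolis ratios is genuinely $o(1)$ uniformly enough to survive the expansion, and (b) the quantities $\PE[2\zeta^\dim]$ and $\PE[(\zeta^\dim)^2]$ are still related as in the positive case despite $\zeta^\dim$ being defined with $V = -\infty$ off $\I$. The role of the constant $\constSet > 1$ and of the symmetric pair of indicators $\1_\I(\cdot+\constSet\theta)$, $\1_\I(\cdot+(1-\constSet)\theta)$ — which is what makes the truncated increment an \emph{even-ish}, boundary-robust modification of $V(\cdot+\theta)-V(\cdot)$ — is precisely to make these bookkeeping steps go through, and verifying that carefully, together with re-deriving \Cref{lem:approx_ratio} and \Cref{lem:mean:zeta} in the truncated setting, is where the real work lies; the CLT step itself is then routine.
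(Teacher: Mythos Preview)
Your proposal is correct and follows essentially the same route as the paper: the paper re-derives the key lemmas in the truncated setting (\Cref{lem:DQM:G}, \Cref{lem:integrated-DQM:G}, \Cref{lem:interm_prop_approx_ratio:G}, \Cref{lem:approx_ratio:G}, \Cref{lem:mean:zeta:G}) using exactly the indicators $\1_{\Dcal_{\I}}$ you describe, controls the complement via the same $O(\dim^{1-\gamma/2})$ union bound, and then states that the conclusion follows the proof of \Cref{theo:result_acceptance_rate_RWM} verbatim. The only cosmetic difference is that in the final step the paper conditions on $X_0^\dim$ so that $\Theta^\dim$ is exactly Gaussian and applies dominated convergence through the bounded function $\Gamma$ of \eqref{eq:defGamma}, rather than invoking a weak-convergence/continuous-mapping argument for $\upsilon^\dim$ directly.
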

\begin{proof}
The proof is postponed to \Cref{proof:lem:approx_ratio:G}.
\end{proof}
We now established the weak convergence of the sequence $\{(Y_{t,1}^{\dim})_{t\ge 0}, \dim\in \Nset^{\star}\}$, following the same steps as for the proof of \Cref{theo:diffusion_limit_RMW}. Denote for all $d\geq 1$, $\mu_d$ the law of the process $(Y_{t,1}^{\dim})_{t\ge 0}$.
\begin{proposition}
\label{prop:tight:G}
Assume \Cref{assum:diff:quadratic:G} holds. Then, the sequence $\left(\mu_d\right)_{d\ge 1}$ is tight in $\Wienerspace$.
\end{proposition}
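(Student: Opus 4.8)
The plan is to reproduce, \emph{mutatis mutandis}, the argument used for \Cref{prop:tight}, which is itself adapted from \cite{jourdain:lelievre:miasojedow:2015}. By Prohorov's theorem and the Kolmogorov--Chentsov tightness criterion on each compact interval $[0,T]$, and since the law of $Y_{0,1}^{\dim}$ equals $\pi$ for every $\dim$ (so the initial laws form a tight family), it suffices to produce, for every $T>0$, a constant $C_T$ independent of $\dim$ such that
\[
\PE\bigl[\,\abs{Y_{t,1}^{\dim}-Y_{s,1}^{\dim}}^{4}\,\bigr]\le C_T\abs{t-s}^{2}\eqsp,\qquad 0\le s\le t\le T\eqsp.
\]
For $s,t$ lying in one or two consecutive intervals $[k/\dim,(k+1)/\dim]$, $Y_{t,1}^{\dim}-Y_{s,1}^{\dim}$ is a sum of at most two terms of the form $\dim(t-s)\,\ell\dim^{-1/2}Z^{\dim}_{\cdot,1}\1_{\setAccept^{\dim}_{\cdot}}$, and the bound then follows from $\dim^{2}\abs{t-s}^{4}\le\abs{t-s}^{2}$; so I may assume $\abs{t-s}\ge1/\dim$ and write $Y_{t,1}^{\dim}-Y_{s,1}^{\dim}=\sum_{k}\delta_k^{\dim}$ up to two such boundary terms, where $\delta_k^{\dim}=\ell\dim^{-1/2}Z^{\dim}_{k+1,1}\1_{\setAccept^{\dim}_{k+1}}$ and $k$ ranges over an index set of cardinality $n\le\dim\abs{t-s}+2$.

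Next I would use the Doob decomposition $\delta_k^{\dim}=\Delta M_k^{\dim}+\Delta A_k^{\dim}$, where $\Delta A_k^{\dim}=\PE[\delta_k^{\dim}\mid\mcf_k]$ with $\mcf_k$ the $\sigma$-field generated by the chain and the innovations through step $k$, and $(\Delta M_k^{\dim})_k$ is a martingale-difference sequence. For the martingale part, $\PE[(\Delta M_k^{\dim})^{2}\mid\mcf_k]\le C\dim^{-1}$ and $\PE[(\Delta M_k^{\dim})^{4}\mid\mcf_k]\le C\dim^{-2}(1+\abs{\Vdot(X_{k,1}^{\dim})}^{4})$ almost surely (up to lower-order remainders), so the standard fourth-moment inequality for martingale differences, together with the stationarity of $X_k^{\dim}$ and $\norm{\Vdot}_{\pi,4}\le\norm{\Vdot}_{\pi,6}<\infty$ from \Cref{assum:diff:quadratic:G}\ref{assum:X6:G}, gives $\PE[\,\abs{\sum_k\Delta M_k^{\dim}}^{4}\,]\le C\abs{t-s}^{2}$ once $n\le\dim\abs{t-s}+2$ and $\abs{t-s}\ge1/\dim$ are used. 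For the drift part, the expansion of the acceptance ratio carried out for \Cref{theo:result_acceptance_rate_RWM:G} (whose estimates are established in \Cref{proof:lem:approx_ratio:G}) --- the quadratic control granted by \Cref{assum:diff:quadratic:G}\ref{hyp:mean_square_deriv2:G} together with the oddness of $Z^{\dim}_{k+1,1}$ --- yields $\abs{\Delta A_k^{\dim}}\le C\dim^{-1}(1+\abs{\Vdot(X_{k,1}^{\dim})})+r_k^{\dim}$ with $\sum_k\PE[\abs{r_k^{\dim}}]\to0$; Minkowski's inequality then gives $\PE[\,\abs{\sum_k\Delta A_k^{\dim}}^{4}\,]^{1/4}\le C\,n\,\dim^{-1}(1+\norm{\Vdot}_{\pi,4})\le C\abs{t-s}$, hence $\le C_T\abs{t-s}^{2}$ on $[0,T]$. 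Adding the two contributions closes the moment estimate and, via Kolmogorov--Chentsov and Prohorov, proves tightness.

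The only genuinely new point compared with \Cref{prop:tight} is that a proposal coordinate may leave $\I$, forcing rejection of the whole move; one must check that these boundary rejections do not destroy the per-step drift bound $\abs{\Delta A_k^{\dim}}=O(\dim^{-1})$. This is exactly where \Cref{assum:diff:quadratic:G}\ref{assum:proba:G} and its consequence \eqref{eq:application-hyp} enter, as in the proof of \Cref{theo:result_acceptance_rate_RWM:G}: the probability that a fixed coordinate of the proposal falls in $\I^c$ is $O(\dim^{-\gamma/2})$ with $\gamma\ge6$, so by Cauchy--Schwarz (for the first coordinate) and a union bound over the remaining $\dim-1$ coordinates the corresponding correction to each $\Delta A_k^{\dim}$ is $O(\dim^{-2})$, which contributes $O(n\dim^{-2})=O(\abs{t-s}/\dim)\to0$ to $\sum_k\PE[\abs{r_k^{\dim}}]$. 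I expect the reconciliation of the truncated increments appearing in \Cref{assum:diff:quadratic:G}\ref{hyp:mean_square_deriv2:G} with the actual increments $\Delta V^{\dim}_{i}$, uniformly over the $n\sim\dim\abs{t-s}$ steps, to be the main --- and essentially the only --- delicate obstacle; the martingale moment inequality, the use of stationarity of $X_k^{\dim}$, and the Kolmogorov--Chentsov criterion are exactly as in the positive-density case, and the full details parallel those of \Cref{sec:proof:tightness}.
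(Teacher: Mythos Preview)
Your approach is sound in outline but differs in structure from the paper's.  The paper does \emph{not} use a Doob decomposition.  Instead, following \cite{jourdain:lelievre:miasojedow:2015}, it writes
\[
\sum_{k} Z^{\dim}_{k,1}\1_{\setAccept_k^{\dim}}
=\sum_{k} Z^{\dim}_{k,1}-\sum_{k} Z^{\dim}_{k,1}\1_{(\setAccept_k^{\dim})^c}
\]
and expands the fourth power of the second sum combinatorially, classifying quadruplets $(m_1,\dots,m_4)$ by the cardinality of $\{m_1,\dots,m_4\}$ (see \Cref{lem:kolmo:G}).  To decouple the four factors when $\#\{m_i\}=4$, it introduces an auxiliary chain $(\tilde X_k^{\dim})$ that freezes at the four times $m_1,\dots,m_4$, then conditions on the $\sigma$-field generated by $\tilde X$ and bounds each factor by an $A_{m_j}^{\dim}+B_{m_j}^{\dim}$ pair with $\PE[(A_{m_j}^{\dim})^4],\PE[(B_{m_j}^{\dim})^4]\le C\dim^{-2}$.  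The only change for the interval case is to split $(\tilde{\setAccept}^{\dim})^c=(\tilde{\Dcal}_{\I,1:d}^{\dim})^c\cup\bigl((\tilde{\setAccept}^{\dim})^c\cap\tilde{\Dcal}_{\I,1:d}^{\dim}\bigr)$ and control the boundary piece via \Cref{assum:diff:quadratic:G}\ref{assum:proba:G} with $\gamma\ge 6$, exactly as you anticipated.

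Your martingale--plus--drift route can be made to work, and is arguably more transparent, but it trades the frozen-chain trick for a uniform $\lp[4]$ bound on the conditional drift $\Delta A_k^{\dim}=\PE[\delta_k^{\dim}\mid\mcf_k]$.  The bound you need is in fact the per-step estimate hidden in the paper's $A_{m_j}^{\dim}+B_{m_j}^{\dim}$ analysis: using that $\varphi(x)=(1-\rme^x)_+$ is $1$-Lipschitz and \cite[Lemma~6]{jourdain:lelievre:miasojedow:2015}, one gets $\norm{\Delta A_k^{\dim}}_4\le C\dim^{-1}(1+\norm{\Vdot}_{\pi,4})+\norm{r_k^{\dim}}_4$ with $\norm{r_k^{\dim}}_4=o(\dim^{-1})$ by \Cref{lem:integrated-DQM:G}\ref{lem:integrated-DQM-Lp:G} and H\"older (pairing $\abs{Z}$ in $\lp[q]$, $q=4p/(p-4)$, with the remainder in $\lp$, $p>4$).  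Your statement ``$\sum_k\PE[\abs{r_k^{\dim}}]\to 0$'' is the one place that needs tightening: an $\lone$ control on $\sum_k r_k^{\dim}$ does not feed into the fourth-moment Minkowski step, but the $\lp[4]$ bound $\norm{r_k^{\dim}}_4=o(\dim^{-1})$ does, and it is available under \Cref{assum:diff:quadratic:G}.  With that fix, your argument goes through and yields the same $\PE[(X_{k_2,1}^{\dim}-X_{k_1,1}^{\dim})^4]\le C\sum_{p=2}^4(k_2-k_1)^p/\dim^p$ as \Cref{lem:kolmo:G}.
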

\begin{proof}
The proof is postponed to \Cref{sec:proof:tightness:G}.
\end{proof}
Contrary to the case where $\pi$ is positive on $\rset$, we do not assume that $\Vdot$ is bounded on all compact sets of $\rset$. Therefore, we consider the local martingale problem associated with \eqref{eq:langevin_limit}:  with the notations of \Cref{sec:R}, a probability measure
$\mu$ on $\Wienerspace$ is said to solve the local martingale problem associated with \eqref{eq:langevin_limit} if
the pushforward of $\mu$ by $W_0$ is  $\pdf$ and if for all $\psi \in C^{\infty}(\rset,\rset)$,
the process
$$\left(\psi(W_t) - \psi(W_0) - \int_{0}^t \Lrm \psi(W_u) \rmd u\right)_{ t \geq 0} $$
is a local martingale with respect to $\mu$ and the filtration
$(\canonicalFiltration_t)_{t \geq 0}$.  By \cite[Theorem
1.27]{cherny:engelbert:2005}, any solution to the local martingale
problem associated with \eqref{eq:langevin_limit} coincides with the
law of a solution to the SDE \eqref{eq:langevin_limit} and
conversely. If \eqref{eq:langevin_limit} admits a  unique solution in law,  this law is the unique solution
to the local martingale problem associated with \eqref{eq:langevin_limit}.  We first prove that any
limit point $\mu$ of $(\mu_d)_{d\ge 1}$ is a solution to the local martingale
problem associated with \eqref{eq:langevin_limit}.

\begin{assumptionG}
\label{assum:Vdot:G}
The function $\Vdot$ is continuous on $\I$ except on a null-set $\setDisconDotV$, \wrt~the
Lebesgue measure, and is bounded on all compact sets of $\I$.
\end{assumptionG}
This condition does not preclude that $\Vdot$ is unbounded at the boundary of $\I$.
\begin{proposition}
\label{propo:reduction_martingale_problem:G}
Assume \Cref{assum:diff:quadratic:G} and \Cref{assum:Vdot:G} hold. Assume also that for all $\phi \in C_c^{\infty}(\I,\rset)$, $m \in \nset^*$, $g : \rset^m \to \rset$ bounded and continuous, and $0 \leq t_1 \leq \dots \leq t_m \leq s \leq t$:
 \begin{equation}
 \label{eq:reduction_martingale_problem:G}
 \lim_{d \to \plusinfty} \PE^{\mu_d}\left[ \left(\phi\left(W_t\right) - \phi\left( W_s \right) - \int_s^t \Lrm \phi \left(W_u \right) \rmd u  \right) g\left(W_{t_1},\dots,W_{t_m} \right) \right] =0 \eqsp.
\end{equation}
Then, every limit point of the sequence of probability measures $(\mu_{\dim})_{\dim\ge 1}$ on $\Wienerspace$ is a solution to the local martingale problem associated with \eqref{eq:langevin_limit}.
\end{proposition}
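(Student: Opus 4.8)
The plan is to mimic the classical Stroock--Varadhan passage from convergence along a skeleton to the full (local) martingale problem, the only novelty being the use of a localisation sequence to accommodate the possible unboundedness of $\Vdot$ near $\partial\I$. Fix a limit point $\mu$ of $(\mu_d)_{d\ge1}$; by \Cref{prop:tight:G} and Prohorov's theorem such a limit point exists, and (along the corresponding subsequence, which we do not relabel) $\mu_d\Rightarrow\mu$ in $\Wienerspace$. First I would record that the pushforward of $\mu$ by $W_0$ is $\pdf$: each $\mu_d$ has initial law $\pdf$ since $X_0^\dim\sim\target^\dim$ and the marginals are i.i.d., so this passes to the weak limit because evaluation at $t=0$ is continuous on $\Wienerspace$.

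Next I would fix $\psi\in C^\infty(\rset,\rset)$ and, for $R>0$, choose a cutoff $\chi_R\in C^\infty_c(\I,\rset)$ with $\chi_R\equiv1$ on a compact set $K_R\subset\I$ exhausting $\I$ as $R\to\infty$, and set $\phi_R=\psi\chi_R\in C^\infty_c(\I,\rset)$. The hypothesis \eqref{eq:reduction_martingale_problem:G} applied to $\phi_R$ gives, for every bounded continuous $g:\rset^m\to\rset$ and $0\le t_1\le\dots\le t_m\le s\le t$,
\begin{equation*}
\PE^{\mu}\left[\left(\phi_R(W_t)-\phi_R(W_s)-\int_s^t\Lrm\phi_R(W_u)\,\rmd u\right)g(W_{t_1},\dots,W_{t_m})\right]=0\eqsp,
\end{equation*}
provided I can pass to the limit $d\to\infty$ inside \eqref{eq:reduction_martingale_problem:G}: this is where I would invoke that $\phi_R$, $\dot\phi_R$, $\ddot\phi_R$ are bounded with compact support in $\I$ and that $\Vdot$ is bounded on $\operatorname{supp}\phi_R$ by \Cref{assum:Vdot:G}, so that $x\mapsto\Lrm\phi_R(x)$ is bounded; continuity $\mu$-a.s. of the relevant functionals of the path (using \Cref{assum:Vdot:G}, $\setDisconDotV$ is Lebesgue-null and $\mu$-a.e. path spends zero time in $\setDisconDotV$, which itself needs a short Fubini argument against the occupation measure — but this is exactly the content already used implicitly in \Cref{propo:reduction_martingale_problem}) then yields the displayed identity. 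By the usual disintegration argument (conditioning on $\canonicalFiltration_s$ and using that cylinder functionals generate $\canonicalFiltration_s$), this identity for all such $g,t_i$ is equivalent to saying that
\begin{equation*}
M^{\phi_R}_t\eqdef\phi_R(W_t)-\phi_R(W_0)-\int_0^t\Lrm\phi_R(W_u)\,\rmd u
\end{equation*}
is a genuine $(\canonicalFiltration_t,\mu)$-martingale.

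Finally I would localise. Define the exit time $\tau_R=\inf\{t\ge0:W_t\notin K_R\}$ (with $K_R$ chosen so that $\chi_R=1$ on a neighbourhood of $K_R$, hence $\phi_R$ and $\psi$ agree to second order on $K_R$). Then on $[0,\tau_R)$ one has $\phi_R(W_u)=\psi(W_u)$ and $\Lrm\phi_R(W_u)=\Lrm\psi(W_u)$, so the stopped process $M^{\psi}_{t\wedge\tau_R}=M^{\phi_R}_{t\wedge\tau_R}$ is a martingale by optional stopping (the stopped martingale of a martingale). Since $\I$ is open and $\Iclosed$ its closure, and since $\mu$-a.e. path is continuous, $\tau_R\uparrow\tau_\I\eqdef\inf\{t:W_t\notin\I\}$ as $R\to\infty$, and $\tau_\I$ is $\mu$-a.s. strictly positive (indeed $W_0\in\I$ $\mu$-a.s. by the first step); thus $(\tau_R)_R$ is a localising sequence and $(M^{\psi}_t)_{t\ge0}$ is a local martingale under $\mu$. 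As $\psi\in C^\infty(\rset,\rset)$ was arbitrary, $\mu$ solves the local martingale problem associated with \eqref{eq:langevin_limit}.

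The main obstacle, and the only place real work is needed beyond bookkeeping, is the interchange of limits justifying that \eqref{eq:reduction_martingale_problem:G} (a statement about $\mu_d$) transfers to the same identity for the limit $\mu$: the integrand $\bigl(\phi_R(W_t)-\phi_R(W_s)-\int_s^t\Lrm\phi_R(W_u)\,\rmd u\bigr)g(W_{t_1},\dots,W_{t_m})$ is bounded, but it is only $\mu$-a.s. continuous on $\Wienerspace$, because $\Lrm\phi_R$ inherits the discontinuities of $\Vdot$ on $\setDisconDotV$. One must therefore check that $\mu\bigl(\{w:\leb(\{u\in[s,t]:w_u\in\setDisconDotV\})>0\}\bigr)=0$; I would prove this by the occupation-time bound, writing $\PE^{\mu}\!\int_s^t\1_{\setDisconDotV}(W_u)\,\rmd u=\lim_d\PE^{\mu_d}\!\int_s^t\1_{\setDisconDotV}(W_u)\,\rmd u$ via Portmanteau (the map $w\mapsto\int_s^t\1_{G}(w_u)\rmd u$ is upper semicontinuous for $G$ closed, and one approximates $\setDisconDotV$ from outside by open sets of small $\pdf$-measure), and then controlling $\PE^{\mu_d}\int_s^t\1_{\setDisconDotV}(W_u)\rmd u$ by the fact that at stationarity each coordinate of the chain has marginal close to $\pdf$, so the expected occupation time of the Lebesgue-null set $\setDisconDotV$ is zero — the same estimate already needed in the proof of \Cref{propo:reduction_martingale_problem}. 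Everything else is the standard, essentially formal, skeleton-to-martingale-problem argument plus a routine localisation.
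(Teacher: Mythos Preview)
Your overall strategy—pass to the limit in \eqref{eq:reduction_martingale_problem:G} to get a genuine martingale for each $\phi_R\in C^\infty_c(\I,\rset)$, then localise to reach arbitrary $\psi\in C^\infty(\rset,\rset)$—is natural, and the passage $d\to\infty$ for fixed $\phi_R$ is essentially as in the paper. But the localisation step contains a genuine gap. You claim that $(\tau_R)_R$ is a localising sequence because $\tau_R\uparrow\tau_\I$ and $\tau_\I>0$ $\mu$-a.s. That is not enough: a localising sequence must tend to $+\infty$, not merely be positive. You have not shown—and from the hypotheses of the proposition alone one cannot show—that $\tau_\I=+\infty$ $\mu$-a.s. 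Weak convergence $\mu_d\Rightarrow\mu$ only gives $\mu(C(\rset_+,\Iclosed))=1$ (this set is closed, Portmanteau applies), and the fact that each one-dimensional marginal of $\mu$ is $\pdf$ does not preclude the limit path from touching $\partial\I$ at some random time. So as written your argument produces a local martingale only on $[0,\tau_\I)$, which is not what the local martingale problem on $\Wienerspace=C(\rset_+,\rset)$ requires.

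The paper circumvents this by a different two-stage reduction (\Cref{propo:reduction_martingale_problem_01:G}). First, it localises only in the large-$|x|$ direction via $\tau_k=\inf\{t:|W_t|\ge k\}$, which trivially $\uparrow+\infty$; this reduces from $\psi\in C^\infty(\Iclosed,\rset)$ to $\varpi\in C^\infty_c(\Iclosed,\rset)$. Second—and this is the step your approach is missing—it passes from $C^\infty_c(\I,\rset)$ to $C^\infty_c(\Iclosed,\rset)$ by an $\lone$ approximation: take $\phi_k\in C^\infty_c(\I,\rset)$ converging to $\varpi$ in $C^\infty_c(\Iclosed,\rset)$, and observe the uniform domination $|\Lrm\phi_k(W_u)|\le C(1+|\Vdot(W_u)|)$. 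Since by \Cref{lem:law_point_limit:G} the pushforward of $\mu$ by $W_u$ is $\pdf$ for every $u$, and $\Vdot\in\lp[6](\pdf)\subset\lone(\pdf)$ by \Cref{assum:diff:quadratic:G}\ref{assum:X6:G}, dominated convergence applies and yields the martingale identity for $\varpi$ without any control on $\tau_\I$. This integrability of $\Vdot$ is precisely the missing ingredient that lets one handle the possible approach to $\partial\I$.

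A minor point: your occupation-time argument for $\setDisconDotV$ is more elaborate than needed. Once \Cref{lem:law_point_limit:G} is in hand, Fubini gives directly $\PE^{\mu}\bigl[\int_s^t\1_{\setDisconDotV}(W_u)\,\rmd u\bigr]=\int_s^t\pdf(\setDisconDotV)\,\rmd u=0$; there is no need to route through $\mu_d$ and Portmanteau.
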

\begin{proof}
The proof is postponed to \Cref{sec:reduction_martingale_problem:G}.
\end{proof}

\begin{theorem}
\label{theo:diffusion_limit_RMW:G}
Assume \Cref{assum:diff:quadratic:G} and \Cref{assum:Vdot:G} hold. Assume also that \eqref{eq:langevin_limit} has a unique weak solution. Then, $\defEns{(Y_{t,1}^{\dim})_{t\geq 0}, \dim \in \Nset^*}$ converges weakly to the solution $\parenthese{Y_t}_{
t \geq 0}$ of the Langevin equation defined
by \eqref{eq:langevin_limit}. Furthermore, $h(\ell)$ is maximized at the unique value of $\ell$ for which $a
(\ell) = 0.234$, where $a$ is defined in \Cref{theo:result_acceptance_rate_RWM}.
\end{theorem}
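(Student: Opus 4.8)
The plan is to run the standard weak-convergence-via-martingale-problem argument, now for the \emph{local} martingale problem, reusing the interval-specific estimates already in place. By \Cref{prop:tight:G} the sequence $(\mu_d)_{d\ge 1}$ is tight in $\Wienerspace$, hence by Prohorov's theorem relatively compact, so every subsequence has a further subsequence converging weakly to some probability measure $\mu$ on $\Wienerspace$. If one can verify the hypothesis \eqref{eq:reduction_martingale_problem:G} of \Cref{propo:reduction_martingale_problem:G}, then every such limit point $\mu$ solves the local martingale problem associated with \eqref{eq:langevin_limit}; by \cite[Theorem 1.27]{cherny:engelbert:2005} its solutions are exactly the laws of weak solutions of \eqref{eq:langevin_limit}, and since by assumption \eqref{eq:langevin_limit} has a unique weak solution, the local martingale problem has a unique solution, namely the law of $(Y_t)_{t\ge 0}$. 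All limit points of $(\mu_d)_{d\ge 1}$ therefore coincide with this law, and a relatively compact sequence with a single limit point converges, which yields the claimed weak convergence.

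The heart of the proof is thus the verification of \eqref{eq:reduction_martingale_problem:G}. Fix $\phi\in C_c^\infty(\I,\rset)$ with support contained in a compact $K\subset\I$, a bounded continuous $g$, and times $t_1\le\cdots\le t_m\le s\le t$. Using that $Y_{\cdot,1}^d$ is the piecewise-affine interpolation of $(X_{k,1}^d)_k$, I would write $\phi(W_t)-\phi(W_s)$ under $\mu_d$ as a telescoping sum $\sum_k\big(\phi(X_{k+1,1}^d)-\phi(X_{k,1}^d)\big)$ over the integer steps between $\lfloor ds\rfloor$ and $\lceil dt\rceil$, up to two endpoint interpolation corrections of size $O(d^{-1/2})$, and Taylor-expand each increment to second order in $\ell d^{-1/2}Z_{k+1,1}^d\1_{\setAccept_{k+1}^d}$. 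Taking conditional expectations along the filtration of the chain and using stationarity (so that $X_{k,i}^d\sim\pi$ for every $k,i$), the summed first-order term is $\big(\ell\sqrt d\,\PE[Z_1^d\1_{\setAccept_1^d}\mid X_1^d=\cdot]\big)\dot\phi$ and the summed second-order term is $\tfrac{\ell^2}{2}\,\PE[(Z_1^d)^2\1_{\setAccept_1^d}\mid X_1^d=\cdot]\,\ddot\phi$. Replacing $1\wedge\exp(\sum_i\Delta V_i^d)$ by $1\wedge\exp(\upsilon^d)$ via the interval analogue of \Cref{lem:approx_ratio} (proved in \Cref{proof:lem:approx_ratio:G}), using the Gaussian limit of $\upsilon^d$ underlying \Cref{theo:result_acceptance_rate_RWM:G}, and the identity $h(\ell)=\ell^2 a(\ell)$, these two conditional expectations converge to $-\tfrac12 h(\ell)\Vdot(\cdot)\dot\phi(\cdot)$ and $\tfrac12 h(\ell)\ddot\phi(\cdot)$, whose sum is $\Lrm\phi$; the Taylor remainders and the contribution of steps in which some coordinate's proposal lands in $\I^c$ are absorbed using a law of large numbers $d^{-1}\sum_{i=2}^d\Vdot^2(X_i^d)\to I$, the moment bound \Cref{assum:diff:quadratic:G}\ref{assum:X6:G}, \Cref{assum:diff:quadratic:G}\ref{hyp:mean_square_deriv2:G}, and the key estimate \eqref{eq:application-hyp} (where $\gamma\ge 6$). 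Multiplying by the bounded functional $g(W_{t_1},\dots,W_{t_m})$ and letting $d\to\infty$ then gives \eqref{eq:reduction_martingale_problem:G}.

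The hard part, and the genuine difference from \Cref{theo:diffusion_limit_RMW}, is this last point: here $\Vdot$ need not be bounded on compact subsets of $\rset$, only of $\I$, so one must localize the tracked coordinate to $K$ (where $\Vdot$ is bounded by \Cref{assum:Vdot:G}) while still controlling the other coordinates, whose proposals may exit $\I$; the extra indicators $\1_{\I}(\cdot+\constSet\theta)\1_{\I}(\cdot+(1-\constSet)\theta)$ in \Cref{assum:diff:quadratic:G}\ref{hyp:mean_square_deriv2:G} together with \eqref{eq:application-hyp} are exactly what render these boundary error terms negligible, and carefully bounding them is where most of the effort lies — the drift and diffusion identification itself is as in \cite{roberts:gelman:gilks:1997}. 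Finally, the optimal scaling statement is unchanged from \Cref{theo:diffusion_limit_RMW}: $h(\ell)=2\ell^2\Phi(-\ell\sqrt I/2)$ is the same function, so differentiating in $\ell$ shows it has a unique maximizer $\ell^\star$, characterized with $u=\ell^\star\sqrt I/2$ by $2\Phi(-u)=u\,\Phi'(u)$, at which the limiting acceptance rate is $a(\ell^\star)=2\Phi(-u)\approx 0.234$.
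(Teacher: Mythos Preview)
Your proposal is correct and follows essentially the same route as the paper: tightness (\Cref{prop:tight:G}) plus Prohorov, reduction to the local martingale problem via \Cref{propo:reduction_martingale_problem:G} and \cite[Theorem~1.27]{cherny:engelbert:2005}, verification of \eqref{eq:reduction_martingale_problem:G} through a Taylor expansion of $\phi$ along the chain, and the standard $0.234$ computation. The only organizational difference is that the paper packages the entire Taylor/drift--diffusion identification you sketch into a separate result, \Cref{propo:decomposition_martingale:G} (supported by \Cref{lem:approx_first_term_decomposition_martingale:G}), which shows $\phi(Y^{\dim}_{t,1})-\phi(Y^{\dim}_{s,1})-\int_s^t\Lrm\phi(Y^{\dim}_{r,1})\,\rmd r$ differs from an explicit $(\mcf^{\dim}_k)$-martingale increment $M^{\dim}_{\lceil dt\rceil}(\phi)-M^{\dim}_{\lceil ds\rceil}(\phi)$ by an $\mathrm L^1$-negligible term; the proof of the theorem then reduces to the one-line observation that $\PE[(M^{\dim}_{\lceil dt\rceil}(\phi)-M^{\dim}_{\lceil ds\rceil}(\phi))\,g(Y^{\dim}_{t_1},\dots,Y^{\dim}_{t_m})]=0$ by successive conditioning, which is exactly your ``taking conditional expectations along the filtration'' step.
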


\begin{proof}
The proof is along the same lines as the proof of \Cref{theo:diffusion_limit_RMW} and is postponed to \Cref{sec:proof:weaklimit:G}. 
\end{proof}
The conditions for uniqueness in law of singular one-dimensional stochastic differential equations are given in \cite{cherny:engelbert:2005}. These conditions are rather involved and difficult to summarize in full generality. We rather illustrate \Cref{theo:diffusion_limit_RMW:G} by two examples.
\begin{example}[Application to the Gamma distribution]
\label{ex:gamma}
Define the class of the generalized Gamma distributions as the family of densities on $\rset$ given by
\[
\pigamma : x \mapsto x^{\rma_1-1}\exp(-x^{\rma_2 })\1_{\rset_+^{\star}}(x)\Big/\int_{\rset_+^{\star}} y^{\rma_1-1}\exp(-y^{\rma_2})\rmd y \eqsp,
\]
with two parameters $\rma_1>6$ and $\rma_2 >0$.
Note that in this case $\I = \rset_+^{\star}$, for all $x\in\I$,
$\Vgamma:x\mapsto x^{\rma_2}- (\rma_1-1)\log x$ and  $\Vgammadot:x\mapsto \rma_2 x ^{\rma_2-1} - (\rma_1-1)/x$.
We check that \Cref{assum:diff:quadratic:G} holds with $\constSet = 3/2$.  First, we show that
\Cref{assum:diff:quadratic:G}\ref{hyp:mean_square_deriv2:G} holds with 
$p= 5 $. Write for all $\theta\in\rset$ and $x\in\I$,
$$
\left\{\Vgamma(x+\theta)-\Vgamma(x)\right\}\1_{\I}(x+(1-r)\theta)\1_{\I}(x+r\theta)-\theta\Vgammadot(x) = \mathcal{E}_1 + \mathcal{E}_2 + \mathcal{E}_3 \eqsp,
$$
 where
\begin{align*}
\mathcal{E}_1 & = \theta \Vgammadot(x) \left\{\1_{\I}(x-\theta/2)\1_{\I}(x+3\theta/2)-1 \right\}\eqsp,\\
\mathcal{E}_2 & = (1-\rma_1)\left\{\log (1+\theta/x)-\theta/x\right\}\1_{\I}(x-\theta/2)\1_{\I}(x+3\theta/2)\eqsp,\\
\mathcal{E}_3 & = ((x+\theta)^{\rma_2} - x^{\rma_2} - \rma_2\theta  x^{\rma_2-1})\1_{\I}(x-\theta/2)\1_{\I}(x+3\theta/2)\eqsp.
\end{align*}
It is enough to prove that there exists $q >5$ such that for all $i\in\{1,2,3\}$, $\int_{\I} \left|\mathcal{E}_i\right|^5\pigamma(x)\rmd x \le C|\theta|^q$. The result is proved for $\theta<0$ (the proof for $\theta>0$ follows the same lines). For all $\theta\in\rset$ using $\rma_1 >6$,
\begin{align}
\nonumber
\int_{\rset^*_+} \left|\mathcal{E}_1\right|^5\pigamma(x)\rmd x 
&\le C |\theta|^{5}\int_{0}^{3|\theta|/2}\left\{1/x^5 + x^{5(\rma_2-1)}\right\}x^{\rma_1-1}\mathrm{e}^{-x^{\rma_2}}\rmd x\eqsp, \\
\nonumber
& \le C |\theta|^{\rma_1} \parenthese{\int_{0}^{3/2}x^{\rma_1-6}\mathrm{e}^{-(|\theta|x)^{\rma_2}}\rmd x + |\theta|^{5\rma_2}\int_{0}^{3/2}x^{5(\rma_2-1)+\rma_1-1}\mathrm{e}^{-(|\theta|x)^{\rma_2}}\rmd x }\eqsp, \nonumber\\
\label{eq:item:verif_gamma_1}
&\le C (|\theta|^{\rma_1}+|\theta|^{5\rma_2+\rma_1}) \eqsp.
\end{align}
On the other hand, as for all $x>-1$, $x/(x+1)\le\log(1+x)\le x$, for all $\theta<0$, and $x\ge 3|\theta|/2$,
\[
\left|\log(1+\theta/x)-\theta/x\right|\le \frac{|\theta|^2}{x^2(1+\theta/x)}\le 3 |\theta|^2/x^2\eqsp,
\]
where the last inequality come from $|\theta|/x\le 2/3$. Then, it yields
\begin{align}
\int_{\rset^*_+} \left|\mathcal{E}_2(x)\right|^5\pigamma(x)\rmd x &\le C|\theta|^{10} \parenthese{\int_{3|\theta|/2}^{1} x^{\rma_1-11}\mathrm{e}^{-x^{\rma_2}}\rmd x + \int_{1}^{+\infty} x^{\rma_1-11}\mathrm{e}^{-x^{\rma_2}}\rmd x}\eqsp,\nonumber\\
&\le C(|\theta|^{\rma_1}+|\theta|^{10})\eqsp.
\label{eq:item:verif_gamma_2}
\end{align}
For the last term, for all $\theta<0$ and all $x\ge 3|\theta|/2$, using a Taylor expansion of $x\mapsto x^{\rma_2}$, there exists $\zeta \in [x+\theta,x]$ such that
\[
\left|(x+\theta)^{\rma_2} - x^{\rma_2} - \rma_2\theta  x^{\rma_2-1}\right|\le C |\theta|^{2}|\zeta|^{\rma_2-2} \le C|\theta|^{2}\left|x\right|^{\rma_2-2}\eqsp.
\]
Then,
\begin{equation}
\int_{\rset^*_+} \left|\mathcal{E}_3(x)\right|^5 \pigamma(x)\rmd x \le C|\theta|^{10}\int_{3|\theta|/2}^{+\infty}x^{5(\rma_2-2) + \rma_1-1}\mathrm{e}^{-x^{\rma_2}}\rmd x
\leq C(|\theta|^{5\rma_2 + \rma_1} + |\theta|^{10}) \eqsp.
\label{eq:item:verif_gamma_3}
\end{equation}
Combining \eqref{eq:item:verif_gamma_1}, \eqref{eq:item:verif_gamma_2},\eqref{eq:item:verif_gamma_3} and using that $\rma_1 > 6$ concludes the proof of \Cref{assum:diff:quadratic:G}\ref{hyp:mean_square_deriv2:G} for $p=5$. The proof of \Cref{assum:diff:quadratic:G}\ref{assum:X6:G} follows from 
\begin{align*}
\int_{\rset^*_+}|\Vgammadot(x)|^6\pigamma(x) \rmd x
& \leq C\left(\int_{\rset^*_+}x^{\rma_1-1+6(\rma_2-1)}\mathrm{e}^{-x^{\rma_2}}\rmd x+\int_{\rset^*_+}x^{\rma_1-7}\mathrm{e}^{-x^{\rma_2}}\rmd x\right) < \infty
\end{align*}
and \Cref{assum:diff:quadratic:G}\ref{assum:proba:G} follows from $\int_{\rset} \1_{\I^c}(x+\theta) \pigamma(x)\rmd x \le C |\theta|^{\rma_1}$. Now consider the Langevin equation associated with $\pigamma$ given by
$ \rmd Y_t = -\Vgammadot(Y_t) \rmd t + \sqrt{2} \rmd B_t $
with initial distribution $\pigamma$.  This stochastic differential
equation has $0$ as singular point, which has right type $3$ according
to the terminology of \cite{cherny:engelbert:2005}. On the other hand
$\infty$ has type $A$ and the existence and uniqueness in law for
the SDE follows from \cite[Theorem 4.6
(viii)]{cherny:engelbert:2005}. Since \Cref{assum:Vdot:G} is
straightforward, \Cref{theo:diffusion_limit_RMW:G} can be applied.
\end{example}

\begin{example}[Application to the Beta distribution]
Consider now the case of the Beta distributions $\pibeta$ with
density $x\mapsto x^{a_1-1}(1-x)^{a_2-1} \1_{\ooint{0,1}}(x)$ with
$a_1,a_2>6$. Here $\I = \ooint{0,1}$ and the log-density $\Vbeta$ and its derivative on $\I$ are defined by
$\Vbeta(x)= - (a_1-1)\log x - (a_2-1) \log(1-x)$ and $\Vbetadot(x)=  - (a_1-1)/x - (a_2-1)/(1-x)$. 
Along the same lines as above, $\pibeta$ satisfies \Cref{assum:diff:quadratic:G} and \Cref{assum:Vdot:G}. Hence
\Cref{theo:result_acceptance_rate_RWM:G} can be applied if we establish the
uniqueness in law for the Langevin equation associated with $\pibeta$
defined by $  \rmd Y_t = -\Vbetadot(Y_t) \rmd t + \sqrt{2} \rmd B_t $
with initial distribution $\pibeta$. In the terminology
of \cite{cherny:engelbert:2005}, $0$ has right type $3$ and $1$ has
left type $3$. Therefore by \cite[Theorem 2.16 (i),
(ii)]{cherny:engelbert:2005}, the SDE has a global unique weak solution.
To illustrate our findings, consider the Beta
distribution with parameters $a_1=10$ and $a_2=10$. Define the expected
square distance by $\ESJD^d(\ell) = \expe{\norm{X_1^d-X_0^d}^2}$
 where $X_0^d$ has distribution $\pibeta^d$ and $X_1^d$ is the first
 iterate of the Markov chain defined by the Random Walk Metropolis
 algorithm given in \eqref{eq:proposaliid}. By
 \Cref{theo:result_acceptance_rate_RWM:G} and
 \Cref{theo:diffusion_limit_RMW:G}, we have $\lim_{d \to \plusinfty}
 \ESJD^d(\ell) = h(\ell) = \ell^2 a(\ell)$.  \Cref{fig:ESJD_beta} displays an  empirical estimation for the $\ESJD^d$ for dimensions
 $d=10,50,100$ as a function of the empirical mean acceptance rate. We
 can observe that as expected, the $\ESJD^d$ converges to some limit
 function as $d$ goes infinity, and this function has a maximum for a
 mean acceptance probability around $0.23$.
 \begin{figure}[h]
   \centering
   \includegraphics[scale=0.5]{./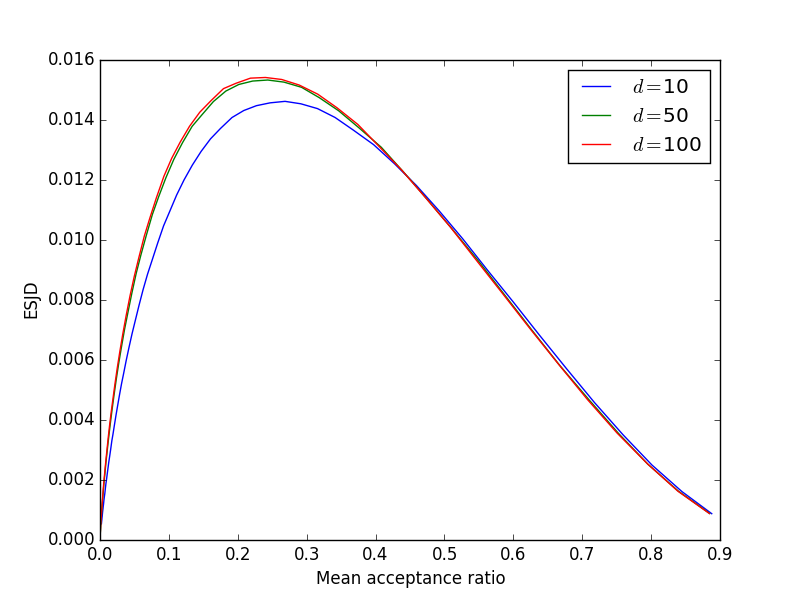}
   \caption{Expected square jumped distance for the beta distribution with parameters $a_1=10$ and $a_2=10$ as a function of the mean acceptance rate for $d=10,50,100$.}
   \label{fig:ESJD_beta}
 \end{figure}
 \end{example}

%%% Local Variables:
%%% mode: latex
%%% TeX-master: "scaling_HM_L1"
%%% End:

\section{Proofs of \Cref{sec:R}}
\label{sec:proofs}
For any real random variable $Y$ and any $p \geq 1$, let $\norm{Y}_{p} \eqdef \PE[|Y|^p]^{1/p}$.
\subsection{Proof of \Cref{lem:DQM}}
\label{sec:proof-DQM}
Let $\Delta_{\theta}V(x) = V(x)-V(x+\theta)$. By definition of $\sqpdf_\theta$ and $\unitarget$,
\[
 \left(\sqpdf_\theta(x)-\sqpdf_0(x) + \theta \Vdot(x)\sqpdf_0(x)/2\right)^2 \le 2\left\{A_{\theta}(x) + B_{\theta}(x)\right\}\unitarget(x)\eqsp,
\]
where
\begin{align*}
&A_{\theta}(x)= \left(\exp(\Delta_{\theta}V(x)/2) - 1 - \Delta_{\theta}V(x)/2\right)^2 \eqsp, \\
&B_{\theta}(x)= \left(\Delta_{\theta}V(x)+ \theta \Vdot(x)\right)^2/4.
\end{align*}
By \Cref{assum:diff:quadratic}\ref{hyp:mean_square_deriv2}, $\norm{B_{\theta}}_{\pdf,p} \le C|\theta|^{\beta}$. For $A_{\theta}$, note that for all $x\in\rset$, $(\exp(x)-1-x)^2 \le 2x^4(\exp(2x)+1)$. Then,
\begin{align*}
\int_{\rset} A_{\theta}(x) \unitarget(x)\rmd x &\le C \int_{\rset} \Delta_{\theta}V(x)^4 \left(1+\rme^{\Delta_{\theta}V(x)}\right) \unitarget(x)\rmd x \\
&\le  C \int_{\rset} \left(\Delta_{\theta}V(x)^4 + \Delta_{-\theta}V(x)^4\right) \unitarget(x)\rmd x \eqsp.
\end{align*}
The proof is completed writing (the same inequality holds for $\Delta_{-\theta}V$):
\[
\int_{\rset} \Delta_{\theta}V(x)^4 \unitarget(x)\rmd x \le C\left[\int_{\rset} \left(\Delta_{\theta}V(x)-\theta\Vdot(x)\right)^4 \unitarget(x)\rmd x + \theta^4\int_{\rset} \Vdot^4(x) \unitarget(x)\rmd x\right]
\]
and using \Cref{assum:diff:quadratic}\ref{hyp:mean_square_deriv2}-\ref{assum:X6}.

%%% Local Variables:
%%% mode: plain-tex
%%% TeX-master: "scaling_HM_L1"
%%% End:
\subsection{Proof of \Cref{lem:approx_ratio}}
\label{proof:lem:approx_ratio}
Define
\begin{equation}
\label{eq:def_R}
R(x) = \int_0^x \frac{(x-u)^2}{(1+u)^3} \rmd u \eqsp.
\end{equation}
$R$ is the remainder term of the Taylor expansion of $x\mapsto \log(1+x)$:
\begin{equation}
  \label{eq:dev_taylor_log}
\log(1+x) = x-x^2/2+R(x) \eqsp.
\end{equation}
We preface the proof by the following Lemma.
\begin{lemma}
\label{lem:integrated-DQM}
Assume \Cref{assum:diff:quadratic} holds. Then,
if $X$ is a random variable distributed according to $\pi$ and $Z$ is a standard Gaussian
random variable independent of $X$,
\begin{enumerate}[label=(\roman*)]
\item \label{lem:integrated-DQM-L2} $\lim_{d \to \plusinfty} d \ \left\|\zeta^{\dim}(X,Z) +\ell Z\Vdot(X)/( 2\sqrt{\dim}) \right\|_2^2 = 0$.
\item \label{lem:integrated-DQM-Lp} $\lim_{d \to \plusinfty} \sqrt{d}\norm{V(X)-V(X+\ell Z /\sqrt{d})+\ell Z\Vdot(X)/\sqrt{d}}_p = 0.$
\item \label{lem:integrated-DQM-remainder} $\lim_{\dim \to \infty} \dim \norm{ R\left(\zeta^{\dim}(X,Z)\right)}_1 = 0$,
\end{enumerate}
where $\zeta^{\dim}$ is given by \eqref{eq:def:zeta}.
\end{lemma}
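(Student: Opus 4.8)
\emph{Proof sketch.} The plan is to obtain all three limits from the single deterministic estimate $\norm{V(\cdot+\theta)-V(\cdot)-\theta\Vdot(\cdot)}_{\pdf,p}\le C\abs{\theta}^{\beta}$ of \Cref{assum:diff:quadratic}\ref{hyp:mean_square_deriv2} (with $p>4$, $\beta>1$) together with $\norm{\Vdot}_{\pdf,6}<\infty$. In each item the displacement $\theta=\ell\dim^{-1/2}Z$ is random, so the scheme is always the same: condition on $Z$ so that $\theta$ becomes deterministic, apply the estimate, and then average over $Z$ using that $Z$ has finite moments of every order. The extra negative power of $\dim$ that forces the limit to vanish comes from $\beta>1$ in (i)--(ii) and from $p>4$ in (iii).

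For (ii), conditioning on $Z$ gives $\PE[\abs{V(X+\theta)-V(X)-\theta\Vdot(X)}^p \mid Z]\le C^p\abs{\theta}^{p\beta}$, so averaging over $Z$ yields $\norm{V(X)-V(X+\ell Z/\sqrt\dim)+\ell Z\Vdot(X)/\sqrt\dim}_p\le C\ell^{\beta}\PE[\abs{Z}^{p\beta}]^{1/p}\dim^{-\beta/2}$, and multiplying by $\sqrt\dim$ leaves $O(\dim^{(1-\beta)/2})\to0$. For (i), I would first note that since $\pdf>0$ on $\Rset$, $\zeta^{\dim}(x,z)+1=\sqpdf_\theta(x)/\sqpdf_0(x)$ with $\theta=\ell z/\sqrt\dim$, whence
\[
\zeta^{\dim}(x,z)+\frac{\ell z}{2\sqrt\dim}\Vdot(x)=\bigl(\sqpdf_\theta(x)-\sqpdf_0(x)+(\theta/2)\Vdot(x)\sqpdf_0(x)\bigr)/\sqpdf_0(x)\eqsp.
\]
Squaring, integrating against $\pdf$ and using $\sqpdf_0^2=\pdf$, the conditional second moment equals $\int_{\Rset}\bigl(\sqpdf_\theta-\sqpdf_0+(\theta/2)\Vdot\sqpdf_0\bigr)^2\le C^2\abs{\theta}^{2\beta}$ by \Cref{lem:DQM}; averaging over $Z$ and multiplying by $\dim$ gives $O(\dim^{1-\beta})\to0$.

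Item (iii) is the delicate one. Set $w\eqdef\log(1+\zeta^{\dim}(X,Z))=\tfrac12(V(X)-V(X+\theta))$ with $\theta=\ell Z/\sqrt\dim$. By \eqref{eq:dev_taylor_log}, $R(\zeta^{\dim}(X,Z))=f(w)$ where $f(w)\eqdef w+\tfrac32-2\rme^{w}+\tfrac12\rme^{2w}$, and since $f(0)=f'(0)=f''(0)=0$ one has $\abs{f(w)}\le C_{c_0}\abs{w}^3$ for $\abs{w}\le c_0$ (any fixed $c_0>0$), while $0\le f(w)\le 3\rme^{2w}$ for $w\ge0$ and $\abs{f(w)}\le\abs{w}+4$ for $w\le0$. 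I would split $\PE[\abs{R(\zeta^{\dim}(X,Z))}]$ over $\{\abs{w}\le c_0\}$, $\{w>c_0\}$ and $\{w<-c_0\}$. Writing $w=-\tfrac12(r_\theta(X)+\theta\Vdot(X))$ with $r_\theta(x)=V(x+\theta)-V(x)-\theta\Vdot(x)$, and using $\norm{r_\theta}_{\pdf,q}\le C\abs{\theta}^{\beta}$ for $q\le p$ together with $\norm{\Vdot}_{\pdf,6}<\infty$, one gets after averaging over $Z$ that $\PE[\abs{w}^3]=O(\dim^{-3/2})$ and, with $p'\eqdef p\wedge6\in(4,6]$, $\PE[\abs{w}^{p'}]=O(\dim^{-p'/2})$ (here $\beta>1$ makes the $r_\theta$ contribution negligible next to the $\theta\Vdot$ one). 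Hence the $\{\abs{w}\le c_0\}$ piece is $\le C\dim\PE[\abs{w}^3]=O(\dim^{-1/2})\to0$, and, by Markov's inequality of order $p'$, the $\{w<-c_0\}$ piece is $\le\dim\PE[(\abs{w}+4)\1_{\{\abs{w}>c_0\}}]\le C\dim\,\PE[\abs{w}^{p'}]=O(\dim^{1-p'/2})\to0$.

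The hard part will be the $\{w>c_0\}$ piece, where $\abs{f(w)}\le3\rme^{2w}$ and one must show $\dim\PE[\rme^{2w}\1_{\{w>c_0\}}]\to0$. A plain Cauchy--Schwarz estimate is useless here, because $\PE[\rme^{2qw}\mid Z]=\int\pdf(x+\theta)^{q}\pdf(x)^{1-q}\rmd x\le1$ only for $q\in(0,1]$ (by H\"older), so $\rme^{2w}$ carries no uniformly bounded moment of order $>1$. The decisive step is a change of variables: conditionally on $Z$, since $\rme^{2w}=\pdf(X+\theta)/\pdf(X)$,
\[
\PE[\rme^{2w}\1_{\{w>c_0\}} \mid Z]=\int_{\Rset}\pdf(x+\theta)\1_{\{V(x)-V(x+\theta)>2c_0\}}\rmd x=\PP\bigl(V(Y-\theta)-V(Y)>2c_0\bigr)\eqsp,
\]
with $Y\sim\pdf$ and $\theta=\ell Z/\sqrt\dim$ --- the exponential moment is thereby replaced by a genuine tail probability. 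Since $V(Y-\theta)-V(Y)=\tilde r(Y)-\theta\Vdot(Y)$ with $\tilde r(y)=V(y-\theta)-V(y)+\theta\Vdot(y)$ and $\norm{\tilde r}_{\pdf,p'}\le C\abs{\theta}^{\beta}$ (apply \Cref{assum:diff:quadratic}\ref{hyp:mean_square_deriv2} with $-\theta$), Markov's inequality of order $p'$ bounds this probability by $C(\abs{\theta}^{p'\beta}+\abs{\theta}^{p'})$; averaging over $Z$ gives $\PE[\rme^{2w}\1_{\{w>c_0\}}]=O(\dim^{-p'/2})$, hence $\dim\PE[\rme^{2w}\1_{\{w>c_0\}}]=O(\dim^{1-p'/2})\to0$ because $p'>4$. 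Combining the three regimes establishes (iii).
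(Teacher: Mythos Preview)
Your arguments for (i) and (ii) coincide with the paper's: condition on $Z$, apply \Cref{assum:diff:quadratic}\ref{hyp:mean_square_deriv2} (resp.\ \Cref{lem:DQM}) with $\theta=\ell Z/\sqrt{\dim}$, then average over $Z$.

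For (iii) your proof is correct but more elaborate than the paper's. You split according to the sign and size of $w=\tfrac12\bigl(V(X)-V(X+\theta)\bigr)$ and handle each regime separately; the only delicate one, $\{w>c_0\}$, you dispatch via the change-of-measure identity $\rme^{2w}\pi(x)=\pi(x+\theta)$, which turns the exponential moment into a tail probability for $V(\cdot-\theta)-V(\cdot)$ under $\pi$. The paper uses exactly the same identity, but earlier and without any case analysis: from the pointwise bound $\abs{R(x)}\le x^2\abs{\log(1+x)}$ (valid for all $x>-1$) together with $\abs{\rme^u-1}\le\abs{u}(\rme^u+1)$ one gets in one stroke
\[
\abs{R(\zeta^{\dim}(x,z))}\le \tfrac14\abs{V(x+\theta)-V(x)}^{3}\bigl(\rme^{V(x)-V(x+\theta)}+1\bigr),
\]
and since $\rme^{V(x)-V(x+\theta)}\pi(x)=\pi(x+\theta)$ the integral against $\pi$ becomes $\tfrac14\int\abs{V(x+\theta)-V(x)}^{3}\{\pi(x)+\pi(x+\theta)\}\rmd x$, which is bounded by $C(\abs{\theta}^{3}+\abs{\theta}^{3\beta})$ via \Cref{assum:diff:quadratic}. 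So the paper's route is shorter and yields the same $O(\dim^{-1/2})$ rate; your decomposition works just as well and makes transparent that the only genuine obstruction is the region $\{w>c_0\}$, where the change of measure is indispensable. A small side remark: your sentence ``the extra negative power \dots comes from \dots $p>4$ in (iii)'' overstates the requirement --- your own bounds show $p'>2$ suffices for (iii); the constraint $p>4$ is used elsewhere in the paper (notably in \Cref{lem:kolmo}).
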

\begin{proof}
Using the definitions \eqref{eq:def_xitheta} and \eqref{eq:def:zeta} of $\zeta^{\dim}$ and $\sqpdf_\theta$ ,
\begin{equation}
\label{eq:relationxi_zeta}
\zeta^{\dim}(x,z)=  \sqpdf_{\ell z\dim^{-1/2}}(x)/\sqpdf_0(x)-1 \eqsp.
\end{equation}

\begin{enumerate}[label=(\roman*), wide=0pt, labelindent=\parindent]
\item The proof follows from \Cref{lem:DQM} using that $\beta> 1$:
\begin{equation*}
\left\|\zeta^{\dim}(X,Z) +\ell Z\Vdot(X)/( 2\sqrt{\dim})\right\|_2^2 
\leq C \ell^{2\beta}\dim^{-\beta}\PE\left[|Z|^{2\beta}\right]\eqsp.
\end{equation*}

\item Using \Cref{assum:diff:quadratic}\ref{hyp:mean_square_deriv2}, we get that
\[
\norm{V(X)-V(X+\ell Z /\sqrt{d})+\ell Z\Vdot(X)/\sqrt{d}}_p^p
\le C\ell^{\beta p}\dim^{-\beta p/2}\PE\left[\left|Z\right|^{\beta p}\right]
\]
and the proof follows since $\beta> 1$.
\item
Note that for all $x >0$, $u \in \ccint{0,x}$, $\vert
(x-u)(1+u)^{-1} \vert \leq \abs{x}$, and the same inequality holds for $x \in
\ocint{-1,0}$ and $u \in \ccint{x,0}$. Then by \eqref{eq:def_R} and
\eqref{eq:dev_taylor_log}, for all $x>-1$, $\abs{R(x)} \le x^2\left|\log(1+x)\right|$.

Then by \eqref{eq:relationxi_zeta}, setting $\Psi_d(x,z)= R(\zeta^\dim(x,z))$
\begin{equation*}
\left| \Psi_d(x,z)\right|
\le \left(\sqpdf_{\ell z\dim^{-1/2}}(x)/\sqpdf_0(x)-1\right)^2\left|V(x+\ell z\dim^{-1/2}) -V(x)\right|/2\eqsp.
\end{equation*}
Since for all $x\in\rset$, $|\exp(x)-1|\le |x|(\exp(x)+1)$, this yields,
\[
\left|\Psi_d(x,z) \right| \le 4^{-1}\left|V(x+\ell z\dim^{-1/2}) -V(x)\right|^3\left(\exp\left(V(x)-V(x+\ell z\dim^{-1/2})\right)+1\right)\eqsp,
\]
which implies that
\[
 \int_{\rset} \left|\Psi_d(x,z)\right| \pi(x)\rmd x \le 4^{-1}\int_{\rset} \left|V(x+\ell z\dim^{-1/2}) -V(x)\right|^3 \{ \pi(x) + \pi(x+\ell z\dim^{-1/2}) \} \rmd x\eqsp.
\]
By H\"older's inequality and using \Cref{assum:diff:quadratic}\ref{hyp:mean_square_deriv2},
\[
\int_{\rset} \left|\Psi_d(x,z)\right| \pi(x)\rmd x \le C\left(\left|\ell z\dim^{-1/2}\right|^3 \left(\int_{\rset} \left|\Vdot(x)\right|^4\pi(x)\rmd x\right)^{3/4}+ \left|\ell z\dim^{-1/2}\right|^{3\beta}\right)\eqsp.
\]
The proof follows from \Cref{assum:diff:quadratic}\ref{assum:X6} since $\beta>1$.
\end{enumerate}
\end{proof}
For all $d \geq 1$, let $X^d$ be distributed according to $\pi^d$, and $Z^d$ be
$d$-dimensional Gaussian random variable independent of $X^d$, set
$$
\Jrm^{\dim}= \norm{\sum_{i=2}^{\dim}
\left\{ \Delta V^{\dim}_{i} - b^\dim(X_i^\dim,Z_i^\dim) \right\}}_1 \eqsp,
$$
where $\Delta V^{\dim}_{i}$ and $b^\dim$ are defined in \eqref{eq:Delta_V} and \eqref{eq:bdi}, respectively.
\begin{lemma}
\label{lem:interm_prop_approx_ratio}
$\lim_{\dim\to\plusinfty}\Jrm^{\dim}= 0$.
\end{lemma}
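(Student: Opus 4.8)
The plan is to expand each summand $\Delta V^\dim_i-b^\dim(X_i^\dim,Z_i^\dim)$ and to show that, after a suitable regrouping, the partial sums vanish in $\rmL^1$. Since $1+\zeta^\dim(x,z)=\exp\{(V(x)-V(x+\ell\dim^{-1/2}z))/2\}$, one has $\Delta V^\dim_i=2\log(1+\zeta^\dim_i)$ with $\zeta^\dim_i\eqdef\zeta^\dim(X_i^\dim,Z_i^\dim)$, so the Taylor expansion \eqref{eq:dev_taylor_log} of $x\mapsto\log(1+x)$ gives $\Delta V^\dim_i=2\zeta^\dim_i-(\zeta^\dim_i)^2+2R(\zeta^\dim_i)$. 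Writing $r^\dim_i\eqdef\zeta^\dim_i+\ell\dim^{-1/2}Z_i^\dim\Vdot(X_i^\dim)/2$, subtracting $b^\dim(X_i^\dim,Z_i^\dim)$, using that $\PE[2\zeta^\dim(X_1^\dim,Z_1^\dim)]$ does not depend on $i$ and equals $2\PE[\zeta^\dim_i]=2\PE[r^\dim_i]$ (because $Z_i^\dim$ is centred and independent of $X_i^\dim$), and inserting into the quadratic term the elementary identity $(\zeta^\dim_i)^2=\ell^2\dim^{-1}(Z_i^\dim)^2\Vdot^2(X_i^\dim)/4-\ell\dim^{-1/2}Z_i^\dim\Vdot(X_i^\dim)r^\dim_i+(r^\dim_i)^2$, I would arrive at
\begin{equation*}
\sum_{i=2}^\dim\{\Delta V^\dim_i-b^\dim(X_i^\dim,Z_i^\dim)\}=S_1+S_2+S_3,
\end{equation*}
where $S_1=2\sum_{i=2}^\dim(r^\dim_i-\PE[r^\dim_i])$, $S_3=2\sum_{i=2}^\dim R(\zeta^\dim_i)$, and
\begin{equation*}
S_2=\sum_{i=2}^\dim\parenthese{\frac{\ell^2}{4\dim}\Vdot^2(X_i^\dim)\bigl(1-(Z_i^\dim)^2\bigr)+\frac{\ell}{\sqrt\dim}Z_i^\dim\Vdot(X_i^\dim)r^\dim_i-(r^\dim_i)^2}\eqsp.
\end{equation*}

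I would then control the three sums separately. For $S_1$ the summands are i.i.d. and centred, with $\norm{r^\dim_i}_2=o(\dim^{-1/2})$ by \Cref{lem:integrated-DQM}(i), so $\norm{S_1}_1\le\norm{S_1}_2=2\sqrt{\dim-1}\,\norm{r^\dim_2-\PE[r^\dim_2]}_2\le2\sqrt{\dim}\,\norm{r^\dim_2}_2\to0$. For $S_3$, \Cref{lem:integrated-DQM}(iii) gives $\PE|R(\zeta^\dim_i)|=o(\dim^{-1})$, hence $\norm{S_3}_1\le2(\dim-1)\,\PE|R(\zeta^\dim_2)|\to0$. For $S_2$, the first contribution $\ell^2\Vdot^2(X_i^\dim)(1-(Z_i^\dim)^2)/(4\dim)$ is i.i.d., centred, and has $\rmL^2$ norm of order $\dim^{-1}$ (using $\norm{\Vdot}_{\pi,4}\le\norm{\Vdot}_{\pi,6}<\infty$ from \Cref{assum:diff:quadratic}\ref{assum:X6}, $\PE[(1-(Z_i^\dim)^2)^2]=2$, and independence), so its partial sum has $\rmL^2$ norm of order $\dim^{-1/2}\to0$; the remaining two contributions I would bound without using any cancellation, via $\PE|\ell\dim^{-1/2}Z_i^\dim\Vdot(X_i^\dim)r^\dim_i|\le\ell\dim^{-1/2}\norm{\Vdot}_{\pi,2}\norm{r^\dim_i}_2=o(\dim^{-1})$ (Cauchy--Schwarz together with \Cref{lem:integrated-DQM}(i)) and $\PE[(r^\dim_i)^2]=\norm{r^\dim_i}_2^2=o(\dim^{-1})$, so that summing $\dim-1$ such terms still yields $o(1)$ in $\rmL^1$. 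A final triangle inequality then gives $\Jrm^\dim\to0$.

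The only step calling for some care is the quadratic term $(\zeta^\dim_i)^2$: \Cref{lem:integrated-DQM} controls $\zeta^\dim_i$ only in $\rmL^2$, through the remainder $r^\dim_i$, and not in $\rmL^4$, so $\norm{(\zeta^\dim_i)^2}_2$ cannot be estimated directly. The device is the substitution $\zeta^\dim_i=-\ell\dim^{-1/2}Z_i^\dim\Vdot(X_i^\dim)/2+r^\dim_i$ used above: expanding its square produces a leading Gaussian term which, once paired with the term $\ell^2\Vdot^2(X_i^\dim)/(4\dim)$ supplied by $b^\dim$, collapses into the centred quantity $\ell^2\Vdot^2(X_i^\dim)(1-(Z_i^\dim)^2)/(4\dim)$, whose $\rmL^2$ size involves only a fourth moment of $\Vdot$, while the cross term and $(r^\dim_i)^2$ are of order $o(\dim^{-1})$ in $\rmL^1$, small enough that the crude bound $(\dim-1)\,o(\dim^{-1})=o(1)$ closes the argument. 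Everything else is a routine use of independence and of $\norm{\cdot}_1\le\norm{\cdot}_2$.
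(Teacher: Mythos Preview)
Your proof is correct and follows essentially the same approach as the paper's: both use the Taylor expansion $\Delta V_i^\dim=2\zeta_i^\dim-(\zeta_i^\dim)^2+2R(\zeta_i^\dim)$, handle the linear and remainder parts via \Cref{lem:integrated-DQM}(i) and (iii) respectively, and treat the quadratic term by the substitution $\zeta_i^\dim=-\ell\dim^{-1/2}Z_i^\dim\Vdot(X_i^\dim)/2+r_i^\dim$ so that only $\norm{r_i^\dim}_2$ and $\norm{\Vdot}_{\pi,4}$ are needed. The only cosmetic difference is that you first note $\PE[\zeta_i^\dim]=\PE[r_i^\dim]$ to write the centred linear sum directly in terms of $r_i^\dim$, whereas the paper keeps $\PE[2\zeta_i^\dim]$ in the summand; the estimates are otherwise identical.
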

  \begin{proof}
Noting that $\Delta V^{\dim}_{i} = 2\log\left(1+\zeta^{\dim}\left(X_{i}^{\dim},Z_i^{\dim}\right)\right)$ and using \eqref{eq:dev_taylor_log}, we get
\begin{multline*}
\Jrm^{\dim} \leq \sum_{i=1}^3 \Jrm_i^{\dim} = \left\|\sum_{i=2}^{\dim}2\zeta^{\dim}\left(X_{i}^{\dim},Z_i^{\dim}\right) +\frac{\ell
  Z_{i}^{\dim}}{\sqrt{\dim}}\Vdot(X_{i}^{\dim})-\PE\left[2\zeta^{\dim}(X_{i}^{\dim},Z_{i}^{\dim})\right]\right\|_1 \\ + \left\|\sum_{i=2}^{\dim}\zeta^{\dim}\left(X_{i}^{\dim},Z_i^{\dim}\right)^2-\frac{\ell^2 }{4\dim}\Vdot^2(X_{i}^{\dim})\right\|_1
  +2 \left\|\sum_{i=2}^{\dim}R\left(\zeta^{\dim}\left(X_{i}^{\dim},Z_i^{\dim}\right)\right)\right\|_1\eqsp,
\end{multline*}
where $R$ is defined by \eqref{eq:def_R}. By \Cref{lem:integrated-DQM}\ref{lem:integrated-DQM-L2}, the first term goes to $0$ as $\dim$ goes to $+\infty$ since
\[
  \Jrm_1^{\dim} \le \sqrt{\dim}\,\left\|2\zeta^{\dim}\left(X_{1}^{\dim},Z_1^{\dim}\right) +\frac{\ell
  Z_{1}^{\dim}}{\sqrt{\dim}}\Vdot(X_{1}^{\dim})\right\|_2\eqsp.
\]
Consider now $\Jrm_2^{\dim}$. We use the following decomposition for all $2\le i \le \dim$,
\begin{multline*}
\zeta^{\dim} (X_i^{d},Z_i^{d})^2 - \frac{\ell^2
}{4\dim}\Vdot^2(X_i^{d}) = \left(\zeta^{\dim}(X_i^{d},Z_i^{d}) + \frac{\ell
}{2\sqrt{\dim}}Z_i^{\dim}\Vdot(X_i^{d})\right)^2 \\
-\frac{\ell}{\sqrt{\dim}}Z_i^{\dim}\Vdot(X_i^{d})\left(\zeta^{\dim}(X_i^{d},Z_i^{d})+\frac{\ell}{2\sqrt{\dim}}Z_i^{\dim}\Vdot(X_i^{d})
\right) + \frac{\ell^2}{4\dim}\left\{(Z_i^{\dim})^2-1\right\}\Vdot^2(X_i^{d}) \eqsp.
\end{multline*}
Then,
\begin{multline*}
\Jrm^{\dim}_2 \le \dim\,\left\|\zeta^{\dim}(X_1^{d},Z_1^{d})
+ \frac{\ell
}{2\sqrt{\dim}}Z_1^{\dim}\Vdot(X_1^{d})\right\|_2^2  + \frac{\ell^2}{4\dim}\,\left\|\sum_{i=2}^{\dim}\Vdot^2(X_i^{d})\left\{(Z_i^{\dim})^2-1\right\}\right\|_1\\
+ \ell\sqrt{\dim}\,\left\|\Vdot(X_1^{d})Z_1^{\dim}\left(\zeta^{\dim}(X_1^{d},Z_1^{d}) + \frac{\ell
}{2\sqrt{\dim}}Z_1^{\dim}\Vdot(X_1^{d})\right)\right\|_1
\eqsp.
\end{multline*}
Using \Cref{assum:diff:quadratic}\ref{assum:X6}, \Cref{lem:integrated-DQM}\ref{lem:integrated-DQM-L2} and the Cauchy-Schwarz
inequality show that the first  and the last term converge to zero. For the second term note that $\PE\left[(Z_i^{\dim})^2-1\right] = 0$ so that
\begin{equation*}
\dim^{-1} \norm{\sum_{i=2}^{\dim}\Vdot^2(X_i^{d})\left\{(Z_i^{\dim})^2-1\right\}}_1
\leq \dim^{-1/2} \,\mathrm{Var}\left[\Vdot^2(X_1^{d})\left\{(Z_1^{\dim})^2-1\right\}\right]^{1/2} \to 0\eqsp.
\end{equation*}
Finally, $\lim_{\dim \to \infty} \Jrm_3^{\dim}= 0$ by  \eqref{eq:dev_taylor_log} and \Cref{lem:integrated-DQM}\ref{lem:integrated-DQM-remainder}.
\end{proof}

\begin{proof}[Proof of \Cref{lem:approx_ratio}]
Let $q >0$ and $\Lambda^d = - \ell \dim^{-1/2} Z_{1}^{d}\Vdot(X_{1}^{d}) + \sum_{i=2}^d \Delta V^{\dim}_{i}$. By the triangle inequality,
$\Erm^{\dim}(q)\leq \Erm^{\dim}_1(q) + \Erm^{\dim}_2(q)$ where
\begin{align*}
\Erm^{\dim}_1(q) &= \PE\left[ \left(Z_{1}^d\right)^q \left|  1 \wedge \exp\left\{\sum_{i=1}^d \Delta V^{\dim}_{i}\right\} - 1 \wedge \exp\left\{\Lambda^d\right\} \right| \right]\eqsp,\\
\Erm^{\dim}_2(q) &= \PE\left[ \left(Z_{1}^d\right)^q  \left|1 \wedge \exp\left\{\Lambda^d\right\}   - 1 \wedge \exp\left\{\upsilon^d\right\} \right| \right] \eqsp.
\end{align*}
Since $t \mapsto 1 \wedge \rme^t$ is $1$-Lipschitz, by the Cauchy-Schwarz inequality we get
\[
\Erm^{\dim}_1(q) \leq \norm{Z_1^{\dim}}_{2q}^q \norm{\Delta V^{\dim}_1 + \ell \dim^{-1/2} Z_{1}^{d}\Vdot(X_{1}^{d})}_2  \eqsp.
\]
By \Cref{lem:integrated-DQM}\ref{lem:integrated-DQM-Lp}, $\Erm^{\dim}_1(q)$ goes to $0$ as $d$ goes to $\plusinfty$.
Consider now $\Erm^{\dim}_2(q)$.  Using again that $t \mapsto 1 \wedge \rme^t$ is $1$-Lipschitz and \Cref{lem:interm_prop_approx_ratio}, 
$\Erm^{\dim}_2(q)$ goes to $0$.
\end{proof}

%%% Local Variables:
%%% mode: latex
%%% TeX-master: "scaling_HM_L1"
%%% End:
\subsection{Proof of \Cref{theo:result_acceptance_rate_RWM}}
\label{proof:theo:result_acceptance_rate_RWM}
Following \cite{jourdain:lelievre:miasojedow:2015}, we introduce the function $\G$ defined on $\overline{\Rset}_+\times \Rset$ by:
\begin{equation}
\label{eq:defG}
\G(a,b) = \left\{
    \begin{array}{ll}
        \mathrm{exp}\left(\frac{a-b}{2}\right)\Phi\left(\frac{b}{2\sqrt{a}}-\sqrt{a}\right) &
        \mbox{if}\; a\in(0,+\infty)\eqsp, \\
        0 & \mbox{if}\; a=+\infty\eqsp,\\
        \mathrm{exp}\left(-\frac{b}{2}\right)\1_{\left\{b>0\right\}} & \mbox{if}\; a=0\eqsp,
    \end{array}
\right.
\end{equation}
where $\Phi$ is the cumulative distribution function of a standard normal variable, and $\Gamma$:
\begin{equation}
\label{eq:defGamma}
\Gamma(a,b) = \left\{
    \begin{array}{ll}
        \Phi\left(-\frac{b}{2\sqrt{a}}\right) + \mathrm{exp}\left(\frac{a-b}{2}\right)\Phi\left(\frac{b}{2\sqrt{a}}-\sqrt{a}\right) &
        \mbox{if}\; a\in(0,+\infty)\eqsp, \\
        \frac{1}{2} & \mbox{if}\; a=+\infty\eqsp,\\
        \mathrm{exp}\left(-\frac{b_+}{2}\right) & \mbox{if}\; a=0\eqsp.
    \end{array}
\right.
\end{equation}
Note that $\mathcal{G}$ and $\Gamma$ are bounded on $\overline{\Rset}_+\times \Rset$. $\mathcal{G}$ and $\Gamma$ are used throughout \Cref{sec:proofs}.

\begin{lemma}
\label{lem:mean:zeta}
Assume \Cref{assum:diff:quadratic} holds. For
all $d \in \nset^*$, let
$X^{\dim}$ be a random variable distributed according to $\target^{\dim}$ and $Z^{\dim}$
be a standard Gaussian random variable in $\rset^d$, independent of $X$. Then,
\[
\lim_{d \to \plusinfty}\dim\,\PE\left[2\zeta^{\dim}(X^{\dim}_1,Z^{\dim}_1)\right] = -\frac{\ell^2}{4}I\eqsp,
\]
where $I$ is defined in \eqref{eq:translation-score} and $\zeta^{\dim}$ in \eqref{eq:def:zeta}.
\end{lemma}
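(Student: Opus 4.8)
The plan is to relate the quantity $\dim\,\PE[2\zeta^{\dim}(X_1^{\dim},Z_1^{\dim})]$ to the DQM expansion from \Cref{lem:DQM}. The key algebraic identity is the one already flagged in the text after \Cref{theo:result_acceptance_rate_RWM}: since $X_1^{\dim}$ has density $\pi$ and $Z_1^{\dim}$ is standard Gaussian, using \eqref{eq:relationxi_zeta} one has
\[
\PE\left[2\zeta^{\dim}(X_1^{\dim},Z_1^{\dim})\right] = \PE\left[2\left(\frac{\sqpdf_{\ell Z\dim^{-1/2}}(X)}{\sqpdf_0(X)} - 1\right)\right] = -\PE\left[\left(\zeta^{\dim}(X_1^{\dim},Z_1^{\dim})\right)^2\right]\eqsp,
\]
because $\int (\sqpdf_\theta - \sqpdf_0)\sqpdf_0 = \int \sqpdf_\theta\sqpdf_0 - 1$ and, by Cauchy--Schwarz applied to $\sqpdf_\theta,\sqpdf_0 \in \ltworset$ together with $\int\sqpdf_\theta^2 = \int\sqpdf_0^2 = 1$, one gets $\int(\sqpdf_\theta/\sqpdf_0 - 1)\pi = -\tfrac12\int(\sqpdf_\theta - \sqpdf_0)^2$. (Here one first takes the expectation over $X$ for fixed $z$, then over $Z$; a Fubini justification is needed, which follows from the integrability bounds below.) This reduces the statement to showing
\[
\lim_{d\to\plusinfty} \dim\,\PE\left[\left(\zeta^{\dim}(X_1^{\dim},Z_1^{\dim})\right)^2\right] = \frac{\ell^2}{4}I\eqsp.
\]

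For that limit I would use \Cref{lem:integrated-DQM}\ref{lem:integrated-DQM-L2}. Write
\[
\zeta^{\dim}(X,Z)^2 = \left(\zeta^{\dim}(X,Z) + \frac{\ell Z}{2\sqrt{\dim}}\Vdot(X)\right)^2 - \frac{\ell Z}{\sqrt{\dim}}\Vdot(X)\left(\zeta^{\dim}(X,Z) + \frac{\ell Z}{2\sqrt{\dim}}\Vdot(X)\right) + \frac{\ell^2 Z^2}{4\dim}\Vdot^2(X)\eqsp.
\]
Multiplying by $\dim$ and taking expectations: the first term contributes $\dim\|\zeta^{\dim}(X,Z) + \ell Z\Vdot(X)/(2\sqrt\dim)\|_2^2 \to 0$ by \Cref{lem:integrated-DQM}\ref{lem:integrated-DQM-L2}; the cross term is bounded in absolute value, via Cauchy--Schwarz and \Cref{assum:diff:quadratic}\ref{assum:X6}, by $\ell\sqrt{\dim}\,\|Z\Vdot(X)\|_2\,\|\zeta^{\dim}(X,Z)+\ell Z\Vdot(X)/(2\sqrt\dim)\|_2$, which tends to $0$ for the same reason (the $\sqrt\dim$ is absorbed); and the last term equals $\tfrac{\ell^2}{4}\PE[Z^2]\PE[\Vdot^2(X)] = \tfrac{\ell^2}{4}I$ exactly, using independence of $X$ and $Z$, $\PE[Z^2]=1$, and the definition \eqref{eq:translation-score} of $I$ together with $\norm{\Vdot}_{\pi,6}<\infty$ from \Cref{assum:diff:quadratic}\ref{assum:X6}.

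The only real subtlety, and the step I expect to require the most care, is the Fubini/integrability justification for the identity $\PE[2\zeta^{\dim}] = -\PE[(\zeta^{\dim})^2]$: one must check that $\int\int |\sqpdf_{\ell z\dim^{-1/2}}(x)/\sqpdf_0(x) - 1|\,\pi(x)\,\rmd x\,\varphi(z)\,\rmd z < \infty$ (where $\varphi$ is the Gaussian density) so that the exchange of the $x$- and $z$-integrals is legitimate and the per-$z$ identity $\int(\sqpdf_{\ell z\dim^{-1/2}}/\sqpdf_0 - 1)\pi = -\tfrac12\int(\sqpdf_{\ell z\dim^{-1/2}} - \sqpdf_0)^2$ can be integrated over $z$. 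This integrability is exactly what is controlled by \Cref{assum:diff:quadratic} via the bounds already established in the proof of \Cref{lem:integrated-DQM} (in particular $\norm{\zeta^{\dim}(X,Z)}_2$ is finite and $O(\dim^{-1/2})$), so the argument closes; everything else is the routine expansion above.
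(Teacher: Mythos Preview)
Your proposal is correct and follows essentially the same route as the paper: establish the Hellinger-type identity $\PE[2\zeta^{\dim}] = -\PE[(\zeta^{\dim})^2]$ via $\int(\sqpdf_\theta-\sqpdf_0)^2 = 2 - 2\int\sqpdf_\theta\sqpdf_0$, then invoke \Cref{lem:integrated-DQM}\ref{lem:integrated-DQM-L2} to conclude. The paper's write-up is terser (it just cites \Cref{lem:integrated-DQM}\ref{lem:integrated-DQM-L2} without spelling out your three-term expansion, since the $L^2$ triangle inequality already gives $\sqrt{\dim}\|\zeta^{\dim}\|_2 \to \ell\sqrt{I}/2$), but the argument is the same.
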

\begin{proof}
By \eqref{eq:def:zeta},
\begin{multline*}
\dim\,\PE\left[2\zeta^{\dim}(X^{\dim}_1,Z^{\dim}_1)\right] =2\dim\PE\left[\int_{\rset} \sqrt{\pdf\left(x+ \ell \dim^{-1/2} Z^{\dim}_1\right)}\sqrt{\pdf\left(x\right)}\rmd x-1\right]\eqsp,\\
=  -\dim\PE\left[\int_{\rset} \left(\sqrt{\pdf\left(x+ \ell\dim^{-1/2}\, Z^{\dim}_1\right)}-\sqrt{\pdf\left(x\right)}\right)^2\rmd x\right] = -\dim\,\PE\left[\{\zeta^{\dim}(X^{\dim}_1,Z^{\dim}_1)\}^2\right]\eqsp.
\end{multline*}
The proof is then completed by \Cref{lem:integrated-DQM}\ref{lem:integrated-DQM-L2}.
\end{proof}

\begin{proof}[Proof of \Cref{theo:result_acceptance_rate_RWM}]
By definition of $\setAccept_1^\dim$, see \eqref{eq:setAccept},
\[
\PP\left[\setAccept_1^\dim \right] = \PE\left[1 \wedge \exp\left\{\sum_{i=1}^d \Delta V^{\dim}_{i}\right\}\right]\eqsp,
\]
where $\Delta V_i^{\dim}= V(X_{0,i}^{\dim}) -
V(X_{0,i}^{\dim}+\ell\dim^{-1/2}Z^{\dim}_{1,i})$ and where $X_{0}^{\dim}$ is distributed
according to $\pi^{\dim}$ and independent of the standard $d$-dimensional Gaussian random
variable $Z_1^{\dim}$. Following the same steps as in the proof of \Cref{lem:approx_ratio}
yields:
\begin{equation}
\label{eq:lim-accratio}
\lim_{\dim\to\plusinfty}\left|\PP\left[\setAccept_1^\dim \right] -  \PE\left[1 \wedge \exp\left\{\Theta^{\dim}\right\}\right]\right| = 0\eqsp,
\end{equation}
where 
\[
\Theta^{\dim} =  -\ell
\dim^{-1/2}\sum_{i=1}^{\dim}Z_{1,i}^{\dim}\Vdot(X_{0,i}^{\dim}) - \ell^2\sum_{i=2}^d
\Vdot(X_{0,i}^{\dim})^2/(4\dim) +
2(\dim-1)\PE\left[\zeta^{\dim}(X_{0,1}^{\dim},Z_{1,1}^{\dim})\right] \eqsp.
\]
Conditional on $X_{0}^{\dim}$, $\Theta^{\dim}$ is a one dimensional Gaussian random variable with mean
$\mu_{\dim}$ and variance $\sigma^2_{\dim}$, defined by
\begin{align*}
\mu_{\dim}      &= - \ell^2\sum_{i=2}^d \Vdot(X_{0,i}^{\dim})^2/(4\dim) + 2(\dim-1)\PE\left[\zeta^{\dim}(X_{0,1}^{\dim},Z_{1,1}^{\dim})\right]\\
\sigma^2_{\dim} &= \ell^2\dim^{-1}\sum_{i=1}^{\dim}\Vdot(X_{0,i}^{\dim})^2\eqsp.
\end{align*}
Therefore, since for any $G\sim\mathcal{N}(\mu,\sigma^2)$, $\PE[1\wedge \exp(G)] =
\Phi(\mu/\sigma) +  \exp(\mu+\sigma^2/2)\Phi(-\sigma-\mu/\sigma)$, taking the expectation
conditional on $X^d_0$, we have
\begin{align*}
\PE\left[1 \wedge \exp\left\{\Theta^{\dim}\right\}\right] 
&= \PE\left[\Phi(\mu_{\dim}/\sigma_{\dim}) +\exp(\mu_{\dim}+\sigma_{\dim}^2/2)\Phi(-\sigma_{\dim}-\mu_{\dim}/\sigma_{\dim})\right]\\
&= \PE\left[\Gamma(\sigma^2_{\dim},-2\mu_{\dim})\right]\eqsp,
\end{align*}
where the function $\Gamma$ is defined in \eqref{eq:defGamma}. By \Cref{lem:mean:zeta} and the law of large numbers, almost surely, $\lim_{\dim\to\plusinfty} \mu_{\dim} = -\ell^2I/2$ and $\lim_{\dim\to\plusinfty} \sigma^2_{\dim} = \ell^2I$. Thus, as $\Gamma$ is bounded, by Lebesgue's dominated convergence theorem:
\[
\lim_{\dim\to\plusinfty}\PE\left[1 \wedge \exp\left\{\Theta^{\dim}\right\}\right] = 2\Phi\left(-\ell\sqrt{I}/2\right)\eqsp.
\]
The proof is then completed by \eqref{eq:lim-accratio}.
\end{proof}

%%% Local Variables:
%%% mode: latex
%%% TeX-master: "scaling_HM_L1"
%%% End:
\subsection{Proof of \Cref{prop:tight}}
\label{sec:proof:tightness}
By Kolmogorov's criterion it is enough to prove that there exists a non-decreasing function $\gamma:\mathbb{R}_+ \to \mathbb{R}_+$ such that for all $\dim\ge 1$ and all $0\le s\le t$,
\[
\PE\left[\left(Y_{t,1}^{\dim}-Y_{s,1}^{\dim}\right)^4\right] \le \gamma(t)(t-s)^2\eqsp.
\]
The inequality is straightforward for all $0\le s\le t$ such that $\lfloor\dim s\rfloor = \lfloor\dim t\rfloor$. For all $0\le s\le t$ such that $\lceil\dim s\rceil \le  \lfloor\dim t\rfloor$,
\begin{equation*}
Y_{t,1}^{\dim}-Y_{s,1}^{\dim}
=X_{\floor{\dim t},1}^{\dim}-X_{\ceil{\dim s},1}^{\dim} + \frac{\dim t - \lfloor\dim t\rfloor}{\sqrt{\dim}} \ell Z^{\dim}_{\lceil \dim t\rceil,1} \1_{\setAccept^{\dim}_{\lceil \dim t\rceil}}  + \frac{\ceil{\dim s} - \dim s}{\sqrt{\dim}} \ell  Z^{\dim}_{\lceil \dim s\rceil,1} \1_{\setAccept^{\dim}_{\lceil \dim s\rceil}}\eqsp.
\end{equation*}
Then by the Hölder inequality,
\begin{align*}
\PE\left[\left(Y_{t,1}^{\dim}-Y_{s,1}^{\dim}\right)^4\right]
&\le C \left((t - s)^2 + \PE\left[\left(X_{\lfloor\dim t\rfloor,1}^{\dim} - X_{\lceil\dim s\rceil,1}^{\dim}\right)^4\right]\right)\eqsp,
\end{align*}
where we have used
\begin{equation*}
\frac{(\dim t - \lfloor\dim t\rfloor)^2}{\dim^2} +  \frac{(\lceil\dim s\rceil - \dim s)^2}{\dim^2}
\leq  \frac{(\dim t - \dim s)^2  + (\ceil{\dim s} - \lfloor\dim t\rfloor)^2}{\dim^2} \leq 2 (t-s)^2 \eqsp.
\end{equation*}
The proof is completed using \Cref{lem:kolmo}.

\begin{lemma}
\label{lem:kolmo}
Assume \Cref{assum:diff:quadratic}. Then, there exists $C>0$ such that, for all $0\le k_1<k_2$,
\[
\PE\left[\left(X_{k_2,1}^{\dim} - X_{k_1,1}^{\dim}\right)^4\right] \le C \sum_{p=2}^4 \frac{(k_2-k_1)^p}{\dim^p} \eqsp.
\]
\end{lemma}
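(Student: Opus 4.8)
The plan is to split the increment $X_{k_2,1}^{\dim}-X_{k_1,1}^{\dim}$ as a sum of martingale increments plus a predictable ``drift'', and to bound the two contributions separately. Set $n=k_2-k_1$ and, for $k_1<j\le k_2$, $\delta_j=\ell\dim^{-1/2}Z_{j,1}^{\dim}\1_{\setAccept_j^{\dim}}$, so that by \eqref{eq:proposaliid}, $X_{k_2,1}^{\dim}-X_{k_1,1}^{\dim}=\sum_{j=k_1+1}^{k_2}\delta_j$. Let $(\mcf_j)_{j\ge0}$ be the filtration generated by $X_0^{\dim}$ and by $(Z_i^{\dim},U_i)_{1\le i\le j}$, put $\mu_j=\PE[\delta_j\mid\mcf_{j-1}]$, and decompose $\delta_j=(\delta_j-\mu_j)+\mu_j$, where $(\delta_j-\mu_j)_j$ is a martingale-difference sequence. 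Since $(a+b)^4\le8(a^4+b^4)$, it then suffices to bound $\PE[(\sum_j\mu_j)^4]$ and $\PE[(\sum_j(\delta_j-\mu_j))^4]$ separately.

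The heart of the argument is the estimate that $|\mu_j|$ is of order $\dim^{-1}$ with an $\rmL^4(\pi)$-bounded prefactor. Conditionally on $\mcf_{j-1}$ and on $(Z_{j,i}^{\dim})_{i\ge2}$, the indicator $\1_{\setAccept_j^{\dim}}$ has conditional mean $1\wedge\exp(\Delta V_{j,1}^{\dim}+S_j^{\dim})$, where $\Delta V_{j,1}^{\dim}=V(X_{j-1,1}^{\dim})-V(X_{j-1,1}^{\dim}+\ell\dim^{-1/2}Z_{j,1}^{\dim})$ and $S_j^{\dim}=\sum_{i\ge2}\{V(X_{j-1,i}^{\dim})-V(X_{j-1,i}^{\dim}+\ell\dim^{-1/2}Z_{j,i}^{\dim})\}$ does not involve $Z_{j,1}^{\dim}$. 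Using that $u\mapsto1\wedge\rme^{u}$ is $1$-Lipschitz and that $Z_{j,1}^{\dim}$ is centred and independent of $(\mcf_{j-1},(Z_{j,i}^{\dim})_{i\ge2})$, I would subtract the $Z_{j,1}^{\dim}$-free quantity $1\wedge\exp(S_j^{\dim})$ inside the $Z_{j,1}^{\dim}$-integral — which annihilates it — at the cost of $|\Delta V_{j,1}^{\dim}|$, obtaining
\[
|\mu_j|\le\ell\dim^{-1/2}\,\Psi_{\dim}(X_{j-1,1}^{\dim})\eqsp,\qquad\Psi_{\dim}(x)=\PE\bigl[\,|Z|\,\abs{V(x)-V(x+\ell\dim^{-1/2}Z)}\,\bigr]\eqsp,
\]
with $Z$ standard Gaussian. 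Splitting $\abs{V(x)-V(x+\theta)}\le\abs{V(x+\theta)-V(x)-\theta\Vdot(x)}+|\theta|\abs{\Vdot(x)}$ with $\theta=\ell\dim^{-1/2}Z$, and applying Minkowski's integral inequality, \Cref{assum:diff:quadratic}\ref{hyp:mean_square_deriv2} at exponent $4<p$ (so $\norm{V(\cdot+\theta)-V(\cdot)-\theta\Vdot(\cdot)}_{\pi,4}\le C|\theta|^{\beta}$) and \ref{assum:X6}, one gets $\norm{\Psi_{\dim}}_{\pi,4}\le C\dim^{-1/2}$; since $X_{j-1,1}^{\dim}\sim\pi$ by stationarity, this gives $\norm{\mu_j}_4\le C\dim^{-1}$.

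Granting this, the drift is immediate: $\norm{\sum_{j=k_1+1}^{k_2}\mu_j}_4\le\sum_j\norm{\mu_j}_4\le Cn\dim^{-1}$, so $\PE[(\sum_j\mu_j)^4]\le Cn^4\dim^{-4}$. For the martingale part, I would invoke Burkholder's inequality, $\PE[(\sum_j(\delta_j-\mu_j))^4]\le C\,\PE[(\sum_j(\delta_j-\mu_j)^2)^2]$, then expand the square; using for $j<k$ that $\PE[(\delta_j-\mu_j)^2(\delta_k-\mu_k)^2]=\PE[(\delta_j-\mu_j)^2\,\PE[(\delta_k-\mu_k)^2\mid\mcf_{k-1}]]$ together with the deterministic bounds $\PE[(\delta_j-\mu_j)^2\mid\mcf_{j-1}]\le\PE[\delta_j^2\mid\mcf_{j-1}]\le\ell^2\dim^{-1}$ and $\PE[(\delta_j-\mu_j)^4]\le8(\PE[\delta_j^4]+\norm{\mu_j}_4^4)\le C\dim^{-2}$, one obtains $\PE[(\sum_j(\delta_j-\mu_j)^2)^2]\le C(n+n^2)\dim^{-2}$, hence $\PE[(\sum_j(\delta_j-\mu_j))^4]\le Cn^2\dim^{-2}$. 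Combining, $\PE[(X_{k_2,1}^{\dim}-X_{k_1,1}^{\dim})^4]\le C(n^2\dim^{-2}+n^4\dim^{-4})\le C\sum_{p=2}^{4}(k_2-k_1)^p\dim^{-p}$.

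The step I expect to be the main obstacle is the conditional-mean estimate of the second paragraph: one has to carefully isolate the dependence of the acceptance probability on the single coordinate $Z_{j,1}^{\dim}$ — it enters only through the small quantity $\Delta V_{j,1}^{\dim}$ — so that the leading mean-zero Gaussian contribution to $\PE[Z_{j,1}^{\dim}\1_{\setAccept_j^{\dim}}\mid\mcf_{j-1}]$ cancels, leaving a term genuinely of order $\dim^{-1}$, and then control the remaining prefactor in $\rmL^4(\pi)$, which is exactly where the $\Lp$-mean-differentiability bound (with $p>4$, so that fourth $\pi$-moments are available) and the sixth-moment condition on $\Vdot$ in \Cref{assum:diff:quadratic} enter. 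The remaining steps are standard martingale moment estimates, and stationarity of the chain is essential to turn the bound on $\Psi_{\dim}$ into a bound on $\mu_j$ that is uniform in $j$.
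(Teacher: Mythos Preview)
Your argument is correct, and it takes a genuinely different route from the paper. The paper does not use a Doob decomposition: it writes $Z_{k,1}^{\dim}\1_{\setAccept_k^{\dim}}=Z_{k,1}^{\dim}-Z_{k,1}^{\dim}\1_{(\setAccept_k^{\dim})^c}$, controls the pure Gaussian sum trivially, and then expands $\PE[(\sum_k Z_{k,1}^{\dim}\1_{(\setAccept_k^{\dim})^c})^4]$ as a quadruple sum. To decouple the four factors it introduces an auxiliary process $(\tilde X_k^{\dim})$ that freezes the chain at the four selected time indices and agrees with $(X_k^{\dim})$ on the event that all four moves are rejected; conditionally on $\sigma((\tilde X_k^{\dim})_{k\ge0})$ the four factors become independent, and each conditional expectation $\PE[Z_{m_j,1}^{\dim}\1_{(\tilde\setAccept_{m_j}^{\dim})^c}\mid\mcf]$ is bounded in $\rmL^4$ by $C\dim^{-1/2}$ using the $1$-Lipschitz property of $x\mapsto(1-\rme^x)_+$ together with the explicit Gaussian formula \cite[Lemma~6]{jourdain:lelievre:miasojedow:2015} involving the function $\mathcal G$ of \eqref{eq:defG}. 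A case analysis on the cardinality of $\{m_1,\dots,m_4\}$ then yields the stated bound.

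Your approach is shorter and more self-contained for this lemma: Burkholder's inequality replaces the quadruple-sum expansion and the auxiliary-process trick, and the subtraction of $1\wedge\rme^{S_j^{\dim}}$ replaces the appeal to the function $\mathcal G$. Both proofs rest on the same underlying cancellation (the acceptance probability depends on $Z_{j,1}^{\dim}$ only through the small quantity $\Delta V_{j,1}^{\dim}$), but the paper's machinery with $\mathcal G$ is reused later when identifying the limiting drift in \Cref{propo:decomposition_martingale}, which may explain why the authors set it up already here.
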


\begin{proof}
For all $0\le k_1< k_2$,
\begin{align*}
\PE\left[\left(X_{k_2,1}^{\dim} - X_{k_1,1}^{\dim}\right)^4\right]
& =  \frac{\ell^4}{\dim^2}\PE\left[\left(\sum_{k=k_1+1}^{k_2}Z_{k,1}^{\dim}- \sum_{k=k_1+1}^{k_2}Z_{k,1}^{\dim}\1_{\left(\setAccept_{k}^{\dim}\right)^c}\right)^4\right]\eqsp.
\end{align*}
Therefore by the Hölder inequality,
\begin{align}
\PE\left[\left(X_{k_2,1}^{\dim} - X_{k_1,1}^{\dim}\right)^4\right]
&\le \frac{24\ell^4}{\dim^2}(k_2-k_1)^2 + \frac{8\ell^4}{\dim^2}\PE\left[\left(\sum_{k=k_1+1}^{k_2}Z_{k,1}^{\dim}\1_{\left(\setAccept_{k}^{\dim}\right)^c}\right)^4\right]\label{eq:deltaX}\eqsp.
\end{align}
The second term can be written:
\[
\PE\left[\left(\sum_{k=k_1+1}^{k_2}Z_{k,1}^{\dim}\1_{\left(\setAccept_{k}^{\dim}\right)^c}\right)^4\right] = \sum \PE\left[\prod_{i=1}^4Z_{m_i,1}^{\dim}\1_{\left(\setAccept_{m_i}^{\dim}\right)^c}\right]\eqsp,
\]
where the sum is over all the quadruplets $(m_i)_{i=1}^4$ satisfying $m_i \in \{k_1+1,\dots,k_2\}$, $i=1,\dots,4$.
The expectation on the right hand side can be upper bounded depending on the cardinality of $\{m_1,\ldots,m_4\}$. For all $1\le j\le 4$, define
\begin{equation}
\label{eq:definitionI-j}
\mathcal{I}_j = \left\{(m_1,\ldots,m_4)\in\left\{k_1+1,\dots,k_2\right\}\;;\; \#\{m_1,\ldots,m_4\}=j\right\} \eqsp.
\end{equation}
Let $(m_1,m_2,m_3,m_4)\in\{k_1+1,\dots,k_2\}^4$  and  $(\tilde{X}_k^{\dim})_{k\ge 0}$ be defined as:
\[
\tilde{X}_0^{\dim} = X_0^{\dim} \quad \mbox{and} \quad \tilde{X}_{k+1}^{\dim} = \tilde{X}_{k}^{\dim} + \1_{k\notin\left\{m_1-1,m_2-1,m_3-1,m_4-1\right\}}\frac{\ell}{\sqrt{\dim}}Z_{k+1}^{\dim}1_{\tilde{\setAccept}_{k+1}^{\dim}}\eqsp,
\]
with $\tilde{\setAccept}_{k+1}^{\dim} = \left\lbrace U_{k+1} \leq \exp \left(\sum_{i=1}^{\dim} \Delta \tilde{V}^{\dim}_{k,i}\right) \right\rbrace$,
where for all $k\ge 0$ and all $1\le i \le \dim$, $\Delta \tilde{V}_{k,i}$ is defined by
\[
\Delta \tilde{V}^{\dim}_{k,i} = V\left(\tilde{X}_{k,i}^{\dim}\right) - V\left(\tilde{X}_{k,i}^{\dim} + \frac{\ell}{\sqrt{\dim}} Z^{\dim}_{k+1,i}\right)\eqsp.
\]
Note that on the event $\bigcap_{j=1}^4 \left\{\setAccept_{m_j}^{\dim}\right\}^c$, the two processes
$(X_k)_{k \geq 0}$ and $(\tilde{X}_k)_{k \geq 0}$ are equal. Let $\mcf$ be the $\sigma$-field generated by $\left(\tilde{X}^d_k\right)_{k\ge 0}$.

\begin{enumerate}[label=(\alph*), wide=0pt, labelindent=\parindent]
\item $\#\{m_1,\ldots,m_4\}=4$, as the $\left\{\left(U_{m_j},Z_{m_j,1}^{\dim}, \dots,Z_{m_j,\dim}^{\dim} \right)\right\}_{1\le j \le 4}$ are independent conditionally to $\mcf$,
\begin{align*}
\PE\left[\prod_{j=1}^4Z_{m_j,1}^{\dim}\1_{\left(\setAccept_{m_j}^{\dim}\right)^c}\middle| \mcf\right] &= \prod_{j=1}^4\PE\left[Z_{m_j,1}^{\dim}\1_{\left(\tilde{\setAccept}_{m_j}^{\dim}\right)^c}\middle| \mcf\right]\eqsp,\\
&= \prod_{j=1}^4\PE\left[Z_{m_j,1}^{\dim}\varphi\left(\sum_{i=1}^{\dim} \Delta \tilde{V}^{\dim}_{m_j-1,i}\right)\middle| \mcf\right]\eqsp.
\end{align*}
where $\varphi(x)= \left(1-\mathrm{e}^x\right)_+$. Since the function $\varphi$ is 1-Lipschitz, we get
\begin{multline*}
\left|\varphi\left(\sum_{i=1}^{\dim} \Delta \tilde{V}^{\dim}_{m_j-1,i}\right) -\varphi\left(-\frac{\ell}{\sqrt{\dim}}\Vdot(\tilde{X}_{m_j-1,1}^{\dim})Z^{\dim}_{m_j,1} + \sum_{i=2}^{\dim} \Delta \tilde{V}^{\dim}_{m_j-1,i}\right)\right|\\
\le\left|\Delta \tilde{V}^{\dim}_{m_j-1,1} + \frac{\ell}{\sqrt{\dim}}\Vdot(\tilde{X}_{m_j-1,1}^{\dim})Z^{\dim}_{m_j,1}\right|\eqsp.
\end{multline*}
Then,
\begin{equation*}
\left|\PE\left[\prod_{j=1}^4Z_{m_j,1}^{\dim}\1_{\left(\setAccept_{m_j}^{\dim}\right)^c}\right]\right|
\leq  \PE\left[\prod_{j=1}^4\left\{A_{m_j}^{\dim} + B_{m_j}^{\dim}\right\}\right]\eqsp,
\end{equation*}
where
\begin{align*}
A_{m_j}^{\dim}&= \PE\left[\left|Z_{m_j,1}^{\dim}\right|\left|\Delta \tilde{V}^{\dim}_{m_j-1,1} + \frac{\ell}{\sqrt{\dim}}\Vdot(\tilde{X}_{m_j-1,1}^{\dim})Z^{\dim}_{m_j,1}\right|\middle|\mcf\right] \eqsp,\\
B_{m_j}^{\dim}&= \left|\PE\left[\!Z_{m_j,1}^{\dim}\left(\!1\!-\!\mathrm{exp}\left\{\!-\frac{\ell}{\sqrt{\dim}}\Vdot(\tilde{X}_{m_j-1,1}^{\dim})Z^{\dim}_{m_j,1} + \sum_{i=2}^{\dim} \Delta \tilde{V}^{\dim}_{m_j-1,i}\right\}\right)_+\middle|\mcf\right]\right|\eqsp.
\end{align*}
By the inequality of arithmetic and geometric means and convex inequalities,
\[
\left|\PE\left[\prod_{j=1}^4Z_{m_j,1}^{\dim}\1_{\left(\setAccept_{m_j}^{\dim}\right)^c}\right]\right|
\le 8\PE\left[\sum_{j=1}^4\left(A_{m_j}^{\dim}\right)^4 + \left(B_{m_j}^{\dim}\right)^4\right]\eqsp.
\]
By \Cref{lem:integrated-DQM}\ref{lem:integrated-DQM-Lp} and the Hölder inequality, there exists $C>0$ such that $\PE\left[\left(A_{m_j}^{\dim}\right)^4\right]\le C\dim^{-2}$. On the other hand, by \cite[Lemma~6]{jourdain:lelievre:miasojedow:2015} since $Z_{m_j,1}^d$ is independent of $\mcf$,
\begin{equation*}
B_{m_j}^{\dim} = \left|\PE\left[\frac{\ell}{\sqrt{\dim}}\Vdot(\tilde{X}_{m_j-1,1}^{\dim})\G\left(\frac{\ell^2}{\dim}\Vdot(\tilde{X}_{m_j-1,1}^{\dim})^2,-2\sum_{i=2}^{\dim} \Delta \tilde{V}^{\dim}_{m_j-1,i}\right)\middle|\mcf\right]\right|\eqsp,
\end{equation*}
where the function $\G$ is defined in \eqref{eq:defG}. By \Cref{assum:diff:quadratic}\ref{assum:X6} and since $\G$ is bounded, $\PE[(B_{m_j}^{\dim})^4]\le C\dim^{-2}$. Therefore $|\PE[\prod_{j=1}^4Z_{m_j,1}^{\dim}\1_{(\setAccept_{m_j}^{\dim})^c}]|\le C\dim^{-2}$, showing that
\begin{equation}
\label{eq:card4}
\sum_{(m_1,m_2,m_3,m_4)\in\mathcal{I}_4}\left|\PE\left[\prod_{i=1}^4Z_{m_i,1}^{\dim}\1_{\left(\setAccept_{m_i}^{\dim}\right)^c}\right]\right|\le \frac{C}{\dim^2}{ k_2-k_1 \choose 4}\eqsp.
\end{equation}
\item $\#\{m_1,\ldots,m_4\}=3$, as the $\left\{\left(U_{m_j},Z_{m_j,1}^{\dim},\ldots,Z_{m_j,\dim}^{\dim}\right)\right\}_{1\le j \le 3}$ are independent conditionally to $\mcf$,
\begin{multline*}
\left|\PE\left[\left(Z_{m_1,1}^{\dim}\right)^2\1_{\left(\setAccept_{m_1}^{\dim}\right)^c}\prod_{j=2}^3Z_{m_j,1}^{\dim}\1_{\left(\setAccept_{m_j}^{\dim}\right)^c}\middle| \mcf\right]\right| \\
\le \PE\left[\left(Z_{m_1,1}^{\dim}\right)^2\middle|
  \mcf\right]\left|\prod_{j=2}^3\PE\left[Z_{m_j,1}^{\dim}\1_{\left(\tilde{\setAccept}_{m_j}^{\dim}\right)^c}\middle| \mcf\right]\right|\le \left|\prod_{j=2}^3\PE\left[Z_{m_j,1}^{\dim}\1_{\left(\tilde{\setAccept}_{m_j}^{\dim}\right)^c}\middle| \mcf\right]\right|\eqsp.
\end{multline*}
Then, following the same steps as above, and using Holder's inequality yields
\begin{align*}
\left|\PE\left[\prod_{j=2}^3Z_{m_j,1}^{\dim}\1_{\left(\setAccept_{m_j}^{\dim}\right)^c}\right]\right| &\le C\PE\left[\sum_{j=2}^3\left(A_{m_j}^{\dim}\right)^2 + \left(B_{m_j}^{\dim}\right)^2\right]\le C\dim^{-1}
\end{align*}
and
\begin{equation}
\label{eq:card3}
\sum_{(m_1,m_2,m_3,m_4)\in\mathcal{I}_3}\left|\PE\left[\prod_{i=1}^4Z_{m_i,1}^{\dim}\1_{\left(\setAccept_{m_i}^{\dim}\right)^c}\right]\right|\le \frac{C}{\dim}{ k_2-k_1 \choose 3}\le\frac{C}{\dim}(k_2-k_1)^3\eqsp.
\end{equation}
\item If $\#\{m_1,\ldots,m_4\}=2$ two cases have to be considered:
\begin{align*}
\PE\left[\left(Z_{m_1,1}^{\dim}\right)^2\1_{\left(\tilde{\setAccept}_{m_1}^{\dim}\right)^c}\left(Z_{m_2,1}^{\dim}\right)^2\1_{\left(\setAccept_{m_2}^{\dim}\right)^c}\right] &\le \PE\left[\left(Z_{m_1,1}^{\dim}\right)^2\right]\PE\left[\left(Z_{m_2,1}^{\dim}\right)^2\right] \le 1\eqsp,\\
\PE\left[\left(Z_{m_1,1}^{\dim}\right)^3\1_{\left(\setAccept_{m_1}^{\dim}\right)^c}Z_{m_2,1}^{\dim}\1_{\left(\setAccept_{m_2}^{\dim}\right)^c}\right] &\le \PE\left[\left|Z_{m_1,1}^{\dim}\right|^3\right]\PE\left[\left|Z_{m_2,1}^{\dim}\right|\right] \le \frac{4}{\pi}\eqsp.
\end{align*}
This yields
\begin{multline}
\label{eq:card2}
\sum_{(m_1,m_2,m_3,m_4)\in\mathcal{I}_2}\left|\PE\left[\prod_{i=1}^4Z_{m_i,1}^{\dim}\1_{\left(\setAccept_{m_i}^{\dim}\right)^c}\right]\right|\\
\le \left(3 + 4\cdot\frac{4}{\pi}\right)(k_2-k_1)(k_2-k_1-1)\le C(k_2-k_1)^2\eqsp.
\end{multline}
\item If $\#\{m_1,\ldots,m_4\}=1$: $\PE\left[\left(Z_{m_i,1}^{\dim}\1_{\left(\setAccept_{m_i}^{\dim}\right)^c}\right)^4\right] \le \PE\left[\left(Z_{m_1,1}^{\dim}\right)^4\right] \le 3$, then
\begin{equation}
\label{eq:card1}
\sum_{(m_1,m_2,m_3,m_4)\in\mathcal{I}_1}\left|\PE\left[\prod_{i=1}^4Z_{m_i,1}^{\dim}\1_{\left(\setAccept_{m_i}^{\dim}\right)^c}\right]\right|\le 3(k_2-k_1)\eqsp.
\end{equation}
\end{enumerate}
The proof is completed by combining \eqref{eq:deltaX} with \eqref{eq:card4}, \eqref{eq:card3}, \eqref{eq:card2} and \eqref{eq:card1}.
\end{proof}

%%% Local Variables:
%%% mode: latex
%%% TeX-master: "scaling_HM_L1"
%%% End:
\subsection{Proof of \Cref{propo:reduction_martingale_problem}}
\label{sec:reduction_martingale_problem}
We preface the proof by a preliminary lemma.
\begin{lemma}
\label{lem:law_point_limit}
Assume that \Cref{assum:diff:quadratic} holds. Let $\mu$ be a  limit point of the sequence of laws $(\mu_{\dim})_{\dim\ge 1}$ of $\defEns{(Y_{t,1}^{\dim})_{t\geq 0}, \ \dim \in
  \Nset^*}$. Then for all $t \geq 0$, the pushforward measure of $\mu$ by $W_t$ is $\pdf$.
   \end{lemma}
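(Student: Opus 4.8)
The plan is to show that for each fixed $t\ge 0$ the scalar random variable $Y_{t,1}^{\dim}$ converges weakly to $\pdf$ as $\dim\to\plusinfty$, and then to transfer this to an arbitrary limit point $\mu$ by continuity of the coordinate projection $W\mapsto W_t$ on $\Wienerspace$.

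First I would record that throughout this section the chain is started in stationarity, $X_0^{\dim}\sim\pdf^{\dim}$, and that the RWM kernel with symmetric proposal is reversible, hence $\pdf^{\dim}$-invariant; consequently $X_k^{\dim}\sim\pdf^{\dim}$ for every $k\ge 0$, and by the product form \eqref{eq:targetiid} each first coordinate $X_{k,1}^{\dim}$ has law $\pdf$ exactly, for all $k$ and all $\dim$.

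Second, from \eqref{eq:defY} one has, for every $t\ge 0$,
$$Y_{t,1}^{\dim} - X_{\floor{\dim t},1}^{\dim} = \left(\dim t - \floor{\dim t}\right)\ell\dim^{-1/2}Z_{\ceil{\dim t},1}^{\dim}\1_{\setAccept_{\ceil{\dim t}}^{\dim}},$$
whose absolute value is bounded by $\ell\dim^{-1/2}\abs{Z_{\ceil{\dim t},1}^{\dim}}$ and therefore tends to $0$ in $\rmL^2$, in particular in probability. Since $X_{\floor{\dim t},1}^{\dim}\sim\pdf$ for every $\dim$ (a fortiori this family converges weakly to $\pdf$), combining with a convergence-in-probability-to-zero perturbation gives $Y_{t,1}^{\dim}\Rightarrow\pdf$ as $\dim\to\plusinfty$.

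Third, if $\mu$ is a limit point of $(\mu_{\dim})_{\dim\ge1}$, then $\mu_{\dim_k}\Rightarrow\mu$ in $\Wienerspace=C(\rset_+,\rset)$ along some subsequence; the evaluation map $W\mapsto W_t$ is continuous for the topology of uniform convergence on compact sets, so the pushforwards satisfy $\mu_{\dim_k}\circ W_t^{-1}\Rightarrow\mu\circ W_t^{-1}$. But $\mu_{\dim_k}\circ W_t^{-1}$ is exactly the law of $Y_{t,1}^{\dim_k}$, which by the previous step converges weakly to $\pdf$; uniqueness of weak limits yields $\mu\circ W_t^{-1}=\pdf$. The argument is essentially routine; the only points needing a little care are that stationarity is genuinely being used (so the grid marginals are $\pdf$ on the nose, with no need to control mixing) and that weak convergence in $\Wienerspace$ for the locally uniform topology does entail convergence of one-dimensional time marginals, which is exactly the continuity of the projections just invoked.
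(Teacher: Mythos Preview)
Your proof is correct and follows essentially the same route as the paper: both argue that $Y_{t,1}^{\dim}-X_{\floor{\dim t},1}^{\dim}\to 0$ in $\rmL^1$ (or $\rmL^2$), use stationarity to get $X_{\floor{\dim t},1}^{\dim}\sim\pdf$ exactly, and then pass to the limit using weak convergence of $\mu_{\dim}$ to $\mu$ together with continuity of the evaluation $W\mapsto W_t$. If anything, your version is slightly more careful in working along a subsequence (the paper writes ``$(\mu_{\dim})$ converges weakly to $\mu$'' where strictly speaking $\mu$ is only a limit point) and in making explicit the Slutsky/continuous-mapping steps.
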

\begin{proof}
By \eqref{eq:defY},
\[
\lim_{\dim \to \plusinfty} \PE\left[\left|Y^{\dim}_{t,1} -X_{\floor{\dim t},1}^{\dim}\right|\right] = 0\eqsp.
\]
Since $(\mu_{\dim})_{\dim\ge 1}$ converges weakly to $\mu$, for all bounded Lipschitz
function $\psi : \rset \to \rset$, $\PE^{\mu} [ \psi(W_t) ] = \lim_{d\to \plusinfty}  \PE
[\psi(Y^{\dim}_{t,1})] = \lim_{d\to \plusinfty}\PE [\psi( X_{\floor{\dim t},1}^{\dim})]$. The proof is completed upon noting that for all
$\dim \in \Nset^*$ and all $t \geq 0$, $ X_{\floor{\dim t},1}^{\dim}$ is distributed
according to $\pdf$ .
\end{proof}

\begin{proof}[Proof of \Cref{propo:reduction_martingale_problem}]
Let $\mu$ be a limit point of $(\mu_{\dim})_{\dim\ge 1}$. It is straightforward to show that $\mu$ is a solution to the martingale problem associated with $\generator$ if for all $\phi \in C_c^{\infty}(\rset,\rset)$, $m \in \nset^*$, $g : \rset^m \to \rset$ bounded and continuous, and $0 \leq t_1 \leq \dots \leq t_m \leq s \leq t$:
 \begin{equation}
 \label{eq:reduction_martingale_pb1}
  \PE^{\mu}\left[ \left(\phi\left(W_t\right) - \phi\left( W_s \right) - \int_s^t \Lrm \phi \left(W_u \right) \rmd u  \right) g\left(W_{t_1},\dots,W_{t_m} \right) \right] =0 \eqsp.
 \end{equation}
Let $\phi \in C_c^{\infty}(\rset,\rset)$, $m\in \nset^*$, $g:\Rset^m \to \rset$ continuous and bounded, $0 \leq t_1 \leq \dots \leq t_m \leq s \leq t$ and $\Wienerspace_{\Vdot} = \lbrace w \in \Wienerspace | w_u \not \in \setDisconDotV \text{  for  almost every } u \in [s,t]\rbrace$. Note first that $w \in \Wienerspace_{\Vdot}^c$ if and only if $\int_s^t \1_{\setDisconDotV}(w_u ) \rmd u >0 $. 
Therefore, by \Cref{assum:Vdot} and Fubini's theorem:
\begin{equation*}
\PE^{\mu}\left[ \int_s ^t \1_{ \setDisconDotV}(W_u ) \rmd u  \right] = \int_s ^t \PE^{\mu} \left[ \1_{ \setDisconDotV}(W_u ) \right] \rmd u =0\eqsp,
\end{equation*}
showing that $\mu( \Wienerspace_{\Vdot}^c) = 0$. We now prove that on $\Wienerspace_{\Vdot}$,
\begin{equation}
\label{eq:reduction_martingale_problem2}
\Psi_{s,t}:\;w \mapsto  \left\{\phi(w_t) - \phi(w_s) - \int_s^t \Lrm \phi(w_u) \rmd u \right\} g(w_{t_1},\ldots,w_{t_m})
\end{equation}
is continuous. It is clear that it is enough to show that $w \mapsto \int_s^t \Lrm \phi(w_u)
\rmd u $ is continuous on $\Wienerspace_{\Vdot}$. So let $w \in \Wienerspace_{\Vdot}$ and
$\left(w^n\right)_{n\ge 0}$ be a sequence in $\Wienerspace$ which converges to $w$ in the
uniform topology on compact sets. Then by
\Cref{assum:Vdot}, for any $u$ such that $w_u\notin
\setDisconDotV$, $\generator \phi(w_u^n)$ converges to $\Lrm\phi(w_u)$ when $n$ goes to infinity and $\generator \phi$ is bounded. Therefore by
Lebesgue's dominated convergence theorem, $\int_s ^t \Lrm\phi(w_u^n) \rmd u$ converges to
$\int_s ^t \generator \phi(w_u) \rmd u$. Hence, the map defined by \eqref{eq:reduction_martingale_problem2} is continuous on $\Wienerspace_{\Vdot}$. Since $(\mu_{\dim})_{\dim\ge 1}$ converges weakly to $\mu$, by \eqref{eq:reduction_martingale_problem}:
\[
\mu\left(\Psi_{s,t}\right) = \lim_{d \to \plusinfty}\mu^{\dim}\left(\Psi_{s,t}\right)=0\eqsp,
\]
which is precisely \eqref{eq:reduction_martingale_pb1}.
\end{proof}

%%% Local Variables: 
%%% mode: latex
%%% TeX-master: "scaling_HM_L1"
%%% End: 
\subsection{Proof of \Cref{theo:diffusion_limit_RMW}}
\label{sec:proof:weaklimit}
By \Cref{propo:reduction_martingale_problem}, it is enough to check \eqref{eq:reduction_martingale_problem} to prove that $\mu$ is a solution to the martingale problem. The core of the proof of \Cref{theo:diffusion_limit_RMW} is \Cref{propo:decomposition_martingale}, for which  we need two technical lemmata.
\begin{lemma}
\label{lem:fun_nearly_lip}
Let $\Xsf,\Ysf$ and $\Usf$ be $\rset$-valued random variables and $\epsilon>0$. Assume that $\Usf$ is nonnegative and bounded by $1$. Let $g: \rset \to \rset$ be a bounded function on $\rset$ such that for all $(x,y) \in \ocint{-\infty , -\epsilon}^2 \cup \coint{\epsilon,+\infty}^2$, $\abs{g(x)-g(y)}\leq C_g\abs{x-y}$.
\begin{enumerate}[label=(\roman*)]
\item
\label{lem:item:fun_nearly1}
For all $a>0$,
\begin{multline*}
\expe{\Usf\abs{g(\Xsf) - g(Y)}} \leq C_g \expe{\Usf\abs{\Xsf-Y}} \\ +\osc(g) \defEns{\proba{\abs{\Xsf}\leq \epsilon} + a^{-1}\expe{\Usf \abs{\Xsf-Y}} + \proba{\epsilon < \abs{\Xsf} < \epsilon +a}} \eqsp,
\end{multline*}
where $\osc(g) = \sup(g) - \inf(g)$.
\item
\label{lem:item:fun_nearly2}
If there exist $\mu \in \rset$ and  $\sigma,C_{\Xsf} \in \rset_+$ such that
\[
\sup_{x \in \rset} \abs{\proba{\Xsf \leq x} - \Phi((x-\mu)/\sigma) } \leq C_{\Xsf} \eqsp,
\]
then
\begin{multline*}
\expe{\Usf \abs{g(\Xsf) - g(Y)}} \leq C_g \expe{\Usf\abs{\Xsf-Y}} \\
+ 2\osc(g)\defEns{C_{\Xsf} + \sqrt{2\expe{\Usf\abs{\Xsf-Y}}(2 \pi \sigma^2)^{-1/2}} +\epsilon (2 \pi \sigma^2)^{-1/2}} \eqsp.
\end{multline*}
\end{enumerate}
\end{lemma}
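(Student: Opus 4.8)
The plan is to prove (i) by splitting the expectation according to whether $\Xsf$ and $\Ysf$ both lie in one of the two regions where $g$ is Lipschitz, and then to derive (ii) from (i) by a Gaussian density estimate followed by an optimisation in the free parameter $a$.

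For (i), I would introduce the event $E=\defEns{\Xsf\le-\epsilon,\ \Ysf\le-\epsilon}\cup\defEns{\Xsf\ge\epsilon,\ \Ysf\ge\epsilon}$, on which the near-Lipschitz hypothesis gives $\abs{g(\Xsf)-g(\Ysf)}\le C_g\abs{\Xsf-\Ysf}$, while on $E^{c}$ I would only use $\abs{g(\Xsf)-g(\Ysf)}\le\osc(g)$. Since $\Usf\ge0$ this already yields $\expe{\Usf\abs{g(\Xsf)-g(\Ysf)}}\le C_g\expe{\Usf\abs{\Xsf-\Ysf}}+\osc(g)\expe{\Usf\1_{E^{c}}}$, so it remains to bound $\expe{\Usf\1_{E^{c}}}$. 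A short case analysis on the sign and size of $\Xsf$ gives $E^{c}=\defEns{\abs{\Xsf}<\epsilon}\cup\defEns{\Xsf\le-\epsilon,\ \Ysf>-\epsilon}\cup\defEns{\Xsf\ge\epsilon,\ \Ysf<\epsilon}$; splitting each of the last two events according to whether $\abs{\Xsf}<\epsilon+a$ or $\abs{\Xsf}\ge\epsilon+a$ and noting that on the latter part $\Xsf$ and $\Ysf$ lie on opposite sides of $\pm\epsilon$ at distance at least $a$, one gets $E^{c}\subseteq\defEns{\abs{\Xsf}\le\epsilon}\cup\defEns{\epsilon<\abs{\Xsf}<\epsilon+a}\cup\defEns{\abs{\Xsf-\Ysf}>a}$. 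Using $\Usf\le1$ on the first two events and Markov's inequality in the form $\Usf\1_{\abs{\Xsf-\Ysf}>a}\le a^{-1}\Usf\abs{\Xsf-\Ysf}$ on the third then gives exactly (i).

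For (ii), I would apply (i) and merge the two probability terms via $\defEns{\abs{\Xsf}\le\epsilon}\cup\defEns{\epsilon<\abs{\Xsf}<\epsilon+a}=\defEns{-(\epsilon+a)<\Xsf<\epsilon+a}$, so their sum equals $\proba{\Xsf<\epsilon+a}-\proba{\Xsf\le-(\epsilon+a)}$. The Kolmogorov-type hypothesis (passing to the left limit, which is legitimate since $\Phi$ is continuous) bounds this by $\Phi((\epsilon+a-\mu)/\sigma)-\Phi((-(\epsilon+a)-\mu)/\sigma)+2C_{\Xsf}$, and bounding the standard normal density by $(2\pi)^{-1/2}$ over an interval of length $2(\epsilon+a)/\sigma$ gives $2(\epsilon+a)(2\pi\sigma^{2})^{-1/2}+2C_{\Xsf}$. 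Substituting into (i) leaves $\osc(g)$ times $2C_{\Xsf}+2\epsilon(2\pi\sigma^{2})^{-1/2}+2a(2\pi\sigma^{2})^{-1/2}+a^{-1}\expe{\Usf\abs{\Xsf-\Ysf}}$, and I would conclude by minimising over $a>0$: if $\expe{\Usf\abs{\Xsf-\Ysf}}=0$ then $g(\Xsf)=g(\Ysf)$ on $\defEns{\Usf>0}$ and the bound is trivial, and otherwise $a=\bigl((2\pi\sigma^{2})^{1/2}\expe{\Usf\abs{\Xsf-\Ysf}}/2\bigr)^{1/2}$ makes the sum of the last two terms equal to $2\sqrt{2\expe{\Usf\abs{\Xsf-\Ysf}}(2\pi\sigma^{2})^{-1/2}}$, which is the claimed bound.

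I expect the only delicate point to be the bookkeeping in (i): describing $E^{c}$ correctly and, in particular, making sure the boundary configurations $\abs{\Xsf}=\epsilon$ are absorbed into the closed event $\defEns{\abs{\Xsf}\le\epsilon}$ rather than being lost. Everything else — Markov's inequality, the Gaussian tail estimate, and the one-variable optimisation — is routine.
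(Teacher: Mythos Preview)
Your proposal is correct and follows essentially the same route as the paper: the same good/bad decomposition on the event $E=\{(\Xsf,\Ysf)\in(-\infty,-\epsilon]^2\cup[\epsilon,+\infty)^2\}$, the same splitting of $E^c$ according to whether $\epsilon<|\Xsf|<\epsilon+a$ or $|\Xsf|\ge\epsilon+a$ (the latter forcing $|\Xsf-\Ysf|\ge a$), and the same Gaussian density bound plus optimisation in $a$ for part (ii). Your treatment is in fact slightly more explicit than the paper's about the boundary points $|\Xsf|=\epsilon$ and about passing to the left limit in the Kolmogorov bound, but these are cosmetic differences only.
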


\begin{proof}
\begin{enumerate}[label=(\roman*),wide=0pt, labelindent=\parindent]
\item
Consider the following decomposition
\begin{multline*}
\expe{\Usf\abs{g(\Xsf) - g(\Ysf)}} = \expe{\Usf \abs{\parenthese{g(\Xsf)-g(\Ysf)} }\1_{\defEns{(\Xsf,\Ysf) \in \ocint{-\infty,-\epsilon}^2}\cup \defEns{(\Xsf,\Ysf) \in \coint{\epsilon,\plusinfty}^2}}} \\
+
\expe{\Usf
\abs{g(\Xsf)-g(\Ysf)} \parenthese{\1_{\left\{\Xsf \in \ccint{-\epsilon,\epsilon}\right\}} + \1_{
\parenthese{\defEns{\Xsf < -\epsilon}\cap\defEns{\Ysf \geq -\epsilon}} \cup \parenthese{\defEns{\Xsf> \epsilon}\cap\defEns{\Ysf \leq \epsilon}}
}}
}\eqsp.
\end{multline*}
In addition, for all $a >0$,
\begin{multline*}
\parenthese{\defEns{\Xsf < -\epsilon}\cap\defEns{\Ysf \geq -\epsilon}} \cup \parenthese{\defEns{\Xsf> \epsilon}\cap\defEns{\Ysf \leq \epsilon}} \\ \subset \defEns{\epsilon < \abs{\Xsf} < \epsilon +a}
\cup \parenthese{\defEns{\abs{\Xsf} \geq \epsilon+a} \cap \defEns{\abs{\Xsf - \Ysf} \geq a}}\eqsp.
\end{multline*}
Then using that $\Usf \in \coint{0,1}$, we get
\[
\expe{\Usf \abs{g(\Xsf) - g(\Ysf)}} \leq C_g\expe{\Usf \abs{\Xsf-\Ysf}}  +\osc(g)\parenthese{\proba{\abs{\Xsf} < \epsilon +a } + a^{-1} \expe{\Usf \abs{\Xsf-\Ysf}}} \eqsp.
\]
\item  The result is straightforward if $\expe{\Usf \abs{\Xsf-\Ysf}} =0$. Assume $\expe{\Usf \abs{\Xsf-\Ysf}} >0$. Combining the additional assumption and the previous result,
\begin{multline*}
\expe{\Usf\abs{g(\Xsf) - g(\Ysf)}} \leq C_g\expe{\Usf\abs{\Xsf-\Ysf}} \\ 
+\osc(g)\left\{2 C_\Xsf + 2 (\epsilon +a)(2\pi \sigma^2)^{-1/2}+ a^{-1} \expe{\Usf\abs{\Xsf-\Ysf}}\right\}\eqsp.
\end{multline*}
As this result holds for all $a>0$, the proof is concluded by setting $a = \sqrt{\expe{\Usf\abs{\Xsf-\Ysf}}(2\pi \sigma^2)^{1/2}/2}$.
\end{enumerate}
\end{proof}
\begin{lemma}
\label{lem:approx_first_term_decomposition_martingale}
Assume \Cref{assum:diff:quadratic} holds. Let
$X^d$ be distributed according to $\target^{d}$ and $Z^d$ be a
$d$-dimensional standard Gaussian random variable, independent of $X^d$. Then, $\lim_{d
  \to \plusinfty} \Erm^{\dim} = 0$, where
\begin{equation*}
\Erm^{\dim}=  \expe{\abs{\Vdot(X_1^d) \defEns{\mathcal{G}\parenthese{\frac{\ell^2}{\dim}\Vdot(X_1^d)^2, 2 \sum_{i=2}^d \Delta V_i^d}-\mathcal{G}\parenthese{\frac{\ell^2}{\dim}\Vdot(X_1^d)^2, 2 \sum_{i=2}^d b_i^d}}} }  \eqsp,
\end{equation*}
$\Delta V_i^d  $ and $b_i^d$ are resp. given by \eqref{eq:Delta_V} and \eqref{eq:bdi}.
\end{lemma}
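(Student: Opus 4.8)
The plan is to condition on $\mathcal{F}_{\dim}:=\sigma(X_1^{\dim},\dots,X_{\dim}^{\dim})$ and apply \Cref{lem:fun_nearly_lip}\ref{lem:item:fun_nearly2}. Set $S^{\dim}:=\sum_{i=2}^{\dim}\Delta V_i^{\dim}$, $T^{\dim}:=\sum_{i=2}^{\dim}b_i^{\dim}$ and $a_{\dim}:=\ell^2\dim^{-1}\Vdot(X_1^{\dim})^2$, so that $\Erm^{\dim}=\PE\bigl[\abs{\Vdot(X_1^{\dim})}\,\abs{\mathcal{G}(a_{\dim},2S^{\dim})-\mathcal{G}(a_{\dim},2T^{\dim})}\bigr]$ with $a_{\dim}$, $\Vdot(X_1^{\dim})$ both $\mathcal{F}_{\dim}$-measurable. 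I would rely on three observations. (a) For $a>0$ the map $b\mapsto\mathcal{G}(a,b)$ is $[0,1]$-valued and smooth with $\partial_b\mathcal{G}(a,b)=-\tfrac12\mathcal{G}(a,b)+(2\sqrt{2\pi a})^{-1}\rme^{-b^2/(8a)}$, so that for $\abs{b}\ge\epsilon$ one has $\abs{\partial_b\mathcal{G}(a,b)}\le\tfrac12+(2\sqrt{2\pi a})^{-1}\rme^{-\epsilon^2/(8a)}\le C_\epsilon:=\tfrac12+\rme^{-1/2}(\epsilon\sqrt{2\pi})^{-1}$ (the last step because $\sup_{a>0}a^{-1/2}\rme^{-\epsilon^2/(8a)}$ is attained at $a=\epsilon^2/4$); hence, uniformly in $a>0$, $b\mapsto\mathcal{G}(a,b)$ is $C_\epsilon$-Lipschitz on $\ocint{-\infty,-\epsilon}$ and on $\coint{\epsilon,+\infty}$, which is precisely the hypothesis imposed on $g$ in \Cref{lem:fun_nearly_lip}. (b) By \eqref{eq:bdi}, $Z_i^{\dim}$ enters $b_i^{\dim}$ only through the linear term $-\ell\dim^{-1/2}Z_i^{\dim}\Vdot(X_i^{\dim})$, so conditionally on $\mathcal{F}_{\dim}$ the variable $2T^{\dim}$ is Gaussian with variance $v_{\dim}:=4\ell^2\dim^{-1}\sum_{i=2}^{\dim}\Vdot(X_i^{\dim})^2$, and by the law of large numbers and \Cref{assum:diff:quadratic}\ref{assum:X6}, $v_{\dim}\to4\ell^2 I$ in probability. (c) $X_1^{\dim}$ is independent of $(X_i^{\dim},Z_i^{\dim})_{i\ge2}$, hence of $S^{\dim}-T^{\dim}$ and of $v_{\dim}$; moreover $\PE[\abs{S^{\dim}-T^{\dim}}]=\Jrm^{\dim}\to0$ by \Cref{lem:interm_prop_approx_ratio}.

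Granting this, I may assume $I>0$ (if $I=0$ then $\Vdot=0$ $\pdf$-a.e.\ and $\Erm^{\dim}=0$) and discard $\{\Vdot(X_1^{\dim})=0\}$, which contributes nothing. Fix $\epsilon>0$ and $\delta\in\ooint{0,4\ell^2 I}$. Conditioning on $\mathcal{F}_{\dim}$ and applying \Cref{lem:fun_nearly_lip}\ref{lem:item:fun_nearly2} with $\Usf=1$, $\Xsf=2T^{\dim}$ (so $C_{\Xsf}=0$, $\sigma^2=v_{\dim}$), $\Ysf=2S^{\dim}$, $g=\mathcal{G}(a_{\dim},\cdot)$, $C_g=C_\epsilon$, $\osc(g)\le1$, I get
\begin{equation*}
\PE\bigl[\abs{\mathcal{G}(a_{\dim},2S^{\dim})-\mathcal{G}(a_{\dim},2T^{\dim})}\mid\mathcal{F}_{\dim}\bigr]\le 2C_\epsilon\,\PE[\abs{S^{\dim}-T^{\dim}}\mid\mathcal{F}_{\dim}]+2\sqrt{\frac{4\,\PE[\abs{S^{\dim}-T^{\dim}}\mid\mathcal{F}_{\dim}]}{\sqrt{2\pi v_{\dim}}}}+\frac{2\epsilon}{\sqrt{2\pi v_{\dim}}}\eqsp.
\end{equation*}
I then multiply by $\abs{\Vdot(X_1^{\dim})}$, take expectations, and split on $\{v_{\dim}\ge\delta\}$ versus $\{v_{\dim}<\delta\}$. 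On the bad event I bound the bracket by $1$ and use independence of $X_1^{\dim}$: the contribution is $\PE[\abs{\Vdot(X_1^{\dim})}]\,\PP(v_{\dim}<\delta)\to0$. On $\{v_{\dim}\ge\delta\}$ I replace $v_{\dim}^{-1/2}$ by $\delta^{-1/2}$ and use the independence of $X_1^{\dim}$, Jensen's inequality $\PE[\sqrt{\cdot}]\le\sqrt{\PE[\cdot]}$, and $\PE[\abs{S^{\dim}-T^{\dim}}]=\Jrm^{\dim}$: the first two terms are $\le 2C_\epsilon\PE[\abs{\Vdot(X_1^{\dim})}]\Jrm^{\dim}$ and $\le C(\delta)\PE[\abs{\Vdot(X_1^{\dim})}]\sqrt{\Jrm^{\dim}}$ (both vanishing), while the third is $\le 2\epsilon(2\pi\delta)^{-1/2}\PE[\abs{\Vdot(X_1^{\dim})}]$. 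Altogether $\limsup_{\dim\to\plusinfty}\Erm^{\dim}\le 2\epsilon(2\pi\delta)^{-1/2}\PE[\abs{\Vdot(X_1^{\dim})}]$ for every $\epsilon>0$, and letting $\epsilon\downarrow0$ gives $\lim_{\dim\to\plusinfty}\Erm^{\dim}=0$.

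The main obstacle is observation (a): as $a\downarrow0$ the function $b\mapsto\mathcal{G}(a,b)$ degenerates to the discontinuous $\rme^{-b/2}\1_{\{b>0\}}$, so no uniform Lipschitz bound can hold; the point is that the blow-up of $\partial_b\mathcal{G}(a,b)$ is confined to an $O(\sqrt a)$-window around $b=0$ and stays of bounded height — exactly the \emph{nearly Lipschitz} situation of \Cref{lem:fun_nearly_lip} — while the anti-concentration needed to absorb that window costs nothing here because $2T^{\dim}$ is genuinely Gaussian given $\mathcal{F}_{\dim}$. Everything else is bookkeeping: the degenerate events $\{v_{\dim}<\delta\}$ and $\{\Vdot(X_1^{\dim})=0\}$, the independence structure, and the order of the limits ($\dim\to\infty$ first, then $\epsilon\downarrow0$).
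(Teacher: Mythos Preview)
Your proof is correct and uses the same core tool (\Cref{lem:fun_nearly_lip}\ref{lem:item:fun_nearly2}) as the paper, but the route is genuinely different. The paper conditions only on $X_1^{\dim}$; conditional on $X_1^{\dim}$, the sum $\sum_{i\ge2}b_i^{\dim}$ is \emph{not} Gaussian, so the authors invoke the Berry--Esseen theorem to feed $C_{\Xsf}=O(\dim^{-1/2})$ into \Cref{lem:fun_nearly_lip}, with a deterministic variance $\tilde\sigma_{\dim}^2\to\ell^2 I$. They then take the adaptive choice $\epsilon=a_{\dim}^{1/4}=(\ell^2\Vdot(X_1^{\dim})^2/\dim)^{1/4}$, which makes the near-Lipschitz bound \eqref{eq:G_lip_in_b} hold with a \emph{universal} constant and drives the residual $\epsilon$-term to zero in a single limit (at the price of the extra moment $\PE[|\Vdot|^{3/2}]$). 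You instead condition on the full $\sigma$-field $\mathcal{F}_{\dim}=\sigma(X_1^{\dim},\dots,X_{\dim}^{\dim})$, which makes $2T^{\dim}$ \emph{exactly} Gaussian ($C_{\Xsf}=0$) and dispenses with Berry--Esseen altogether; the cost is that the conditional variance $v_{\dim}$ is random, forcing the $\{v_{\dim}<\delta\}$ truncation, and that your fixed-$\epsilon$ Lipschitz constant $C_\epsilon\asymp\epsilon^{-1}$ blows up, forcing the $\dim\to\infty$ then $\epsilon\downarrow0$ double limit. Both arguments ultimately rest on $\Jrm^{\dim}\to0$ from \Cref{lem:interm_prop_approx_ratio} and on the independence of $X_1^{\dim}$ from $(X_i^{\dim},Z_i^{\dim})_{i\ge2}$; yours is more elementary (no Berry--Esseen) but slightly less streamlined.
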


\begin{proof}
Set for all $d \geq 1$, $\barY_d = \sum_{i=2}^d \Delta V_i^d$ and $\barX_d = \sum_{i=2}^d b_i^d$. By \eqref{eq:defG}, $\partial_b \mathcal{G} (a,b) = -\mathcal{G}(a,b)/2 + \exp(-b^2/8a)/(2\sqrt{2 \pi a})$. As $\mathcal{G}$ is bounded and $x \mapsto x\exp(-x)$ is bounded on $\rset_+$, we get
$\sup_{a \in \rset_+ ; \abs{b} \geq a^{1/4}} \partial_b \mathcal{G}(a,b) < \plusinfty$.
Therefore, there exists $C\geq 0$ such that, for all $a \in \rset_+$and $(b_1,b_2) \in \ooint{- \infty , -a^{1/4}}^2 \cup \ooint{a^{1/4}, \plusinfty}^2$,
\begin{equation}
\label{eq:G_lip_in_b}
\abs{\mathcal{G}(a,b_1)-\mathcal{G}(a,b_2)} \leq C \abs{b_1-b_2}\eqsp.
\end{equation}
By definition of $b_i^d$ \eqref{eq:bdi},  $\barX_d$ may be expressed as $\barX_d = \sigma_d \barS_d +\mu_d$, where
\begin{align*}
\mu_d &= 2 (d-1) \expe{\zeta^{\dim}(X_1^{\dim},Z_1^{\dim})} - \frac{\ell^2(d-1)}{4d} \expe{\Vdot(X_1^{\dim})^2}\eqsp, \\
\sigma^2_d &= \ell^2 \expe{\Vdot(X_1^{\dim})^2} + \frac{\ell^4}{16d} \expe{\parenthese{\Vdot(X_1^{\dim})^2 - \expe{\Vdot(X_1^{\dim})^2}}^2}\eqsp,  \\
\barS_d &= (\sqrt{d} \sigma_d)^{-1}\sum_{i=2}^d \beta_i^d\eqsp, \\
\beta_i^d &= -\ell Z_i^d \Vdot(X_i^d) - \frac{\ell^2}{4\sqrt{\dim}}\parenthese{\Vdot(X_i^d)^2 - \expe{\Vdot(X_i^d)^2}} \eqsp.
\end{align*}
By \Cref{assum:diff:quadratic}\ref{assum:X6} the Berry-Essen Theorem \cite[Theorem 5.7]{petrov:1995} can be applied to $\barS_d$. Then, there exists a universal constant $C$ such that for all $\dim>0$,
\[
\sup_{x \in \rset} \abs{\proba{\parenthese{\frac{d}{d-1}}^{1/2}\barS_d \leq x}-\Phi(x)} \leq C/ \sqrt{d} \eqsp.
\]
It follows that
\[
\sup_{x \in \rset} \abs{\proba{\barX_d \leq x}-\Phi((x-\mu_d)/\tilde{\sigma}_d)} \leq C/ \sqrt{d} \eqsp,
\]
where $\tilde{\sigma}^2_d = (d-1)\sigma_d^2/d$.
By this result and \eqref{eq:G_lip_in_b}, \Cref{lem:fun_nearly_lip} can be applied to obtain a constant $C \geq 0$, independent of $\dim$, such that:
\begin{multline*}
\expe{\abs{\mathcal{G}\parenthese{\ell^2\Vdot(X_1^d)^2/d, 2 \barY_d}-\mathcal{G}\parenthese{\ell^2\Vdot(X_1^d)^2/d, 2 \barX_d}} \sachant{X_1^d}} \\
\leq  C \parenthese{ \varepsilon_{\dim} + d^{-1/2} + \sqrt{2\varepsilon_{\dim}(2 \pi \tilde{\sigma}_d^2)^{-1/2}} +  \sqrt{\ell|\Vdot(X_1^d)|/( 2 \pi d^{1/2} \tilde{\sigma}_d^2 )} } \eqsp,
\end{multline*}
where $\varepsilon_{\dim} = \expe{\abs{\barX_d-\barY_d}}$. Using this result, we have
\begin{multline}
\label{eq:approx_first_term_decomp_martingale_3}
\Erm^{\dim}\leq  C \left\{\parenthese{ \varepsilon_{\dim} + d^{-1/2} + \sqrt{2\varepsilon_{\dim}(2 \pi \tilde{\sigma}^2_d)^{-1/2}} } \expe{|\Vdot(X_1^d)|}\right.   \\ \left.+  \ell^{1/2}\expe{|\Vdot(X_1^d)|^{3/2}} ( 2 \pi d^{1/2} \tilde{\sigma}^2_d )^{-1/2} \right\} \eqsp.
\end{multline}
By \Cref{lem:interm_prop_approx_ratio}, $\varepsilon_{\dim}$ goes to $0$ as $d$ goes to infinity, and by
\Cref{assum:diff:quadratic}\ref{assum:X6} $\lim_{d \to \plusinfty} \sigma^2_d =
\ell^2\expe{\Vdot(X)^2} $. Combining these results with
\eqref{eq:approx_first_term_decomp_martingale_3}, it follows that $\Erm^{\dim}$ goes to $0$
when $d$ goes to infinity.
\end{proof}
For all $n\ge 0$, define $\mcf_{n}^{\dim} = \sigma( \lbrace X_k^d,k\leq n \rbrace) $
and for all $\phi \in C_c^{\infty}(\rset,\rset)$,
\begin{multline}
\label{eq:def:martingale}
M^{\dim}_n(\phi) = \frac{\ell}{\sqrt{\dim}} \sum_{k=0}^{n-1} \phi'(X_{k,1}^d)\left\{Z_{k+1,1}^d \1_{\setAccept^d_{k+1}} - \CPE{Z_{k+1,1}^d \1_{\setAccept^d_{k+1}}}{\mcf_k^d} \right\} \\
+  \frac{\ell^2}{2 \dim} \sum_{k=0}^{n-1} \phi''(X_{k,1}^d)\left\{(Z_{k+1,1}^d)^2
\1_{\setAccept^d_{k+1}} - \CPE{(Z_{k+1,1}^d)^2 \1_{\setAccept^d_{k+1}}}{\mcf_k^d} \right\}  \eqsp.
\end{multline}

\begin{proposition}
\label{propo:decomposition_martingale}
Assume \Cref{assum:diff:quadratic} and \Cref{assum:Vdot} hold. Then, for all $s \leq t$ and all $\phi \in C_c^{\infty}(\rset,\rset)$,
\[
\lim_{\dim \to \plusinfty} \PE \left[ \abs{\phi(Y_{t,1}^{\dim} ) -\phi(Y_{s,1}^{\dim} ) - \int_s ^t \Lrm \phi(Y_{r,1}^{\dim}) \rmd r -\left(M^{\dim}_{\ceil{\dim t}}(\phi) - M^{\dim}_{\ceil{\dim s}}(\phi)\right)} \right] = 0\eqsp.
\]
\end{proposition}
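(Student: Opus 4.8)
The plan is a Dynkin-type decomposition: Taylor-expand $\phi$ along the chain's updates, identify the fluctuation part with $M^\dim_{\ceil{\dim t}}(\phi)-M^\dim_{\ceil{\dim s}}(\phi)$, and show that the remaining compensator converges in $\rmL^1$ to $\int_s^t\Lrm\phi(Y^\dim_{r,1})\rmd r$. Write $n_1=\ceil{\dim s}$, $n_2=\ceil{\dim t}$, $N_k^\dim=\CPE{Z^\dim_{k+1,1}\1_{\setAccept^\dim_{k+1}}}{\mcf_k^\dim}$, $Q_k^\dim=\CPE{(Z^\dim_{k+1,1})^2\1_{\setAccept^\dim_{k+1}}}{\mcf_k^\dim}$, and $\Delta V^\dim_{k,i}=V(X^\dim_{k,i})-V(X^\dim_{k,i}+\ell\dim^{-1/2}Z^\dim_{k+1,i})$ (the time-$k$ analogue of \eqref{eq:Delta_V}). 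Since $\phi\in C^\infty_c(\rset,\rset)$ and $X^\dim_{k+1,1}=X^\dim_{k,1}+\ell\dim^{-1/2}Z^\dim_{k+1,1}\1_{\setAccept^\dim_{k+1}}$, a second-order Taylor expansion of $\phi$, summed over $k\in\{n_1,\dots,n_2-1\}$ and completed by adding and subtracting the $\mcf_k^\dim$-conditional expectations of its linear and quadratic terms, gives
\[
\phi(X^\dim_{n_2,1})-\phi(X^\dim_{n_1,1})=\bigl(M^\dim_{n_2}(\phi)-M^\dim_{n_1}(\phi)\bigr)+C^\dim+\rho^\dim\eqsp,
\]
with $M^\dim$ as in \eqref{eq:def:martingale}, $C^\dim=\sum_{k=n_1}^{n_2-1}\bigl(\ell\dim^{-1/2}\phi'(X^\dim_{k,1})N_k^\dim+\tfrac{\ell^2}{2\dim}\phi''(X^\dim_{k,1})Q_k^\dim\bigr)$, and $\PE[|\rho^\dim|]\le C(n_2-n_1)\dim^{-3/2}\PE[|Z^\dim_{1,1}|^3]\to0$. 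Moreover $\PE[|\phi(Y^\dim_{t,1})-\phi(X^\dim_{n_2,1})|]+\PE[|\phi(Y^\dim_{s,1})-\phi(X^\dim_{n_1,1})|]\le2\ell\dim^{-1/2}\norm{\phi'}_\infty\PE[|Z^\dim_{1,1}|]\to0$ by \eqref{eq:defY}, and, $\Lrm\phi$ being bounded (because $\phi'$ has compact support and $\Vdot$ is bounded on compact sets by \Cref{assum:Vdot}), $|\int_s^t\Lrm\phi(Y^\dim_{r,1})\rmd r-\int_{n_1/\dim}^{n_2/\dim}\Lrm\phi(Y^\dim_{r,1})\rmd r|\le2\norm{\Lrm\phi}_\infty/\dim$. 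Hence it remains to prove $\PE\bigl[\bigl|C^\dim-\int_{n_1/\dim}^{n_2/\dim}\Lrm\phi(Y^\dim_{r,1})\rmd r\bigr|\bigr]\to0$.

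\textbf{Limits of $N_k^\dim$ and $Q_k^\dim$.} This is the core. Conditionally on $\mcf_k^\dim$, the variable $Z^\dim_{k+1,1}$ is standard Gaussian and independent of $R_k^\dim:=\sum_{i=2}^\dim\Delta V^\dim_{k,i}$ and of $\bar R_k^\dim:=\sum_{i=2}^\dim b^\dim(X^\dim_{k,i},Z^\dim_{k+1,i})$, and $\bar R_k^\dim$ is Gaussian with $\mcf_k^\dim$-measurable mean $m_k^\dim=(\dim-1)\PE[2\zeta^\dim(X^\dim_1,Z^\dim_1)]-\tfrac{\ell^2}{4\dim}\sum_{i=2}^\dim\Vdot^2(X^\dim_{k,i})$ and variance $v_k^\dim=\tfrac{\ell^2}{\dim}\sum_{i=2}^\dim\Vdot^2(X^\dim_{k,i})$; since $X^\dim_k\sim\pdf^\dim$, by \Cref{lem:mean:zeta} and the law of large numbers (\Cref{assum:diff:quadratic}\ref{assum:X6}), $m_k^\dim\to-\ell^2 I/2$ and $v_k^\dim\to\ell^2 I$ in probability. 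For $Q_k^\dim$ I would replace $\Delta V^\dim_{k,1}$ by $0$ (error $\le\CPE{(Z^\dim_{k+1,1})^2|\Delta V^\dim_{k,1}|}{\mcf_k^\dim}$, of expectation $O(\dim^{-1/2})$ by \Cref{lem:integrated-DQM}\ref{lem:integrated-DQM-Lp} and \Cref{assum:diff:quadratic}\ref{assum:X6}) and then $R_k^\dim$ by $\bar R_k^\dim$ (error $\le\CPE{|R_k^\dim-\bar R_k^\dim|}{\mcf_k^\dim}$, of expectation $\Jrm^\dim\to0$ by \Cref{lem:interm_prop_approx_ratio}), which — using $Z^\dim_{k+1,1}\perp\bar R_k^\dim$ given $\mcf_k^\dim$ and the Gaussianity of $\bar R_k^\dim$ — leaves $\CPE{1\wedge\exp(\bar R_k^\dim)}{\mcf_k^\dim}=\Gamma(v_k^\dim,-2m_k^\dim)$ with $\Gamma$ as in \eqref{eq:defGamma}, and by continuity of $\Gamma$ this tends, in $\rmL^1$, to $\Gamma(\ell^2 I,\ell^2 I)=2\Phi(-\ell\sqrt I/2)=h(\ell)/\ell^2$. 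For $N_k^\dim$ I would replace $\Delta V^\dim_{k,1}$ by $-\ell\dim^{-1/2}\Vdot(X^\dim_{k,1})Z^\dim_{k+1,1}$ (error $\le\CPE{|Z^\dim_{k+1,1}|\,|\Delta V^\dim_{k,1}+\ell\dim^{-1/2}\Vdot(X^\dim_{k,1})Z^\dim_{k+1,1}|}{\mcf_k^\dim}$, of expectation $o(\dim^{-1/2})$ by \Cref{lem:integrated-DQM}\ref{lem:integrated-DQM-Lp}), and then invoke the Gaussian identity of \cite[Lemma~6]{jourdain:lelievre:miasojedow:2015} (already used in the proof of \Cref{prop:tight}) to obtain, up to a term of $\rmL^1$ norm $o(\dim^{-1/2})$,
\[
N_k^\dim=-\frac{\ell}{\sqrt\dim}\,\Vdot(X^\dim_{k,1})\,\CPE{\G\!\left(\tfrac{\ell^2}{\dim}\Vdot(X^\dim_{k,1})^2,\,-2R_k^\dim\right)}{\mcf_k^\dim}\eqsp,
\]
with $\G$ as in \eqref{eq:defG}. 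Replacing $R_k^\dim$ by $\bar R_k^\dim$ inside $\G$ is exactly where \Cref{lem:approx_first_term_decomposition_martingale} enters (its proof applies verbatim with $-2$ in place of $2$ in the second slot of $\G$): the resulting error carries the extra factor $\ell\dim^{-1/2}$, so that summing over the $n_2-n_1=O(\dim)$ indices $k$ and multiplying by the $\ell\dim^{-1/2}$ weight from $C^\dim$ produces a bound $\le\ell^2\norm{\phi'}_\infty\tfrac{n_2-n_1}{\dim}\Erm^\dim\to0$, with $\Erm^\dim$ the quantity of \Cref{lem:approx_first_term_decomposition_martingale}; a plain $1$-Lipschitz bound on $t\mapsto1\wedge\rme^t$ would instead leave a term of order $\sqrt\dim\,\Jrm^\dim$, not known to vanish for $\beta$ close to $1$. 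Finally, since $\tfrac{\ell^2}{\dim}\Vdot(X^\dim_{k,1})^2\to0$, $m_k^\dim\to-\ell^2 I/2$ and $v_k^\dim\to\ell^2 I$ in probability, $\G$ is continuous off $(0,0)$ and $-2\bar R_k^\dim$ avoids $0$ with probability tending to $1$, dominated convergence and a direct Gaussian integration give $\CPE{\G(\tfrac{\ell^2}{\dim}\Vdot(X^\dim_{k,1})^2,-2\bar R_k^\dim)}{\mcf_k^\dim}\to\Phi(-\ell\sqrt I/2)$ in $\rmL^1$. Assembling these facts, using \eqref{eq:def-generator} and that, by stationarity, every $\rmL^1$ error above has a law independent of $k$ while $|\phi'(X^\dim_{k,1})\Vdot(X^\dim_{k,1})|$ is bounded (\Cref{assum:Vdot}), one gets $\PE\bigl[\bigl|C^\dim-\tfrac1\dim\sum_{k=n_1}^{n_2-1}\Lrm\phi(X^\dim_{k,1})\bigr|\bigr]\to0$.

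\textbf{From the Riemann sum to the integral, and the main obstacle.} Writing $\int_{n_1/\dim}^{n_2/\dim}\Lrm\phi(Y^\dim_{r,1})\rmd r=\sum_{k=n_1}^{n_2-1}\int_{k/\dim}^{(k+1)/\dim}\Lrm\phi(Y^\dim_{r,1})\rmd r$ and using $|Y^\dim_{r,1}-X^\dim_{k,1}|\le\ell\dim^{-1/2}|Z^\dim_{k+1,1}|$ on $[k/\dim,(k+1)/\dim)$, the difference with $\tfrac1\dim\sum_{k=n_1}^{n_2-1}\Lrm\phi(X^\dim_{k,1})$ is $\rmL^1$-bounded by $\sum_{k=n_1}^{n_2-1}\int_{k/\dim}^{(k+1)/\dim}\PE[|\Lrm\phi(Y^\dim_{r,1})-\Lrm\phi(X^\dim_{k,1})|]\rmd r$; by stationarity the integrand depends only on $\dim$ and $\dim r-k\in[0,1)$, and since $\Lrm\phi$ is bounded and continuous off a Lebesgue-null set (by \Cref{assum:Vdot}) whereas $X^\dim_{k,1}\sim\pdf$ has a density, splitting the expectation on $\{\ell\dim^{-1/2}|Z^\dim_{k+1,1}|\le\delta\}$ and its complement and letting first $\dim\to\infty$ then $\delta\to0$ shows it vanishes uniformly in $r$; hence the difference is $O\bigl((t-s+\dim^{-1})\sup_{\theta\in[0,1)}\PE[|\Lrm\phi(Y^\dim_{(k+\theta)/\dim,1})-\Lrm\phi(X^\dim_{k,1})|]\bigr)\to0$. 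Combining the three steps yields the proposition. I expect the main obstacle to be the middle step, and within it the bookkeeping for $N_k^\dim$: since $O(\dim)$ error terms are summed, an $o(1)$ control of the substitution $\sum_i\Delta V^\dim_i\rightsquigarrow\sum_i b^\dim_i$ does not suffice, and it is the $o(\dim^{-1/2})$ gain afforded by integrating out $Z^\dim_{k+1,1}$ exactly (the $\G$-representation, together with \Cref{lem:approx_first_term_decomposition_martingale}) that closes the argument.
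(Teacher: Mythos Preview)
Your proposal is correct and follows essentially the same approach as the paper: a Taylor/Dynkin decomposition, identification of the martingale part \eqref{eq:def:martingale}, and reduction of the compensator via \cite[Lemma~6]{jourdain:lelievre:miasojedow:2015} combined with \Cref{lem:integrated-DQM}, \Cref{lem:interm_prop_approx_ratio}, \Cref{lem:mean:zeta} and, crucially, \Cref{lem:approx_first_term_decomposition_martingale} for the drift term; you correctly identify the latter as the key obstacle and explain why a naive $1$-Lipschitz bound would fail. The only organisational difference is that the paper exploits the piecewise-linear structure of $Y^{\dim}$ to write $\phi(Y_{t,1}^{\dim})-\phi(Y_{s,1}^{\dim})$ directly as an integral over $[s,t]$ (Taylor-expanding $\phi'(Y^{\dim}_{r,1})$ about $X^{\dim}_{\floor{\dim r},1}$) and then decomposes the integrand into terms $T_1^{\dim},\dots,T_5^{\dim}$, whereas you work with the discrete chain, obtain a Riemann sum $C^{\dim}$, and pass to the integral at the end; the estimates and the lemmata invoked are the same in either arrangement. (For $Q_k^{\dim}$ the paper keeps the first-coordinate contribution and applies the full formula \cite[Lemma~6~(A.5)]{jourdain:lelievre:miasojedow:2015}, producing terms $B_1,B_2,B_3$; your shortcut of first replacing $\Delta V^{\dim}_{k,1}$ by $0$ at cost $O(\dim^{-1/2})$ is a valid and slightly simpler alternative.)
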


\begin{proof}
First, since $\rmd Y^{d}_{r,1} = \ell\sqrt{d}Z^{d}_{\ceil{dr},1} \1_{\setAccept_{\ceil{dr}}^{d}}\rmd r$,
\begin{equation}
\label{eq:dev_phiY}
\phi(Y_{t,1}^{d}) -  \phi(Y_{s,1}^{d}) =  \ell\sqrt{\dim}\int_s ^t \phi'(Y^{d}_{r,1})Z^{d}_{\ceil{dr},1} \1_{\setAccept_{\ceil{dr}}^{d}}\rmd r \eqsp.
\end{equation}
As $\phi$ is $C^3$, using \eqref{eq:defY} and a Taylor expansion, for all $r \in \ccint{s,t}$ there exists $\chi_r \in \ccint{X^{d}_{\floor{dr},1},Y^{d}_{r,1}}$ such that:
\begin{multline*}
\phi'(Y^{d}_{r,1}) = \phi'(X^{d}_{\floor{dr},1})+\frac{\ell}{\sqrt{\dim}}(dr -\floor{dr})\phi''(X^{d}_{\floor{dr},1}) Z^{d}_{\ceil{dr},1} \1_{\setAccept_{\ceil{dr}}^{d}} \\ +  \frac{\ell^2}{2\dim}(dr -\floor{dr})^2\phi^{(3)}(\chi_r) \left(Z^{d}_{\ceil{dr},1}\right)^2 \1_{\setAccept_{\ceil{dr}}^{d}} \eqsp.
\end{multline*}
Plugging this expression into \eqref{eq:dev_phiY} yields:
\begin{align*}
\phi(Y_{t,1}^{d}) -  \phi(Y_{s,1}^{d})& =  \ell\sqrt{\dim}\int_s ^t \phi'(X^{d}_{\floor{dr},1})Z^{d}_{\ceil{dr},1} \1_{\setAccept_{\ceil{dr}}^{d}}\rmd r \\
&+ \ell^2\int_s ^t (dr -\floor{dr})\phi''(X^{d}_{\floor{dr},1})(Z^{d}_{\ceil{dr},1})^2 \1_{\setAccept_{\ceil{dr}}^{d}}\rmd r\\
&+ \frac{\ell^3}{2\sqrt{\dim}}\int_s ^t (dr -\floor{dr})^2\phi^{(3)}(\chi_r)(Z^{d}_{\ceil{dr},1})^3 \1_{\setAccept_{\ceil{dr}}^{d}}\rmd r\eqsp.
\end{align*}
As $\phi^{(3)}$ is bounded,
\[
\lim_{\dim \to \plusinfty} \PE \left[ \abs{\dim^{-1/2}\int_s ^t (dr -\floor{dr})^2\phi^{(3)}(\chi_r)(Z^{d}_{\ceil{dr},1})^3 \1_{\setAccept_{\ceil{dr}}^{d}}\rmd r}\right] = 0\eqsp.
\]
On the other hand, $I = \int_s^t \phi''(X^{d}_{\floor{dr},1})(dr -\floor{dr}) (Z^{d}_{\ceil{dr},1})^2 \1_{\setAccept_{\ceil{dr}}^{d}} \rmd r  = I_1 +I_2$ with
\begin{align*}
I_1&= \int_{s} ^{\ceil{ds}/d} + \int_{\floor{dt}/d} ^{t} \phi''(X^{d}_{\floor{dr},1})(dr -\floor{dr} -1/2)(Z^{d}_{\ceil{dr},1})^2 \1_{\setAccept_{\ceil{dr}}^{d}} \rmd r \\
I_2&= \frac{1}{2} \int_{s}^{t} \phi''(X^{d}_{\floor{dr},1}) (Z^{d}_{\ceil{dr},1})^2 \1_{\setAccept_{\ceil{dr}}^{d}} \rmd r \eqsp.
\end{align*}
Note that
\begin{multline*}
I_1 = \frac{1}{2\dim}(\ceil{ds}-ds)(ds-\floor{ds})\phi''(X^{d}_{\floor{ds},1}) (Z^{d}_{\ceil{ds},1})^2 \1_{\setAccept_{\ceil{ds}}^{d}} \\
 +\frac{1}{2\dim}(\ceil{dt}-dt)(dt-\floor{dt})\phi''(X^{d}_{\floor{dt},1}) (Z^{d}_{\ceil{dt},1})^2 \1_{\setAccept_{\ceil{dt}}^{d}}
\end{multline*}
showing, as $\phi''$ is bounded, that $\lim_{\dim \to \plusinfty} \PE [|I_1|]= 0$.
Therefore, 
\[
\lim_{d \to \plusinfty} \mathbb{E}\left[\left| \phi(Y_{t,1}^{\dim} ) - \phi(Y_{s,1}^{\dim} ) - I_{s,t} \right| \right] = 0 \eqsp,
\]
where 
\[
I_{s,t} =  \int_s ^t \left\{\ell \sqrt{\dim} \phi'(X^{\dim}_{\floor{\dim r },1}) Z^{\dim}_{\ceil{\dim r},1} + \ell^2 \phi''(X^{\dim}_{\floor{\dim r },1}) (Z^{\dim}_{\ceil{\dim r},1})^2/2\right\}\1_{\setAccept_{\ceil{\dim r}}^{\dim}} \rmd r\eqsp.
\]
Write
\[
I_{s,t} - \int_s ^t \Lrm \phi(Y_{r,1}^{\dim}) \rmd r - \left(M^{\dim}_{\ceil{\dim t}}(\phi) - M^{\dim}_{\ceil{\dim s}}(\phi)\right)
= T^{\dim}_1 + T_2^{\dim} + T_3^{\dim} - T_4^{\dim} + T_5^{\dim}\eqsp,
\]
where
\begin{align*}
T^{\dim}_1 &=  \int_s ^t  \phi'(X^{\dim}_{\floor{\dim r },1}) \left( \ell \sqrt{\dim}  \
\CPE{Z_{\ceil{\dim r },1}^d \1_{\setAccept^d_{\ceil{\dim r }}}}{ \mcf_{\floor{\dim r}}^\dim}  + \frac{h(\ell)}{2}\Vdot(X^{\dim}_{\floor{\dim r },1})\right) \rmd r\eqsp,\\
T^{\dim}_2 & = \int_s ^t  \phi''(X^{\dim}_{\floor{\dim r },1}) \left( \frac{\ell^2}{2} \  \PE \left[ (Z_{\ceil{\dim r },1}^d)^2 \1_{\setAccept^d_{\ceil{\dim r }}}  \sachant{ \mcf_{\floor{\dim r}} ^d} \right] -  \frac{h(\ell)}{2} \right) \rmd r \eqsp,\\
T^{\dim}_3 & = \int_s ^t \left(\Lrm \phi(Y^{\dim}_{\floor{\dim r}/\dim,1}) - \Lrm \phi(Y^{\dim}_{r,1})\right) \rmd r\eqsp, \\
T^{\dim}_4& = \frac{\ell(\ceil{\dim t} - \dim t)}{\sqrt{\dim}} \phi'(X^{\dim}_{\floor{\dim t },1}) \left(  Z_{\ceil{\dim t },1}^d \1_{\setAccept^d_{\ceil{\dim t }}} -  \PE \left[ Z_{\ceil{\dim t },1}^d \1_{\setAccept^d_{\ceil{\dim t }}}  \sachant{ \mcf_{\floor{\dim t}} ^d} \right] \right) \\
& \;\;+ \frac{\ell^2 (\ceil{\dim t} - \dim t)}{2 \dim } \phi''(X^{\dim}_{\floor{\dim t },1}) \left(  (Z_{\ceil{\dim t },1}^d)^2 \1_{\setAccept^d_{\ceil{\dim t }}} \!\!-  \PE \left[ (Z_{\ceil{\dim t },1}^d)^2 \1_{\setAccept^d_{\ceil{\dim t }}}  \sachant{ \mcf_{\floor{\dim t}} ^d} \right] \right)\eqsp, \\
T^{\dim}_5& = \frac{\ell(\ceil{\dim s} - \dim s)}{\sqrt{\dim}} \phi'(X^{\dim}_{\floor{\dim s },1}) \left(  Z_{\ceil{\dim s },1}^d \1_{\setAccept^d_{\ceil{\dim s }}} -  \PE \left[ Z_{\ceil{\dim s },1}^d \1_{\setAccept^d_{\ceil{\dim s }}}  \sachant{ \mcf_{\floor{\dim s}} ^d} \right] \right) \\
&  \;\;+ \frac{\ell^2 (\ceil{\dim s} - \dim s)}{2 \dim } \phi''(X^{\dim}_{\floor{\dim s },1}) \left(  (Z_{\ceil{\dim s },1}^d)^2 \1_{\setAccept^d_{\ceil{\dim s }}} \!\!-  \PE \left[ (Z_{\ceil{\dim s },1}^d)^2 \1_{\setAccept^d_{\ceil{\dim s }}}  \sachant{ \mcf_{\floor{\dim s}} ^d} \right] \right) \eqsp.
\end{align*}
It is now proved that for all $1\le i\le 5$, $\lim_{d \to \plusinfty} \PE[ | T^d_i|] =0$.
First, as $\phi'$ and $\phi''$ are bounded,
\begin{equation}
\label{eq:T4_T5}
\PE\left[\abs{T^{\dim}_4 }+\abs{ T^{\dim}_5} \right] \leq C \dim^{-1/2} \eqsp.
\end{equation}
Denote for all $r \in \ccint{s,t}$ and $d \geq 1$,
\begin{align*}
\Delta V_{r,i}^d &= V\left(X_{\floor{dr},i}^{\dim}\right) - V\left(X_{\floor{dr},i}^{\dim} +
  \ell\dim^{-1/2} Z^{\dim}_{\ceil{dr},i}\right) \\
 \Xi_r^d &= 1 \wedge
      \exp\left\{-\ell Z_{\ceil{\dim r },1}^{d}\Vdot(X_{ \floor{\dim r}
          ,1}^{d})/\sqrt{\dim} +\sum_{i=2}^d  \varb_{\floor{\dim r} , i}^{d}\right\} \eqsp,\\
\Upsilon_r^d &= 1 \wedge
      \exp\left\{-\ell Z_{\ceil{\dim r },1}^{d}\Vdot(X_{ \floor{\dim r}
          ,1}^{d})/\sqrt{\dim} +\sum_{i=2}^d \Delta V_{r,i}^d\right\} \eqsp,
\end{align*}
where for all $k,i\ge 0$, $b_{k,i}^d=b^d(X_{k,i}^d,Z_{k+1,i}^d)$, and for all $x,z \in
\rset$, $b^d(x,y)$ is given by \eqref{eq:bdi}.
 By the triangle inequality,
\begin{equation}
\label{eq:treatment_T1}
\abs{T_1^d}  \leq \int_s ^t  \left|\phi'(X^{\dim}_{\floor{\dim r },1})\right|(A_{1,r} + A_{2,r} + A_{3,r}) \rmd r \eqsp,
\end{equation}
where
\begin{align*}
  A_{1,r} &= \abs{\ell \sqrt{\dim} \  \PE \left[
    Z_{\ceil{\dim r },1}^d \left( \1_{\setAccept^d_{\ceil{\dim r }}} - \Upsilon_r^d
    \right)  \sachant{ \mcf_{\floor{\dim r}} ^d} \right]}\eqsp, \\
  A_{2,r} &= \abs{\ell \sqrt{\dim} \  \PE \left[
    Z_{\ceil{\dim r },1}^d \left(  \Upsilon_r^d
      -\Xi_r^d
    \right)  \sachant{ \mcf_{\floor{\dim r}} ^d} \right]}\eqsp, \\
A_{3,r}& =  \abs{\ell \sqrt{\dim}  \ \PE \left[ Z_{\ceil{\dim r },1}^d \Xi_r^{\dim} \sachant{ \mcf_{\floor{\dim r}} ^d} \right] + \Vdot(X^{\dim}_{\floor{\dim r },1}) h(\ell)/2 } \eqsp.
  \end{align*}
Since $t \mapsto 1 \wedge \exp(t)$ is $1$-Lipschitz, by
\Cref{lem:integrated-DQM}\ref{lem:integrated-DQM-Lp} $\PE[\abs{A_{1,r}^{\dim}}]$ goes to
$0$ as $d\to \plusinfty$
for almost all $r$. So by
the Fubini theorem, the
first term  in \eqref{eq:treatment_T1} goes to $0$ as $d \to \plusinfty$.
For $A_{2,r}^{\dim}$, by \cite[Lemma 6]{jourdain:lelievre:miasojedow:2015},
\begin{multline*}
\expe{\abs{A_{2,r}^{\dim}}} \leq \mathbb{E}\left[\left| \ell^2\Vdot(X_{\floor{dr},1}^d)\left\{\mathcal{G}\parenthese{\frac{\ell^2\Vdot(X_{\floor{dr},1}^d)^2}{d}, 2\sum_{i=2}^d \Delta V_{r,i}^d }\right.\right.\right. \\
\left.\left.\left.-\mathcal{G}\parenthese{\frac{\ell^2\Vdot(X_{\floor{dr},1}^d)^2}{d}, 2 \sum_{i=2}^d  \varb_{\floor{\dim r} , i}^{d}} \right\}\right|\right]\eqsp,
\end{multline*}
where $\mathcal{G}$ is defined in \eqref{eq:defG}. By  \Cref{lem:approx_first_term_decomposition_martingale}, this expectation  goes to zero
when $d$ goes to infinity. Then by the Fubini theorem and the Lebesgue dominated
convergence theorem, the second term of \eqref{eq:treatment_T1} goes $0$ as $d \to
\plusinfty$. For the last term, by \cite[Lemma 6]{jourdain:lelievre:miasojedow:2015} again:
\begin{multline}
\ell \sqrt{d} \ \PE \left[ Z_{\ceil{\dim r },1}^d \Xi_r^{\dim} \sachant{
    \mcf_{\floor{\dim r}} ^d} \right]  = -\ell^2\Vdot(X_{ \floor{\dim r}  ,1}^{d}) \\
\label{eq:treatment_T1_1}
 \times \mathcal{G}\left(
  \frac{\ell^2}{\dim}\sum_{i=1}^{d} \Vdot(X_{\floor{\dim r},i}^{d})^2,\frac{\ell^2}{2\dim}\sum_{i=2}^{d} \Vdot(X_{\floor{\dim r},i}^{d})^2 -4(d-1) \PE\left[\zeta^{\dim}(X,Z)\right]\right) \eqsp,
\end{multline}
where  $ X$ is distributed according to $\pdf$ and $Z$ is a standard Gaussian random variable independent of $X$. As $\G$ is continuous on $\rset_+ \times \rset \setminus \defEns{0,0}$ (see \cite[Lemma 2]{jourdain:lelievre:miasojedow:2015}), by \Cref{assum:diff:quadratic}\ref{assum:X6}, \Cref{lem:mean:zeta} and the law of large numbers, almost surely,
\begin{multline}
\label{eq:limG}
\lim_{d \to \plusinfty} \ell^2\mathcal{G}\left(\frac{\ell^2}{\dim}\sum_{i=1}^{d} \Vdot(X_{\floor{\dim r},i}^{d})^2, \frac{\ell^2}{2\dim}\sum_{i=2}^{d} \Vdot(X_{\floor{\dim r},i}^{d})^2 -4(d-1) \PE\left[\zeta^{\dim}(X,Z)\right] \right) \\ = \ell^2\mathcal{G}\left(\ell^2\PE_{}[\Vdot(X)^2],\ell^2\PE_{}[\Vdot(X)^2]\right) = h(\ell)/2\eqsp,
\end{multline}
where $h(\ell)$ is defined in \eqref{eq:defhK}. Therefore by Fubini's Theorem,
\eqref{eq:treatment_T1_1} and Lebesgue's dominated convergence theorem, the last term of
\eqref{eq:treatment_T1} goes to $0$ as $d$ goes to infinity. The proof for $T_2^d$ follows the same lines. By the triangle inequality,
\begin{multline}
\label{eq:treatment_T2}
\abs{T_2^{d}}  \leq \abs{\int_s ^t  \phi''(X^{\dim}_{\floor{\dim r },1})(\ell^2/2) \  \PE \left[ (Z_{\ceil{\dim r },1}^d )^2 \left( \1_{\setAccept^d_{\ceil{\dim r }}} -\Xi_r^{\dim} \right) \sachant{ \mcf_{\floor{\dim r}} ^d} \right]  \rmd r}\\
+ \abs{\int_s ^t  \phi''(X^{\dim}_{\floor{\dim r },1}) \left( (\ell^2/2) \ \PE \left[
      (Z_{\ceil{\dim r },1}^d)^2 \Xi_r^{\dim} \sachant{ \mcf_{\floor{\dim r}} ^d} \right]
    -  h(\ell)/2 \right) \rmd r} \eqsp.
\end{multline}
 By Fubini's Theorem,  Lebesgue's dominated convergence theorem and \Cref{lem:approx_ratio}, the expectation of the first term goes to zero when $d$ goes to infinity. For the second term,  by \cite[Lemma 6 (A.5)]{jourdain:lelievre:miasojedow:2015},
\begin{multline}
\label{eq:treatment_T2_1}
( \ell^2/2) \PE \left[ (Z_{\ceil{\dim r },1}^d)^2 1 \wedge \exp\left\{ -\frac{\ell Z_{\ceil{\dim r },1}^{d}}{\sqrt{\dim}}\Vdot(X_{ \floor{\dim r}  ,1}^{d}) +\sum_{i=2}^d  \varb_{\floor{\dim r} , i}^{d} \right\} \sachant{ \mcf_{\floor{\dim r}} ^d} \right] \\
= (B_1 + B_2 - B_3)/2 \eqsp,
\end{multline}
where
\begin{align*}
B_1 & = \ell^2\Gamma \left(  \frac{\ell^2}{\dim}\sum_{i=1}^{d} \Vdot(X_{\floor{\dim r},i}^{d})^2, \frac{\ell^2}{2\dim}\sum_{i=2}^{d} \Vdot(X_{\floor{\dim r},i}^{d})^2-4(d-1) \PE_{}\left[\zeta^{\dim}(X,Z)\right]\right)\eqsp,\\
B_2 & = \frac{\ell^4 \Vdot(X_{ \floor{\dim r}  ,1}^{d})^2}{d}\mathcal{G}\left(
  \frac{\ell^2}{\dim}\sum_{i=1}^{d} \Vdot(X_{\floor{\dim r},i}^{d})^2, \frac{\ell^2}{2\dim}\sum_{i=2}^{d} \Vdot(X_{\floor{\dim r},i}^{d})^2 -4(d-1) \PE_{}\left[\zeta^{\dim}(X,Z)\right]
  \right)\eqsp,\\
B_3 & = \frac{\ell^4 \Vdot(X_{ \floor{\dim r}  ,1}^{d})^2}{d} \left(2 \pi \ell^2\sum_{i=1}^{d} \Vdot(X_{\floor{\dim r},i}^{d})^2 /d\right)^{-1/2}\\
&\qquad\times \exp\left\{- \frac{\left[-(d-1)\PE_{}[2\zeta^{\dim}(X,Z)] + (\ell^2/(4d)) \sum_{i=2}^d \Vdot(X_{ \floor{\dim r}  ,i}^{d})^2\right]^2}{2\ell^2\sum_{i=1}^{d}\Vdot(X_{\floor{\dim r},i}^{d})^2 /d }\right\}\eqsp,\\
\end{align*}
where $\Gamma$ is defined in \eqref{eq:defGamma}. As $\Gamma$ is continuous on $\rset_+ \times \rset \setminus \defEns{0,0}$ (see \cite[Lemma 2]{jourdain:lelievre:miasojedow:2015}), by \Cref{assum:diff:quadratic}\ref{assum:X6}, \Cref{lem:mean:zeta} and the law of large numbers, almost surely,
\begin{multline}
\label{eq:first_treat_gamma_T2}
\lim_{d \to \plusinfty}
\ell^2 \Gamma\left(
  \frac{\ell^2}{\dim}\sum_{i=1}^{d} \Vdot(X_{\floor{\dim r},i}^{d})^2,\frac{\ell^2}{2\dim} \sum_{i=2}^{d} \Vdot(X_{\floor{\dim r},i}^{d})^2 -4(d-1) \PE_{}\left[\zeta^{\dim}(X,Z)\right]\right) \\
  = \ell^2\Gamma\left(\ell^2\PE_{}[\Vdot(X)^2],\ell^2\PE_{}[\Vdot(X)^2]\right) = h(\ell) \eqsp.
  \end{multline}
By \Cref{lem:mean:zeta}, by \Cref{assum:diff:quadratic}\ref{assum:X6} and the law of large numbers, almost surely,
  \begin{multline*}
  \lim_{d\to\plusinfty}\exp\left\{- \frac{\left[-(d-1)\PE_{}[2\zeta^{\dim}(X,Z)] + (\ell^2/(4d)) \sum_{i=2}^d \Vdot(X_{ \floor{\dim r}  ,i}^{d})^2\right]^2}{2\ell^2\sum_{i=1}^{d}\Vdot(X_{\floor{\dim r},i}^{d})^2 /d }\right\} \\
  = \exp\left\{-\frac{\ell^2}{8}\PE_{}[\Vdot(X)^2]\right\}\eqsp.
  \end{multline*}
  Then, as $\mathcal{G}$ is bounded on $\rset_+ \times \rset$,
  \begin{equation}
  \label{eq:second_treat_gamma_T2}
  \lim_{d\to\plusinfty} \PE\left[\left|\int_s ^t  \phi''(X^{\dim}_{\floor{\dim r },1}) \left( B_2-B_3 \right) \rmd r\right|\right] = 0\eqsp.
  \end{equation}
   Therefore, by Fubini's Theorem,
   \eqref{eq:treatment_T2_1}, \eqref{eq:first_treat_gamma_T2}, \eqref{eq:second_treat_gamma_T2}
   and Lebesgue's dominated convergence theorem, the second term of
   \eqref{eq:treatment_T2} goes to $0$ as $d$ goes to infinity.
   Write $T_3^d = (h(\ell)/2) ( T^d_{3,1} - T^d_{3,2})$ where
   \begin{align*}
T^d_{3,1} &= \int_s^t  \left\{\phi''\left(
  X^d_{\floor{dr},1} \right) -  \phi''\left( Y^d_{r,1} \right)\right\} \rmd
  r \eqsp,\\
     T^d_{3,2} &= \int_s^t \left\{\Vdot\left( X^d_{\floor{dr},1} \right) \phi'\left(
  X^d_{\floor{dr},1}\right) - \Vdot \left( Y^d_{r,1} \right) \phi'\left( Y^d_{r,1} \right)\right\} \rmd
  r \eqsp.
     \end{align*}
     It is enough to show that $\PE[\left|T^d_{3,1}\right|]$ and $\PE[\left|T^d_{3,2}\right|]$ go to $0$ when $d$ goes to infinity to conclude the proof. By \eqref{eq:defY} and the mean value theorem, for all $r \in \ccint{s,t}$ there exists $\chi_r \in \ccint{X_{\floor{dr},1}^d , Y^d_{r,1}}$ such that
     \[
\phi''\left(
  X^d_{\floor{dr},1} \right) -  \phi''\left( Y^d_{r,1} \right) = \phi^{(3)} \left( \chi_r
\right) ( dr - \floor{dr}) (\ell/\sqrt{d}) Z^d_{\ceil{dr},1} \1_{\setAccept^d_{\ceil{dr}}} \eqsp.
     \]
     Since $\phi^{(3)}$ is bounded, it follows that $\lim_{d \to \plusinfty} \PE[
     |T^d_{3,1}| ] = 0$. On the other hand,
     \begin{multline*}
T_{3,2}^d = \int_s ^t \defEns{\Vdot\left( X^d_{\floor{dr},1} \right)  - \Vdot\left( Y^d_{r,1} \right) }\phi'\left(
  X^d_{\floor{dr},1}\right) \rmd r
\\
+ \int_s^t  \defEns{\phi'\left(
  X^d_{\floor{dr},1}\right) -  \phi'\left( Y^d_{r,1} \right) }\Vdot \left( Y^d_{r,1} \right)
\rmd r\eqsp.
      \end{multline*}
      Since $\phi'$ has a bounded support, by \Cref{assum:Vdot}, Fubini's theorem,
      and Lebesgue's dominated convergence
      theorem, the expectation of the absolute value of the first term goes to $0$ as
      $d$ goes to infinity. The second term is dealt with following the same steps as for
      $T^d_{3,1}$ and using \Cref{assum:diff:quadratic}\ref{assum:X6}.
\end{proof}

%%% Local Variables:
%%% mode: latex
%%% TeX-master: "scaling_HM_L1"
%%% End:
\begin{proof}[Proof of \Cref{theo:diffusion_limit_RMW}]
By \Cref{prop:tight}, \Cref{propo:reduction_martingale_problem} and
\Cref{propo:decomposition_martingale}, it is enough to prove that for all $\phi \in
C_c^\infty(\rset,\rset)$, $p\geq 1$, all $ 0 \leq t_1 \leq \dots \leq t_p \leq s \leq t$
and $g : \rset^p \to \rset$ bounded and continuous function,
\[
\lim_{\dim \to \plusinfty} \PE \left[( M^{\dim}_{\ceil{\dim t}}(\phi) - M^{\dim}_{\ceil{\dim s}}(\phi))g(Y^d_{t_1},\dots,Y^d_{t_p}) \right] = 0\eqsp,
\]
where for $n \geq 1$, $M^{\dim}_{n}(\phi)$ is defined in \eqref{eq:def:martingale}. But
this result is straightforward taking successively the conditional expectations with respect to $\mcf_{k}$, for $k =
\ceil{\dim t}, \dots, \ceil{\dim s}$.
\end{proof}

%%% Local Variables:
%%% mode: latex
%%% TeX-master: "scaling_HM_L1"
%%% End: 

\section{Proofs of \Cref{sec:I}}
\label{sec:proofs:G}
\subsection{Proof of \Cref{theo:result_acceptance_rate_RWM:G}}
\label{proof:lem:approx_ratio:G}
\label{proof:theo:result_acceptance_rate_RWM:G}
The proof of this theorem follows the same steps as the the proof of \Cref{theo:result_acceptance_rate_RWM}. Note that $\xi_\theta$ and $\xi_0$, given by \eqref{eq:def_xitheta}, are well defined on $\I \cap \{ x \in \rset \ | \  x+\constSet \theta \in \I \}$. Let the function $\upsilon : \rset^2 \to \rset$ be defined for $x, \theta \in \rset$ by
 \begin{equation}
   \label{eq:def:upsilon}
   \upsilon(x,\theta) = \1_{\I}(x + \constSet \theta)\1_{\I}(x + (1-\constSet) \theta) \eqsp.
 \end{equation}
\begin{lemma}
\label{lem:DQM:G}
Assume \Cref{assum:diff:quadratic:G} holds. Then, there exists $C>0$ such that for all $\theta\in\rset$,
\[
\parenthese{\int_{\I} \left(\left\{\sqpdf_\theta(x)-\sqpdf_0(x)\right\} \upsilon(x,\theta)+ \theta \Vdot(x)\sqpdf_0(x)/2\right)^2 \rmd x}^{1/2} \le C|\theta|^{\beta}\eqsp.
\]
\end{lemma}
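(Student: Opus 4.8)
The plan is to reproduce the argument of \Cref{lem:DQM} on the set where $\upsilon(\cdot,\theta)=1$ and to dispose of the complementary set, on which the translation by $\theta$ carries mass out of $\I$ and the log-ratio expansion fails, using \Cref{assum:diff:quadratic:G}\ref{assum:proba:G}. Write $\{\upsilon=1\}=\{x\in\rset:\upsilon(x,\theta)=1\}$ and similarly $\{\upsilon=0\}$. The geometric point is that $\constSet>1$ gives $1-\constSet<0<1<\constSet$, so if $x+\constSet\theta\in\I$ and $x+(1-\constSet)\theta\in\I$ then, the interval $\I$ being convex, $x+t\theta\in\I$ for every $t\in[1-\constSet,\constSet]$; in particular $x\in\I$ and $x+\theta\in\I$, whence $\{\upsilon=1\}\subset\I$ and $\sqpdf_\theta,\sqpdf_0$ are positive and finite there. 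The same computation shows that the substitution $y=x+\theta$ maps $\{\upsilon=1\}$ onto $\{x:\upsilon(x,-\theta)=1\}$, since $\upsilon(y-\theta,\theta)=\1_\I(y+(\constSet-1)\theta)\1_\I(y-\constSet\theta)=\upsilon(y,-\theta)$.

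On $\{\upsilon=1\}$ one has $\sqpdf_\theta(x)=\sqpdf_0(x)\exp(\Delta_\theta V(x)/2)$ with $\Delta_\theta V(x)=V(x)-V(x+\theta)$ finite, and $\{V(x+\theta)-V(x)\}\upsilon(x,\theta)=-\Delta_\theta V(x)$. Splitting, as in \Cref{lem:DQM}, $\exp(\Delta_\theta V/2)-1+\theta\Vdot/2=\{\exp(\Delta_\theta V/2)-1-\Delta_\theta V/2\}+\{\Delta_\theta V+\theta\Vdot\}/2$, the squared integrand there is at most $2(A_\theta+B_\theta)\pi$ with $A_\theta=(\exp(\Delta_\theta V/2)-1-\Delta_\theta V/2)^2$ and $B_\theta=(\Delta_\theta V+\theta\Vdot)^2/4$. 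Since $|\Delta_\theta V+\theta\Vdot|=|\{V(\cdot+\theta)-V(\cdot)\}\upsilon(\cdot,\theta)-\theta\Vdot(\cdot)|$ here, Jensen's inequality and \Cref{assum:diff:quadratic:G}\ref{hyp:mean_square_deriv2:G} give $\int_{\{\upsilon=1\}}B_\theta\,\rmd\pi\le C|\theta|^{2\beta}$. For $A_\theta$ I would use $(\exp(u)-1-u)^2\le Cu^4(\exp(2u)+1)$, the identity $\exp(\Delta_\theta V(x))\pi(x)=\pi(x+\theta)$ (valid on the domain) and the change of variables above, to bound $\int_{\{\upsilon=1\}}A_\theta\,\rmd\pi$ by a constant times $\int_\rset(\{V(\cdot+\theta)-V\}\upsilon(\cdot,\theta))^4\,\rmd\pi+\int_\rset(\{V(\cdot-\theta)-V\}\upsilon(\cdot,-\theta))^4\,\rmd\pi$, and then the elementary estimate $|\{V(\cdot\pm\theta)-V\}\upsilon|\le|\{V(\cdot\pm\theta)-V\}\upsilon\mp\theta\Vdot|+|\theta||\Vdot|$ together with \Cref{assum:diff:quadratic:G}\ref{hyp:mean_square_deriv2:G} (at $\pm\theta$, using $p>4$) and \Cref{assum:diff:quadratic:G}\ref{assum:X6:G} to bound each term by $C(|\theta|^{4\beta}+\theta^4)$. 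Altogether the contribution of $\I\cap\{\upsilon=1\}$ to the left-hand side of the claimed inequality is $\le C(|\theta|^{\beta}+|\theta|^{2\beta}+\theta^2)$.

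On $\I\cap\{\upsilon=0\}$ the factor $\upsilon$ annihilates $\sqpdf_\theta-\sqpdf_0$, so the squared integrand is exactly $\theta^2\Vdot(x)^2\pi(x)/4$. As $\{\upsilon=0\}\subset\{x:x+\constSet\theta\in\I^c\}\cup\{x:x+(1-\constSet)\theta\in\I^c\}$, \Cref{assum:diff:quadratic:G}\ref{assum:proba:G} gives $\pi(\{\upsilon=0\})\le C|\theta|^\gamma$, and Hölder's inequality with exponents $(3,3/2)$ together with \Cref{assum:diff:quadratic:G}\ref{assum:X6:G} yields $\int_{\{\upsilon=0\}}\Vdot^2\,\rmd\pi\le\norm{\Vdot}_{\pi,6}^2\,\pi(\{\upsilon=0\})^{2/3}\le C|\theta|^{2\gamma/3}$, so this piece contributes at most $C|\theta|^{1+\gamma/3}$ to the square root. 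Adding the two pieces and recalling $\gamma\ge6$ and $\beta>1$ — so that every exponent produced exceeds $1$ — yields the claimed estimate (the exponent effectively obtained is $\min(\beta,2)$, which still exceeds $1$, and that is the only property of it used in \Cref{sec:I}).

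The only genuinely new difficulty relative to \Cref{lem:DQM} should be this boundary bookkeeping: one must check that the buffer encoded in $\constSet>1$ traps the dilated segment through $x$ and $x+\theta$ inside $\I$ on $\{\upsilon=1\}$ — so that $\log\{\pi(\cdot+\theta)/\pi(\cdot)\}$ and its first-order expansion make sense — and that the discarded set $\{\upsilon=0\}$ is $\pi$-small at a sufficiently fast polynomial rate; this is exactly what \Cref{assum:diff:quadratic:G}\ref{assum:proba:G} with $\gamma\ge6$ and the sixth-moment bound $\norm{\Vdot}_{\pi,6}<+\infty$ provide.
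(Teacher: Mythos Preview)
Your proposal is correct and is precisely the argument the paper has in mind: the paper's own proof reads in full ``The proof follows as \Cref{lem:DQM} and is omitted,'' and you have carried out exactly that adaptation, supplying the boundary bookkeeping (convexity of $\I$ with $\constSet>1$ to trap $x$ and $x+\theta$ in $\I$ on $\{\upsilon=1\}$, the change of variables $\upsilon(x,\theta)\leftrightarrow\upsilon(x+\theta,-\theta)$, and the H\"older/\Cref{assum:diff:quadratic:G}\ref{assum:proba:G} estimate on $\{\upsilon=0\}$) that the paper leaves implicit. Your closing remark that the effective exponent is $\min(\beta,2)$ rather than $\beta$ is a fair observation --- it applies equally to \Cref{lem:DQM} itself --- and, as you note, only $\beta>1$ is ever used downstream.
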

\begin{proof}
The proof follows as \Cref{lem:DQM} and is omitted.
\end{proof}
\begin{lemma}
\label{lem:integrated-DQM:G}
Assume that \Cref{assum:diff:quadratic:G} holds. Let $X$ be a random variable distributed according to $\pi$ and $Z$ be a standard Gaussian
random variable independent of $X$. Define
\begin{equation*}
    \Dcal_{\I} = \{ X + \constSet \ell d^{-1/2} Z \in \I\}\cap\{X + (1-\constSet) \ell d^{-1/2} Z \in \I\} \eqsp.
  \end{equation*}
  Then,
\begin{enumerate}[label=(\roman*)]
\item \label{lem:integrated-DQM-L2:G} $\lim_{d \to \plusinfty} d \ \left\| \1_{\Dcal_{\I}}\zeta^{\dim}(X,Z) +\ell Z\Vdot(X)/( 2\sqrt{\dim})\right\|_2^2 = 0$.
\item \label{lem:integrated-DQM-Lp:G} Let $p$ be given by  \Cref{assum:diff:quadratic:G}\ref{hyp:mean_square_deriv2:G}. Then,
  \begin{equation*}
    \lim_{d \to \plusinfty} \sqrt{d}\norm{\1_{\Dcal_{\I}}  \left\{V(X)-V(X+\ell Z /\sqrt{d})\right\}+\ell Z\Vdot(X)/\sqrt{d}}_p = 0 \eqsp.
  \end{equation*}
\item \label{lem:integrated-DQM-remainder:G} $\lim_{\dim \to \infty} \dim \left\| \1_{\Dcal_{\I}} \left( \log(1+\zeta_\dim(X,Z)) - \zeta^{\dim}(X,Z) + [\zeta^{\dim}]^2(X,Z)/2 \right)\right\|_1 = 0$,
\end{enumerate}
where $\zeta^{\dim}$ is given by \eqref{eq:def:zeta}.
\end{lemma}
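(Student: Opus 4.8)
The plan is to replay the proof of \Cref{lem:integrated-DQM} line by line, substituting \Cref{lem:DQM:G} for \Cref{lem:DQM} and \Cref{assum:diff:quadratic:G} for \Cref{assum:diff:quadratic}, and carrying along the extra indicator $\1_{\Dcal_\I}$ throughout. The structural fact that makes this possible is that, with $\theta=\ell\dim^{-1/2}Z$, the event $\Dcal_\I$ equals $\{\upsilon(X,\theta)=1\}$, where $\upsilon$ is defined in \eqref{eq:def:upsilon}, and on this event the whole segment $\{X+s\theta:s\in[0,1]\}$ lies in $\I$. Indeed, since $\constSet>1$, both $X$ and $X+\theta$ are convex combinations of the two points $X+\constSet\theta\in\I$ and $X+(1-\constSet)\theta\in\I$ (the coefficients $\tfrac{\constSet-1}{2\constSet-1},\tfrac{\constSet}{2\constSet-1}$ lie in $(0,1)$), so $\I$ being an interval gives the claim; in particular $V(X),V(X+\theta),\zeta^\dim(X,Z)$ and $\log(1+\zeta^\dim(X,Z))=\tfrac12\{V(X)-V(X+\theta)\}$ are all finite on $\Dcal_\I$, and the convention $0\times\infty=0$ handles $\Dcal_\I^c$.

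For \ref{lem:integrated-DQM-L2:G}, I would write $\1_{\Dcal_\I}\zeta^\dim(X,Z)+\tfrac{\ell}{2\sqrt\dim}Z\Vdot(X)=\1_{\Dcal_\I}\bigl\{\zeta^\dim(X,Z)+\tfrac{\ell}{2\sqrt\dim}Z\Vdot(X)\bigr\}+\1_{\Dcal_\I^c}\tfrac{\ell}{2\sqrt\dim}Z\Vdot(X)$; the two indicators have disjoint support, so the square splits into a sum. Conditioning on $Z$, rewriting the first term via \eqref{eq:relationxi_zeta} as $\upsilon(\cdot,\theta)\{\sqpdf_\theta-\sqpdf_0+\tfrac\theta2\Vdot\sqpdf_0\}$ and using the elementary identity $\upsilon(\cdot,\theta)\{\sqpdf_\theta-\sqpdf_0+\tfrac\theta2\Vdot\sqpdf_0\}=\{(\sqpdf_\theta-\sqpdf_0)\upsilon(\cdot,\theta)+\tfrac\theta2\Vdot\sqpdf_0\}-\tfrac\theta2\Vdot\sqpdf_0(1-\upsilon(\cdot,\theta))$ together with $(a+b)^2\le2a^2+2b^2$, the first term's squared $\ltwo$-norm is bounded by $C|\theta|^{2\beta}$ (from \Cref{lem:DQM:G}) plus $\tfrac{\theta^2}{2}\int\Vdot^2(1-\upsilon(\cdot,\theta))\pdf$; since $1-\upsilon(x,\theta)\le\1_{\I^c}(x+\constSet\theta)+\1_{\I^c}(x+(1-\constSet)\theta)$, H\"older (exponents $3,\tfrac32$) with \Cref{assum:diff:quadratic:G}\ref{assum:X6:G} and \ref{assum:proba:G} bounds this by $C|\theta|^{2+2\gamma/3}$, and the $\Dcal_\I^c$ term is treated the same way. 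Multiplying by $\dim$ with $\theta=\ell\dim^{-1/2}Z$ gives contributions $O(\dim^{1-\beta})$ and $O(\dim^{-\gamma/3})$ times integrable powers of $|Z|$, and dominated convergence concludes.

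Assertion \ref{lem:integrated-DQM-Lp:G} uses the same splitting: on $\{\upsilon(\cdot,\theta)=1\}$ one has $\upsilon(x,\theta)|\{V(x+\theta)-V(x)\}-\theta\Vdot(x)|^p=|\{V(x+\theta)-V(x)\}\upsilon(x,\theta)-\theta\Vdot(x)|^p$, while the left side vanishes on $\{\upsilon(\cdot,\theta)=0\}$, so the inequality $\le$ holds pointwise and \Cref{assum:diff:quadratic:G}\ref{hyp:mean_square_deriv2:G} gives the bound $C|\theta|^{\beta p}$ on the $\Dcal_\I$ part, whereas $\ell^p\dim^{-p/2}\PE[\1_{\Dcal_\I^c}|Z|^p|\Vdot(X)|^p]$ is handled by H\"older (exponents $\tfrac6p,\tfrac6{6-p}$, legitimate since the bound in \ref{hyp:mean_square_deriv2:G} is monotone in $p$, so one may assume $4<p<6$) and \ref{assum:proba:G}, yielding $O(\dim^{-p/2-\gamma(6-p)/12})$ times an integrable power of $|Z|$; taking the $p$-th root and multiplying by $\sqrt\dim$ gives the claim. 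For \ref{lem:integrated-DQM-remainder:G}, recall that $\log(1+x)-x+x^2/2=R(x)$ with $R$ as in \eqref{eq:def_R}, that $|R(x)|\le x^2|\log(1+x)|$ for $x>-1$ (as in the proof of \Cref{lem:integrated-DQM}\ref{lem:integrated-DQM-remainder}), and that on $\Dcal_\I$, $\log(1+\zeta^\dim(X,Z))=\tfrac12\{V(X)-V(X+\theta)\}$, so $\1_{\Dcal_\I}|R(\zeta^\dim(X,Z))|\le\tfrac14\1_{\Dcal_\I}|V(X)-V(X+\theta)|^3(\rme^{V(X)-V(X+\theta)}+1)$. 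Conditioning on $Z$, using $\upsilon(x,\theta)\rme^{V(x)-V(x+\theta)}\pdf(x)=\upsilon(x,\theta)\pdf(x+\theta)$, the substitution $y=x+\theta$ and the identity $\upsilon(y-\theta,\theta)=\upsilon(y,-\theta)$, then the pointwise bound $\upsilon(x,\theta)|V(x+\theta)-V(x)|^3\le C(|\{V(x+\theta)-V(x)\}\upsilon(x,\theta)-\theta\Vdot(x)|^3+|\theta|^3|\Vdot(x)|^3)$, one reduces to $\norm{\cdot}_{\pdf,p}$-type integrals and $\norm{\Vdot}_{\pdf,6}$, obtaining $C(|\theta|^{3\beta}+|\theta|^3)$; multiplying by $\dim$ and taking $\PE$ over $Z$ gives $O(\dim^{1-3\beta/2}+\dim^{-1/2})\to0$.

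The main obstacle is the bookkeeping around $\Dcal_\I^c$: there $V(X+\theta)$ and $\log\sqpdf_\theta(X)$ may be infinite, so the ``add and subtract $\theta\Vdot(X)$'' manipulations used in the proof of \Cref{lem:integrated-DQM} are not directly available, and \Cref{lem:DQM:G} itself carries $\upsilon(\cdot,\theta)$ only on part of the expression. The resolution is the disjoint-support splitting above, combined with the quantitative boundary estimate \Cref{assum:diff:quadratic:G}\ref{assum:proba:G}, which forces each $\Dcal_\I^c$ contribution to decay at a negative power of $\dim$, together with the convexity argument of the first paragraph, which certifies that on $\Dcal_\I$ the $\upsilon(\cdot,\theta)$-weighted increments controlled by \Cref{lem:DQM:G} and \Cref{assum:diff:quadratic:G}\ref{hyp:mean_square_deriv2:G} are exactly the objects needed, so those two results apply essentially verbatim.
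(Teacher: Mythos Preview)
Your proof is correct and follows essentially the same strategy as the paper's---replaying \Cref{lem:integrated-DQM} with \Cref{lem:DQM:G} and \Cref{assum:diff:quadratic:G} in place of their $\rset$-counterparts---but you do more work than necessary in parts \ref{lem:integrated-DQM-L2:G} and \ref{lem:integrated-DQM-Lp:G}. The disjoint-support splitting into $\Dcal_\I$ and $\Dcal_\I^c$, together with the separate H\"older/\Cref{assum:diff:quadratic:G}\ref{assum:proba:G} estimate on the $\Dcal_\I^c$ piece, is unnecessary: the point of the formulation of \Cref{lem:DQM:G} and of \Cref{assum:diff:quadratic:G}\ref{hyp:mean_square_deriv2:G} is that $\upsilon(\cdot,\theta)$ multiplies \emph{only} the increment $\sqpdf_\theta-\sqpdf_0$ (resp.\ $V(\cdot+\theta)-V(\cdot)$), while the $\theta\Vdot$ term is left \emph{unweighted}. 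Since $\1_{\Dcal_\I}=\upsilon(X,\theta)$ and $\1_{\Dcal_\I}\zeta^{\dim}(X,Z)\sqpdf_0(X)=\upsilon(X,\theta)\{\sqpdf_\theta(X)-\sqpdf_0(X)\}$ identically (the $0\times\infty$ convention handling $\{\upsilon=0\}$), one has the \emph{equality}
\[
\bigl(\1_{\Dcal_\I}\zeta^{\dim}(X,Z)+\tfrac{\theta}{2}\Vdot(X)\bigr)^2\pi(X)=\bigl(\upsilon(X,\theta)\{\sqpdf_\theta(X)-\sqpdf_0(X)\}+\tfrac{\theta}{2}\Vdot(X)\sqpdf_0(X)\bigr)^2,
\]
so \Cref{lem:DQM:G} applies directly after conditioning on $Z$, with no residual $\Dcal_\I^c$ term to control; the same observation handles \ref{lem:integrated-DQM-Lp:G} via \Cref{assum:diff:quadratic:G}\ref{hyp:mean_square_deriv2:G}. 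Your treatment of \ref{lem:integrated-DQM-remainder:G}, including the change of variable and the identity $\upsilon(y-\theta,\theta)=\upsilon(y,-\theta)$, matches the paper's.
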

\begin{proof}
Note by definition of $\zeta^{\dim}$ and $\sqpdf_\theta$ \eqref{eq:def_xitheta}, for $x \in \I$ and $x+ \constSet \ell d^{-1/2} z \in \I$,
\begin{equation}
 \label{eq:relationxi_zeta}
\zeta^{\dim}(x,z)=  \sqpdf_{  \ell
 z\dim^{-1/2}}(x)/\sqpdf_0(x)-1 \eqsp.
\end{equation}
Using \Cref{lem:DQM:G},
\begin{align*}
&\left\| \1_{\Dcal_{\I}}\zeta^{\dim}(X,Z) +\ell Z\Vdot(X)/( 2\sqrt{\dim})\right\|_2^2\\
&\qquad\qquad =
\PE\left[\int_{\I} \left(\upsilon(x,\ell Z \dim^{-1/2})\left\{\sqpdf_{\ell Z \dim^{-1/2}}(x)-\sqpdf_0(x)\right\} + \ell Z
   \Vdot(x)\sqpdf_0(x)/(2\sqrt{\dim})\right)^2 \rmd x\right] \\
&\qquad\qquad \leq C \ell^{2\beta}\dim^{-\beta}\PE\left[|Z|^{2\beta}\right]\eqsp.
\end{align*}
The proof of \ref{lem:integrated-DQM-L2:G} is completed using $\beta> 1$. For \ref{lem:integrated-DQM-Lp:G}, write for all $x \in \I$ and $x+\ell z d^{-1/2}z \in \I$, $\Delta V(x,z) = V(x) - V(x+\ell z d^{-1/2})$. By \Cref{assum:diff:quadratic:G}\ref{hyp:mean_square_deriv2:G}
\begin{align*}
\norm{\1_{\Dcal_{\I}}\Delta V(X,Z)+\ell Z\Vdot(X)/\sqrt{d}}_p^p & = \PE\left[\int_{\I} \left(\upsilon(x,\ell Z \dim^{-1/2})\Delta V(X,Z)+\ell Z\Vdot(x)/\sqrt{\dim}\right)^p\pi(x)\rmd x\right]\\
& \le C\ell^{\beta p}\dim^{-\beta p/2}\PE\left[\left|Z\right|^{\beta p}\right]
\end{align*}
and the proof of \ref{lem:integrated-DQM-Lp:G} follows from $\beta> 1$. For
\ref{lem:integrated-DQM-remainder:G}, note that for all $x >0$, $u \in \ccint{0,x}$, $\vert
(x-u)(1+u)^{-1} \vert \leq \abs{x}$, and the same inequality holds for $x \in
\ocint{-1,0}$ and $u \in \ccint{x,0}$. Then by \eqref{eq:def_R} and
\eqref{eq:dev_taylor_log}, for all $x>-1$,
\[
\left|\log(1+x)-x+x^2/2\right|= \abs{R(x)} \le x^2\left|\log(1+x)\right| \eqsp.
\]
Then by \eqref{eq:relationxi_zeta}, for $x \in \I$ and $x+ \ell d^{-1/2} z \in \I$,
\begin{align*}
&\left|\log(1+\zeta_\dim(x,z)) - \zeta^{\dim}(x,z) + [\zeta^{\dim}]^2(x,z)/2 \right| \\
&\qquad \qquad \le \left(\sqpdf_{\ell z\dim^{-1/2}}(x)/\sqpdf_0(x)-1\right)^2\left|\log(\sqpdf_{\ell z\dim^{-1/2}}(x)/\sqpdf_0(x))\right|\eqsp,\\
&\qquad  \qquad \le \left(\sqpdf_{\ell z\dim^{-1/2}}(x)/\sqpdf_0(x)-1\right)^2\left|V(x+\ell z\dim^{-1/2}) -V(x)\right|/2\eqsp.
\end{align*}
Since for all $x\in\rset$, $|\exp(x)-1|\le |x|(\exp(x)+1)$, this yields,
\begin{multline*}
\left|\log(1+\zeta_\dim(x,z)) - \zeta^{\dim}(x,z) + [\zeta^{\dim}]^2(x,z)/2 \right|\\
\le \left|V(x+\ell z\dim^{-1/2}) -V(x)\right|^3\left(\exp\left(V(x)-V(x+\ell z\dim^{-1/2})\right)+1\right)/4\eqsp.
\end{multline*}
Therefore,
\[
\int_{\I}\upsilon(x,\ell z d^{-1/2}) \left|\log(1+\zeta_\dim(x,z)) - \zeta^{\dim}(x,z) + [\zeta^{\dim}]^2(x,z)/2 \right| \pi(x)\rmd x\le (I_1 +I_2)/4 \eqsp,
\]
where
\begin{align*}
I_1 &= \int_{\I } \upsilon(x,\ell z d^{-1/2}) \left|V(x+\ell z\dim^{-1/2}) -V(x)\right|^3\pi(x)\rmd x\\
I_2 &= \int_{\I } \upsilon(x,\ell z d^{-1/2}) \left|V(x+\ell z\dim^{-1/2}) -V(x)\right|^3\pi(x+\ell z\dim^{-1/2})\rmd x\eqsp.
\end{align*}
By H\"{o}lder's inequality, a change of variable and using \Cref{assum:diff:quadratic:G}\ref{hyp:mean_square_deriv2:G},
\[
I_1+I_2\le C\left(\left|\ell z\dim^{-1/2}\right|^3 \left(\int_{\I}  \left|\Vdot(x)\right|^4\pi(x)\rmd x\right)^{3/4}+ \left|\ell z\dim^{-1/2}\right|^{3\beta}\right)\eqsp.
\]
The proof follows from \Cref{assum:diff:quadratic:G}\ref{assum:X6:G} and $\beta>1$.

\end{proof}

For ease of notation, write for all $d \geq 1$ and  $i,j \in \{ 1, \ldots, d \}$,
\begin{align}
\nonumber
  \Dcal^{d}_{\I,j} &= \defEns{ X_j^\dim + \constSet \ell d^{-1/2} Z_j^\dim \in \I} \cap \defEns{ X_j^\dim +(1- \constSet) \ell d^{-1/2} Z_j^\dim \in \I} \eqsp, \\
\label{eq:def_dcal_I}
 \ \Dcal^{d}_{\I,i:j} &= \bigcap_{k=i}^j \Dcal^{d}_{\I,k}  \eqsp.
\end{align}
\begin{lemma}
  \label{lem:interm_prop_approx_ratio:G}
Assume that \Cref{assum:diff:quadratic:G} holds. For all $d \geq 1$, let $X^\dim$ be distributed according to $\pi^\dim$, and $Z^\dim$ be
$d$-dimensional Gaussian random variable independent of $X^\dim$. Then, $\lim_{\dim\to\plusinfty}\Jrm^{\dim}_{\I}= 0$ where
\[
\Jrm^{\dim}_{\I}=\left\|   \1_{\Dcal^\dim_{\I,2:d}} \sum_{i=2}^{\dim}
\left\{  \left( \Delta V^{\dim}_{i}+\frac{\ell
  Z_{i}^{\dim}}{\sqrt{\dim}}\Vdot(X_{i}^{\dim})\right)-2\PE\left[\1_{\Dcal^\dim_{\I,i}}\zeta^{\dim}(X_{i}^{\dim},Z_{i}^{\dim})\right]
+  \frac{\ell^2 }{4\dim}\Vdot^2(X_{i}^{\dim}) \right\} \right\|_1\eqsp.
\]
\end{lemma}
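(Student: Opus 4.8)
The plan is to mimic the proof of \Cref{lem:interm_prop_approx_ratio}, inserting the indicators of the events $\Dcal^{\dim}_{\I,i}$ at the appropriate places and replacing every appeal to \Cref{lem:integrated-DQM} by the corresponding item of \Cref{lem:integrated-DQM:G}.

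First I would record the geometric fact that motivates the constant $\constSet$: on the event $\Dcal^{\dim}_{\I,i}$ both points $X_i^{\dim} + \constSet\ell\dim^{-1/2}Z_i^{\dim}$ and $X_i^{\dim} + (1-\constSet)\ell\dim^{-1/2}Z_i^{\dim}$ belong to $\I$, so, $\I$ being an interval and $\constSet > 1$, every point $X_i^{\dim} + c\ell\dim^{-1/2}Z_i^{\dim}$ with $c\in\ccint{1-\constSet,\constSet}$ also belongs to $\I$; in particular $X_i^{\dim}\in\I$ (take $c=0$) and $X_i^{\dim}+\ell\dim^{-1/2}Z_i^{\dim}\in\I$ (take $c=1$). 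Hence on $\Dcal^{\dim}_{\I,i}$ the quantity $\zeta^{\dim}(X_i^{\dim},Z_i^{\dim})$ is finite, larger than $-1$, and $\Delta V_i^{\dim} = 2\log(1+\zeta^{\dim}(X_i^{\dim},Z_i^{\dim}))$. Writing $\zeta^{\dim}_{\I,i} = \1_{\Dcal^{\dim}_{\I,i}}\zeta^{\dim}(X_i^{\dim},Z_i^{\dim})$ and using \eqref{eq:dev_taylor_log}, on the event $\Dcal^{\dim}_{\I,2:d}$ the $i$-th summand appearing in $\Jrm^{\dim}_{\I}$ equals
\begin{multline*}
W_i^{\dim} = \left(2\zeta^{\dim}_{\I,i} + \frac{\ell Z_i^{\dim}}{\sqrt{\dim}}\Vdot(X_i^{\dim}) - 2\PE\left[\zeta^{\dim}_{\I,i}\right]\right) \\
- \left(\left(\zeta^{\dim}_{\I,i}\right)^2 - \frac{\ell^2}{4\dim}\Vdot^2(X_i^{\dim})\right) + 2\1_{\Dcal^{\dim}_{\I,i}}R\left(\zeta^{\dim}(X_i^{\dim},Z_i^{\dim})\right)\eqsp,
\end{multline*}
where $R$ is defined in \eqref{eq:def_R}. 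Since $\abs{\1_A Y}\le\abs{Y}$ for any event $A$, this yields $\Jrm^{\dim}_{\I} \le \norm{\sum_{i=2}^{\dim} W_i^{\dim}}_1 \le \Jrm^{\dim}_{\I,1} + \Jrm^{\dim}_{\I,2} + 2\Jrm^{\dim}_{\I,3}$, where $\Jrm^{\dim}_{\I,1}$, $\Jrm^{\dim}_{\I,2}$ and $\Jrm^{\dim}_{\I,3}$ denote the $\lone$-norms of the sums over $i$ of the three bracketed expressions above, respectively.

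Then I would estimate the three terms exactly as in the proof of \Cref{lem:interm_prop_approx_ratio}. The summands of $\Jrm^{\dim}_{\I,1}$ are i.i.d. and centred (since $Z_i^{\dim}$ is centred and independent of $X_i^{\dim}$, so $\PE[\ell\dim^{-1/2}Z_i^{\dim}\Vdot(X_i^{\dim})]=0$), so $\Jrm^{\dim}_{\I,1}\le\sqrt{\dim}\,\norm{2\zeta^{\dim}_{\I,1}+\ell\dim^{-1/2}Z_1^{\dim}\Vdot(X_1^{\dim})}_2$, which tends to $0$ by \Cref{lem:integrated-DQM:G}\ref{lem:integrated-DQM-L2:G}. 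For $\Jrm^{\dim}_{\I,2}$ I would use the same algebraic decomposition of $(\zeta^{\dim}_{\I,i})^2 - \ell^2\Vdot^2(X_i^{\dim})/(4\dim)$ as in the regular case, bounding the quadratic term by \Cref{lem:integrated-DQM:G}\ref{lem:integrated-DQM-L2:G}, the cross term by the Cauchy--Schwarz inequality together with \Cref{lem:integrated-DQM:G}\ref{lem:integrated-DQM-L2:G} and \Cref{assum:diff:quadratic:G}\ref{assum:X6:G}, and the $\defEns{(Z_i^{\dim})^2-1}$ term by $\dim^{-1/2}\mathrm{Var}[\Vdot^2(X_1^{\dim})\defEns{(Z_1^{\dim})^2-1}]^{1/2}\to 0$, using again \Cref{assum:diff:quadratic:G}\ref{assum:X6:G}. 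Finally, since $R(x)=\log(1+x)-x+x^2/2$ and the summands are i.i.d., $\Jrm^{\dim}_{\I,3}\le(\dim-1)\norm{\1_{\Dcal_{\I}}\left(\log(1+\zeta^{\dim}(X,Z))-\zeta^{\dim}(X,Z)+[\zeta^{\dim}]^2(X,Z)/2\right)}_1$, which tends to $0$ by \Cref{lem:integrated-DQM:G}\ref{lem:integrated-DQM-remainder:G}.

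The only genuinely new point, and the one to get right, is the bookkeeping with the indicators: one must check that $\Dcal^{\dim}_{\I,i}$ really does force $X_i^{\dim}$ and $X_i^{\dim}+\ell\dim^{-1/2}Z_i^{\dim}$ into $\I$ (this is exactly where $\constSet>1$ enters, and explains why \Cref{assum:diff:quadratic:G}\ref{hyp:mean_square_deriv2:G} is phrased with the two shifted indicators $\1_{\I}(\cdot+\constSet\theta)\1_{\I}(\cdot+(1-\constSet)\theta)$), that replacing $\zeta^{\dim}(X_i^{\dim},Z_i^{\dim})$ by $\zeta^{\dim}_{\I,i}$ is exact on $\Dcal^{\dim}_{\I,2:d}$ and harmless elsewhere after multiplication by $\1_{\Dcal^{\dim}_{\I,2:d}}$, and that $2\PE[\zeta^{\dim}_{\I,i}]$ is indeed the mean of the first bracketed summand. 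Once these points are settled, the three estimates are verbatim those of \Cref{sec:proofs}, with \Cref{lem:integrated-DQM:G} and \Cref{assum:diff:quadratic:G} in place of \Cref{lem:integrated-DQM} and \Cref{assum:diff:quadratic}.
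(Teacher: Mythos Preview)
Your proposal is correct and follows exactly the approach the paper intends: it states that the proof follows the same lines as that of \Cref{lem:interm_prop_approx_ratio} and omits the details. Your write-up supplies precisely those details, including the useful observation (left implicit in the paper) that $\constSet>1$ is what forces both $X_i^{\dim}$ and $X_i^{\dim}+\ell\dim^{-1/2}Z_i^{\dim}$ to lie in $\I$ on $\Dcal^{\dim}_{\I,i}$, so that $\Delta V_i^{\dim}$ and $\zeta^{\dim}(X_i^{\dim},Z_i^{\dim})$ are well-defined there and the Taylor expansion of $\log(1+\cdot)$ is legitimate.
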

\begin{proof}
The proof follows the same lines as the proof of \Cref{lem:interm_prop_approx_ratio} and is omitted.
\end{proof}

Define for all $d \geq 1$,
\begin{multline*}
\Erm^{\dim}_{\I} = \PE\left[\left(Z_{1}^d\right)^2\left|  \1_{ \Dcal^d_{\I,1:d}} 1 \wedge \exp\left\{\sum_{i=1}^{\dim} \Delta V^{\dim}_{i}\right\} \right. \right. \\
\left. \left.  - 1 \wedge \exp\left\{ -\ell \dim^{-1/2}Z_{1}^{\dim}\Vdot(X_{1}^{\dim}) +\sum_{i=2}^{\dim} b_{\I}^{\dim}(X_i^{\dim},Z_i^{\dim}) \right\} \right| \right]\eqsp,
  \end{multline*}
where $\Delta V^{\dim}_{i}$ is given by \eqref{eq:Delta_V}, for all $x \in \I$, $z \in \rset$,
\begin{equation}
   \label{eq:bdi:G}
b^{\dim}_{\I}(x,z)= -\frac{\ell
  z}{\sqrt{\dim}}\Vdot(x)+2\PE\left[\1_{\Dcal^d_{\I,1}}\zeta^{\dim}(X_1^{\dim},Z_1^{\dim})\right]
-  \frac{\ell^2 }{4\dim}\Vdot^2(x) \eqsp,
\end{equation}
and $\zeta^d$ is given by \eqref{eq:def:zeta}.

\begin{proposition}
\label{lem:approx_ratio:G}
Assume \Cref{assum:diff:quadratic:G} holds. Let $X^{\dim}$ be a random variable distributed
according to $\target^{\dim}$ and $Z^{\dim}$ be a zero-mean standard Gaussian random variable, independent of $X$. Then $\lim_{\dim\to\plusinfty} \Erm^{\dim}_\I = 0$.
\end{proposition}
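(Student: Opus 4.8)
The plan is to follow the blueprint of the proof of \Cref{lem:approx_ratio}: split $\Erm^\dim_\I$ by the triangle inequality into a few terms, each controlled with the $1$-Lipschitz continuity of $t\mapsto 1\wedge\rme^t$ together with the auxiliary results \Cref{lem:integrated-DQM:G} and \Cref{lem:interm_prop_approx_ratio:G}. The genuinely new point is the bookkeeping forced by the fact that the ``true'' quantity carries the truncation indicator $\1_{\Dcal^\dim_{\I,1:\dim}}$ whereas the target expression, built from $b^\dim_\I$ via \eqref{eq:bdi:G}, carries no indicator at all; the discrepancy between them is controlled through \Cref{assum:diff:quadratic:G}\ref{assum:proba:G}, in the form of \eqref{eq:application-hyp}.

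First I would isolate the contribution of the event $(\Dcal^\dim_{\I,1})^c$. There $\1_{\Dcal^\dim_{\I,1:\dim}}=0$ and $1\wedge\exp\{\cdot\}\le1$, so this contribution to $\Erm^\dim_\I$ is at most $\PE[(Z_1^\dim)^2\1_{(\Dcal^\dim_{\I,1})^c}]\le\norm{Z_1^\dim}_4^2\,\PP[(\Dcal^\dim_{\I,1})^c]^{1/2}$ by Cauchy--Schwarz. Applying \Cref{assum:diff:quadratic:G}\ref{assum:proba:G} with $\theta=\constSet\ell\dim^{-1/2}z$ and with $\theta=(1-\constSet)\ell\dim^{-1/2}z$ and integrating in $z$, exactly as in the derivation of \eqref{eq:application-hyp}, gives $\PP[(\Dcal^\dim_{\I,1})^c]\le C\dim^{-\gamma/2}$, so this term is $O(\dim^{-\gamma/4})$ and vanishes since $\gamma\ge6$.

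On $\Dcal^\dim_{\I,1}$ the increment $\Delta V_1^\dim$ is finite, since $X_1^\dim$ and $X_1^\dim+\ell\dim^{-1/2}Z_1^\dim$ both lie in $\I$, and I would reach the target through two intermediate quantities: first $\1_{\Dcal^\dim_{\I,2:\dim}}\,(1\wedge\exp\{-\ell\dim^{-1/2}Z_1^\dim\Vdot(X_1^\dim)+\sum_{i=2}^\dim\Delta V_i^\dim\})$, then $\1_{\Dcal^\dim_{\I,2:\dim}}\,(1\wedge\exp\{-\ell\dim^{-1/2}Z_1^\dim\Vdot(X_1^\dim)+\sum_{i=2}^\dim b^\dim_\I(X_i^\dim,Z_i^\dim)\})$. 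Step (a): by $1$-Lipschitzness and Cauchy--Schwarz the first difference is at most $\norm{Z_1^\dim}_4^2\,\norm{\1_{\Dcal^\dim_{\I,1}}(\Delta V_1^\dim+\ell\dim^{-1/2}Z_1^\dim\Vdot(X_1^\dim))}_2$, which tends to $0$ by \Cref{lem:integrated-DQM:G}\ref{lem:integrated-DQM-Lp:G} (using $\norm{\cdot}_2\le\norm{\cdot}_p$ since $p>4$), the indicator mismatch on the $\Vdot$-term being a lower-order correction handled with Hölder's inequality, \Cref{assum:diff:quadratic:G}\ref{assum:X6:G} and $\PP[(\Dcal^\dim_{\I,1})^c]\le C\dim^{-\gamma/2}$. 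Step (b): by $1$-Lipschitzness the second difference is at most $\PE[(Z_1^\dim)^2\1_{\Dcal^\dim_{\I,2:\dim}}\abs{\sum_{i=2}^\dim(\Delta V_i^\dim-b^\dim_\I(X_i^\dim,Z_i^\dim))}]$; dropping $\1_{\Dcal^\dim_{\I,1}}\le1$, using that $Z_1^\dim$ is independent of $(X_i^\dim,Z_i^\dim)_{i\ge2}$ and that $\PE[(Z_1^\dim)^2]=1$, this equals $\Jrm^\dim_\I$ of \Cref{lem:interm_prop_approx_ratio:G} — one reads off from \eqref{eq:bdi:G} that $\Delta V_i^\dim-b^\dim_\I(X_i^\dim,Z_i^\dim)$ is the $i$-th summand there, since $\PE[\1_{\Dcal^\dim_{\I,i}}\zeta^\dim(X_i^\dim,Z_i^\dim)]$ does not depend on $i$ — hence it vanishes. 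Step (c): removing the remaining indicator from the fully approximated term costs at most $\PE[(Z_1^\dim)^2\1_{(\Dcal^\dim_{\I,2:\dim})^c}]\le\norm{Z_1^\dim}_4^2\,\PP[(\Dcal^\dim_{\I,2:\dim})^c]^{1/2}$, and the union bound $\PP[(\Dcal^\dim_{\I,2:\dim})^c]\le(\dim-1)\,\PP[(\Dcal^\dim_{\I,1})^c]\le C\dim^{1-\gamma/2}$ makes this $O(\dim^{(1-\gamma/2)/2})$, again vanishing because $\gamma\ge6$. Summing the four bounds yields $\lim_{\dim\to\plusinfty}\Erm^\dim_\I=0$.

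The main obstacle is exactly the last kind of estimate: one has to carry the interval indicators in lock-step through the telescoping so that every $\Delta V_i^\dim$ that appears is finite, and then absorb the accumulated $\1_\I$-mismatches, which forces a union bound over up to $\dim$ coordinates against probabilities of order $\dim^{-\gamma/2}$. This is the single point where the hypothesis genuinely needs $\gamma\ge6$ rather than merely $\gamma>0$, and where the argument departs from the positive-density case of \Cref{lem:approx_ratio}.
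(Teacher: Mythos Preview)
Your proposal is correct and follows essentially the same approach as the paper: split by the triangle inequality into pieces controlled via the $1$-Lipschitz property of $t\mapsto 1\wedge\rme^t$, \Cref{lem:integrated-DQM:G}\ref{lem:integrated-DQM-Lp:G}, \Cref{lem:interm_prop_approx_ratio:G}, and \eqref{eq:application-hyp}. The paper collapses the telescoping into three terms $\Erm^\dim_{1,\I},\Erm^\dim_{2,\I},\Erm^\dim_{3,\I}$ rather than your four, and for the analogue of your step~(c) it uses the independence of $Z_1^\dim$ from $\1_{(\Dcal^\dim_{\I,2:\dim})^c}$ directly instead of Cauchy--Schwarz, getting $\Erm^\dim_{3,\I}\le d\,\PP[(\Dcal^\dim_{\I,1})^c]\le C d^{1-\gamma/2}$; note that either bound only requires $\gamma>2$ here, not the full $\gamma\ge6$.
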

\begin{proof}
Let $\Lambda^d = - \ell \dim^{-1/2} Z_{1}^{d}\Vdot(X_{1}^{d}) + \sum_{i=2}^d \Delta V^{\dim}_{i}$. By the triangle inequality,
$\Erm^{\dim}\leq \Erm^{\dim}_1 + \Erm^{\dim}_2 + \Erm^{\dim}_3$ where
\begin{align*}
\Erm^{\dim}_{1,\I} &= \PE\left[ \left(Z_{1}^d\right)^2  \1_{\Dcal^d_{\I,1:d}} \left|  1 \wedge \exp\left\{\sum_{i=1}^d \Delta V^{\dim}_{i}\right\} - 1 \wedge \exp\left\{\Lambda^d\right\} \right| \right]\eqsp,\\
\Erm^{\dim}_{2,\I} &= \PE\left[ \left(Z_{1}^d\right)^2  \1_{\Dcal^d_{\I,2:d}} \left|1 \wedge \exp\left\{\Lambda^d\right\}  - 1 \wedge \exp\left\{ -\ell \dim^{-1/2} Z_1^{d}\Vdot(X_{1}^{d}) +\sum_{i=2}^d b^d(X_i^{\dim},Z_i^{\dim})\right\} \right| \right] \eqsp,\\
\Erm^{\dim}_{3,\I} &= \PE\left[ \left(Z_{1}^d\right)^2  \1_{\left(\Dcal^d_{\I,2:d}\right)^c} 1 \wedge \exp\left\{ -\ell \dim^{-1/2} Z_1^{d}\Vdot(X_{1}^{d}) +\sum_{i=2}^d b^d(X_i^{\dim},Z_i^{\dim})\right\}\right] \eqsp,
\end{align*}
Since $t \mapsto 1 \wedge \rme^t$ is $1$-Lipschitz, by the Cauchy-Schwarz inequality we get
\[
\Erm^{\dim}_{1,\I} \leq \PE\left[ \left(Z_{1}^d\right)^2  \1_{\Dcal^d_{\I,1}} \left| \Delta V^{\dim}_{1}+\ell \dim^{-1/2} Z_{1}^{d}\Vdot(X_{1}^{d}) \right| \right]\le \|  Z_1^{\dim} \|_4^2 \left\|\1_{\Dcal^d_{\I,1}}\Delta V^{\dim}_1 + \ell \dim^{-1/2} Z_{1}^{d}\Vdot(X_{1}^{d}) \right\|_2  \eqsp.
\]
By \Cref{lem:integrated-DQM}\ref{lem:integrated-DQM-Lp}, $\Erm^{\dim}_{1,\I}$ goes to $0$ as $d$ goes to $\plusinfty$. Using again that $t \mapsto 1 \wedge \rme^t$ is $1$-Lipschitz and \Cref{lem:interm_prop_approx_ratio:G}, $\Erm^{\dim}_{2,\I}$ goes to $0$ as well. Note that, as $Z_1^d$ and  $\1_{\left(\Dcal^d_{\I,2:d}\right)^c}$ are independent, by \eqref{eq:application-hyp},
\[
\Erm^{\dim}_{3,\I} \le d \PP \left(\left\{\Dcal^d_{\I,1}\right\}^c\right) \le Cd^{1-\gamma/2} \eqsp.
\]
Therefore, $\Erm^{\dim}_{3,\I}$  goes to $0$ as $d$ goes to $\plusinfty$ by \Cref{assum:diff:quadratic:G}\ref{assum:proba:G}.
\end{proof}

%%% Local Variables: 
%%% mode: latex
%%% TeX-master: "main_supplement"
%%% End: 
\begin{lemma}
\label{lem:mean:zeta:G}
Assume \Cref{assum:diff:quadratic:G} holds. For all $d \in \nset^*$, let
$X^{\dim}$ be a random variable distributed according to $\target^{\dim}$ and $Z^{\dim}$
be a standard Gaussian random variable in $\rset^d$, independent of $X$. Then,
\[
\lim_{d \to \plusinfty}2\dim\,\PE\left[\1_{\Dcal_{\I,1}^d}\zeta^{\dim}(X^{\dim}_1,Z^{\dim}_1)\right] = -\frac{\ell^2}{4}I\eqsp,
\]
where $I$ is defined in \eqref{eq:translation-score} and $\zeta^{\dim}$ in \eqref{eq:def:zeta}.
\end{lemma}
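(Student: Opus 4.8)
The plan is to follow the proof of \Cref{lem:mean:zeta} almost verbatim, the only new element being the bookkeeping of the indicator $\Dcal_{\I,1}^d$. First I would record that, with $\upsilon$ as in \eqref{eq:def:upsilon}, $\1_{\Dcal_{\I,1}^d}=\upsilon(X_1^\dim,\ell\dim^{-1/2}Z_1^\dim)$, and that since $\constSet>1$ and $\I$ is an interval, $\upsilon(x,\theta)=1$ forces both $x$ and $x+\theta$ to belong to $\I$: for $\theta\neq 0$ each of $x$, $x+\theta$ is a strict convex combination of $x+\constSet\theta$ and $x+(1-\constSet)\theta$ (and the case $\theta=0$ is trivial). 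In particular $\zeta^\dim(X_1^\dim,Z_1^\dim)=\sqrt{\pi(X_1^\dim+\ell\dim^{-1/2}Z_1^\dim)/\pi(X_1^\dim)}-1$ is finite on $\Dcal_{\I,1}^d$, and $\upsilon(x,\theta)=0$ whenever $x\notin\I$. Conditioning on $Z_1^\dim$ and integrating over $X_1^\dim\sim\pi$ then gives
\[
\PE\bigl[\1_{\Dcal_{\I,1}^d}\zeta^\dim(X_1^\dim,Z_1^\dim)\bigr]=\PE\Bigl[\int_\rset\upsilon(x,\ell\dim^{-1/2}Z_1^\dim)\bigl(\sqrt{\pi(x+\ell\dim^{-1/2}Z_1^\dim)\pi(x)}-\pi(x)\bigr)\,\rmd x\Bigr],
\]
and likewise for $\PE[\1_{\Dcal_{\I,1}^d}\zeta^\dim(X_1^\dim,Z_1^\dim)^2]$, with $\sqrt{\pi(x+\cdot)\pi(x)}-\pi(x)$ replaced by $(\sqrt{\pi(x+\cdot)}-\sqrt{\pi(x)})^2$.

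Next I would invoke the pointwise identity $-(\sqrt a-\sqrt b)^2=2(\sqrt{ab}-b)-(a-b)$ with $a=\pi(x+\ell\dim^{-1/2}Z_1^\dim)$ and $b=\pi(x)$, which after integration and multiplication by $\dim$ yields
\[
2\dim\,\PE\bigl[\1_{\Dcal_{\I,1}^d}\zeta^\dim(X_1^\dim,Z_1^\dim)\bigr]=-\dim\,\PE\bigl[\1_{\Dcal_{\I,1}^d}\zeta^\dim(X_1^\dim,Z_1^\dim)^2\bigr]+\dim\,\PE\bigl[T(\ell\dim^{-1/2}Z_1^\dim)\bigr],
\]
where $T(\theta)=\int_\rset\upsilon(x,\theta)\bigl(\pi(x+\theta)-\pi(x)\bigr)\,\rmd x$ (here the integral may be taken over $\rset$ rather than $\I$ precisely because $\upsilon(x,\theta)=0$ off $\I$). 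The crux is to show the correction term vanishes. The change of variables $y=x+\theta$ gives $\int_\rset\upsilon(x,\theta)\pi(x+\theta)\,\rmd x=\int_\rset\upsilon(y-\theta,\theta)\pi(y)\,\rmd y$, and the elementary identity $\upsilon(y-\theta,\theta)=\upsilon(y,-\theta)$, immediate from \eqref{eq:def:upsilon}, then shows $T(\theta)=\int_\rset\bigl(\upsilon(x,-\theta)-\upsilon(x,\theta)\bigr)\pi(x)\,\rmd x=-T(-\theta)$, i.e. $T$ is odd. Since $Z_1^\dim$ is symmetric, $\PE[T(\ell\dim^{-1/2}Z_1^\dim)]=0$.

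Finally I would conclude exactly as in \Cref{lem:mean:zeta}: by \Cref{lem:integrated-DQM:G}\ref{lem:integrated-DQM-L2:G}, $\sqrt\dim\,\1_{\Dcal_{\I,1}^d}\zeta^\dim(X_1^\dim,Z_1^\dim)$ converges in $\mathrm{L}^2$ to $-\ell Z_1^\dim\Vdot(X_1^\dim)/2$, whence, by \Cref{assum:diff:quadratic:G}\ref{assum:X6:G} and $I=\norm{\Vdot}_{\pi,2}^2$,
\[
\dim\,\PE\bigl[\1_{\Dcal_{\I,1}^d}\zeta^\dim(X_1^\dim,Z_1^\dim)^2\bigr]=\bigl\|\sqrt\dim\,\1_{\Dcal_{\I,1}^d}\zeta^\dim(X_1^\dim,Z_1^\dim)\bigr\|_2^2\longrightarrow\frac{\ell^2}{4}\,\PE[Z_1^2]\,\PE[\Vdot(X_1)^2]=\frac{\ell^2}{4}I.
\]
Combining the three displays gives $2\dim\,\PE[\1_{\Dcal_{\I,1}^d}\zeta^\dim(X_1^\dim,Z_1^\dim)]\to-\ell^2 I/4$, as claimed. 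I expect the only delicate point to be the treatment of the boundary term $T$; the symmetry of the Gaussian increment makes it vanish exactly, which recovers the full-line situation of \Cref{lem:mean:zeta} where $\upsilon\equiv1$ and $T\equiv0$ identically.
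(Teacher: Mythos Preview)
Your proof is correct and follows essentially the same approach as the paper: the paper's proof is omitted apart from recording the change-of-variable identity $\int_{\I}\upsilon(x,\theta)\pi(x+\theta)\,\rmd x=\int_{\I}\upsilon(x,-\theta)\pi(x)\,\rmd x$, which is precisely your identity $\upsilon(y-\theta,\theta)=\upsilon(y,-\theta)$, and then refers to the proof of \Cref{lem:mean:zeta}. You have simply made explicit the boundary correction $T(\theta)$, its oddness, and the use of the symmetry of $Z_1^{\dim}$ to kill $\PE[T(\ell\dim^{-1/2}Z_1^\dim)]$; this is the natural way to spell out what the paper leaves implicit.
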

\begin{proof}
Noting that for all $\theta\in \mathbb{R}$,
\[
\int_{\I} \1_{\I}(x+\constSet \theta) \1_{\I}(x+(1-\constSet) \theta) \pi(x+\theta)\rmd x = \int_{\I} \1_{\I}(x+(\constSet-1) \theta)\1_{\I}(x-\constSet\theta)\pi(x)\rmd x\eqsp.
\]
the proof follows the same steps as the the proof of \Cref{lem:mean:zeta} and is omitted.
\end{proof}

\begin{proof}[Proof of \Cref{theo:result_acceptance_rate_RWM:G}]
The proof follows the same lines as the proof of \Cref{theo:result_acceptance_rate_RWM} and is therefore omitted.
\end{proof}

%%% Local Variables:
%%% mode: latex
%%% TeX-master: "scaling_HM_L1"
%%% End:
\subsection{Proof of \Cref{prop:tight:G}}
\label{sec:proof:tightness:G}
As for the proof of \Cref{prop:tight}, the proof follows from \Cref{lem:kolmo:G}.

\begin{lemma}
\label{lem:kolmo:G}
Assume \Cref{assum:diff:quadratic:G}. Then, there exists $C>0$ such that, for all $0\le k_1<k_2$,
\[
\PE\left[\left(X_{k_2,1}^{\dim} - X_{k_1,1}^{\dim}\right)^4\right] \le C \sum_{p=2}^4 \frac{(k_2-k_1)^p}{\dim^p} \eqsp.
\]
\end{lemma}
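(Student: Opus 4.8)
The plan is to follow the proof of \Cref{lem:kolmo} almost verbatim, inserting the $\constSet$-shifted indicators $\Dcal^{\dim}_{\I,\cdot}$ of \eqref{eq:def_dcal_I} wherever a proposed coordinate may fall outside $\I$, and discarding the complementary events with the help of \eqref{eq:application-hyp} --- exactly the modification used in passing from \Cref{lem:interm_prop_approx_ratio} and \Cref{lem:approx_ratio} to \Cref{lem:interm_prop_approx_ratio:G} and \Cref{lem:approx_ratio:G}. Concretely, as in \Cref{lem:kolmo} one writes
\[
\PE\bigl[(X_{k_2,1}^{\dim}-X_{k_1,1}^{\dim})^4\bigr] = \frac{\ell^4}{\dim^2}\,\PE\Bigl[\Bigl(\sum_{k=k_1+1}^{k_2}Z_{k,1}^{\dim}-\sum_{k=k_1+1}^{k_2}Z_{k,1}^{\dim}\1_{(\setAccept_k^{\dim})^c}\Bigr)^4\Bigr]
\]
and, by the Hölder inequality, reduces the statement to bounding $\PE\bigl[(\sum_{k}Z_{k,1}^{\dim}\1_{(\setAccept_k^{\dim})^c})^4\bigr]=\sum_{(m_1,\dots,m_4)}\PE\bigl[\prod_{i=1}^4 Z_{m_i,1}^{\dim}\1_{(\setAccept_{m_i}^{\dim})^c}\bigr]$ by $C\sum_{p=2}^4(k_2-k_1)^p/\dim^{p-2}$, the sum running over quadruplets in $\{k_1+1,\dots,k_2\}$; one then splits it according to the cardinality $j$ of $\{m_1,\dots,m_4\}$ as in \eqref{eq:definitionI-j}.

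For a fixed quadruplet, introduce the decoupled chain $(\tilde X_k^{\dim})_k$ frozen at the steps $m_1-1,\dots,m_4-1$ exactly as in \Cref{lem:kolmo}: on $\bigcap_j(\setAccept_{m_j}^{\dim})^c$ it coincides with $(X_k^{\dim})_k$, one may replace $\setAccept^{\dim}_{m_j}$ by the decoupled acceptance event $\tilde{\setAccept}^{\dim}_{m_j}$ built from $\tilde X^{\dim}_{m_j-1}$, and $\mcf:=\sigma((\tilde X_k^{\dim})_k)$ is independent of $(U_{m_j},Z_{m_j}^{\dim})_{j=1}^4$. The new ingredient is that $\pi^{\dim}$ is stationary for the decoupled dynamics too, so $\tilde X^{\dim}_{m_j-1}\sim\pi^{\dim}$; hence, by \eqref{eq:application-hyp} applied coordinatewise --- and to the shifts $\constSet\ell\dim^{-1/2}z$ and $(1-\constSet)\ell\dim^{-1/2}z$, which is legitimate by \Cref{assum:diff:quadratic:G}\ref{assum:proba:G} --- the event $E_{m_j}$ that the proposal at step $m_j$ has some coordinate outside $\I$, or that at coordinate $1$ both $\constSet$-shifted proposals of \eqref{eq:def_dcal_I} fail to stay in $\I$ (write $\Dcal^{\dim}_{\I,m_j}$ for the event that they do), satisfies $\PP(E_{m_j})\le C\dim^{1-\gamma/2}\le C\dim^{-2}$ because $\gamma\ge6$; moreover, $\I$ being an interval, $\Dcal^{\dim}_{\I,m_j}$ forces the coordinate-$1$ proposal into $\I$, so $\Delta \tilde{V}^{\dim}_{m_j-1,1}$ is finite there.

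For $j=4$, conditional independence gives $\PE[\prod_j Z_{m_j,1}^{\dim}\1_{(\setAccept_{m_j}^{\dim})^c}\mid\mcf]=\prod_j\PE[Z_{m_j,1}^{\dim}\1_{(\tilde{\setAccept}_{m_j}^{\dim})^c}\mid\mcf]$. Writing the conditional rejection probability as $\1_{E_{m_j}}+\1_{E_{m_j}^c}\,\varphi\bigl(\sum_i\Delta \tilde{V}^{\dim}_{m_j-1,i}\bigr)$ with $\varphi(x)=(1-\rme^x)_+$, then (i) replacing the coordinate-$1$ ``proposal in $\I$'' indicator by $\1_{\Dcal^{\dim}_{\I,m_j}}$, (ii) using the $1$-Lipschitz property of $\varphi$ to linearise $\Delta \tilde{V}^{\dim}_{m_j-1,1}$ around $-\ell\dim^{-1/2}\Vdot(\tilde X_{m_j-1,1}^{\dim})Z_{m_j,1}^{\dim}$, and (iii) applying \cite[Lemma~6]{jourdain:lelievre:miasojedow:2015} to the remaining Gaussian integral, one obtains $|\PE[Z_{m_j,1}^{\dim}\1_{(\tilde{\setAccept}_{m_j}^{\dim})^c}\mid\mcf]|\le\tilde A_{m_j}^{\dim}+\tilde B_{m_j}^{\dim}+\tilde C_{m_j}^{\dim}$, where $\tilde A_{m_j}^{\dim}=\PE\bigl[|Z_{m_j,1}^{\dim}|\,\bigl|\1_{\Dcal^{\dim}_{\I,m_j}}\Delta \tilde{V}^{\dim}_{m_j-1,1}+\ell\dim^{-1/2}\Vdot(\tilde X_{m_j-1,1}^{\dim})Z_{m_j,1}^{\dim}\bigr|\mid\mcf\bigr]$, where $\tilde B_{m_j}^{\dim}$ has the form $|\PE[\ell\dim^{-1/2}\Vdot(\tilde X_{m_j-1,1}^{\dim})\,\G(\cdot,\cdot)\mid\mcf]|$ with $\G$ the bounded function of \eqref{eq:defG}, and $\tilde C_{m_j}^{\dim}\le\PE[|Z_{m_j,1}^{\dim}|\1_{E_{m_j}}\mid\mcf]\le\PP(E_{m_j}\mid\mcf)^{1/2}$. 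By the Hölder inequality together with \Cref{lem:integrated-DQM:G}\ref{lem:integrated-DQM-Lp:G} (for $\tilde A$), \Cref{assum:diff:quadratic:G}\ref{assum:X6:G} and boundedness of $\G$ (for $\tilde B$), and $\PP(E_{m_j})\le C\dim^{-2}$ with $\PP(E_{m_j}\mid\mcf)^2\le\PP(E_{m_j}\mid\mcf)$ (for $\tilde C$), each of these three terms has $L^4$-norm $\le C\dim^{-1/2}$; hence $|\PE[\prod_{j=1}^4 Z_{m_j,1}^{\dim}\1_{(\setAccept_{m_j}^{\dim})^c}]|\le\prod_{j=1}^4\|\tilde A_{m_j}^{\dim}+\tilde B_{m_j}^{\dim}+\tilde C_{m_j}^{\dim}\|_4\le C\dim^{-2}$, so the $\mathcal{I}_4$-sum is $\le C\dim^{-2}\binom{k_2-k_1}{4}$. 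The case $j=3$ is the same after factoring out a squared coordinate (whose conditional second moment is $1$), the two remaining factors having $L^2$-norm $\le C\dim^{-1/2}$, giving an $\mathcal{I}_3$-sum $\le C\dim^{-1}(k_2-k_1)^3$; the cases $j=2$ and $j=1$ are literally those of \Cref{lem:kolmo}, using only $\PE[|Z_{m,1}^{\dim}|^q\1_{(\setAccept_m^{\dim})^c}]\le\PE[|Z_{m,1}^{\dim}|^q]$, and give $\le C(k_2-k_1)^2$ and $\le C(k_2-k_1)$. Combining the four estimates with the Hölder bound and the $\ell^4/\dim^2$ prefactor yields the claim.

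The main obstacle will be the bookkeeping of the extra boundary terms in the $j=4$ case: one must insert precisely the $\constSet$-shifted event $\Dcal^{\dim}_{\I,m_j}$ of \eqref{eq:def_dcal_I} into the DQM term so that \Cref{lem:integrated-DQM:G}\ref{lem:integrated-DQM-Lp:G} applies, then remove it again before invoking \cite[Lemma~6]{jourdain:lelievre:miasojedow:2015}, each time at the price of an error of probability $O(\dim^{1-\gamma/2})$, while simultaneously keeping track of the full ``proposal in $\I^{\dim}$'' events that separate Metropolis rejection from rejection by constraint violation. The quantitative point --- and the reason for the exponent $\gamma\ge6$ in \Cref{assum:diff:quadratic:G}\ref{assum:proba:G} --- is that these errors are then of order $\dim^{-2}$, the same order as the non-boundary terms already present in \Cref{lem:kolmo}, so the counting over $\mathcal{I}_4$ is not degraded.
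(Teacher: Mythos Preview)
Your proposal is correct and follows essentially the same route as the paper's proof: the same decoupled chain $(\tilde X_k^{\dim})$, the same split of the rejection event into a boundary piece of probability $O(\dim^{1-\gamma/2})\le C\dim^{-2}$ and an interior piece handled by the $1$-Lipschitz property of $\varphi$, \Cref{lem:integrated-DQM:G}\ref{lem:integrated-DQM-Lp:G}, and \cite[Lemma~6]{jourdain:lelievre:miasojedow:2015}. The only cosmetic differences are that the paper uses the full $\constSet$-shifted event $\tilde{\Dcal}^{d,m_j-1}_{\I,1:d}$ on all coordinates as its ``good'' set (rather than your mix of ``all actual proposals in $\I$'' plus the coordinate-$1$ $\constSet$-shifted event), and combines the four factors via the arithmetic--geometric mean inequality instead of Hölder; neither change affects the argument.
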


\begin{proof}
We use the same decomposition of $\PE[(X_{k_2,1}^{\dim} - X_{k_1,1}^{\dim})^4]$ as in the proof of \Cref{lem:kolmo} so that we only need to upper bound the following term:
\[
d^{-2}\PE\left[\left(\sum_{k=k_1+1}^{k_2}Z_{k,1}^{\dim}\1_{\left(\setAccept_{k}^{\dim}\right)^c}\right)^4\right] = d^{-2}\sum \PE\left[\prod_{i=1}^4Z_{m_i,1}^{\dim}\1_{\left(\setAccept_{m_i}^{\dim}\right)^c}\right]\eqsp,
\]
where the sum is over all the quadruplets $(m_p)_{p=1}^4$ satisfying $m_p \in \{k_1+1,\dots,k_2\}$, $p=1,\dots,4$.
Let $(m_1,m_2,m_3,m_4)\in\{k_1+1,\dots,k_2\}^4$  and  $(\tilde{X}_k^{\dim})_{k\ge 0}$ be defined as:
\[
\tilde{X}_0^{\dim} = X_0^{\dim} \quad \mbox{and} \quad \tilde{X}_{k+1}^{\dim} = \tilde{X}_{k}^{\dim} + \1_{k\notin\left\{m_1-1,m_2-1,m_3-1,m_4-1\right\}}\ell\dim^{-1/2}Z_{k+1}^{\dim}\1_{\tilde{\setAccept}_{k+1}^{\dim}}\eqsp,
\]
where for all $k\ge 0$ and all $1\le i \le \dim$,
\begin{align*}
\tilde{\setAccept}_{k+1}^{\dim} &= \left\lbrace U_{k+1} \leq \exp \left(\sum_{i=1}^{\dim} \Delta \tilde{V}^{\dim}_{k,i}\right) \right\rbrace\\
 \Delta \tilde{V}^{\dim}_{k,i} &= V\left(\tilde{X}_{k,i}^{\dim}\right) - V\left(\tilde{X}_{k,i}^{\dim} + \ell \dim^{-1/2} Z^{\dim}_{k+1,i}\right)\eqsp.
\end{align*}
Define, for all $k_1+1\le k\le k_2$, $1\le i,j\le d$,
\begin{align*}
\tilde{\Dcal}^{d,k}_{\I,j} &= \defEns{ \tilde{X}_{k,j}^d + \constSet \ell d^{-1/2} Z^d_{k+1,j} \in \I} \cap \defEns{ \tilde{X}_{k,j}^d + (1-\constSet)\ell d^{-1/2} Z^d_{k+1,j} \in \I}\eqsp,  \\
 \tilde{\Dcal}^{d,k}_{\I,i:j} &= \bigcap_{\ell=i}^j \tilde{\Dcal}^{d,k}_{\I,\ell}\eqsp.
\end{align*}
Note that by convention $V(x) = -\infty$ for all $x\notin \I$, $\tilde{\setAccept}_{k+1}^{\dim} \subset \tilde{\Dcal}^{d,k}_{\I,1:d}$ so that $\left(\tilde{\setAccept}_{k+1}^{\dim}\right)^c$ may be written $\left(\tilde{\setAccept}_{k+1}^{\dim}\right)^c = \left(\tilde{\Dcal}^{d,k}_{\I,1:d}\right)^c\bigcup \left(\left(\tilde{\setAccept}_{k+1}^{\dim}\right)^c\cap \tilde{\Dcal}^{d,k}_{\I,1:d} \right)$. Let $\mcf$ be the $\sigma$-field generated by $\left(\tilde{X}^d_k\right)_{k\ge 0}$. Consider the case $\#\{m_1,\ldots,m_4\}=4$. The case $\#\{m_1,\ldots,m_4\}=3$ is dealt with similarly and the two other cases follow the same lines as the proof of \Cref{lem:kolmo:G}. As $\left\{\left(U_{m_j},Z_{m_j,1}^{\dim}, \cdots,Z_{m_j,\dim}^{\dim} \right)\right\}_{1\le j\le 4}$ are independent conditionally to $\mcf$,
\[
\PE\left[\prod_{j=1}^4Z_{m_j,1}^{\dim}\1_{\left(\setAccept_{m_j}^{\dim}\right)^c}\middle| \mcf\right] = \prod_{j=1}^4\left\{\PE\left[\1_{\left(\tilde{\Dcal}^{d,m_{j-1}}_{\I,1:d}\right)^c}Z_{m_j,1}^{\dim}\middle| \mcf\right] + \PE\left[\1_{\tilde{\Dcal}^{d,m_{j-1}}_{\I,1:d}}\1_{\left(\tilde{\setAccept}_{m_j}^{\dim}\right)^c}Z_{m_j,1}^{\dim}\middle| \mcf\right]\right\}\eqsp.
\]
As $U_{m_j}$ is independent of $(Z_{m_j,1}^{\dim}, \cdots,Z_{m_j,\dim}^{\dim})$ conditionally to $\mcf$, the second term may be written:
\[
\PE\left[\1_{\tilde{\Dcal}^{d,m_{j-1}}_{\I,1:d}}\1_{\left(\tilde{\setAccept}_{m_j}^{\dim}\right)^c}Z_{m_j,1}^{\dim}\middle| \mcf\right] = \PE\left[\1_{\tilde{\Dcal}^{d,m_{j-1}}_{\I,1:d}}Z_{m_j,1}^{\dim}\left(1-\mathrm{exp}\left\{\sum_{i=1}^{\dim} \Delta \tilde{V}^{\dim}_{m_j-1,i}\right\}\right)_+\middle| \mcf\right]\eqsp.
\]
Since the function $x\mapsto \left(1-\mathrm{e}^x\right)_+$ is 1-Lipschitz, on $\tilde{\Dcal}^{d,m_{j-1}}_{\I,1:d}$
\begin{equation*}
\left|\left(1-\mathrm{exp}\left\{\sum_{i=1}^{\dim} \Delta \tilde{V}^{\dim}_{m_j-1,i}\right\}\right)_+ -\Theta_{m_j} \right|
\le\left|\Delta \tilde{V}^{\dim}_{m_j-1,1} + \ell \dim^{-1/2} \Vdot(\tilde{X}_{m_j-1,1}^{\dim})Z^{\dim}_{m_j,1}\right|\eqsp,
\end{equation*}
where $\Theta_{m_j} = (1-\mathrm{exp}\{-\ell \dim^{-1/2}\Vdot(\tilde{X}_{m_j-1,1}^{\dim})Z^{\dim}_{m_j,1} + \sum_{i=2}^{\dim} \Delta \tilde{V}^{\dim}_{m_j-1,i}\})_+$.
Then,
\[
\left|\PE\left[\1_{\tilde{\Dcal}^{d,m_{j-1}}_{\I,1:d}}Z_{m_j,1}^{\dim}\left(1-\mathrm{exp}\left\{\sum_{i=1}^{\dim} \Delta \tilde{V}^{\dim}_{m_j-1,i}\right\}\right)_+\middle| \mcf\right]\right| \le A_{m_j}^{\dim} + B_{m_j}^{\dim}\eqsp,
\]
where
\begin{align*}
A_{m_j}^{\dim}&= \PE\left[\left|Z_{m_j,1}^{\dim}\right|\left|\1_{\tilde{\Dcal}^{d,m_{j-1}}_{\I,1}}\Delta \tilde{V}^{\dim}_{m_j-1,1} + \ell\dim^{-1/2}\Vdot(\tilde{X}_{m_j-1,1}^{\dim})Z^{\dim}_{m_j,1}\right|\middle|\mcf\right] \eqsp,\\
B_{m_j}^{\dim}&= \left|\PE\left[\1_{\tilde{\Dcal}^{d,m_{j-1}}_{\I,2:d}}Z_{m_j,1}^{\dim} \Theta_{m_j}\middle|\mcf\right]\right|\eqsp.
\end{align*}
By Jensen inequality,
\begin{multline*}
\left|\PE\left[\prod_{j=1}^4Z_{m_j,1}^{\dim}\1_{\left(\setAccept_{m_j}^{\dim}\right)^c}\right]\right|
\leq  \PE\left[\prod_{j=1}^4\left\{\PE\left[\1_{\left(\tilde{\Dcal}^{d,m_{j-1}}_{\I,1:d}\right)^c}|Z_{m_j,1}^{\dim}|\middle| \mcf\right] + A_{m_j}^{\dim} + B_{m_j}^{\dim}\right\}\right]\eqsp,\\
 \leq C\PE\left[\sum_{j=1}^4\PE\left[\1_{\left(\tilde{\Dcal}^{d,m_{j-1}}_{\I,1:d}\right)^c}|Z_{m_j,1}^{\dim}|^4\middle| \mcf\right] + \left(A_{m_j}^{\dim}\right)^4 + \left(B_{m_j}^{\dim}\right)^4\right]\eqsp,
\end{multline*}
By \Cref{assum:diff:quadratic:G}\ref{assum:proba:G} and Holder's inequality applied with $\alpha= 1/(1-2/\gamma)>1$, for all $1\le j \le 4$,
\begin{align*}
\PE\left[\1_{\left(\tilde{\Dcal}^{d,m_{j-1}}_{\I,1:d}\right)^c}|Z_{m_j,1}^{\dim}|^4\right]&\le \PE\left[\1_{\left(\tilde{\Dcal}^{d,m_{j-1}}_{\I,1}\right)^c}|Z_{m_j,1}^{\dim}|^4\right] +  \sum_{i=2}^d \PE\left[\1_{\left(\tilde{\Dcal}^{d,m_{j-1}}_{\I,i}\right)^c}\right]\eqsp,\\
&\le \PE\left[|Z_{m_j,1}^{\dim}|^{4\alpha/(\alpha-1)}\right]^{(\alpha-1)/\alpha}d^{-\gamma/(2\alpha)} + d^{1-\gamma/2}\eqsp,\\
&\le Cd^{1-\gamma/2}\eqsp.
\end{align*}
By \Cref{lem:integrated-DQM:G}\ref{lem:integrated-DQM-Lp:G} and the Holder's inequality, there exists $C>0$ such that $\PE\left[\left(A_{m_j}^{\dim}\right)^4\right]\le C\dim^{-2}$. On the other hand, by \cite[Lemma~6]{jourdain:lelievre:miasojedow:2015} since $Z^d_{m_j,1}$ is independent of $\mcf$,
\[
B_{m_j}^{\dim} = \left|\PE\left[\1_{\tilde{\Dcal}^{d,m_{j-1}}_{\I,2:d}} \ell \dim^{-1/2} \Vdot(\tilde{X}_{m_j-1,1}^{\dim})\G\left(\ell^2\dim^{-1}\Vdot(\tilde{X}_{m_j-1,1}^{\dim})^2,-2\sum_{i=2}^{\dim} \Delta \tilde{V}^{\dim}_{m_j-1,i}\right)\middle|\mcf\right]\right|\eqsp,
\]
where the function $\G$ is defined in \eqref{eq:defG}. By \Cref{assum:diff:quadratic:G}\ref{assum:X6:G} and since $\G$ is bounded, $\PE[(B_{m_j}^{\dim})^4]\le C\dim^{-2}$. Since $\gamma\ge 6$ in \Cref{assum:diff:quadratic:G}\ref{assum:proba:G}, $|\PE[\prod_{j=1}^4Z_{m_j,1}^{\dim}\1_{(\setAccept_{m_j}^{\dim})^c}]|\le C\dim^{-2}$, showing that
\begin{equation}
\label{eq:card4}
\sum_{(m_1,m_2,m_3,m_4)\in\mathcal{I}_4}\left|\PE\left[\prod_{i=1}^4Z_{m_i,1}^{\dim}\1_{\left(\setAccept_{m_i}^{\dim}\right)^c}\right]\right|\le C\dim^{-2}{ k_2-k_1 \choose 4}\eqsp.
\end{equation}
\end{proof}

%%% Local Variables: 
%%% mode: latex
%%% TeX-master: "main_supplement"
%%% End: 

\subsection{Proof of \Cref{propo:reduction_martingale_problem:G}}
\label{sec:reduction_martingale_problem:G}
\begin{lemma}
\label{lem:law_point_limit:G}
Assume that \Cref{assum:diff:quadratic:G} holds. Let $\mu$ be a  limit point of the sequence of laws $(\mu_{\dim})_{\dim\ge 1}$ of $\defEns{(Y_{t,1}^{\dim})_{t\geq 0}, \ \dim \in
  \Nset^*}$. Then for all $t \geq 0$, the pushforward measure of $\mu$ by $W_t$ is $\pdf$.
   \end{lemma}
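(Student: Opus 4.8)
The plan is to follow verbatim the argument used to prove \Cref{lem:law_point_limit} in the positive-density setting; the restriction of the target to an interval plays no role here. First I would record the elementary bound coming from the definition \eqref{eq:defY} of $Y^{\dim}$: for every $t\ge 0$,
\[
\abs{Y_{t,1}^{\dim} - X_{\floor{\dim t},1}^{\dim}} = \abs{(\dim t - \floor{\dim t})\,\ell\dim^{-1/2} Z^{\dim}_{\ceil{\dim t},1}\1_{\setAccept^{\dim}_{\ceil{\dim t}}}} \le \ell\dim^{-1/2}\abs{Z^{\dim}_{\ceil{\dim t},1}}\eqsp,
\]
so that $\PE\bigl[\abs{Y_{t,1}^{\dim} - X_{\floor{\dim t},1}^{\dim}}\bigr]\le \ell\dim^{-1/2}\PE[\abs{Z^{\dim}_{1,1}}]\to 0$ as $\dim\to\plusinfty$. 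Next I would invoke stationarity of the chain: the kernel defined by \eqref{eq:proposaliid}--\eqref{eq:setAccept} is a Metropolis--Hastings kernel with symmetric Gaussian proposal targeting the product density $\target^{\dim}$ (this remains meaningful with the convention $V\equiv-\infty$ off $\I$), hence $\target^{\dim}$ is invariant; since $X_0^{\dim}\sim\target^{\dim}$, we get $X_k^{\dim}\sim\target^{\dim}$ for every $k$, and in particular $X_{\floor{\dim t},1}^{\dim}$ has law $\pdf$ for all $\dim$ and all $t\ge 0$.

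Then I would pass to the limit along the subsequence for which $(\mu_{\dim})$ converges weakly to $\mu$. Since the coordinate map $w\mapsto w_t$ is continuous on $\Wienerspace$, the pushforwards of that subsequence by $W_t$ converge weakly to the pushforward of $\mu$ by $W_t$; hence for every bounded Lipschitz $\psi:\rset\to\rset$,
\[
\PE^{\mu}[\psi(W_t)] = \lim_{\dim\to\plusinfty}\PE[\psi(Y_{t,1}^{\dim})]\eqsp.
\]
Combining this with the $\rmL^1$ estimate above (and the boundedness and Lipschitz continuity of $\psi$) yields $\PE^{\mu}[\psi(W_t)] = \lim_{\dim\to\plusinfty}\PE[\psi(X_{\floor{\dim t},1}^{\dim})] = \int_{\rset}\psi\,\rmd\pdf$. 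Since bounded Lipschitz functions form a measure-determining class on $\rset$, the pushforward of $\mu$ by $W_t$ is $\pdf$, which is the claim; the case $t=0$ is immediate from $Y_{0,1}^{\dim}=X_{0,1}^{\dim}\sim\pdf$.

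I do not anticipate a genuine obstacle here. The only points worth flagging are that weak convergence is available only along the subsequence defining the limit point $\mu$ (harmless), and that one should note explicitly that \Cref{assum:diff:quadratic:G} is used only to guarantee that all objects in sight are well defined — the argument rests purely on the invariance of $\target^{\dim}$ and on the $\dim^{-1/2}$ closeness of $Y^{\dim}$ to the embedded chain, exactly as in \Cref{lem:law_point_limit}.
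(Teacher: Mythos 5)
Your proof is correct and is essentially the paper's own argument: the paper proves this lemma by noting it is identical to the proof of \Cref{lem:law_point_limit}, namely the $\rmL^1$ bound $\PE[|Y_{t,1}^{\dim}-X_{\floor{\dim t},1}^{\dim}|]\le \ell \dim^{-1/2}\PE[|Z_{1,1}^{\dim}|]\to 0$ from \eqref{eq:defY}, the stationarity $X_{\floor{\dim t},1}^{\dim}\sim\pdf$, and weak convergence tested against bounded Lipschitz functions. Your extra remarks (convergence only along the subsequence defining $\mu$, invariance of $\target^{\dim}$ under the Metropolis kernel with the convention $V\equiv-\infty$ off $\I$) are harmless clarifications of points the paper leaves implicit.
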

\begin{proof}
The proof is the same as in \Cref{lem:law_point_limit} and is omitted.
\end{proof}
We preface the proof by a lemma which provides a condition to verify that any limit point $\mu$ of $(\mu_d)_{ d \geq 1}$ is a solution to the local martingale problem associated with \eqref{eq:langevin_limit}.
\begin{lemma}
\label{propo:reduction_martingale_problem_01:G}
Assume \Cref{assum:diff:quadratic:G}. Let $\mu$ be a limit point of the sequence $(\mu_d)_{d \geq 1}$. If for all  $\phi \in C^{\infty}_c(\I,\rset)$,
the process
$(\phi(W_t) - \phi(W_0) - \int_{0}^t \Lrm \phi(W_u) \rmd u)_{ t \geq 0} $
is a  martingale with respect to $\mu$ and the filtration
$(\canonicalFiltration_t)_{t \geq 0}$, then $\mu$ solves the local martingale problem associated with \eqref{eq:langevin_limit}.
\end{lemma}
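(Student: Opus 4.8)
\medskip
\noindent\textbf{Proof proposal.}
I would verify the two requirements in the definition of a solution of the local martingale problem. That the pushforward of $\mu$ by $W_0$ is $\pdf$ is exactly \Cref{lem:law_point_limit:G} at $t=0$, so all the work is to show that for every $\psi\in C^{\infty}(\rset,\rset)$ the process $N^{\psi}_t:=\psi(W_t)-\psi(W_0)-\int_0^t\Lrm\psi(W_u)\,\rmd u$, $t\geq0$, is a local martingale under $\mu$ for $(\canonicalFiltration_t)_{t\geq0}$. The idea is a localisation that upgrades the martingale property supplied by the hypothesis (valid only for test functions compactly supported inside $\I$) to a local martingale property for an arbitrary smooth $\psi$ on $\rset$.

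More precisely, I would fix an exhausting sequence of compact subintervals $K_n$ of $\I$ with $K_n\subset\mathring{K}_{n+1}$ and $\bigcup_n K_n=\I$, and set $\tau_n:=\inf\{t\geq0:\ W_t\notin\mathring{K}_n\}$, an $(\canonicalFiltration_t)$-stopping time. Given $\psi\in C^{\infty}(\rset,\rset)$, for each $n$ I would take $\phi_n=\chi_n\psi$ with $\chi_n\in C_c^{\infty}(\I,[0,1])$ equal to $1$ on an open neighbourhood of $K_n$, so that $\phi_n\in C_c^{\infty}(\I,\rset)$ and $\phi_n$ agrees with $\psi$, hence $\Lrm\phi_n$ with $\Lrm\psi$, on a neighbourhood of $K_n$. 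By assumption $M^{\phi_n}_t:=\phi_n(W_t)-\phi_n(W_0)-\int_0^t\Lrm\phi_n(W_u)\,\rmd u$ is a $\mu$-martingale, hence so is $(M^{\phi_n}_{t\wedge\tau_n})_{t\geq0}$ by optional stopping. Since on $[0,\tau_n]$ the path stays in $K_n$, a short check — distinguishing $\{\tau_n=0\}$, on which $M^{\phi_n}_{\,\cdot\,\wedge\tau_n}\equiv0\equiv N^{\psi}_{\,\cdot\,\wedge\tau_n}$, from $\{\tau_n>0\}$, on which $W_0\in\mathring{K}_n$ so that $\phi_n(W_0)=\psi(W_0)$ — gives $M^{\phi_n}_{t\wedge\tau_n}=N^{\psi}_{t\wedge\tau_n}$ for all $t\geq0$. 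Thus $(N^{\psi}_{t\wedge\tau_n})_{t\geq0}$ is a $\mu$-martingale for every $n$.

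It then remains to check that $(\tau_n)_n$ is a localising sequence, i.e. $\tau_n\uparrow\plusinfty$ $\mu$-a.s.; since $\mathring{K}_n\uparrow\I$ this says precisely that $\mu$-a.s. the canonical path never leaves $\I$, and this is the step I expect to be the crux. I would obtain it from three ingredients: (i) each $\mu_d$ is carried by $C(\rset_+,\I)$, because $(X^d_k)_k$ lives in $\I^d$ and, $\I$ being an interval hence convex, the linear interpolant $Y^d_{\cdot,1}$ stays in $\I$; as $C(\rset_+,\Iclosed)$ is closed in $\Wienerspace$, Portmanteau gives $\mu(C(\rset_+,\Iclosed))=1$; (ii) \Cref{lem:law_point_limit:G}, by which $W_t\sim\pdf$ for every $t$, so $W_t\in\I$ $\mu$-a.s. for each fixed $t$ and, by Fubini, $\{u\geq0:\ W_u\notin\I\}$ is $\mu$-a.s. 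Lebesgue-null; and (iii) \Cref{assum:diff:quadratic:G}\ref{assum:proba:G}, which forces $\pdf$ and hence $V$, $\Vdot$ to be singular enough at $\partial\I$ that a continuous path confined to $\Iclosed$ and spending no time outside $\I$ cannot reach $\partial\I$ (here one may also use $\int_0^t\PE^{\mu}[|\Vdot(W_u)|]\,\rmd u=t\,\norm{\Vdot}_{\pdf,1}<\plusinfty$, which follows from \Cref{lem:law_point_limit:G} and \Cref{assum:diff:quadratic:G}\ref{assum:X6:G}). Once this is in hand, $(\tau_n)$ localises, $N^{\psi}$ is a local martingale for every $\psi\in C^{\infty}(\rset,\rset)$, the initial law is $\pdf$, so $\mu$ solves the local martingale problem associated with \eqref{eq:langevin_limit}, which is what \Cref{propo:reduction_martingale_problem:G} will then feed on. The genuinely delicate part is ingredient (iii): making rigorous that the limiting measure inherits, from the approximating chains and from stationarity, the fact that $W$ stays in the open interval $\I$ at all times.
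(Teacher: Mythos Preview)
Your localisation scheme is natural, but ingredient (iii) is a genuine gap, not merely a delicate step. From (i) and (ii) you only know that $\mu$-a.s.\ the path lies in $\Iclosed$ and that the set $\{u:W_u\in\partial\I\}$ has Lebesgue measure zero. This does \emph{not} prevent $W$ from touching $\partial\I$: a continuous path can hit the boundary on a (closed) null set of times and bounce back, exactly as reflected Brownian motion does at the origin. Your appeal to \Cref{assum:diff:quadratic:G}\ref{assum:proba:G} and to the integrability of $\Vdot$ does not close this; both are statements about one-dimensional marginals, and knowing that $\int_0^t|\Vdot(W_u)|\,\rmd u<\infty$ a.s.\ is compatible with $W$ touching the boundary (e.g.\ if $\Vdot(x)\sim c/x$ near $0$ and the path behaves like $|u-t_0|^{1/2}$ near a hitting time $t_0$, the integrand is $\sim|u-t_0|^{-1/2}$, which is integrable). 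Proving that $W$ stays in $\I$ is essentially a statement about boundary classification for the limiting diffusion, which you cannot use before identifying $\mu$ with that diffusion.

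The paper avoids this issue altogether by a different two-step route. First it localises only by magnitude, $\tau_k=\inf\{t:|W_t|\ge k\}$, so that $\tau_k\uparrow\infty$ is automatic; this reduces the problem to showing that $\varpi\mapsto\varpi(W_t)-\varpi(W_0)-\int_0^t\Lrm\varpi(W_u)\,\rmd u$ is a genuine martingale for every $\varpi\in C^\infty_c(\Iclosed,\rset)$. Second, to pass from test functions in $C^\infty_c(\I,\rset)$ (provided by the hypothesis) to $C^\infty_c(\Iclosed,\rset)$, it takes an approximating sequence $\phi_k\in C^\infty_c(\I,\rset)$ and uses dominated convergence, exploiting that under $\mu$ each $W_u$ has law $\pdf$ (\Cref{lem:law_point_limit:G}), so $W_u\in\I$ a.s.\ for each fixed $u$, together with the bound $|\Lrm\phi_k|\le C(1+|\Vdot|)$ and $\norm{\Vdot}_{\pdf,1}<\infty$ from \Cref{assum:diff:quadratic:G}\ref{assum:X6:G}. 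In other words, the paper trades your pathwise statement ``$W$ never leaves $\I$'' for the much weaker marginal statement ``$W_u\in\I$ a.s.\ for each $u$'', which is exactly what \Cref{lem:law_point_limit:G} delivers and which suffices for the limit in the martingale identity. You already collected the right ingredients; the fix is to use them for an $L^1$ limit rather than for your ingredient (iii).
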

\begin{proof}
  As for all $t \geq 0$ and $d \geq 1$, $Y_{t,1}^d \in \I$, for all $d
  \geq 1$ $\mu^{d}(C(\rset_+, \Iclosed)) = 1$. Since $C(\rset_+,
  \Iclosed)$ is closed in $\Wienerspace$, we have by the Portmanteau
  theorem, $\mu(C(\rset_+, \Iclosed)) =1$. Therefore, we only need to
  prove that for all $\psi \in C^\infty(\Iclosed,\rset)$, the process
  $(\psi(W_t) - \psi(W_0) - \int_{0}^t \Lrm \psi(W_u) \rmd u)_{ t \geq 0} $ is a
  local martingale with respect to $\mu$ and the filtration
  $(\canonicalFiltration_t)_{t \geq 0}$. Let $\psi \in
  C^\infty(\Iclosed,\rset)$.

  Suppose first that for all $\varpi \in
  C^\infty_c(\Iclosed,\rset)$, $(\varpi(W_t) - \varpi(W_t) - \int_{0}^t \Lrm
  \varpi(W_u) \rmd u)_{ t \geq 0} $ is a martingale. Then,
  consider the sequence of stopping time defined for $k \geq 1$ by
  $\tau_k = \inf \{ t \geq 0 \ | \ \abs{W_t} \geq k\}$ and a sequence
$(\varpi_k)_{k \geq 0}$ in $C^\infty_c(\Iclosed,\rset)$ satisfying:
\begin{enumerate}
\item for all $k \geq 1$ and all $x \in\Iclosed \cap \ccint{-k , k }$, $\varpi_k(x) = \psi(x)$,
\item $\lim_{k\to \plusinfty} \varpi_k = \psi$ in $C^\infty(\Iclosed,\rset)$.
\end{enumerate}
Since for all $k \geq 1$,
\begin{multline*}
\left(\psi(W_{t\wedge \tau_k}) - \psi(W_{0}) - \int_{0}^{t\wedge \tau_k} \Lrm \psi(W_u) \rmd u\right)_{ t \geq 0}  \\ = \left(\varpi_k(W_{t\wedge \tau_k}) - \varpi_k(W_{0})-  \int_{0}^{t\wedge \tau_k} \Lrm \varpi_k(W_u) \rmd u\right)_{ t \geq 0}
\end{multline*}
and the sequence $(\tau_k)_{k \geq 1}$ goes to $\plusinfty$ as $k$
goes to $\plusinfty$ almost surely, it follows that $(\psi(W_t) - \psi(W_0) -
\int_{0}^t \Lrm \psi(W_u) \rmd u)_{ t \geq 0} $ is a local martingale
with respect to $\mu$ and the filtration $(\canonicalFiltration_t)_{t
  \geq 0}$. It remains to show that for all $\varpi \in
C^\infty_c(\Iclosed,\rset)$, $(\varpi(W_t) - \varpi(W_0) - \int_{0}^t \Lrm \varpi(W_u)
\rmd u)_{ t \geq 0} $ is a martingale under the assumption of the
proposition.  We only need to prove that for all $\varpi \in C^\infty_c(\Iclosed,\rset)$,
$0 \leq s\leq t$, $m \in \nset^*$, $g : \rset^m \to \rset$ bounded and continuous, and $0 \leq t_1 \leq \dots \leq t_m \leq s \leq t$:
 \begin{equation}
 \label{eq:reduction_martingale_problem:10:G}
  \PE^{\mu}\left[ \left(\varpi\left(W_t\right) - \varpi\left( W_s \right) - \int_s^t \Lrm \varpi \left(W_u \right) \rmd u  \right) g\left(W_{t_1},\dots,W_{t_m} \right) \right] =0 \eqsp.
\end{equation}
Let $(\phi_k)_{k \geq 0}$ be a sequence of functions in
$C^\infty_c(\I,\rset)$ and converging to $\varpi$ in
$C^\infty_c(\Iclosed,\rset)$. First note that for all $u \in \ccint{s,t}$, $\mu$-almost everywhere,
\begin{equation}
\label{eq:reduction_martingale_problem:0_2:G}
 \lim_{ k \to \plusinfty} \phi_k(W_u) = \varpi(W_u)\eqsp.
\end{equation}
By \Cref{lem:law_point_limit:G}, for all $u \in \ccint{s,t}$ the pushforward measure of $\mu$ by $W_u$ has density $\pdf$ with respect to the Lebesgue measure and  $\mu$-almost everywhere, $ \lim_{k \to
  \plusinfty} \Lrm \phi_k(W_u) = \Lrm \varpi(W_u)$. On the other hand, there exists $C \geq
0$ such that for all $k \geq 0$, $|\Lrm \phi_k(W_u)| \leq
C(1+|\Vdot(W_u)|)$. Then,
\begin{align*}
\PE^{\mu}\left[\int_s^t\left(1+|\Vdot(W_u)|\right)\rmd u\right] 
&\le (t-s) + \int_s^t \PE^{\mu}\left[|\Vdot(W_u)|\right]\rmd u \\
&\le (t-s)\left(1 + \int_{\I} |\Vdot(x)|\pdf(x)\rmd x\right)\eqsp.
\end{align*}
Therefore, $\mu$-almost everywhere by \Cref{assum:diff:quadratic:G}\ref{assum:X6:G} and the Lebesgue dominated convergence theorem, we get
\begin{equation}
\label{eq:reduction_martingale_problem:0_3:G}
\lim_{k \to \plusinfty } \int_s^t  \Lrm \phi_k(W_u) \rmd u = \int_s^t \Lrm \varpi(W_u) \rmd u\eqsp.
\end{equation}
Therefore, \eqref{eq:reduction_martingale_problem:10:G} follows from  \eqref{eq:reduction_martingale_problem:0_2:G} and \eqref{eq:reduction_martingale_problem:0_3:G}, using again the Lebesgue dominated convergence theorem and \Cref{assum:diff:quadratic:G}\ref{assum:X6:G}.
\end{proof}

\begin{proof}[Proof of \Cref{propo:reduction_martingale_problem:G}]
  Let $\mu$ be a limit point of $(\mu_{\dim})_{\dim\ge 1}$. By
  \Cref{propo:reduction_martingale_problem_01:G}, we only need to
  prove that for all $\phi \in C^{\infty}_c(\I,\rset)$, the
  process $(\phi(W_t) -\phi(W_0)- \int_{0}^t \Lrm \phi(W_u) \rmd u)_{ t \geq 0} $
  is a martingale with respect to $\mu$ and the filtration
  $(\canonicalFiltration_t)_{t \geq 0}$. Then, the proof follows the same line as the proof of \Cref{propo:reduction_martingale_problem} and is omitted.
\end{proof}

%%% Local Variables:
%%% mode: latex
%%% TeX-master: "scaling_HM_L1"
%%% End:

\subsection{Proof of \Cref{theo:diffusion_limit_RMW:G}}
\label{sec:proof:weaklimit:G}
\begin{lemma}
\label{lem:approx_first_term_decomposition_martingale:G}
Assume \Cref{assum:diff:quadratic:G} holds. Let
$X^d$ be distributed according to $\target^{d}$ and $Z^d$ be a
$d$-dimensional standard Gaussian random variable, independent of $X^d$. Then, $\lim_{d
  \to \plusinfty} \Erm^{\dim} = 0$, where
\begin{equation*}
\Erm^{\dim}=  \expe{\abs{\Vdot(X_1^d)\1_{\Dcal_{\I,2:d}^{d}}\defEns{\mathcal{G}\parenthese{\ell^2\Vdot(X_1^d)^2/d, 2 \barY_d}-\mathcal{G}\parenthese{\ell^2\Vdot(X_1^d)^2/d, 2 \barX_d}}} }  \eqsp,
\end{equation*}
where $\barY_d = \sum_{i=2}^d \Delta V_i^d$, $\Delta V_i^d$ and $\Dcal_{\I,2:d}^{d}$ are given by \eqref{eq:Delta_V} and \eqref{eq:def_dcal_I} and $\barX_d = \sum_{i=2}^d b_{\I,i}^d$, $b_{\I,i}^d= b_{\I}^d(X_i^d,Z_i^d)$ with $b_{\I}^d$ given by \eqref{eq:bdi:G}.
\end{lemma}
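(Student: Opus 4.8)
The plan is to follow the proof of \Cref{lem:approx_first_term_decomposition_martingale} essentially line for line, the only genuinely new work being the bookkeeping of the indicator $\1_{\Dcal_{\I,2:d}^{d}}$. First I would record a consequence of $\constSet>1$: since $\I$ is an interval, on the event $\Dcal_{\I,j}^{d}$ the point $X_j^d+\ell\dim^{-1/2}Z_j^d$ lies between $X_j^d+(1-\constSet)\ell\dim^{-1/2}Z_j^d$ and $X_j^d+\constSet\ell\dim^{-1/2}Z_j^d$, hence belongs to $\I$. Consequently, on $\Dcal_{\I,2:d}^{d}$ every $\Delta V_i^d$ with $i\ge 2$ is finite, so that $\barY_d=\sum_{i=2}^d\Delta V_i^d$ is a well-defined real random variable there (extend it arbitrarily, say by $\barX_d$, off this event), and $\barX_d-\barY_d=\sum_{i=2}^d\{b_{\I,i}^d-\Delta V_i^d\}$ on $\Dcal_{\I,2:d}^{d}$. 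Using that $(X_i^d,Z_i^d)$ are i.i.d.\ (so $\PE[\1_{\Dcal_{\I,1}^d}\zeta^{\dim}(X_1^d,Z_1^d)]=\PE[\1_{\Dcal_{\I,i}^d}\zeta^{\dim}(X_i^d,Z_i^d)]$ for every $i$), the random variable $\1_{\Dcal_{\I,2:d}^{d}}\abs{\barX_d-\barY_d}$ equals exactly the integrand whose $\rmL^1$-norm is $\Jrm^{\dim}_{\I}$ in \Cref{lem:interm_prop_approx_ratio:G}; hence $\varepsilon_d\eqdef\expe{\1_{\Dcal_{\I,2:d}^{d}}\abs{\barX_d-\barY_d}}\to 0$ as $\dim\to\plusinfty$.

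Next I would condition on $X_1^d$: the triple $(\barX_d,\barY_d,\1_{\Dcal_{\I,2:d}^{d}})$ depends only on $(X_i^d,Z_i^d)_{2\le i\le d}$ and is therefore independent of $X_1^d$, so with $a=\ell^2\Vdot(X_1^d)^2/\dim$ frozen it suffices to bound $\expe{\1_{\Dcal_{\I,2:d}^{d}}\abs{\mathcal{G}(a,2\barY_d)-\mathcal{G}(a,2\barX_d)}}$. Since $b_{\I}^d$ differs from $b^d$ only through the deterministic constant $\PE[\1_{\Dcal_{\I,1}^d}\zeta^{\dim}]$ in place of $\PE[\zeta^{\dim}]$, we may write $\barX_d=\sigma_d\barS_d+\mu_d$ with exactly the same $\sigma_d$, $\barS_d$ and $\beta_i^d$ as in \Cref{lem:approx_first_term_decomposition_martingale}, namely
\[
\mu_d=2(\dim-1)\PE[\1_{\Dcal_{\I,1}^d}\zeta^{\dim}(X_1^d,Z_1^d)]-\tfrac{\ell^2(\dim-1)}{4\dim}\PE[\Vdot(X_1^d)^2],
\]
$\sigma_d^2=\ell^2\PE[\Vdot(X_1^d)^2]+\tfrac{\ell^4}{16\dim}\PE[(\Vdot(X_1^d)^2-\PE[\Vdot(X_1^d)^2])^2]$, $\barS_d=(\sqrt\dim\,\sigma_d)^{-1}\sum_{i=2}^d\beta_i^d$ and $\beta_i^d=-\ell Z_i^d\Vdot(X_i^d)-\tfrac{\ell^2}{4\sqrt\dim}(\Vdot(X_i^d)^2-\PE[\Vdot(X_i^d)^2])$. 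By \Cref{assum:diff:quadratic:G}\ref{assum:X6:G} the $\beta_i^d$ have uniformly bounded third moments, so the Berry--Esseen theorem gives a universal constant $C$ with $\sup_{x}\abs{\proba{\barX_d\le x}-\Phi((x-\mu_d)/\tilde{\sigma}_d)}\le C/\sqrt\dim$, where $\tilde{\sigma}_d^2=(\dim-1)\sigma_d^2/\dim$.

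Now $\mathcal{G}$ is bounded on $\overline{\rset}_+\times\rset$ and, by the Lipschitz property \eqref{eq:G_lip_in_b} established in the proof of \Cref{lem:approx_first_term_decomposition_martingale}, $g\eqdef\mathcal{G}(a,2\,\cdot\,)$ is Lipschitz on $\ooint{-\infty,-a^{1/4}/2}^2\cup\ooint{a^{1/4}/2,\plusinfty}^2$, so \Cref{lem:fun_nearly_lip}\ref{lem:item:fun_nearly2} applies with $\epsilon=a^{1/4}/2$, $\Usf=\1_{\Dcal_{\I,2:d}^{d}}$, $\Xsf=\barX_d$, $\Ysf=\barY_d$, $\mu=\mu_d$, $\sigma=\tilde{\sigma}_d$ and $C_\Xsf=C/\sqrt\dim$. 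Multiplying the resulting bound by $\abs{\Vdot(X_1^d)}$, using $a^{1/4}=\ell^{1/2}\abs{\Vdot(X_1^d)}^{1/2}\dim^{-1/4}$ and integrating over $X_1^d$ leads to exactly the estimate \eqref{eq:approx_first_term_decomp_martingale_3}, now with $\varepsilon_d=\expe{\1_{\Dcal_{\I,2:d}^{d}}\abs{\barX_d-\barY_d}}$.

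Finally, $\expe{\abs{\Vdot(X_1^d)}}$ and $\expe{\abs{\Vdot(X_1^d)}^{3/2}}$ are finite by \Cref{assum:diff:quadratic:G}\ref{assum:X6:G}, $\sigma_d^2\to\ell^2 I$ so that $\tilde{\sigma}_d^2$ is bounded away from $0$ (the case $I=0$ being trivial, since then $\Vdot=0$ $\pi$-a.e.\ and $\Erm^{\dim}=0$), and $\varepsilon_d\to 0$ by the first paragraph, whence $\Erm^{\dim}\to 0$. The main obstacle is precisely that first paragraph: checking that $\barY_d$ is finite on $\Dcal_{\I,2:d}^{d}$ (convexity of $\I$ together with $\constSet>1$) and identifying $\1_{\Dcal_{\I,2:d}^{d}}\abs{\barX_d-\barY_d}$ with the quantity controlled by \Cref{lem:interm_prop_approx_ratio:G}; once this is in place, the rest is a verbatim transcription of the positive-density argument.
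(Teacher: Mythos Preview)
Your proposal is correct and follows essentially the same route as the paper's own proof: decompose $\barX_d=\sigma_d\barS_d+\mu_d$, apply Berry--Esseen to $\barS_d$, invoke \Cref{lem:fun_nearly_lip}\ref{lem:item:fun_nearly2} with $\Usf=\1_{\Dcal_{\I,2:d}^{d}}$ and the Lipschitz estimate \eqref{eq:G_lip_in_b}, then conclude via \Cref{lem:interm_prop_approx_ratio:G} and \Cref{assum:diff:quadratic:G}\ref{assum:X6:G}. Your additional remarks on the finiteness of $\barY_d$ on $\Dcal_{\I,2:d}^{d}$ (using convexity of $\I$ and $\constSet>1$) and on the degenerate case $I=0$ are not spelled out in the paper but are welcome clarifications rather than departures from its argument.
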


\begin{proof}
Set for all $d \geq 1$, $\barY_d = \sum_{i=2}^d \Delta V_i^d$ and $\barX_d = \sum_{i=2}^d b_{\I,i}^d$. By definition of $b_{\I}^d$ \eqref{eq:bdi:G},  $\barX_d$ may be expressed as $\barX_d = \sigma_d \barS_d +\mu_d$, where
\begin{align*}
\mu_d &= 2 (d-1) \expe{\1_{\Dcal_{\I,1}^d}\zeta^{\dim}(X_1^{\dim},Z_1^{\dim})} - \frac{\ell^2(d-1)}{4d} \expe{\Vdot(X_1^{\dim})^2}\eqsp, \\
\sigma^2_d &= \ell^2 \expe{\Vdot(X_1^{\dim})^2} + \frac{\ell^4}{16d} \expe{\parenthese{\Vdot(X_1^{\dim})^2 - \expe{\Vdot(X_1^{\dim})^2}}^2}\eqsp,  \\
\barS_d &= (\sqrt{d} \sigma_d)^{-1}\sum_{i=2}^d \beta_i^d\eqsp, \\
\beta_i^d &= -\ell Z_i^d \Vdot(X_i^d) - \frac{\ell^2}{4\sqrt{\dim}}\parenthese{\Vdot(X_i^d)^2 - \expe{\Vdot(X_i^d)^2}} \eqsp.
\end{align*}
By \Cref{assum:diff:quadratic:G}\ref{assum:X6:G} the Berry-Essen Theorem \cite[Theorem 5.7]{petrov:1995} can be applied to $\barS_d$. Then, there exists a universal constant $C$ such that for all $\dim>0$,
\[
\sup_{x \in \rset} \abs{\proba{\sqrt{\frac{d}{d-1}}\barS_d \leq x}-\Phi(x)} \leq C/ \sqrt{d} \eqsp.
\]
It follows, with $\tilde{\sigma}_d ^2= (d-1)\sigma_d^2/d$, that
\[
\sup_{x \in \rset} \abs{\proba{\barX_d \leq x}-\Phi((x-\mu_d)/\tilde{\sigma}_d)} \leq C/ \sqrt{d} \eqsp.
\]
By this result and \eqref{eq:G_lip_in_b}, \Cref{lem:fun_nearly_lip} can be applied to obtain a constant $C \geq 0$, independent of $\dim$, such that:
\begin{multline*}
\expe{\1_{\Dcal_{\I,2:d}^{d}}\abs{\mathcal{G}\parenthese{\ell^2\Vdot(X_1^d)^2/d, 2 \barY_d}-\mathcal{G}\parenthese{\ell^2\Vdot(X_1^d)^2/d, 2 \barX_d} \sachant{X_1^d}}} \\
\leq  C \left( \expe{\1_{\Dcal_{\I,2:d}^{d}}\abs{\barX_d-\barY_d}} + d^{-1/2} + \sqrt{2\expe{\1_{\Dcal_{\I,2:d}^{d}}\abs{\barX_d-\barY_d}}(2 \pi \tilde{\sigma}_d^2)^{-1/2}} \right. \\
\left.+  \sqrt{\ell|\Vdot(X_1^d)|/( 2 \pi d^{1/2} \tilde{\sigma}_d^2 )} \right) \eqsp.
\end{multline*}
Using this result, we have
\begin{align}
\label{eq:approx_first_term_decomp_martingale_3:G}
&\Erm^{\dim}\leq  C \left\{
\ell^{1/2}\expe{|\Vdot(X_1^d)|^{3/2}} ( 2 \pi d^{1/2} \tilde{\sigma}^2_d )^{-1/2} +  \expe{|\Vdot(X_1^d)|}
\right.   
\\
\nonumber
&\qquad\qquad \left. \times 
\parenthese{ \expe{\1_{\Dcal_{\I,2:d}^{d}}\abs{\barX_d-\barY_d}} + d^{-1/2} + \sqrt{2\expe{\1_{\Dcal_{\I,2:d}^{d}}\abs{\barX_d-\barY_d}}(2 \pi \tilde{\sigma}^2_d)^{-1/2}} }  \right\} \eqsp.
\end{align}
By \Cref{lem:interm_prop_approx_ratio:G}, $ \PE[\1_{\Dcal_{\I,2:d}^{d}}|\barX_d - \barY_d|]$ goes to $0$ as $d$ goes to infinity, and by
\Cref{assum:diff:quadratic:G}\ref{assum:X6:G} $\lim_{d \to \plusinfty} \tilde{\sigma}^2_d =
\ell^2\expe{\Vdot(X)^2} $. Combining these results with
\eqref{eq:approx_first_term_decomp_martingale_3:G}, it follows that $\Erm^{\dim}$ goes to $0$
when $d$ goes to infinity.
\end{proof}

For all $n\ge 0$, define $\mcf_{n}^{\dim} = \sigma( \lbrace X_k^d,k\leq n \rbrace) $
and for all $\phi \in C_c^{\infty}(\rset,\rset)$,
\begin{multline}
\label{eq:def:martingale:G}
M^{\dim}_n(\phi) = \frac{\ell}{\sqrt{\dim}} \sum_{k=0}^{n-1} \phi'(X_{k,1}^d)\left\{Z_{k+1,1}^d \1_{\setAccept^d_{k+1}} - \CPE{Z_{k+1,1}^d \1_{\setAccept^d_{k+1}}}{\mcf_k^d} \right\} \\
+  \frac{\ell^2}{2 \dim} \sum_{k=0}^{n-1} \phi''(X_{k,1}^d)\left\{(Z_{k+1,1}^d)^2
\1_{\setAccept^d_{k+1}} - \CPE{(Z_{k+1,1}^d)^2 \1_{\setAccept^d_{k+1}}}{\mcf_k^d} \right\}  \eqsp.
\end{multline}

\begin{proposition}
\label{propo:decomposition_martingale:G}
Assume \Cref{assum:diff:quadratic:G} and \Cref{assum:Vdot:G} hold. Then, for all $s \leq t$ and all $\phi \in C_c^{\infty}(\rset,\rset)$,
\[
\lim_{\dim \to \plusinfty} \PE \left[ \abs{\phi(Y_{t,1}^{\dim} ) -\phi(Y_{s,1}^{\dim} ) - \int_s ^t \Lrm \phi(Y_{r,1}^{\dim}) \rmd r -\left(M^{\dim}_{\ceil{\dim t}}(\phi) - M^{\dim}_{\ceil{\dim s}}(\phi)\right)} \right] = 0\eqsp.
\]
\end{proposition}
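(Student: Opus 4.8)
The plan is to mirror the proof of \Cref{propo:decomposition_martingale} step by step, inserting the indicators $\1_{\Dcal^{\dim}_{\I,\cdot}}$ (see \eqref{eq:def_dcal_I}) wherever the DQM expansion is used and replacing the auxiliary quantities $\Upsilon_r^{\dim}$, $\Xi_r^{\dim}$ and $b^{\dim}$ by their interval counterparts built from $b^{\dim}_{\I}$ of \eqref{eq:bdi:G}, so that the interval versions of the technical results — \Cref{lem:integrated-DQM:G}, \Cref{lem:interm_prop_approx_ratio:G}, \Cref{lem:approx_ratio:G}, \Cref{lem:mean:zeta:G} and \Cref{lem:approx_first_term_decomposition_martingale:G} — may be used exactly where \Cref{lem:integrated-DQM}, \Cref{lem:interm_prop_approx_ratio}, \Cref{lem:approx_ratio}, \Cref{lem:mean:zeta} and \Cref{lem:approx_first_term_decomposition_martingale} are invoked in the positive-density case. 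Note first that the definition \eqref{eq:def:martingale:G} of $M^{\dim}_n(\phi)$ coincides with \eqref{eq:def:martingale}, so nothing changes there. I would then reproduce verbatim the opening of the proof of \Cref{propo:decomposition_martingale}: the integral representation of $\phi(Y_{t,1}^{\dim})-\phi(Y_{s,1}^{\dim})$, the Taylor expansion of $\phi'(Y^{\dim}_{r,1})$ around $X^{\dim}_{\floor{\dim r},1}$, and the elimination of the $\phi^{(3)}$ remainder, of the boundary correction $I_1$, and of $T^{\dim}_4$, $T^{\dim}_5$ — none of these involves $\Vdot$, so their $\dim^{-1/2}$ control is unchanged. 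This reduces the claim to $\lim_{\dim\to\plusinfty}\PE[|T^{\dim}_i|]=0$ for $i=1,2,3$, with $T^{\dim}_1,T^{\dim}_2,T^{\dim}_3$ as in the proof of \Cref{propo:decomposition_martingale}.

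For $T^{\dim}_1$ and $T^{\dim}_2$: since $V\equiv-\infty$ off $\I$, $\1_{\setAccept^{\dim}_{\ceil{\dim r}}}$ vanishes as soon as a proposed coordinate leaves $\I$, so every conditional expectation occurring in $T^{\dim}_1$, $T^{\dim}_2$ can be decomposed according to $\Dcal^{\dim}_{\I,2:\dim}$ and its complement. On $\Dcal^{\dim}_{\I,2:\dim}$ the increments $\Delta V^{\dim}_i$ carry the indicators $\1_{\Dcal^{\dim}_{\I,i}}$, and replacing $\Upsilon_r^{\dim},\Xi_r^{\dim}$ by their $\1_{\Dcal^{\dim}_{\I,2:\dim}}$-versions and $b^{\dim}$ by $b^{\dim}_{\I}$, the pieces vanish exactly as in the positive case: via \Cref{lem:integrated-DQM:G}\ref{lem:integrated-DQM-Lp:G} and the $1$-Lipschitz property of $t\mapsto1\wedge\rme^t$ for the analogue of $A_{1,r}$; via \cite[Lemma~6]{jourdain:lelievre:miasojedow:2015} and \Cref{lem:approx_first_term_decomposition_martingale:G} for the analogue of $A_{2,r}$; via \cite[Lemma~6]{jourdain:lelievre:miasojedow:2015}, continuity of $\G$ and $\Gamma$ off $\{(0,0)\}$, \Cref{lem:mean:zeta:G} (so that $2\dim\,\PE[\1_{\Dcal^{\dim}_{\I,1}}\zeta^{\dim}(X^{\dim}_1,Z^{\dim}_1)]\to-\ell^2I/4$) and the strong law of large numbers for $A_{3,r}$ and for the analogous decomposition of $T^{\dim}_2$, recovering the limits $h(\ell)/2$ and $h(\ell)$ as in \eqref{eq:limG} and \eqref{eq:first_treat_gamma_T2}. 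On $(\Dcal^{\dim}_{\I,2:\dim})^c$ I would bound $1\wedge\exp\{\cdots\}\le1$ and each moment of $|Z^{\dim}_{\ceil{\dim r},1}|$ by a constant, leaving a contribution at most $C\ell\sqrt{\dim}\,\PP((\Dcal^{\dim}_{\I,2:\dim})^c)$; a union bound together with \eqref{eq:application-hyp} gives $\PP((\Dcal^{\dim}_{\I,2:\dim})^c)\le C\dim^{1-\gamma/2}$, so this is $O(\dim^{3/2-\gamma/2})\to0$ since $\gamma\ge6$.

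For $T^{\dim}_3$: write $T^{\dim}_3=(h(\ell)/2)(T^{\dim}_{3,1}-T^{\dim}_{3,2})$ as before; $T^{\dim}_{3,1}$ is controlled by the mean value theorem and the boundedness of $\phi^{(3)}$. Splitting $T^{\dim}_{3,2}$, the term $\int_s^t\{\Vdot(X^{\dim}_{\floor{\dim r},1})-\Vdot(Y^{\dim}_{r,1})\}\phi'(X^{\dim}_{\floor{\dim r},1})\rmd r$ is handled using that $\phi'$ has compact support contained in $\I$, on which $\Vdot$ is bounded by \Cref{assum:Vdot:G}, that $\Vdot$ is continuous off a Lebesgue-null set, and that $Y^{\dim}_{r,1}-X^{\dim}_{\floor{\dim r},1}\to0$ with $X^{\dim}_{\floor{\dim r},1}\sim\pdf$ (cf.\ \Cref{lem:law_point_limit:G}), so that Fubini and dominated convergence give $0$; the term $\int_s^t\{\phi'(X^{\dim}_{\floor{\dim r},1})-\phi'(Y^{\dim}_{r,1})\}\Vdot(Y^{\dim}_{r,1})\rmd r$ is treated by the mean value theorem and \Cref{assum:diff:quadratic:G}\ref{assum:X6:G}. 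Combining these gives $\lim_{\dim\to\plusinfty}\PE[|T^{\dim}_i|]=0$ for $i=1,2,3$, hence the proposition.

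The main obstacle is the bookkeeping of the boundary-crossing events in $T^{\dim}_1$: there the conditional expectations appear multiplied by $\ell\sqrt{\dim}$, so the crude bound $\PP((\Dcal^{\dim}_{\I,j})^c)\lesssim\dim^{-\gamma/2}$ is not by itself enough — one has to verify that only $O(\dim)$ coordinates may exit $\partial\I$ (yielding $\dim^{1-\gamma/2}$) and that the leftover $\sqrt{\dim}$ is absorbed thanks to $\gamma\ge6$ — and one must keep $\phi$ supported inside $\I$ so that $\Vdot$, which \Cref{assum:Vdot:G} allows to be unbounded at $\partial\I$, remains bounded on the set that matters in the $T^{\dim}_3$ step.
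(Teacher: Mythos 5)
Your proposal is correct and takes essentially the same route as the paper: the paper likewise reduces matters to the terms $T^{\dim}_1,\dots,T^{\dim}_5$ of the positive-density proof, splits the conditional expectations along the events $\Dcal^{\dim}_{\I,\cdot}$, invokes the interval lemmas (\Cref{lem:integrated-DQM:G}, \Cref{lem:approx_first_term_decomposition_martingale:G}, \Cref{lem:approx_ratio:G}, \Cref{lem:mean:zeta:G}) exactly where the original ones were used, and absorbs the boundary-crossing contribution through \Cref{assum:diff:quadratic:G}\ref{assum:proba:G}, obtaining precisely the $O(\dim^{3/2-\gamma/2})$ bound you identify (the paper isolates it as an extra term $A_{3,r}$ in the decomposition of $T^{\dim}_1$ and controls the coordinate carrying $Z^{\dim}_{\ceil{\dim r},1}$ by H\"older's inequality with $\alpha=1/(1-2/\gamma)$, rather than by your independence argument on $\Dcal^{\dim}_{\I,2:\dim}$). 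The treatment of $T^{\dim}_3$ is, as you anticipate, taken over verbatim from the proof of \Cref{propo:decomposition_martingale}.
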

\begin{proof}

Using the same decomposition as in the proof of \Cref{propo:decomposition_martingale}, we only need to prove that for all $1\le i\le 5$, $\lim_{d \to \plusinfty} \PE[ | T^d_i|] =0$, where
\begin{align*}
T^{\dim}_1 &=  \int_s ^t  \phi'(X^{\dim}_{\floor{\dim r },1}) \left( \ell \sqrt{\dim}  \
\CPE{Z_{\ceil{\dim r },1}^d \1_{\setAccept^d_{\ceil{\dim r }}}}{ \mcf_{\floor{\dim r}}^\dim}  + \frac{h(\ell)}{2}\Vdot(X^{\dim}_{\floor{\dim r },1})\right) \rmd r\eqsp,\\
T^{\dim}_2 & = \int_s ^t  \phi''(X^{\dim}_{\floor{\dim r },1}) \left( \frac{\ell^2}{2} \  \PE \left[ (Z_{\ceil{\dim r },1}^d)^2 \1_{\setAccept^d_{\ceil{\dim r }}}  \sachant{ \mcf_{\floor{\dim r}} ^d} \right] -  \frac{h(\ell)}{2} \right) \rmd r \eqsp,\\
T^{\dim}_3 & = \int_s ^t \left(\Lrm \phi(Y^{\dim}_{\floor{\dim r}/\dim,1}) - \Lrm \phi(Y^{\dim}_{r,1})\right) \rmd r\eqsp, \\
T^{\dim}_4& = \frac{\ell(\ceil{\dim t} - \dim t)}{\sqrt{\dim}} \phi'(X^{\dim}_{\floor{\dim t },1}) \left(  Z_{\ceil{\dim t },1}^d \1_{\setAccept^d_{\ceil{\dim t }}} -  \PE \left[ Z_{\ceil{\dim t },1}^d \1_{\setAccept^d_{\ceil{\dim t }}}  \sachant{ \mcf_{\floor{\dim t}} ^d} \right] \right) \\
& \; + \frac{\ell^2 (\ceil{\dim t} - \dim t)}{2 \dim } \phi''(X^{\dim}_{\floor{\dim t },1}) \left(  (Z_{\ceil{\dim t },1}^d)^2 \1_{\setAccept^d_{\ceil{\dim t }}} -  \PE \left[ (Z_{\ceil{\dim t },1}^d)^2 \1_{\setAccept^d_{\ceil{\dim t }}}  \sachant{ \mcf_{\floor{\dim t}} ^d} \right] \right)\eqsp, \\
T^{\dim}_5& = \frac{\ell(\ceil{\dim s} - \dim s)}{\sqrt{\dim}} \phi'(X^{\dim}_{\floor{\dim s },1}) \left(  Z_{\ceil{\dim s },1}^d \1_{\setAccept^d_{\ceil{\dim s }}} -  \PE \left[ Z_{\ceil{\dim s },1}^d \1_{\setAccept^d_{\ceil{\dim s }}}  \sachant{ \mcf_{\floor{\dim s}} ^d} \right] \right) \\
& \; + \frac{\ell^2 (\ceil{\dim s} - \dim s)}{2 \dim } \phi''(X^{\dim}_{\floor{\dim s },1}) \left(  (Z_{\ceil{\dim s },1}^d)^2 \1_{\setAccept^d_{\ceil{\dim s }}} -  \PE \left[ (Z_{\ceil{\dim s },1}^d)^2 \1_{\setAccept^d_{\ceil{\dim s }}}  \sachant{ \mcf_{\floor{\dim s}} ^d} \right] \right) \eqsp.
\end{align*}
First, as $\phi'$ and $\phi''$ are bounded, $\PE\left[\abs{T^{\dim}_4 }+\abs{ T^{\dim}_5} \right] \leq C \dim^{-1/2}$. Denote for all $r \in \ccint{s,t}$ and $d \geq 1$,
\begin{align*}
\Delta V_{r,i}^d &= V\left(X_{\floor{dr},i}^{\dim}\right) - V\left(X_{\floor{dr},i}^{\dim} +
  \ell\dim^{-1/2} Z^{\dim}_{\ceil{dr},i}\right) \\
 \Xi_r^d &= 1 \wedge
      \exp\left\{-\ell Z_{\ceil{\dim r },1}^{d}\Vdot(X_{ \floor{\dim r}
          ,1}^{d})/\sqrt{\dim} +\sum_{i=2}^d  \varb_{\I,i}^{d,\floor{\dim r}}\right\} \eqsp,
\end{align*}
where for all $k,i\ge 0$, $\varb_{\I,i}^{d,k} = \varb_{\I}^d(X_{k,i}^d,Z_{k+1,i}^d)$, and for all $x,z \in
\rset$, $b_{\I}^d(x,y)$ is given by \eqref{eq:bdi:G}. For all $k\ge 0$, $1\le i,j\le d$, define
\begin{align*}
\Dcal_{\I,j}^{d,k} &= \left\{X_{k,j}^d + \constSet \ell d^{-1/2}Z_{k+1,j}^d\in\I\right\} \cap \left\{X_{k,j}^d + ( 1-\constSet) \ell d^{-1/2}Z_{k+1,j}^d\in\I\right\}\\
\Dcal_{\I,i:j}^{d,k} &= \bigcap_{\ell=i}^j \Dcal_{\I,\ell}^{d,k}\eqsp.
\end{align*}
By the triangle inequality,
\begin{equation}
\label{eq:treatment_T1:G}
\abs{T_1}  \leq \int_s ^t  \left|\phi'(X^{\dim}_{\floor{\dim r },1})\right|(A_{1,r} + A_{2,r} + A_{3,r} + A_{4,r}) \rmd r \eqsp,
\end{equation}
where
\begin{align*}
\Pi_r^\dim &=  1 \wedge \exp
      \defEns{- \ell \dim^{-1/2} Z_{\ceil{dr},1}^{d}\Vdot(X_{\floor{dr}, 1}^{d}) +
        \sum_{i=2}^d \Delta V_{r,i}^d}\eqsp,\\
  A_{1,r} &= \left|\ell \sqrt{\dim} \PE \left[
    Z_{\ceil{\dim r },1}^d \left(\1_{\setAccept^d_{\ceil{\dim r }}} - 
    \1_{\Dcal_{\I,1:d}^{d,\floor{dr}} }\Pi_r^{\dim}\right) \sachant{ \mcf_{\floor{\dim r}} ^d} \right]\right|\eqsp,\\
  A_{2,r} &= \abs{\ell \sqrt{\dim} \  \PE \left[
    Z_{\ceil{\dim r },1}^d \1_{\Dcal_{\I,1:d}^{d,\floor{dr}} }\left( \Pi_r^{\dim}
      -\Xi_r^d
    \right)  \sachant{ \mcf_{\floor{\dim r}} ^d} \right]}\eqsp, \\
  A_{3,r} &= \abs{\ell \sqrt{\dim} \  \PE \left[
    Z_{\ceil{\dim r },1}^d \1_{\left(\Dcal_{\I,1:d}^{d,\floor{dr}}\right)^c}\Xi_r^d\sachant{ \mcf_{\floor{\dim r}} ^d} \right]}\eqsp, \\
A_{4,r}& =  \abs{\ell \sqrt{\dim}  \ \PE \left[ Z_{\ceil{\dim r },1}^d \Xi_r^{\dim} \sachant{ \mcf_{\floor{\dim r}} ^d} \right] + \Vdot(X^{\dim}_{\floor{\dim r },1}) h(\ell)/2 } \eqsp.
  \end{align*}
Since $t \mapsto 1 \wedge \exp(t)$ is $1$-Lipschitz,
\begin{align*}
\PE\left[\abs{A_{1,r}^{\dim}}\right] &\le \ell\sqrt{d} \PE\left[\1_{\Dcal_{\I,1:d}^{d,\floor{dr}}}\left|Z_{\ceil{\dim r },1}^d\right| \left|\Delta V_{r,1}^d - \ell \dim^{-1/2} Z_{\ceil{dr},1}^{d}\Vdot(X_{\floor{dr}, 1}^{d})\right|\right]\eqsp,\\
&\le \ell\sqrt{d} \PE\left[\1_{\Dcal_{\I,1}^{d,\floor{dr}}}\left|Z_{\ceil{\dim r },1}^d\right| \left|\Delta V_{r,1}^d - \ell \dim^{-1/2} Z_{\ceil{dr},1}^{d}\Vdot(X_{\floor{dr}, 1}^{d})\right|\right]\eqsp,\\
&\le \ell\sqrt{d} \PE\left[\left|Z_{\ceil{\dim r },1}^d\right| \left|\1_{\Dcal_{\I,1}^{d,\floor{dr}}}\Delta V_{r,1}^d - \ell \dim^{-1/2} Z_{\ceil{dr},1}^{d}\Vdot(X_{\floor{dr}, 1}^{d})\right|\right]
\end{align*}
and $\PE[\abs{A_{1,r}^{\dim}}]$ goes to
$0$ as $d\to \plusinfty$
for almost all $r$ by \Cref{lem:integrated-DQM:G}\ref{lem:integrated-DQM-Lp:G}. So by
the Fubini theorem, the
first term  in \eqref{eq:treatment_T1:G} goes to $0$ as $d \to \plusinfty$.
For $A_{2,r}^{\dim}$, note that
\[
A_{2,r} \le \abs{\ell \sqrt{\dim} \  \PE \left[
    Z_{\ceil{\dim r },1}^d \1_{\Dcal_{\I,2:d}^{d,\floor{dr}} }\left(\Pi_r^{\dim}
      -\Xi_r^d
    \right)  \sachant{ \mcf_{\floor{\dim r}} ^d} \right]}\eqsp.
\]
Then, by \cite[Lemma 6]{jourdain:lelievre:miasojedow:2015},
\begin{multline*}
\expe{\abs{A_{2,r}^{\dim}}} \leq \PE\left[\left|\ell^2\Vdot(X_{\floor{dr},1}^d)\1_{\Dcal_{\I,2:d}^{d,\floor{dr}} }\left\{\mathcal{G}\parenthese{\frac{\ell^2\Vdot(X_{\floor{dr},1}^d)^2}{d}, 2\sum_{i=2}^d
        \Delta V_{r,i}^d }\right.\right.\right.\\
        \left.\left.\left.-\mathcal{G}\parenthese{\frac{\ell^2\Vdot(X_{\floor{dr},1}^d)^2}{d}, 2 \sum_{i=2}^d  \varb_{\I, i}^{d, \floor{\dim r}}}\right\}\right|\right]  \eqsp,
\end{multline*}
where $\mathcal{G}$ is defined in \eqref{eq:defG}. By  \Cref{lem:approx_first_term_decomposition_martingale:G}, this expectation  goes to zero
when $d$ goes to infinity. Then by the Fubini theorem and the Lebesgue dominated
convergence theorem, the second term of \eqref{eq:treatment_T1:G} goes $0$ as $d \to
\plusinfty$. On the other hand, by \Cref{assum:diff:quadratic:G}\ref{assum:proba:G} and Holder's inequality applied with $\alpha= 1/(1-2/\gamma)>1$, for all $1\le j \le 4$,
\begin{align*}
\expe{\abs{A_{3,r}^{\dim}}}& \leq \ell \sqrt{d}\left(\PE\left[\left|Z_{\ceil{\dim r },1}^d\right|\1_{\left(\Dcal_{\I,1}^{d,\floor{dr}}\right)^c}\right]+\sum_{i=2}^d \PE\left[\1_{\left(\Dcal_{\I,i}^{d,\floor{dr}}\right)^c}\right]\right)\eqsp,\\
&\le \ell\sqrt{d}\left(\PE\left[|Z_{m_j,1}^{\dim}|^{\alpha/(\alpha-1)}\right]^{(\alpha-1)/\alpha}d^{-\gamma/(2\alpha)} + d^{1-\gamma/2}\right)\le Cd^{3/2-\gamma/2}
\end{align*}
and $\PE[\abs{A_{3,r}^{\dim}}]$ goes to
$0$ as $d\to \plusinfty$
for almost all $r$. Define 
\[
\bar V_{\dim,1} = \sum_{i=1}^{d} \Vdot(X_{\floor{\dim r},i}^{d})^2\quad\mbox{and}\quad\bar V_{\dim,2} = \bar V_{\dim,1} -  \Vdot(X_{\floor{\dim r},1}^{d})^2\eqsp.
\]
For the last term, by \cite[Lemma 6]{jourdain:lelievre:miasojedow:2015}:
\begin{multline}
 \ell \sqrt{d} \ \PE \left[ Z_{\ceil{\dim r },1}^d \Xi_r^{\dim} \sachant{
    \mcf_{\floor{\dim r}} ^d} \right]  = -\ell^2\Vdot(X_{ \floor{\dim r}  ,1}^{d})  \\
\label{eq:treatment_T1_1:G}
 \times\mathcal{G}\left(
  \frac{\ell^2}{\dim}\bar V_{\dim,1}, \left\lbrace \frac{\ell^2}{2\dim}\bar V_{\dim,2} -4(d-1) \PE\left[\1_{\Dcal_{\I}}\zeta^{\dim}(X,Z)\right] \right\rbrace
  \right) \eqsp,
\end{multline}
where  $\Dcal_{\I}= \left\{X+\ell d^{-1/2}Z\in \I\right\}$, $X$ is distributed according to $\pdf$ and $Z$ is a standard Gaussian random variable independent of $X$. As $\G$ is continuous on $\rset_+ \times \rset \setminus \defEns{0,0}$ (see \cite[Lemma 2]{jourdain:lelievre:miasojedow:2015}), by \Cref{assum:diff:quadratic:G}\ref{assum:X6:G}, \Cref{lem:mean:zeta:G} and the law of large numbers, almost surely,
\begin{multline}
\label{eq:limG:G}
\lim_{d \to \plusinfty} \ell^2\mathcal{G}\left(\ell^2\bar V_{\dim,1} /d, \ell^2\bar V_{\dim,2}/(2d) -4(d-1) \PE\left[\1_{\Dcal_{\I}}\zeta^{\dim}(X,Z)\right] \right) \\ = \ell^2\mathcal{G}\left(\ell^2\PE_{}[\Vdot(X)^2],\ell^2\PE_{}[\Vdot(X)^2]\right) = h(\ell)/2\eqsp,
\end{multline}
where $h(\ell)$ is defined in \eqref{eq:defhK}. Therefore by Fubini's Theorem,
\eqref{eq:treatment_T1_1:G} and Lebesgue's dominated convergence theorem, the last term of
\eqref{eq:treatment_T1:G} goes to $0$ as $d$ goes to infinity. The proof for $T_2^d$ follows the same lines. By the triangle inequality,
\begin{multline}
\label{eq:treatment_T2:G}
\abs{T_2^{d}}  \leq \abs{\int_s ^t  \phi''(X^{\dim}_{\floor{\dim r },1})(\ell^2/2) \  \PE \left[ (Z_{\ceil{\dim r },1}^d )^2 \left( \1_{\setAccept^d_{\ceil{\dim r }}} -\Xi_r^{\dim} \right) \sachant{ \mcf_{\floor{\dim r}} ^d} \right]  \rmd r}\\
+ \abs{\int_s ^t  \phi''(X^{\dim}_{\floor{\dim r },1}) \left( (\ell^2/2) \ \PE \left[
      (Z_{\ceil{\dim r },1}^d)^2 \Xi_r^{\dim} \sachant{ \mcf_{\floor{\dim r}} ^d} \right]
    -  h(\ell)/2 \right) \rmd r} \eqsp.
\end{multline}
 By Fubini's Theorem,  Lebesgue's dominated convergence theorem and \Cref{lem:approx_ratio:G}, the expectation of the first term goes to zero when $d$ goes to infinity. For the second term, by \cite[Lemma 6 (A.5)]{jourdain:lelievre:miasojedow:2015},
\begin{multline}
\label{eq:treatment_T2_1:G}
( \ell^2/2) \PE \left[ (Z_{\ceil{\dim r },1}^d)^2 1 \wedge \exp\left\{ -\frac{\ell Z_{\ceil{\dim r },1}^{d}}{\sqrt{\dim}}\Vdot(X_{ \floor{\dim r}  ,1}^{d}) +\sum_{i=2}^d  \varb_{\I, i}^{d,\floor{\dim r}} \right\} \sachant{ \mcf_{\floor{\dim r}} ^d} \right] \\
= (B_1 + B_2 - B_3)/2 \eqsp,
\end{multline}
where
\begin{align*}
B_1 & = \ell^2\Gamma \left(  \ell^2\bar V_{\dim,1} /d, \ell^2\bar V_{\dim,2}/(2d)-4(d-1) \PE_{}\left[\1_{\Dcal_{\I}}\zeta^{\dim}(X,Z)\right]\right)\eqsp,\\
B_2 & = \frac{\ell^4 \Vdot(X_{ \floor{\dim r}  ,1}^{d})^2}{d} \mathcal{G}\left(
  \ell^2\bar V_{\dim,1} /d, \ell^2\bar V_{\dim,2}/(2d) -4(d-1) \PE_{}\left[\1_{\Dcal_{\I}}\zeta^{\dim}(X,Z)\right]
  \right)\eqsp,\\
B_3 & = \frac{\ell^4 \Vdot(X_{ \floor{\dim r}  ,1}^{d})^2}{d} \left(2 \pi \ell^2\bar V_{\dim,1} /d\right)^{-1/2}\\
&\hspace{3cm}\times \exp\left\{- \frac{\left[-2(d-1)\PE_{}[\1_{\Dcal_{\I}}\zeta^{\dim}(X,Z)] + (\ell^2/(4d)) \bar V_{\dim,2}\right]^2}{2\ell^2\bar V_{\dim,1} /d }\right\}\eqsp,\\
\end{align*}
where $\Gamma$ is defined in \eqref{eq:defGamma}. As $\Gamma$ is continuous on $\rset_+ \times \rset \setminus \defEns{0,0}$ (see \cite[Lemma 2]{jourdain:lelievre:miasojedow:2015}), by \Cref{assum:diff:quadratic:G}\ref{assum:X6:G}, \Cref{lem:mean:zeta:G} and the law of large numbers, almost surely,
\begin{multline}
\label{eq:first_treat_gamma_T2:G}
\lim_{d \to \plusinfty}
\ell^2 \Gamma\left(
  \ell^2\bar V_{\dim,1} /d, \left\lbrace  \ell^2 \bar V_{\dim,2}/(2d) -4(d-1) \PE_{}\left[\1_{\Dcal_{\I}}\zeta^{\dim}(X,Z)\right] \right\rbrace
  \right) \\= \ell^2\Gamma\left(\ell^2\PE_{}[\Vdot(X)^2],\ell^2\PE_{}[\Vdot(X)^2]\right) = h(\ell) \eqsp.
  \end{multline}
By \Cref{lem:mean:zeta:G}, by \Cref{assum:diff:quadratic:G}\ref{assum:X6:G} and the law of large numbers, almost surely,
  \[
  \lim_{d\to\plusinfty}\exp\left\{- \frac{\left[-2(d-1)\PE_{}[\1_{\Dcal_{\I}}\zeta^{\dim}(X,Z)] + (\ell^2/(4d)) \bar V_{\dim,2}\right]^2}{2\ell^2\bar V_{\dim,1} /d }\right\}= \exp\left\{-\frac{\ell^2}{8}\PE_{}[\Vdot(X)^2]\right\}\eqsp.
  \]
  Then, as $\mathcal{G}$ is bounded on $\rset_+ \times \rset$,
  \begin{equation}
  \label{eq:second_treat_gamma_T2:G}
  \lim_{d\to\plusinfty} \PE\left[\left|\int_s ^t  \phi''(X^{\dim}_{\floor{\dim r },1}) \left( B_2-B_3 \right) \rmd r\right|\right] = 0\eqsp.
  \end{equation}
   Therefore, by Fubini's Theorem,
   \eqref{eq:treatment_T2_1:G}, \eqref{eq:first_treat_gamma_T2:G}, \eqref{eq:second_treat_gamma_T2:G}
   and Lebesgue's dominated convergence theorem, the second term of
   \eqref{eq:treatment_T2:G} goes to $0$ as $d$ goes to infinity. The proof for $T_3^d$ follows exactly the same lines as the proof of \Cref{propo:decomposition_martingale}.
\end{proof}

%%% Local Variables: 
%%% mode: latex
%%% TeX-master: "main_supplement"
%%% End: 
\begin{proof}[Proof of \Cref{theo:diffusion_limit_RMW:G}]
Using \Cref{prop:tight:G}, \Cref{propo:reduction_martingale_problem:G} and
\Cref{propo:decomposition_martingale:G}, the proof follows the same lines as the proof of \Cref{theo:diffusion_limit_RMW}.
\end{proof}
%%% Local Variables: 
%%% mode: latex
%%% TeX-master: "scaling_HM_L1"
%%% End: 

\subsection*{Acknowledgment}
The work of A.D. and E.M. is supported by the Agence Nationale de la Recherche, under grant ANR-14-CE23-0012 (COSMOS).

\bibliographystyle{plain}
\bibliography{./scaling_HM_MALA_bib}
\end{document}